\tikzset{
	symbol/.style={
		draw=none,
		every to/.append style={
			edge node={node [sloped, allow upside down, auto=false]{$#1$}}}
	}
}
\numberwithin{equation}{section}
\definecolor{orange}{rgb}{0.9, 0.9, 0.1}
\definecolor{turquoise}{rgb}{0.19, 0.84, 0.78}
\newtheorem{thm}{Theorem}
\newtheorem{thmABC}{Theorem}
\newtheorem*{thmABC*}{Theorem}
\newtheorem{lem}[thm]{Lemma}
\newtheorem{prop}[thm]{Proposition}
\newtheorem{cor}[thm]{Corollary}
\newtheorem{conj}[thm]{Conjecture}
\theoremstyle{remark}
\newtheorem{rem}[thm]{Remark}
\theoremstyle{definition}
\newtheorem{defn}[thm]{Definition}
\numberwithin{thm}{section}
\newcommand{\bA}{\mathbb A}
\newcommand{\Aut}{\mathrm{Aut}}
\newcommand{\bC}{\mathbb C}
\newcommand{\rd}{\mathrm d}
\newcommand{\sD}{\mathsf{D}}
\newcommand{\Ext}{\mathrm{Ext}}
\newcommand{\bF}{\mathbb F}
\newcommand{\Fbar}{{\overline{F}}}
\newcommand{\bG}{\mathbb G}
\newcommand{\dG}{\mathcal G}
\newcommand{\Gal}{\mathrm{Gal}}
\newcommand{\rH}{\mathrm H}
\newcommand{\Lie}{\mathrm{Lie}}
\newcommand{\fP}{\mathfrak P}
\newcommand{\cM}{\mathcal{M}}
\newcommand{\bP}{\mathbb P}
\newcommand{\QQ}{\mathbb Q}
\newcommand{\bQ}{\mathbb Q}
\newcommand{\bR}{\mathbb R}
\newcommand\Qbar{{\overline\bQ}}
\newcommand\Kbar{{\overline{K}}}
\newcommand{\cO}{\mathcal{O}}
\newcommand{\dY}{\mathcal{Y}}
\newcommand{\cY}{\mathcal{Y}}
\newcommand{\dT}{\mathcal{T}}
\newcommand{\ZZ}{\mathbb Z}
\newcommand{\bZ}{\mathbb Z}
\newcommand{\rZ}{\mathrm Z}
\newcommand\et{\mathrm{\acute et}}
\newcommand{\logBK}{\log_{\mathrm{BK}}}
\newcommand\lgeom{\mathrm{Sel}}
\newcommand{\DM}{\mathrm{DM}}
\newcommand{\MT}{\mathrm{MT}}
\newcommand{\MTR}{\mathrm{MTR}}
\newcommand\Zar{\mathrm{Zar}}
\mathchardef\mhyphen="2D %
\newcommand\fab{\mathrm{f\mhyphen ab}}
\newcommand\fgab{\mathrm{f\mhyphen gab}}
\newcommand\fcov{\mathrm{f\mhyphen cov}}
\newcommand\mot{\mathrm{mot}}
\newcommand{\ind}[1]{\mathrm{ind}\text{-}#1}
\DeclareMathOperator{\li}{li}
\DeclareMathOperator{\ab}{ab}
\DeclareMathOperator{\cris}{cris}
\DeclareMathOperator{\dR}{dR}
\DeclareMathOperator{\im}{im}
\DeclareMathOperator{\gr}{gr}
\DeclareMathOperator{\loc}{loc}
\DeclareMathOperator{\Li}{Li}
\DeclareMathOperator{\Res}{Res}
\DeclareMathOperator{\Hom}{Hom}
\DeclareMathOperator{\per}{per}
\newcommand{\gab}{{\mathrm{gab}}}
\newcommand{\redu}{{\mathrm{red}}}
\DeclareMathOperator{\PL}{PL}
\DeclareMathOperator{\Rep}{Rep}
\DeclareMathOperator{\Sel}{Sel}
\DeclareMathOperator{\Spec}{Spec}
\DeclareMathOperator{\Sec}{Sec}
\newcommand{\ur}{\mathrm{ur}}
\newcommand{\ev}{\mathrm{ev}}
\newcommand{\proj}{\mathrm{proj}}
\title[Chabauty--Kim, finite descent, and the Section Conjecture]{Chabauty--Kim, finite descent, and the Section Conjecture for locally geometric sections}
\author{L. Alexander Betts}
\author{Theresa Kumpitsch}
\author{Martin Lüdtke}
\subjclass[2020]{14H30 (primary), 11G30, 14G05 (secondary)}
\begin{document}

\begin{abstract}
	Let~$X$ be a smooth projective curve of genus~$\geq2$ over a number field. We consider the relationship between Kim's Conjecture on the sufficiency of the Chabauty--Kim method, a conjecture of Stoll on the sufficiency of the finite descent obstruction, and the locally geometric part of Grothendieck's Section Conjecture. We show that the latter two conjectures are equivalent to one another, and are a consequence of~$X$ satisfying Kim's Conjecture for almost all choices of auxiliary prime~$p$. This leads to a new ``computational'' strategy, based on Chabauty--Kim theory, for proving the locally geometric variant of the Section Conjecture. We give the appropriate generalisation for $S$-integral points on hyperbolic curves and, as a proof of concept, carry out the proposed strategy for the thrice-punctured line over~$\bZ[1/2]$, in the process establishing infinitely many new cases of Kim's Conjecture.
\end{abstract}

	\maketitle
	
	\thispagestyle{empty}
	\vspace{0.2cm}
	\tableofcontents
	
\section{Introduction}

Let $X$ be a smooth projective curve of genus $\geq 2$ over a number field $K$ with algebraic closure~$\Kbar$ and absolute Galois group $G_K = \Gal(\Kbar/K)$. Via the \textit{fundamental exact sequence}
\begin{equation}\label{eq:fes}
1 \to \pi^{\et}_1(X_{\Kbar}) \to \pi^{\et}_1(X) \to G_K \to 1
\end{equation}
one may study the $K$-rational points of $X$: each such point $x \in X(K)$ induces a section of this sequence, well-defined up to conjugation by~$\pi^{\et}_1(X_{\Kbar})$.\footnote{Strictly speaking, one should choose a geometric basepoint on $X_{\Kbar}$ to make sense of these fundamental groups; we suppress this choice for the purposes of the introduction.} Grothendieck's famous \textit{Section Conjecture} states that the set of rational points $X(K)$ should be in bijection with the set~$\Sec(X/K)$ of $\pi^{\et}_1(X_{\Kbar})$-conjugacy classes of splittings of~\eqref{eq:fes}. The injectivity is easy to show, but surjectivity is still a wide open problem, and there exist only a few examples where this is known to hold, all in the special case that~$X(K)=\emptyset$ \cite{harari-szamuely:no_abelian_sections,li-litt-salter-srinivasan:no_sections,stix:leereSC,stix:brauer-manin_for_sections}.

In this paper, we are interested in a variant of the Section Conjecture of local-to-global nature.

\begin{defn}\label{def:selmer_sc}
	A section~$s$ of~\eqref{eq:fes} is \emph{Selmer} just when for every place~$v$ of~$K$, the restriction $s|_{G_v}$ to a decomposition group at~$v$ is the section of the local fundamental exact sequence
	\begin{equation} \label{eq:local_fes}
		1 \to \pi^{\et}_1(X_{\Kbar_v}) \to \pi^{\et}_1(X_{K_v}) \to G_v \to 1
	\end{equation}
	coming from a $K_v$-rational point~$x_v \in X(K_v)$. The set of $\pi^{\et}_1(X_{\Kbar})$-conjugacy classes of Selmer sections is denoted by $\Sec(X/K)^{\Sel}$.
\end{defn}

Sections which are induced globally by a $K$-rational point are clearly Selmer, so we have embeddings
\begin{equation} \label{eq:section_conjecture_inclusions}
	X(K) \subseteq \Sec(X/K)^{\Sel} \subseteq \Sec(X/K).
\end{equation}
It is natural to divide Grothendieck's Section Conjecture into two subproblems, stating that each of the two inclusions in~\eqref{eq:section_conjecture_inclusions} is a bijection:
\begin{enumerate}
	\item \label{item:sc_part_1}
	Show that every Selmer section is induced by a $K$-rational point.
	\item \label{item:sc_part_2}
	Show that every section is Selmer.
\end{enumerate}

The second problem amounts to the $p$-adic Section Conjecture, i.e.\ the analogue of the Section Conjecture over finite extensions of~$\bQ_p$. Just like for number fields, the $p$-adic Section Conjecture is still open, although there are some partial results, most notably the proof of the Birational $p$-adic Section Conjecture \cite{koenigsmann:SC}. The latter implies that every birationally liftable section of $\pi_1^{\et}(X) \to G_K$ (i.e., lifting to a section of $G_{K(X)} \to G_K$, where $K(X)$ is the function field of~$X$) is Selmer \cite[Proposition~1]{stix:birationalSC}.

This paper is concerned with the first problem, which we call the Selmer Section Conjecture.\footnote{As remarked above, this variant of the Section Conjecture is natural, both in being a possible route towards addressing the Section Conjecture itself, and also in its connection to the finite descent obstruction, as we explain in \S\ref{ss:finite_descent}. Although this conjecture does not appear to be written down explicitly in the literature, a number of mathematicians have considered some version of it, including Mohamed Sa\"idi, who introduced the conjecture to the first author. Minhyong Kim informs us that Florian Pop and Akio Tamagawa were also interested in this question.}

\begin{conj}[Selmer Section Conjecture, cf.\ {\cite{betts-stix}}]\label{conj:selmer_sc}
	Let~$X/K$ be a smooth projective curve of genus~$\geq 2$ over a number field~$K$. Then
	\[
	X(K) = \Sec(X/K)^{\Sel} \,.
	\]
\end{conj}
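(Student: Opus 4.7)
My plan is to prove Conjecture~\ref{conj:selmer_sc} by combining the Selmer hypothesis with Kim's non-abelian Chabauty machinery, conditional on Kim's Conjecture at an auxiliary prime~$p$. Since the injectivity $X(K) \hookrightarrow \Sec(X/K)^{\Sel}$ is already known, it suffices to exhibit, for an arbitrary Selmer section~$s$, a rational point $x \in X(K)$ with $s \sim s_x$.

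The first step is to fix a prime~$p$ of good reduction and a finite set $T$ of places of $K$ containing~$p$, the archimedean places, and the primes of bad reduction. Composing $s$ with the quotient $\pi_1^{\et}(X_{\Kbar}) \twoheadrightarrow U_n$, where $U_n$ denotes the depth-$n$ quotient of the $\bQ_p$-pro-unipotent completion of $\pi_1^{\et}(X_{\Kbar})$, produces a cohomology class $\kappa_n(s) \in H^1(G_{K,T},U_n)$. The Selmer hypothesis says that for each place~$v$ the restriction $s|_{G_v}$ is induced by a local point $x_v \in X(K_v)$, so $\kappa_n(s)|_{G_v}$ lies in the Bloch--Kato local condition $H^1_f(G_v,U_n)$ for every $v \in T$. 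Hence $\kappa_n(s) \in H^1_f(G_{K,T},U_n)$, and its image under $\loc_p$ coincides with the unipotent Kummer image of~$x_p$. Consequently $x_p$ lies in the Chabauty--Kim set $X(K_p)^{(n)}_{\Kim}$, and Kim's Conjecture at depth~$n$ and prime~$p$ asserts that this set equals $X(K)$, yielding $x_p \in X(K)$.

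The final and most delicate step is to promote this to an identification of sections $s \sim s_{x_p}$. By construction, $s$ and $s_{x_p}$ have equal restriction to~$G_p$, but \emph{a priori} this is only enough to match them up to the depth-$n$ pro-unipotent completion at~$p$, not on the full profinite fundamental group. I expect to handle this by running the construction above at every good prime~$p$ for which Kim's Conjecture holds: for each such~$p$, we extract a rational point $x_p$ whose associated section agrees with~$s$ on~$G_p$, and the known injectivity of $X(K) \hookrightarrow \Sec(X/K)$ together with some compatibility between the various $x_p$ should force them all to coincide with a single $x \in X(K)$. The remaining verification that $s \sim s_x$ as global sections will rely on a local-to-global rigidity statement to the effect that two sections of~\eqref{eq:fes} which are conjugate on decomposition groups at a density-one set of primes are themselves conjugate; this is where the hypothesis of Kim's Conjecture \emph{for almost all~$p$} enters, and it is the genuinely hard part of the argument, since it requires assembling the pro-unipotent data at many primes into a statement about the full profinite fundamental group.
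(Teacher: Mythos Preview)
First, note that the statement you are addressing is Conjecture~\ref{conj:selmer_sc}, which the paper does \emph{not} prove unconditionally. What the paper establishes is the conditional Theorem~\ref{thm:main_conjecture_implication}: if the refined Kim's Conjecture holds for a density-$1$ set of primes, then the $S$-Selmer Section Conjecture holds. Your proposal is likewise conditional on Kim's Conjecture, so the right comparison is with the proof of Theorem~\ref{thm:main_conjecture_implication}.

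Your first two steps---extracting from a Selmer section~$s$ a class in the Selmer scheme, and using Kim's Conjecture at~$p$ to conclude that the local point~$x_p$ is in fact $K$-rational---agree in spirit with the paper's Lemma~\ref{lem:project_into_kim} and Corollary~\ref{cor:conjecture_implication_2}. The paper routes this through the finite descent locus via the Harari--Stix theorem (Theorem~\ref{thm:image_of_localisation}) rather than directly through the non-abelian cohomology class of~$s$, but the content is similar.

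The genuine gap is in your final step. You propose to conclude $s \sim s_x$ from a ``local-to-global rigidity statement to the effect that two sections which are conjugate on decomposition groups at a density-one set of primes are themselves conjugate.'' No such statement is known for sections of the full profinite fundamental exact sequence; what is available (Sa\"idi's result, used in Lemma~\ref{lem:multiplicity_one}) is only the \emph{abelian} analogue, and promoting that to the full section set is precisely the difficulty. The paper circumvents this entirely: having shown via the theorem of the diagonal (Theorem~\ref{thm:diagonal_theorem}) that the adelic point~$\loc(s)$ is a single rational point $x\in X(K)$, it invokes Lemma~\ref{lem:rational_localisation}, whose proof uses Stix's topological criterion on the section space together with a lifting argument through finite \'etale covers (Lemma~\ref{lem:lifting_selmer_sections}), Faltings' Theorem, and a second application of the theorem of the diagonal. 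This route never compares two global sections locally; it instead shows that every neighbourhood cover of~$s$ has a rational point. Your outline would need a substitute for this argument, and the rigidity statement you invoke is not one you can take for granted.
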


In this paper, we propose a new strategy for proving instances of the Selmer Section Conjecture. The viewpoint we wish to promote, building on \cite{betts-stix}, is that $p$-adic methods for studying rational and $S$-integral points on curves can also be used to study Selmer sections. In \cite{betts-stix}, the first author and Jakob Stix applied the Lawrence--Venkatesh method to prove finiteness results for the Selmer section set of a general~$X$. In this paper, we instead demonstrate that one can use the Chabauty--Kim method to \emph{explicitly compute} the Selmer section set for particular curves.

To fix notation, suppose now that~$K=\bQ$ and~$X/\bQ$ is a smooth projective curve of genus~$\geq 2$. Suppose moreover that~$X$ has at least one rational point (for use as a basepoint for fundamental groups). The Chabauty--Kim method as described in \cite{BDCKW} defines for any prime~$p$ a \emph{Chabauty--Kim locus}
\[
X(\bQ_p)_{\infty} \subseteq X(\bQ_p)
\]
containing the rational points~$X(\bQ)$.\footnote{The Chabauty--Kim locus $X(\bQ_p)_{\infty}$ is defined in \cite{BDCKW} under the assumption that $p$ is a prime of good reduction. When we recall the definition in Definition~\ref{def: refined Selmer scheme}, we will take care to formulate the definition in a way which does not require good reduction, though the general theory is not yet strong enough to study the Chabauty--Kim locus in the bad reduction case.} The Chabauty--Kim locus is by definition the common vanishing locus of some Coleman analytic functions on~$X(\bQ_p)$; in many cases, one can compute several of these Coleman functions explicitly, and this sometimes enables one to compute the set of rational points exactly. In fact, it is conjectured that in general these Coleman functions should always cut out exactly the set of rational points.

\begin{conj}[Kim's Conjecture, {\cite[Conjecture~3.1]{BDCKW}}]\label{conj:kim_projective}
	Let~$X/\bQ$ be a smooth projective curve of genus~$\geq 2$ with $X(\bQ) \neq \emptyset$. Let~$p$ be a prime. Then
	\[
	X(\bQ) = X(\bQ_p)_{\infty} \,.
	\]
\end{conj}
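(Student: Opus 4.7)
The plan is to attack this conjecture via the standard dimension-counting philosophy of Chabauty--Kim, making crucial use of expected vanishing theorems in Galois cohomology. First I would replace the locus $X(\bQ_p)_{\infty}$ by its finite-depth approximations $X(\bQ_p)_n$, which are cut out by the localisation map
\[
\loc_p \colon \Sel_{S,n}(X) \to X(\bQ_p)_n^{\dR}
\]
from the depth-$n$ Selmer scheme to the de Rham period space, via the commutative diagram relating étale and de Rham unipotent Albanese maps. Each $X(\bQ_p)_n$ contains $X(\bQ)$ and forms a decreasing chain, so it suffices to show that for every $y \in X(\bQ_p) \setminus X(\bQ)$ there exists a depth $n$ at which $y$ is excluded by some Coleman function arising from $\loc_p$.

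Next I would carry out the dimension count that motivates the conjecture in the first place. The de Rham target $X(\bQ_p)_n^{\dR}$ has dimension growing like a linear function of the weight-graded pieces of the unipotent fundamental group, while the Selmer scheme $\Sel_{S,n}(X)$ is controlled by Galois cohomology groups $\rH^1_f(G_{K,S}, \gr^W \Lie(\pi_1^{\et,(n)}))$. Jannsen-type vanishing for $\rH^2$ and the Bloch--Kato conjecture predict that these $\rH^1_f$ have dimensions equal to the appropriate orders of vanishing of $L$-functions at integer points, and a direct comparison shows these grow strictly slower than the de Rham dimensions as $n \to \infty$. Under such bounds, the image of $\loc_p$ would be a proper closed subscheme whose codimension tends to infinity, so its $\bQ_p$-points would be contained in a Zariski closed subset of $X(\bQ_p)$, which by the identity theorem for Coleman functions could only be a finite set; a separate archimedean or density argument would then be needed to identify this finite set with $X(\bQ)$.

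The main obstacle, and the reason the conjecture remains open in full generality, is precisely the input one needs from transcendence/motivic theory: one must rule out that $\loc_p$ has unexpected algebraic relations among its components, which amounts to proving injectivity and surjectivity statements of Bloch--Kato and Fontaine--Mazur type for the mixed Tate (or more general mixed motivic) pieces appearing in the unipotent fundamental group. Even in the mixed Tate setting, where the relevant Galois cohomology is well understood over $\bZ[1/S]$, one still needs the non-vanishing of certain $p$-adic regulators (a transcendence statement about $p$-adic polylogarithms or iterated Coleman integrals) in order to convert the dimension estimate into an actual finiteness statement for $X(\bQ_p)_{\infty}$. Accordingly, I would expect the realistic route to Kim's Conjecture in a given case to be computational, verifying at some finite depth $n$ that the explicit Coleman functions produced by $\loc_p$ already isolate $X(\bQ)$ inside $X(\bQ_p)$, rather than a single uniform proof; the unconditional conjecture itself seems to require essentially a full solution to the relevant motivic conjectures.
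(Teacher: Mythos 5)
The statement you were asked to prove is Kim's Conjecture (Conjecture~\ref{conj:kim_projective}), which is an \emph{open conjecture}: the paper does not prove it and does not claim to. It is quoted from \cite{BDCKW} and used only as a \emph{hypothesis} in Theorem~\ref{thm:main_conjecture_implication_projective} (Kim's Conjecture for a density-one set of primes implies the Selmer Section Conjecture). What the paper actually proves are special cases of Kim-type statements: the refined conjecture for $\cY=\bP^1_{\bZ[1/2]}\smallsetminus\{0,1,\infty\}$ and all odd $p$ (Theorem~\ref{thm:main_kim}), the unrefined conjecture for $S=\emptyset$ (Theorem~\ref{thm:main_unrefined}), and a weaker containment $X(\bQ_p)_\infty\subseteq Z(\bQ_p)$ for projective curves whose Jacobian has a rank-zero factor (Theorem~\ref{thm:main_chabauty}). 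So there is no ``paper proof'' to compare against, and your proposal cannot be judged correct as a proof of the statement.

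On its own terms, your proposal is an accurate account of the standard heuristic, but it has a genuine gap which you yourself flag: the dimension count (even granting Jannsen/Bloch--Kato-type bounds on the Selmer scheme) at best shows, conditionally, that $X(\bQ_p)_n$ is cut out by a nonzero Coleman function for $n\gg0$ and hence is finite; it gives no mechanism for identifying that finite set with $X(\bQ)$. The phrase ``a separate archimedean or density argument would then be needed'' is precisely where the content of the conjecture lives --- no such argument is known, and ruling out ``extra'' zeroes of the Coleman functions is a transcendence-type problem that dimension counting cannot touch. It is worth contrasting this with how the paper handles the one family it can treat: instead of counting dimensions, it computes the (refined) Selmer scheme and the localisation map \emph{explicitly} in the polylogarithmic quotient, obtaining the equations $\log(z)=0$ and $\Li_k(z)=0$ for even $k$, and then kills the potential extra zeroes by an elementary congruence for the finite polylogarithm, $\li_{p-3}(z)=z(z+1)(z-1)^{p-3}$ in $\bF_p[z]$, which forces $z=-1$ (Proposition~\ref{thm: Li vanishes only on -1}); the $S_3$-action then recovers the full locus. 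This is exactly the ``computational, finite depth'' route you point to at the end, but note that even there the depth needed ($n=p-3$) varies with $p$, and no uniform-in-$p$ argument of the kind your sketch envisions is currently available.
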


The main theoretical result we will prove in this paper makes precise the relationship between Kim's Conjecture and the Selmer Section Conjecture.

\begin{thm}[= \Cref{thm:main_conjecture_implication}, projective case]\label{thm:main_conjecture_implication_projective}
	Let~$X/\bQ$ be a smooth projective curve of genus~$\geq 2$ with $X(\bQ) \neq \emptyset$. Suppose that Kim's Conjecture~\ref{conj:kim_projective} holds for $(X,p)$ for all primes~$p$ in a set of primes of Dirichlet density~$1$. Then the Selmer Section Conjecture~\ref{conj:s-selmer_sc} holds for~$X$.
\end{thm}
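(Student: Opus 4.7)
The plan is to attach to any Selmer section $s \in \Sec(X/\bQ)^{\Sel}$ a rational point at each prime $p$ in the density-one set, use the finiteness of $X(\bQ)$ to extract a single candidate point $x$, and finally to argue that $s$ is in fact conjugate to the section $s_x$ induced by $x$.

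\emph{Step 1.} For each prime $p$, the Selmer condition yields a point $x_p \in X(\bQ_p)$ whose associated local section is $\pi^{\et}_1(X_{\Kbar_p})$-conjugate to $s|_{G_{\bQ_p}}$. Mochizuki's injectivity of the section map $X(\bQ_p) \hookrightarrow \Sec(X_{\bQ_p}/\bQ_p)$ for $p$-adic projective hyperbolic curves ensures that this $x_p$ is uniquely determined by $s$. Unpacking the definition of the Chabauty--Kim locus shows that $x_p$ automatically lies in $X(\bQ_p)_{\infty}$: the global Selmer class of $s$ in the $\bQ_p$-unipotent Selmer scheme localises at $p$ to the unipotent Kummer class of $x_p$, which is precisely the condition defining membership in $X(\bQ_p)_{\infty}$. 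Under the hypothesis that Kim's Conjecture holds at $p$, we conclude $x_p \in X(\bQ)$.

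\emph{Step 2.} Faltings' theorem makes $X(\bQ)$ finite, so pigeonhole produces a single point $x \in X(\bQ)$ and an infinite, in fact positive-density, set of primes $T$ for which $x_p = x$. At every $p \in T$, the sections $s$ and $s_x$ then have $\pi^{\et}_1(X_{\Kbar_p})$-conjugate restrictions to $G_{\bQ_p}$.

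\emph{Step 3.} The final step, which I expect to be the main obstacle, is to upgrade this local agreement to a global equality $s = s_x$ in $\Sec(X/\bQ)$. A naive Chebotarev argument applied to the difference cocycle $s \cdot s_x^{-1}$ at characteristic finite quotients of $\pi^{\et}_1(X_{\Kbar})$ is inadequate, because a nonabelian $1$-cocycle trivial on every Frobenius of a density-one set need not be cohomologically trivial (the stabilisers of an orbit can cover a finite group without any point being globally fixed). The right approach should be to exploit the Chabauty--Kim framework more fully, by replaying Step~1 with the auxiliary prime $p$ allowed to vary over all rational primes: once $s$ and $s_x$ have matching $\bQ_\ell$-unipotent Selmer classes for every $\ell$, they should agree at every finite level of the fundamental group. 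Formally, what must be established is some injectivity of the localisation map $\Sec(X/\bQ) \to \prod_{p \in T} \Sec(X_{\bQ_p}/\bQ_p)$ along a density-one set $T$ when restricted to Selmer sections. This injectivity, and its reconciliation with the finite-level non-abelian Chebotarev obstruction noted above, is the main technical hurdle of the proof.
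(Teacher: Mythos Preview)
Your Step~1 is correct and matches the paper's Lemma~\ref{lem:project_into_kim}. The gap is in Steps~2 and~3. In Step~2, pigeonhole is too weak: it gives you a single $x\in X(\bQ)$ such that $x_p=x$ on a \emph{positive-density} set $T$, not a density-$1$ set, and there is no evident way to upgrade this. The paper instead proves a ``theorem of the diagonal'' (Theorem~\ref{thm:diagonal_theorem}): any point of the finite abelian descent locus $X(\bA_\bQ)_\bullet^{\fab}$ whose components at a density-$1$ set of places lie in a fixed finite subscheme $Z\subset X$ is in fact a single diagonal element of $Z(\bQ)$. Applied with $Z=X(\bQ)$, this shows that the entire adelic point $\loc(s)=(x_v)_v$ equals a single rational point $x$ at \emph{every} place, not just at positive density. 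The proof of this diagonal theorem uses Sa\"idi's injectivity result for abelian sections together with Stoll's argument for projective curves.

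Step~3 is a genuine gap, as you acknowledge. The injectivity of $\Sec(X/\bQ)^{\Sel}\to\prod_{p\in T}\Sec(X_{\bQ_p}/\bQ_p)$ is not known, and the paper explicitly avoids it. The paper's substitute is Lemma~\ref{lem:rational_localisation}: if $s$ is Selmer and $\loc(s)\in X(\bQ)$, then $s$ arises from the rational point $\loc(s)$. The proof is indirect: one uses Stix's topological criterion that $s\in X(\bQ)$ iff every neighbourhood $U_{X'}=\im(\Sec(X'/\bQ)\to\Sec(X/\bQ))$ containing $s$ has $X'(\bQ)\neq\emptyset$. Given such a cover $X'\to X$ with $s$ lifting to $s'\in\Sec(X'/\bQ)$, one shows $s'$ is again Selmer (Lemma~\ref{lem:lifting_selmer_sections}), so $\loc(s')$ lies in the finite fibre $X'_x$ over $x=\loc(s)$; a second application of the theorem of the diagonal then gives $\loc(s')\in X'_x(\bQ)$, hence $X'(\bQ)\neq\emptyset$. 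Faltings is used here only to know that $X'(\bQ)$ is closed in $\Sec(X'/\bQ)$. Your proposed route through ``matching $\bQ_\ell$-unipotent Selmer classes for every $\ell$'' does not obviously recover this: pro-unipotent classes see only the Mal\u cev completion and cannot by themselves distinguish sections differing by torsion phenomena in the profinite fundamental group.
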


This theorem thus gives a new strategy for proving instances of the Selmer Section Conjecture: if one can compute the Chabauty--Kim locus $X(\bQ_p)_{\infty}$ for a density~$1$ set of primes~$p$ and show that it is equal to the set of rational points, then one obtains for free that the Selmer Section Conjecture holds for~$X$. The difficulty here, of course, lies in carrying out Chabauty--Kim computations for infinitely many different primes~$p$ at once.

We will demonstrate the viability of this approach by working out one example of a hyperbolic curve where we can indeed verify Kim's Conjecture for infinitely many primes at once. The specific example we will study is that of the thrice-punctured line over~$\bZ[1/2]$, which is an \emph{affine} hyperbolic curve. Therefore, we formulate a more general version of our main theorem which holds for $S$-integral points on arbitrary hyperbolic curves, projective or not.

\begin{defn}\label{def:s-selmer}
Let~$Y$ be a hyperbolic curve over a number field~$K$, let~$S$ be a finite set of places of~$K$ containing all archimedean places, and let~$\cY/\cO_{K,S}$ be a regular model of~$Y$, meaning that~$\cY$ is the complement of a horizontal divisor in a flat proper regular $\cO_{K,S}$-scheme and comes with an isomorphism $\cY_K\cong Y$ of its generic fibre with~$Y$. We say that a section~$s$ of the global fundamental exact sequence~\eqref{eq:fes} (with~$X$ replaced with~$Y$) is \emph{$S$-Selmer} just when the restriction $s|_{G_v}$ to a decomposition group at a place~$v$ is the section of the local fundamental exact sequence~\eqref{eq:local_fes}
coming from a
\[
\begin{cases}
	\text{$K_v$-rational point $y_v \in Y(K_v)$} & \text{for all $v \in S$,} \\
	\text{$\cO_v$-integral point $y_v \in \cY(\cO_v)$} & \text{for all $v \not\in S$.}
\end{cases}
\]
Here~$\cO_v$ is the ring of integers of~$K_v$. The set of $\pi^{\et}_1(Y_{\Kbar})$-conjugacy classes of $S$-Selmer sections is denoted by $\Sec(\cY/\cO_{K,S})^{\lgeom}$.
\end{defn}

The map sending an $S$-integral point to its corresponding section of~\eqref{eq:fes} embeds $\cY(\cO_{K,S})$ as a subset of $\Sec(\cY/\cO_{K,S})^{\lgeom}$, and the Section Conjecture would imply that this inclusion is an equality. So the Selmer Section Conjecture has the following natural analogue for~$S$-integral points.

\begin{conj}[$S$-Selmer Section Conjecture]\label{conj:s-selmer_sc}
Let~$Y/K$ be a hyperbolic curve over a number field~$K$ and let~$\cY/\cO_{K,S}$ be a regular $S$-integral model of~$Y$. Then
\[
\cY(\cO_{K,S}) = \Sec(\cY/\cO_{K,S})^{\lgeom} \,.
\]
\end{conj}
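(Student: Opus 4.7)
Since \Cref{conj:s-selmer_sc} is open in full generality, the only viable strategy is to impose additional hypotheses; in the spirit of \Cref{thm:main_conjecture_implication_projective}, my plan is to reduce the conjecture to an $S$-integral analogue of Kim's Conjecture, assumed to hold at a density-$1$ set of primes of~$K$. The bridge between the two is a non-abelian Selmer scheme tailored to the $S$-integral setting.

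First, I would set up for each depth $n \geq 1$ an $S$-integral refined Selmer scheme $\Sel_n(\cY/\cO_{K,S})$, parametrising non-abelian $\rH^1$-classes in the depth-$n$ unipotent quotient of $\pi^{\et}_1(Y_{\Kbar})$ whose localisation at every place $v$ of $K$ lies in the unipotent Kummer image of $\cY(\cO_v)$, or of $Y(K_v)$ when $v \in S$, together with appropriate local conditions at non-archimedean primes of bad reduction outside~$S$. For a prime $\fp \notin S$ of good reduction, localisation at $\fp$ followed by the unipotent Bloch--Kato logarithm identifies the image of $\Sel_n(\cY/\cO_{K,S})$ with (the linearisation of) the Chabauty--Kim locus $\cY(\cO_\fp)_\infty \subseteq \cY(\cO_\fp)$. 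Second, I would show that an $S$-Selmer section $s$ naturally defines a compatible family of classes $[s]_n \in \Sel_n(\cY/\cO_{K,S})$ across all $n$: the $S$-Selmer hypothesis guarantees that each local restriction $s|_{G_v}$ is equivalent to the Kummer class of some local point $y_v$, which by construction satisfies the prescribed local condition. In particular the $\fp$-adic point $y_\fp$ attached to $s|_{G_\fp}$ must lie in $\cY(\cO_\fp)_\infty$; granting Kim's Conjecture at $\fp$ then forces $y_\fp \in \cY(\cO_{K,S})$. Running this argument over a density-$1$ set of~$\fp$ and invoking a rigidity result — that an $S$-Selmer section is determined by sufficiently many of its localisations — produces a unique $y \in \cY(\cO_{K,S})$ inducing~$s$.

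The main obstacle lies in the second stage: upgrading the point-theoretic $S$-Selmer condition on $s$ to a non-abelian cohomological condition at every depth $n$ and every place of $K$, including archimedean places and non-archimedean primes of bad reduction outside $S$. This requires careful handling of (tangential) basepoints on the affine curve $Y$, a functorial identification of local Kummer maps with the restriction of global Galois cocycles, and the choice of local condition at bad primes that both contains the integral Kummer image and is visibly preserved by the section-to-cocycle construction. The density-$1$ hypothesis, meanwhile, demands a uniform treatment of infinitely many $\fp$ together with a rigidity statement reconciling the $S$-integral points produced at different primes — essentially the technical heart of the desired generalisation of \Cref{thm:main_conjecture_implication_projective} to the $S$-integral setting, and the step where one expects the real work to be concentrated.
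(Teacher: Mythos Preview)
Your overall plan matches the paper's strategy for Theorem~\ref{thm:main_conjecture_implication}: assume the refined Kim's Conjecture at a density-$1$ set of primes, push an $S$-Selmer section into the Selmer scheme, and use the local Chabauty--Kim constraints to pin down an $S$-integral point. The first two stages you outline are essentially Lemma~\ref{lem:project_into_kim} in the paper and are, as you suspect, routine once the definitions are in place; the worry you flag about basepoints and bad primes is not where the difficulty lies.

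The genuine gap is your final ``rigidity result --- that an $S$-Selmer section is determined by sufficiently many of its localisations''. This is \emph{not known}: the paper explicitly remarks (just before Lemma~\ref{lem:rational_localisation}) that the localisation map $\Sec(Y/K)^{\lgeom}\to Y(\bA_K)_\bullet$ is not known to be injective, so you cannot simply read off $s=\kappa(y)$ from agreement of local components. The paper circumvents this in two steps you are missing. First, it passes through the finite descent obstruction: by Harari--Stix the image of localisation is exactly $\cY(\bA_{K,S})_\bullet^{\fcov}$, and the \emph{theorem of the diagonal} (Theorem~\ref{thm:diagonal_theorem}) shows that an adelic point in this locus whose components at a density-$1$ set of places lie in a fixed finite subscheme is already $K$-rational and diagonal. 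This is what reconciles the various $y_\fp$. Second, to go from $\loc(s)\in Y(K)$ to $s=\kappa(y)$, the paper uses a non-obvious argument (Lemma~\ref{lem:rational_localisation}) that lifts Selmer sections along finite \'etale covers, invokes Faltings' Theorem to get finiteness of rational points on covers, and then applies the covering criterion of \cite[Lemma~53]{stix:evidence_SC} together with a second application of the theorem of the diagonal. Your sketch does not contain either of these ingredients, and without them the implication from ``$y_\fp\in\cY(\cO_{K,S})$ for many $\fp$'' to ``$s$ comes from an $S$-integral point'' does not go through.
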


The Chabauty--Kim method also can be applied in the setting of affine hyperbolic curves. Suppose that~$K=\bQ$ and~$\cY/\bZ_S$ is a regular $S$-integral model of a hyperbolic curve~$Y$, where~$S$ is a finite set of primes. Suppose moreover that the smooth compactification of~$Y$ has a rational point (so~$Y$ has either a rational point or a rational tangential point for use as a basepoint for fundamental groups). The Chabauty--Kim method, specifically the refined Chabauty--Kim method described in \cite{BD:refined}, defines for any prime~$p\notin S$ a \emph{refined Chabauty--Kim locus}
\[
\cY(\bZ_p)_{S,\infty}^{\min} \subseteq \cY(\bZ_p)
\]
containing the $S$-integral points~$\cY(\bZ_S)$ and being defined as the common vanishing locus of some Coleman analytic functions on~$\cY(\bZ_p)$. The refined version of Kim's Conjecture predicts that this vanishing locus is precisely the set of $S$-integral points.

\begin{conj}[Refined Kim's Conjecture]\label{conj:kim}
	Let~$Y/\bQ$ be a hyperbolic curve and let~$\cY/\bZ_S$ be a regular $S$-integral model of~$Y$ for a finite set of primes~$S$. Suppose that the smooth compactification of~$Y$ has a rational point. Let~$p\notin S$ be prime. Then
	\[
	\cY(\bZ_S) = \cY(\bZ_p)_{S,\infty}^{\min} \,.
	\]
\end{conj}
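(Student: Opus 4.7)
The Refined Kim's Conjecture is stated as a conjecture precisely because no general proof is known; my plan is therefore to establish it in the specific case targeted by the paper, namely $Y = \PP^1 \setminus \{0,1,\infty\}$ with integral model $\cY/\bZ[1/2]$, uniformly over all primes $p \neq 2$. Here $\cY(\bZ[1/2]) = \{-1,\,2,\,1/2\}$ forms a single $S_3$-orbit under the involutions $y \mapsto 1-y$ and $y \mapsto 1/y$, so the task is to rule out any further points in the refined Chabauty--Kim locus $\cY(\bZ_p)^{\min}_{S,\infty}$.

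First I would make the depth-$n$ unipotent Selmer scheme explicit in this mixed Tate setting. By Deligne's work, $\pi_1^{\et}(Y_{\Qbar})$ is pro-unipotent free on two generators (loops around $0$ and $1$), and the relevant Galois representations factor through the motivic Galois group of $\MT(\bZ[1/2])$, which has only one non-trivial weight-$1$ generator (the class of~$\log 2$). This allows the global Selmer scheme to be computed in closed form at each depth. The localisation map to the $p$-adic side is an explicit morphism of affine spaces whose coordinates are the $p$-adic multiple polylogarithms $\Li_w^{(p)}(y)$, and the refined variant additionally imposes that local Selmer classes at $2$ come from a $\bZ_2$-integral point, substantially cutting the locus.

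Next I would exhibit, at some bounded depth (likely $2$ or $4$), a non-trivial Coleman function $F_p$ in the defining ideal of the locus whose zero set on $\cY(\bZ_p)$ is exactly $\{-1,\,2,\,1/2\}$. The $S_3$-symmetry suggests taking $F_p$ symmetric under the action so that its zero locus automatically respects the orbit structure; natural low-depth candidates are polynomials in $\log^{(p)}(y)$, $\log^{(p)}(1-y)$, $\Li_2^{(p)}(y)$ and their $S_3$-transforms. The hard part, and the main obstacle, is to prove non-vanishing of $F_p$ outside the three integral points \emph{uniformly} in~$p$: the coefficients of $\Li_k^{(p)}$ depend on~$p$ through the crystalline Frobenius structure, so a prime-by-prime analysis cannot suffice. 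A plausible route is to factor $F_p$ in the form $(y+1)(y-2)(2y-1)\cdot g_p(y)$ with $g_p$ an overconvergent function on $\cY_{\bZ_p}$, and then to show $g_p$ is a unit by a Newton-polygon or mod-$p$ reduction argument whose output does not depend on the specific prime. Securing this uniformity is where the real technical work will lie.
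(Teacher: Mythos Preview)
Your general framework is right --- the polylogarithmic quotient, the mixed Tate structure over $\bZ[1/2]$, the $S_3$-symmetry, and a mod-$p$ reduction argument are all ingredients the paper uses --- but two of your concrete expectations diverge from what actually works, and one of them is a genuine obstacle.

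First, the paper does \emph{not} look for a single $S_3$-symmetric Coleman function vanishing on $\{-1,2,1/2\}$. Instead, it decomposes the refined Selmer scheme as a union over refinement conditions $\Sigma\in\{0,1,\infty\}$ (one per cusp, coming from the local condition at $\ell=2$), and shows that for the condition $\Sigma=(1)$ the locus is cut out by $\log(z)=0$ together with $\Li_k(z)=0$ for all even $k\ge 2$. One then proves this locus equals $\{-1\}$, and the $S_3$-action transports this to the other two refinement conditions, giving $\{2\}$ and $\{1/2\}$. This is cleaner than symmetrising: the refinement decomposition breaks the symmetry for you, so the functions you need are the simplest possible ones.

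Second, and this is the real gap: your expectation of \emph{bounded} depth (``likely $2$ or $4$'') is exactly what the paper cannot establish. Uniformity in~$p$ at fixed depth is conjectured (the paper's authors believe depth~$2$ suffices and have checked this for $p<10^5$) but remains open. What the paper does instead is go to depth $p-3$, which depends on~$p$. The key computation is that the modified polylogarithm $\Li_{p-3}^{(p)}(z)$ reduces mod~$p$ to $\tfrac{1}{1-z^p}\li_{p-3}(z)$ where the finite polylogarithm admits the closed form
\[
\li_{p-3}(z)=\sum_{k=1}^{p-1}k^2 z^k = z(z+1)(z-1)^{p-3}
\]
over~$\bF_p$. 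Since $\log(z)=0$ forces~$z$ to be a $(p-1)$st root of unity, one reduces to the mod-$p$ zeros of $\li_{p-3}$ in $\bF_p\smallsetminus\{0,1\}$, and the factorisation shows the only one is $-1$. This is precisely the uniform mod-$p$ mechanism you were looking for, but it lives at depth $p-3$, not at depth~$2$ or~$4$. Your proposed factorisation $(y+1)(y-2)(2y-1)\cdot g_p(y)$ with $g_p$ a unit does not have an obvious realisation, and the paper does not pursue anything of that shape.
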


We can now state the general version of our main theorem in the setting of possibly affine hyperbolic curves.

\begin{thmABC}\label{thm:main_conjecture_implication}
	Let~$\cY/\bZ_S$ be a regular $S$-integral model of a hyperbolic curve~$Y/\bQ$ whose smooth compactification has a rational point. Suppose that the refined Kim's Conjecture~\ref{conj:kim} holds for $(\cY,S,p)$ for all primes~$p$ in a set of primes of Dirichlet density~$1$. Then the $S$-Selmer Section Conjecture~\ref{conj:s-selmer_sc} holds for~$(\cY,S)$.
\end{thmABC}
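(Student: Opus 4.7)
Let $s \in \Sec(\cY/\bZ_S)^{\lgeom}$ be a Selmer section. For each prime $p \notin S$, the Selmer condition supplies a local integral point $y_p \in \cY(\bZ_p)$ whose associated section of the local fundamental exact sequence coincides, up to conjugation, with $s|_{G_p}$. The first key step is to show that $y_p$ lies in the refined Chabauty--Kim locus $\cY(\bZ_p)_{S,\infty}^{\min}$. To this end, observe that $s$ itself defines a class in the global non-abelian cohomology of the relevant unipotent completion of $\pi^{\et}_1(Y_{\Qbar})$; the Selmer condition ensures that this class satisfies the local conditions defining the refined Selmer scheme, because at each place $v$ the restriction $s|_{G_v}$ comes from a local point $y_v$, whose unipotent image lies automatically in the image of the local unipotent Kummer map. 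The localization at $p$ of the class of $s$ then agrees, by construction, with the unipotent Kummer image of $y_p$, placing $y_p$ in $\cY(\bZ_p)_{S,\infty}^{\min}$. A technical compatibility check is required here to match the local conditions at bad primes $v \in S$ imposed by the Selmer condition on $s$ with those imposed by the refined Chabauty--Kim setup.

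Invoking the refined Kim's Conjecture for each $p$ in the given density-one set $P$, we upgrade $y_p$ from a point in $\cY(\bZ_p)$ to a point in $\cY(\bZ_S)$. The set $\cY(\bZ_S)$ is finite (by Faltings, or intrinsically from the finiteness of the refined Chabauty--Kim locus under the hypotheses of Kim's conjecture), so by pigeonhole there is a single $y \in \cY(\bZ_S)$ with $y_p = y$ for a set of primes $p \in P$ of positive Dirichlet density. For these primes $p$, the section associated to $y$ and the section $s$ agree on $G_p$ up to conjugation.

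To promote this to a global equality of sections, I would need a rigidity statement: two sections of~\eqref{eq:fes} agreeing on decomposition groups at a positive-density set of primes are conjugate. The plan is to prove this by induction along the lower central series of $\pi^{\et}_1(Y_{\Qbar})$. On each nilpotent graded piece, and for each auxiliary prime $\ell$, the assertion reduces to the vanishing of a continuous $\ell$-adic Galois cohomology class whose restrictions to decomposition groups at a positive-density set of primes are trivial, which follows from the Chebotarev density theorem applied to the associated Galois representation. This rigidity step, in particular controlling the non-abelian conjugation ambiguity as one climbs the nilpotent filtration, is expected to be the main technical obstacle of the proof; the rest of the argument is, by comparison, a rather direct combination of the definitions with the hypothesis that Kim's conjecture holds for enough primes.
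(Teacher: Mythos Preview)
Your first two steps are essentially correct and match the paper's Lemma~\ref{lem:project_into_kim}: a Selmer section~$s$ does give rise to a global cohomology class satisfying the refined Selmer conditions, and so each local point~$y_p$ lands in the refined Chabauty--Kim locus.

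The genuine gap is in the rigidity step. Your pigeonhole argument only produces a set of \emph{positive} density where $y_p=y$, and the claim that a Galois cohomology class vanishing at a positive-density set of decomposition groups must vanish globally is simply false: already for~$V=\mu_2$, the class of~$2$ in $\rH^1(G_\bQ,\mu_2)=\bQ^\times/(\bQ^\times)^2$ vanishes at half of all primes but is globally nontrivial. So the Chebotarev justification you sketch does not work even at the abelian step. Worse, even if you had agreement at \emph{all} places, the conclusion that~$s$ is conjugate to~$\kappa(y)$ would amount to injectivity of the localisation map $\Sec(Y/\bQ)^{\lgeom}\to Y(\bA_\bQ)_\bullet$, which the paper explicitly flags as not known (see the discussion preceding Lemma~\ref{lem:rational_localisation}). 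The inductive scheme you propose along the lower central series would, at each graded piece, require vanishing of the appropriate Tate--Shafarevich-type kernel, and there is no reason to expect this in general.

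The paper avoids this obstacle entirely. Rather than attempting rigidity for sections, it passes through the finite descent locus: by Harari--Stix (Theorem~\ref{thm:image_of_localisation}) the adelic point $(y_v)_v$ attached to~$s$ lies in $\cY(\bA_{\bQ,S})_\bullet^{\fcov}$, and the \emph{theorem of the diagonal} (Theorem~\ref{thm:diagonal_theorem}) then forces $(y_v)_v$ itself to be an $S$-integral point~$y$, using the full density-$1$ hypothesis rather than pigeonhole. Finally, knowing that $\loc(s)=y\in Y(\bQ)$, the paper proves $s=\kappa(y)$ not by cohomological rigidity but by Stix's topological criterion on the section space combined with a lifting lemma for Selmer sections along finite \'etale covers (Lemmas~\ref{lem:lifting_selmer_sections} and~\ref{lem:rational_localisation}). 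This indirect argument is precisely what replaces the rigidity you are missing.
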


Note that if the hyperbolic curve in question happens to be projective, everything in sight is independent of the set~$S$ and model~$\cY$ and this theorem specialises to the one stated for projective curves above. 

In order to show that \Cref{thm:main_conjecture_implication} provides a viable strategy for proving instances of the $S$-Selmer Section Conjecture, we verify the refined Kim's Conjecture in the example of the thrice-punctured line over~$\bZ[1/2]$ for all odd primes~$p$, which is the main computational result in this paper.

\begin{thmABC}\label{thm:main_kim}
	The refined Kim's Conjecture~\ref{conj:kim} holds for~$S=\{2\}$, $\cY=\bP^1_{\bZ[1/2]}\smallsetminus\{0,1,\infty\}$ and all odd primes~$p$.
\end{thmABC}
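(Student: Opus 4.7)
The plan is to verify the refined Kim's Conjecture at depth~$2$: identify an explicit Coleman function on $\cY(\bZ_p)$ cutting out $\cY(\bZ_p)_{\{2\},\infty}^{\min}$, and then show by direct analysis that for every odd prime $p$ its zero locus consists of exactly the three $S$-integral points $\cY(\bZ[1/2]) = \{-1,\tfrac{1}{2},2\}$.

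First I would fix a tangential basepoint on the thrice-punctured line and work with the depth-$2$ quotient $U_2$ of its unipotent fundamental group, which is the free nilpotent group of class~$2$ on two generators. The de Rham realisation $U_2^{\dR}/F^0$ is two-dimensional, and the unipotent Albanese map $\jj\colon \cY(\bZ_p)\to U_2^{\dR}/F^0$ is given in suitable coordinates by $\log_p(z)$ and the $p$-adic dilogarithm $\Li_2(z)$. Next I would compute the refined Selmer scheme $\Sel_{\{2\}}^{\min}(U_2)$ and the image of its localisation $\loc_p$ inside $U_2^{\dR}/F^0$. The refinement at the prime~$2$ is crucial: it imposes that the restriction at~$2$ of a Selmer class factors through the image of the local map $\cY(\bZ_2)\to H^1(G_{\bQ_2},U_2)$, which by Kummer theory and the explicit description of $2$-adic local points is a proper subvariety of the full local cohomology. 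This extra constraint cuts the Selmer scheme down by one dimension, so that the image of $\logBK\circ\loc_p$ inside $U_2^{\dR}/F^0\cong\bA^2$ becomes a proper subvariety; pulling back along~$\jj$ then yields a non-trivial Coleman function on $\cY(\bZ_p)$ whose zero locus contains $\cY(\bZ_p)_{\{2\},\infty}^{\min}$.

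The main obstacle is showing that this Coleman function cuts out exactly the three integral points, uniformly in~$p$. My approach would be to exploit the $S_3$-symmetry of the thrice-punctured line permuting $\{0,1,\infty\}$: since this action permutes $\{-1,\tfrac{1}{2},2\}$ transitively, one can arrange the Coleman function to be $S_3$-equivariant, which forces it into a rigid form with coefficients in $\bQ$ independent of~$p$. Then one performs a residue-disc-by-residue-disc Newton polygon analysis of its $p$-adic power series expansion, bounding the number of zeros on each disc in a way that depends only on the leading term and not on~$p$. The hardest step is this last one: one must ensure that the slope estimates are tight enough to rule out extraneous zeros on every residue disc (in particular those around $0,1,\infty$ and around the three integral points themselves) for every odd~$p$ simultaneously. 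A few small primes may need to be handled by a separate hand-computation, and some care is required at the three integral points where the Coleman function vanishes --- one must show this vanishing is simple to rule out nearby zeros via the Newton polygon.
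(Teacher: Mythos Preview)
Your proposal has a genuine gap: working at depth~$2$ is precisely what is \emph{not} known to suffice uniformly in~$p$. The paper remarks explicitly that depth~$2$ is only conjectured to be enough (verified computationally for~$3\le p\le 10^5$ in \cite{BBKLMQSX}), and the uniform-in-$p$ Newton polygon analysis you sketch for the depth-$2$ function would amount to proving this open conjecture. You give no mechanism for why the slope estimates should be independent of~$p$; in fact the coefficients of the relevant depth-$2$ Coleman function involve $p$-adic periods like $\log^p(2)$ and dilogarithm values whose $p$-adic valuations are not under control as~$p$ varies. (A minor point: $U_2^{\dR}/F^0$ is three-dimensional for the thrice-punctured line, with coordinates $\log$, $\Li_1$, $\Li_2$, not two-dimensional.)

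The paper's actual proof sidesteps this entirely by allowing the depth to depend on~$p$: it goes to depth $n=p-3$ for $p\ge 5$. Using the $S_3$-action and the decomposition of the refined Selmer scheme over refinement conditions $\Sigma\in\{0,1,\infty\}$, one reduces to showing that the locus $\cY(\bZ_p)^{(1)}_{\{2\},\PL,\infty}$ equals $\{-1\}$. On this piece the equations $\log(z)=0$ and $\Li_k(z)=0$ for all even $k\ge 2$ hold. The key computation (Proposition~\ref{thm: Li vanishes only on -1}) is that for $p\ge 5$ the only common zero of $\log$ and $\Li_{p-3}$ in $\cY(\bZ_p)$ is $z=-1$: since $\log(z)=0$ forces $z$ to be a $(p-1)$st root of unity, one has $z^p=z$, so $\Li_{p-3}(z)=0$ implies the modified polylogarithm $\Li^{(p)}_{p-3}(z)$ vanishes, hence the finite polylogarithm $\li_{p-3}(\bar z)=\sum_{k=1}^{p-1}k^2\bar z^k$ vanishes in $\bF_p$. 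But this polynomial factors explicitly as $z(z+1)(z-1)^{p-3}$, forcing $\bar z=-1$ and hence $z=-1$. The case $p=3$ is trivial since $-1$ is the only root of unity in $\cY(\bZ_3)$. This is the missing idea: the choice $k=p-3$ makes the finite polylogarithm factor, giving exact control without any Newton polygon estimates.
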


Consequently, we obtain

\begin{thmABC}\label{thm:main_s-selmer_sc}
	Conjecture~\ref{conj:s-selmer_sc} holds for~$K=\bQ$, $S=\{2\}$ and~$\cY=\bP^1_{\bZ[1/2]}\smallsetminus\{0,1,\infty\}$.
\end{thmABC}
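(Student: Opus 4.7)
The plan is to deduce this statement as an immediate combination of Theorems~\ref{thm:main_conjecture_implication} and~\ref{thm:main_kim} already stated in the introduction. I work in the setting $K=\bQ$, $S=\{2\}$, and $\cY=\bP^1_{\bZ[1/2]}\smallsetminus\{0,1,\infty\}$. The generic fibre $Y=\bP^1_{\bQ}\smallsetminus\{0,1,\infty\}$ is a hyperbolic curve whose smooth compactification is $\bP^1_{\bQ}$, which evidently has rational points, so the hypotheses of Theorem~\ref{thm:main_conjecture_implication} are satisfied.

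First, I invoke Theorem~\ref{thm:main_kim}, which asserts that the refined Kim's Conjecture~\ref{conj:kim} holds for $(\cY,S,p)$ for every odd prime $p$. The set of odd primes has Dirichlet density~$1$, as it excludes only the single prime~$2$, so the density hypothesis of Theorem~\ref{thm:main_conjecture_implication} is verified. Applying that theorem then yields the $S$-Selmer Section Conjecture~\ref{conj:s-selmer_sc} for $(\cY,S)$, which is exactly the statement to be proven.

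In short, Theorem~\ref{thm:main_s-selmer_sc} requires no new argument beyond the two main theorems it combines. The true difficulty is concentrated in those ingredients: the theoretical bridge of Theorem~\ref{thm:main_conjecture_implication}, which must convert information about Chabauty--Kim loci at a density-$1$ set of primes into a global statement about Selmer sections, and the computational input of Theorem~\ref{thm:main_kim}, which must verify the refined Kim's Conjecture uniformly in the prime $p$. I expect the latter to be the main obstacle, since standard Chabauty--Kim computations typically fix the auxiliary prime and depend on explicit $p$-adic evaluations of polylogarithms; obtaining a uniform conclusion for \emph{all} odd primes at once will require extracting the arithmetic content of the Kim functions in a prime-independent form.
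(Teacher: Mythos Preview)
Your proof is correct and matches the paper's approach exactly: the paper presents Theorem~\ref{thm:main_s-selmer_sc} as an immediate consequence of Theorems~\ref{thm:main_conjecture_implication} and~\ref{thm:main_kim}, with the word ``Consequently'' being the entire argument. Your verification that the hypotheses of Theorem~\ref{thm:main_conjecture_implication} are met (hyperbolicity, rational point on the compactification, density~$1$) is precisely what is needed, and your closing remarks about where the real work lies are accurate.
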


\begin{rem}
	Theorem~\ref{thm:main_kim} is, to the best of our knowledge, the first example of a hyperbolic curve~$\cY/\bZ_S$ with~$\cY(\bA_{\bQ,S}^f)\neq\emptyset$ for which the refined Kim's Conjecture can be verified for infinitely many values of~$p$. (For curves with $\cY(\bA_{\bQ,S}^f)=\emptyset$, the refined Kim's Conjecture holds for more or less trivial reasons, see \cite[Remark~2.8]{BBKLMQSX}.) For this it is essential that we work with the refined Chabauty--Kim method \cite{BD:refined}; we were unable to prove the analogue of \Cref{thm:main_kim} for the original Kim's Conjecture. This illustrates some of the advantages of working with the refined version compared to the original method. 
	
	On the other hand, the  particular case of Conjecture~\ref{conj:s-selmer_sc} proved in Theorem~\ref{thm:main_s-selmer_sc} is already known due to work of Jakob Stix \cite[Corollary~6]{stix:birationalSC} (for $K=\bQ$, $\cY=\bP^1_{\bZ_S}\smallsetminus\{0,1,\infty\}$, and any~$S$). However, our strategy is completely different to Stix's, and demonstrates the applicability of the Chabauty--Kim method to questions related to the Section Conjecture.
\end{rem}

There are a couple of other cases where one can show a Kim-like conjecture for infinitely many primes~$p$. The first involves the unrefined Chabauty--Kim loci $\cY(\bZ_p)_{S,\infty}$ for an affine hyperbolic curve, as originally defined in \cite{kim:motivic,kim:albanese}. These are a priori larger than the refined loci $\cY(\bZ_p)_{S,\infty}^{\min}$ but still conjectured to be equal to the set of $S$-integral points $\cY(\bZ_S)$.

\begin{thmABC}\label{thm:main_unrefined}
	The original (i.e.\ unrefined) Kim's Conjecture holds for~$S=\emptyset$, $\cY=\bP^1_{\bZ}\smallsetminus\{0,1,\infty\}$ and all primes~$p$.
\end{thmABC}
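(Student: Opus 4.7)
First observe that $\cY(\bZ)=\emptyset$: an integral point of $\cY=\bP^1_{\bZ}\smallsetminus\{0,1,\infty\}$ would correspond to $y\in\bZ$ with $y,1-y\in\bZ^\times=\{\pm 1\}$, which has no common solution. Hence the theorem reduces to showing the unrefined Chabauty--Kim locus $\cY(\bZ_p)_{\emptyset,\infty}$ is empty for every odd prime $p$.

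The natural strategy is to leverage Theorem~\ref{thm:main_kim}. Let $\cY'=\bP^1_{\bZ[1/2]}\smallsetminus\{0,1,\infty\}$, so that $\cY$ and $\cY'$ have the same generic fibre $Y=\bP^1_\bQ\smallsetminus\{0,1,\infty\}$. The unrefined $\emptyset$-Selmer scheme $\Sel_{\emptyset,n}(\cY)$ sits as a closed subscheme of the unrefined $\{2\}$-Selmer scheme $\Sel_{\{2\},n}(\cY')$, cut out by the additional Galois-cohomological condition of being unramified at~$v=2$. The first step is to deduce the inclusion
\[
\cY(\bZ_p)_{\emptyset,\infty}\subseteq\cY'(\bZ_p)_{\{2\},\infty}^{\min}=\{2,-1,1/2\},
\]
where the equality is Theorem~\ref{thm:main_kim}; this passes through re-running the Coleman-function computation underlying Theorem~\ref{thm:main_kim} with the extra vanishing constraints coming from unramifiedness at~$2$. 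The second step is to rule out each $y\in\{2,-1,1/2\}$: each such $y$ is a $\bQ$-rational point of~$Y$, with associated global section $s_y$ of~\eqref{eq:fes} whose abelianisation $(\kappa(y),\kappa(1-y))$ contains the class $\pm\kappa(2)$, ramified at~$v=2$ (while $\kappa(-1)=0$ for odd $p$). Therefore $s_y\notin\Sel_{\emptyset,n}(\cY)$, and invoking injectivity of the localisation $\loc_p$ at sufficient depth (known for the thrice-punctured line) gives $\loc_p(s_y)\notin\im(\loc_p|_{\Sel_{\emptyset,n}(\cY)})$, so $y\notin\cY(\bZ_p)_{\emptyset,\infty}$.

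The main obstacle is the first step: the inclusion of Selmer schemes does not pass formally to an inclusion of Chabauty--Kim loci, since the latter depends on the image under $\loc_p$. Making this rigorous requires translating the unramified-at-$2$ condition into additional explicit Coleman-function relations at~$p$, extending those used in the proof of Theorem~\ref{thm:main_kim}. A more self-contained alternative is to bypass Theorem~\ref{thm:main_kim} and argue directly from $\cY(\bZ_2)=\emptyset$ (since the open special fibre $\bP^1_{\bF_2}\smallsetminus\{0,1,\infty\}$ is empty): this local-at-$2$ obstruction should force the $\loc_p$-image of $\Sel_{\emptyset,n}(\cY)(\bQ_p)$ to miss $\cY(\bZ_p)$ entirely, yielding emptiness of the Chabauty--Kim locus without appeal to the refined computation.
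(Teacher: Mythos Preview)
Your Step~1 is actually sounder than you think: an inclusion of Selmer schemes does pass formally to an inclusion of Chabauty--Kim loci, since the scheme-theoretic image of a closed subscheme is contained in the scheme-theoretic image of the ambient scheme. Concretely, any class in $\rH^1_{f,\emptyset}(G_\bQ,\Pi)$ is unramified at~$2$, hence restricts to the trivial class in $\rH^1(G_2,\Pi)$, and the trivial class lies in the origin of $R_0\cup R_1\cup R_\infty = j_2(Y(\bQ_2))^{\Zar}$. So $\rH^1_{f,\emptyset}(G_\bQ,\Pi)\subseteq\Sel_{\{2\},\Pi}^{\min}(\cY')$ and hence $\cY(\bZ_p)_{\emptyset,\infty}\subseteq\{2,-1,1/2\}$ follows from Theorem~\ref{thm:main_kim}.

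The real gap is in Step~2. Your argument needs that if $s_y\notin\rH^1_{f,\emptyset}$ then $j_p(y)$ is not in the scheme-theoretic image of $\loc_p|_{\rH^1_{f,\emptyset}}$. Injectivity of $\loc_p$ on $\rH^1_{f,\{2\}}(G_\bQ,U_{\PL,n})$ is \emph{not} known: in the coordinates of the paper it amounts to $\zeta^p(2i+1)\neq0$ for all relevant~$i$, which is an open conjecture. Your final alternative also fails: the \emph{unrefined} Selmer scheme for $S=\emptyset$ does not impose the condition $\xi|_{G_2}\in j_2(\cY(\bZ_2))^{\Zar}=\emptyset$; it only asks for unramifiedness at~$2$, and the trivial class satisfies that. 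The refined Selmer scheme for $S=\emptyset$ is indeed empty for this reason, but that is a statement about the refined conjecture, not the unrefined one.

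The paper's route is more direct and avoids Theorem~\ref{thm:main_kim} entirely. For $S=\emptyset$ there are no $x_\ell,y_\ell$ coordinates on the polylogarithmic Selmer scheme, so the localisation formula collapses to $\loc_p^\sharp(\log)=0$, $\loc_p^\sharp(\Li_1)=0$, and $\loc_p^\sharp(\Li_k)=0$ for even~$k$. Thus every point of $\cY(\bZ_p)_{\emptyset,\PL,n}$ satisfies $\log(z)=\Li_1(z)=0$ and $\Li_k(z)=0$ for even~$2\leq k\leq n$. For $p\geq5$ the equations $\log(z)=0$ and $\Li_{p-3}(z)=0$ already force $z=-1$, and then $\Li_1(-1)=-\log(2)\neq0$ excludes it. For $p=3$ the equation $\log(z)=0$ alone forces $z=-1$, and the same $\Li_1$ argument finishes. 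This argument actually also completes your Step~2, since it shows directly that none of $2,-1,1/2$ satisfy both $\log(z)=0$ and $\Li_1(z)=0$; but at that point the detour through Theorem~\ref{thm:main_kim} is unnecessary.
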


We can also show a Kim-like conjecture for certain projective hyperbolic curves.

\begin{thmABC}\label{thm:main_chabauty}
	Let~$X/\bQ$ be a smooth projective curve of genus~$\geq2$ with a rational point, and suppose that the Jacobian of~$X$ has a factor of dimension~$\geq2$ with Mordell--Weil rank~$0$ and finite Tate--Shafarevich group. Then there is a finite closed subscheme~$Z\subset X$ such that
	\begin{equation}\label{eq:ck_locus_in_finite_subscheme}
	X(\bQ_p)_\infty \subseteq Z(\bQ_p)
	\end{equation}
	for all primes~$p$ of good reduction for~$X$.
\end{thmABC}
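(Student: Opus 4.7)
The plan is to exploit the abelian (depth-$1$) Chabauty--Kim method via the quotient $J \twoheadrightarrow A$. Write $f \colon X \to A$ for the composition of the Abel--Jacobi embedding $X \hookrightarrow J$ (based at the chosen rational point) with $J \twoheadrightarrow A$. The hypotheses on~$A$ immediately give the vanishing $H^1_f(G_\bQ, V_p A) = 0$ of the Bloch--Kato Selmer group: it fits into a short exact sequence as an extension of $A(\bQ) \otimes \bQ_p$ by a vector space measuring the $p$-divisibility of the Tate--Shafarevich group of~$A$, and both terms vanish under our hypotheses of Mordell--Weil rank zero and finite Tate--Shafarevich group.

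By functoriality of the Chabauty--Kim construction along~$f$, the locus~$X(\bQ_p)_\infty$ is contained in the preimage under~$f$ of the image of $H^1_f(G_\bQ, V_p A) \to H^1_f(G_{\bQ_p}, V_p A)$. Since the global Selmer group vanishes, this forces $f(x)$ to lie in the kernel of the local Kummer map $A(\bQ_p) \to H^1_f(G_{\bQ_p}, V_p A)$ for every $x \in X(\bQ_p)_\infty$ --- equivalently, in the torsion subgroup $A(\bQ_p)_{\mathrm{tors}}$. By Mattuck's theorem, $A(\bQ_p)$ is topologically isomorphic to $\bZ_p^{\dim A} \oplus F$ for some finite group~$F$, so $A(\bQ_p)_{\mathrm{tors}} = F$ is finite; and since every torsion point of~$A$ is defined over~$\overline{\bQ}$, after fixing an embedding $\overline{\bQ} \hookrightarrow \overline{\bQ_p}$ each such point comes from an element of $A_{\mathrm{tors}}(\overline{\bQ})$. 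Letting $Z \subset X$ be the reduced closed subscheme whose $\overline{\bQ}$-points are $f^{-1}(A_{\mathrm{tors}}(\overline{\bQ}))$, we deduce $X(\bQ_p)_\infty \subseteq Z(\bQ_p)$.

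It remains to verify that $Z$ is in fact a finite closed subscheme of~$X$. Since the Abel--Jacobi image of~$X$ generates~$J$ as a group, the image curve $C := f(X) \subset A$ generates~$A$; as $\dim A \geq 2 > \dim C = 1$, this forces $C$ not to be a translate of any proper abelian subvariety. Raynaud's theorem (the Manin--Mumford conjecture for curves in abelian varieties) then gives that $C \cap A_{\mathrm{tors}}(\overline{\bQ})$ is finite, and pulling back along the finite morphism $X \to C$ yields the finiteness of~$Z$. The subscheme~$Z$ is defined independently of~$p$, so the containment $X(\bQ_p)_\infty \subseteq Z(\bQ_p)$ holds uniformly over all primes of good reduction.

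The main obstacle I anticipate is the formal setup of the depth-$1$ piece of the Chabauty--Kim locus corresponding to the abelian quotient~$V_p A$ of $V_p J$, and the verification that the full locus $X(\bQ_p)_\infty$ of~\cite{BDCKW} is contained in it. Once this functoriality is in hand, the remaining ingredients --- the vanishing of the Bloch--Kato Selmer group, Mattuck's theorem, and Raynaud's theorem --- combine cleanly to give the bound.
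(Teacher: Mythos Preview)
Your proposal is correct and follows essentially the same route as the paper: use the abelian quotient $V_pA$, invoke the vanishing of $\rH^1_f(G_\bQ,V_pA)$ from the rank and Tate--Shafarevich hypotheses to identify the depth-$1$ locus with $f^{-1}(A(\bQ_p)_{\mathrm{tors}})$, and then apply Manin--Mumford (Raynaud) to produce the finite subscheme~$Z$. The only cosmetic differences are that you make explicit the appeal to Mattuck and the reason why $f(X)$ is not a translate of a proper abelian subvariety, whereas the paper leaves these implicit; the functoriality concern you flag at the end is indeed the one nontrivial formal point, and the paper handles it just as you do, by noting that passing to a smaller quotient of the fundamental group only enlarges the Chabauty--Kim locus.
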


\begin{rem}
	The finite closed subscheme $Z$ of $X$ in \Cref{thm:main_chabauty} is without loss of generality reduced, thus a finite union of closed points. If the conclusion of \Cref{thm:main_chabauty} holds with $Z$ consisting only of rational points, then the refined Kim's Conjecture~\ref{conj:kim} holds for~$X$ and for all primes~$p$ of good reduction. Verifying this in a particular case is likely to be quite subtle.
\end{rem}

In particular, Theorems~\ref{thm:main_unrefined} and \ref{thm:main_chabauty} give additional examples where we prove the $S$-Selmer Section Conjecture using our strategy. (In fact, the containment~\eqref{eq:ck_locus_in_finite_subscheme} is already enough to conclude the Selmer Section Conjecture.) Again, neither of these cases of the $S$-Selmer Section Conjecture are new: for $\cY=\bP^1_{\bZ}\smallsetminus\{0,1,\infty\}$, the $S$-Selmer section set is empty since~$\cY(\bZ_2)=\emptyset$, while in the setting of Theorem~\ref{thm:main_chabauty}, the Selmer Section Conjecture is essentially a theorem of Stoll \cite[Theorem~8.6]{stoll:finite_descent}.
\smallskip

Let us now say a little about the proof of Theorem~\ref{thm:main_kim}. The original unrefined Chabauty--Kim method for the thrice-punctured line has been studied extensively in work of Ishai Dan-Cohen, Stefan Wewers and David Corwin \cite{DCW:explicitCK,DCW:mixedtate1,DC:mixedtate2,CDC:polylog1,CDC:polylog2}. They give some explicit Coleman functions vanishing on $\dY(\bZ[1/2])$, but it is far from clear whether these should cut out exactly~$\dY(\bZ[1/2])$. What we do in this paper is to carry out the refined versions of the computations of \cite{CDC:polylog1}, obtaining the following description of the refined Chabauty--Kim locus.

\begin{prop}
	For~$S=\{2\}$, $\cY=\bP^1_{\bZ[1/2]}\smallsetminus\{0,1,\infty\}$ and~$p$ any odd prime, the refined Chabauty--Kim locus $\dY(\bZ_p)_{\{2\},\infty}^{\min}$ is contained in the union of the $S_3$-translates of the locus in $\cY(\bZ_p)$ defined by the equations
	\[
	\log(z)=0 \quad\text{and}\quad \Li_k(z)=0 \quad\text{for $k\geq2$ even,}
	\]
	where $\log$ and~$\Li_k$ are the $p$-adic (poly)logarithms and the action of~$S_3$ on $\cY$ is the usual one permuting the three cusps $\{0,1,\infty\}$.
\end{prop}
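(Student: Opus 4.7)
The plan is to adapt the explicit Chabauty--Kim computations of Corwin and Dan-Cohen \cite{CDC:polylog1} for the thrice-punctured line to the refined setting of Betts and Dogra \cite{BD:refined}, working in the polylogarithmic quotient of the unipotent fundamental group and organising the proof around a dimension count in the category of mixed Tate motives over $\ZZ[1/2]$.

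To set up, I would work with the polylogarithmic quotient $U^{\PL}$ of the $\QQ_p$-pro-unipotent fundamental group of $\cY_{\Qbar}$ based at the tangential vector at the cusp $0$; its associated graded is $\bigoplus_{k \geq 1} \QQ_p(k)$, with one generator in each weight $k \geq 1$. The $p$-adic unipotent Kummer map $\cY(\ZZ_p) \to U^{\PL}(\QQ_p)$ identifies, after a choice of coordinates on the target, with the tuple of $p$-adic polylogarithms $z \mapsto (\log(z), \Li_2(z), \Li_3(z), \ldots)$, so the refined Chabauty--Kim locus is the preimage under this map of the image of the refined global Selmer scheme under localisation at $p$.

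Next I would compute the refined global Selmer scheme $\Sel_{\{2\}}^{\min}(U^{\PL})$. In the category $\MT(\ZZ[1/2])$, the Beilinson--Borel computation gives $\dim \Ext^1(\QQ, \QQ(k))$ equal to $1$ for $k = 1$ (generated by $\log 2$), to $0$ for even $k \geq 2$, and to $1$ for odd $k \geq 3$ (generated by $\zeta(k)$). In the unrefined case this would cut out the $\Li_{2k}(z) = 0$ equations in each even weight but leave $\log(z)$ unconstrained, because of the $\log 2$ contribution in weight~$1$. The refined local condition at $v = 2$, which amounts to insisting that the section at $v=2$ come from a specific reduction type (one of the three cusps), removes exactly this $\log 2$ freedom, so that the refined Selmer scheme matches the Selmer scheme that one would obtain over $\MT(\ZZ)$. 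Consequently its image in the polylogarithmic coordinates on $U^{\PL}(\QQ_p)$ lies in the subspace where the weight-$1$ coordinate and all even-weight coordinates vanish.

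Translating back via the Kummer identification, the refined Chabauty--Kim locus associated to the reduction type ``cusp $0$'' at $v=2$ is contained in the vanishing locus of $\log(z)$ and of $\Li_{2k}(z)$ for $k \geq 1$. The $S_3$-action on $\cY$ permuting the three cusps also permutes the three possible reduction types at $v = 2$, so the full refined Chabauty--Kim locus $\cY(\ZZ_p)_{\{2\},\infty}^{\min}$ is the union over the three reduction types, i.e., the union of $S_3$-translates of this basic locus, as claimed.

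The hardest step is verifying that the refined local condition at $v = 2$ kills precisely the $\log 2$ direction (and nothing more) at all depths of the polylogarithmic tower. This requires identifying explicitly the local section at $v = 2$ attached to each tangential basepoint at a cusp and computing its image in the weight-one Selmer data, in a way compatible with the higher-weight structure. A secondary challenge is passing coherently to the limit in depth, which should be manageable once one establishes that $\Ext^i_{\MT(\ZZ[1/2])}(\QQ,\QQ(k))$ is concentrated in degree $i=1$ so that no new obstructions appear.
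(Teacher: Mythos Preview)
Your overall strategy---work in the polylogarithmic quotient, decompose the refined Selmer scheme according to the three possible reduction types at $2$, compute one piece explicitly, and then use the $S_3$-action to get the others---is exactly the paper's approach. However, several of your intermediate claims are wrong or imprecise in ways that would need to be fixed.

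First, the polylogarithmic quotient is not one-dimensional in weight~$1$: its abelianisation is $\bQ_p(1)^2$, and the de Rham Kummer map is $z\mapsto(\log(z),\Li_1(z),\Li_2(z),\dots)$, including $\Li_1(z)=-\log(1-z)$. Correspondingly, the unrefined Selmer scheme for $S=\{2\}$ has \emph{two} weight-$1$ coordinates $x,y$ (plus $z_3,z_5,\dots$), and the refined condition for a single cusp kills only one of them (e.g.\ $x=0$ for the cusp~$1$). So your assertion that the refined Selmer scheme ``matches the Selmer scheme over $\MT(\bZ)$'' is false: it still carries the $y$-coordinate, and $\Li_1$ does \emph{not} vanish on the image. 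This doesn't affect the proposition (which only concerns $\log$ and even $\Li_k$), but your reasoning for it does not go through as stated.

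Second, the vanishing of $\Li_k$ for even $k$ is not a pure dimension count. What the Ext computation tells you is that there is no $z_k$-coordinate for even $k$; what you still need is that every monomial appearing in $\loc_p^\sharp(\Li_k)$ for even $k$ contains a factor of $x$ (so that setting $x=0$ kills it). This requires the explicit shape of the polylogarithmic localisation map (each term is $x^{k-1}y$ or $x^{k-2i-1}z_{2i+1}$ with $2i+1\le k-1$), which the paper imports from Corwin--Dan-Cohen and is not visible from Ext dimensions alone.

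Third, the $S_3$-action is compatible with the descending central series quotients $U_n$ but \emph{not} with the polylogarithmic quotient $U_{\PL}$ (only $z\mapsto 1/z$ survives there). So the correct order of operations is: decompose the full refined locus $\cY(\bZ_p)_{\{2\},\infty}^{\min}$ into three $S_3$-related pieces indexed by cusps, then use the containment $\cY(\bZ_p)_{\{2\},\infty}^{(1)}\subseteq\cY(\bZ_p)_{\{2\},\PL,\infty}^{(1)}$ to bound one piece by the polylogarithmic computation. Your proposal applies $S_3$ directly at the polylogarithmic level, which does not work.
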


So Theorem~\ref{thm:main_kim} follows once we verify that $\log$ and $\Li_{p-3}$ have no common zeroes in~$\cY(\bZ_p)$ other than~$z=-1$ for~$p\geq5$, and that the only zero of~$\log$ in~$\cY(\bZ_3)$ is $z=-1$. The same argument will also imply \Cref{thm:main_unrefined}.

\begin{rem}
	When the refined Kim's Conjecture holds, a natural question is what depth in the fundamental group one needs to go to in order to cut out exactly the $S$-integral points. Our proof of Theorem~\ref{thm:main_kim} establishes that the refined Kim's Conjecture already holds in depth~$n=p-3$ for~$p\geq5$, which notably depends on the prime~$p$. We strongly believe this to be an artefact of the proof: in fact, in \cite{BBKLMQSX} we conjectured that~$n=2$ is enough in this case, and verified this computationally for~$3\leq p\leq10^5$.
\end{rem}

\begin{rem}
	The computations of Corwin and Dan-Cohen \cite{CDC:polylog1} which we use in our proof of Theorem~\ref{thm:main_kim} use a rather different foundation of Chabauty--Kim theory than the original papers of Kim, using the $\bQ$-pro-unipotent motivic fundamental groupoid of $\bP^1\smallsetminus\{0,1,\infty\}$ as constructed by Deligne--Goncharov \cite{deligne-goncharov} in place of the $\bQ_p$-pro-unipotent \'etale fundamental groupoid. So in order to apply their results, we need to show that the two approaches are equivalent. This ultimately amounts to what seems to be a folklore result, that the \'etale realisation functor from mixed Tate motives to mixed Tate Galois representations is an equivalence ``after tensoring with~$\bQ_p$''. Here is a precise statement, which we will prove in the Appendix~\ref{sec:appendix}.
	
	\begin{thm}\label{thm:tannaka_group_comparison}
		Let~$K$ be a number field and~$S$ a finite set of primes of~$\cO_K$. Let~$\MT(\cO_{K,S},\bQ)$ denote the Tannakian category of mixed Tate motives over $\cO_{K,S}$ with coefficients in~$\bQ$ \cite[(1.7)]{deligne-goncharov}, and for a rational prime $p$ not divisible by any element of~$S$, let $\Rep_{\bQ_p}^{\MT,S}(G_K)$ denote the Tannakian category of mixed Tate Galois representations which are unramified outside $S\cup\{\mathfrak p\mid p\}$ and crystalline at all $\mathfrak p\mid p$. Let~$G_{K,S}^{\MT}$ and~$G_{K,S}^{\MTR}$ denote the Tannaka groups of $\MT(\cO_{K,S},\bQ)$ and $\Rep_{\bQ_p}^{\MT,S}(G_K)$ at their canonical fibre functors, respectively. Then the \'etale realisation functor
		\[
		\rho_\et\colon \MT(\cO_{K,S},\bQ) \to \Rep_{\bQ_p}^{\MT,S}(G_K)
		\]
		induces an isomorphism
		\[
		G_{K,S}^{\MTR} \xrightarrow\sim G_{K,S,\bQ_p}^{\MT}
		\]
		of affine group schemes over~$\bQ_p$.
	\end{thm}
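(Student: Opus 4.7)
The plan is to use the mixed Tate structure on both sides to reduce the comparison of Tannaka groups to a comparison of $\Ext^1$ groups in each weight, which is then supplied by the Soulé--Bloch--Kato comparison between $K$-theory of $\cO_{K,S}$ and $p$-adic étale cohomology.

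Both categories carry canonical weight filtrations whose associated graded objects lie in the semisimple subcategory generated by Tate twists, so their Tannaka groups each fit in a split short exact sequence
\[
1 \to U \to G \to \bG_m \to 1
\]
with $U$ pro-unipotent and $\bG_m$ acting through the weight grading. The functor $\rho_\et$ respects weights and the identifications of pure Tate objects, so it induces a morphism $U^{\MTR}_{K,S} \to U^{\MT}_{K,S,\bQ_p}$, and it suffices to prove this is an isomorphism. Passing to weight-graded Lie algebras, the task reduces to showing that the induced map $\gr^W\fu^{\MTR} \to \gr^W\fu^{\MT}_{\bQ_p}$ is an isomorphism of graded pro-nilpotent Lie algebras.

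The key structural input is that both Lie algebras are free pro-nilpotent, with generators in each weight $-n$ ($n \geq 1$) dual to $\Ext^1(\bQ, \bQ(n))$. This freeness holds in any mixed Tate Tannakian category for which $\Ext^2(\bQ, \bQ(n)) = 0$ for all $n \geq 1$. On the motivic side this is Beilinson--Soulé vanishing over number rings, a theorem of Borel built into Deligne--Goncharov's construction of $\MT(\cO_{K,S}, \bQ)$. On the Galois side, one needs $\Ext^2$ in $\Rep_{\bQ_p}^{\MT,S}(G_K)$ to vanish, which reduces via Poitou--Tate / Bloch--Kato to Soulé's vanishing $\rH^2(G_{K,S'}, \bQ_p(n)) = 0$ for $n \geq 2$ (where $S' = S \cup \{\mathfrak p \mid p\}$) together with a direct computation in weight $1$; here the hypothesis $p \neq 2$ enters to control local conditions at primes above $2$ and at archimedean places. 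Granted freeness, an isomorphism of Lie algebras is equivalent to an isomorphism on generators, i.e.\ on the $\Ext^1$ groups
\[
K_{2n-1}(\cO_{K,S}) \otimes \bQ_p = \Ext^1_{\MT(\cO_{K,S},\bQ)}(\bQ, \bQ(n)) \otimes \bQ_p \longrightarrow \Ext^1_{\Rep_{\bQ_p}^{\MT,S}(G_K)}(\bQ_p, \bQ_p(n)) = \rH^1_f(G_{K,S'}, \bQ_p(n)),
\]
and the map induced by $\rho_\et$ is the étale Chern class map, which is an isomorphism by the theorems of Soulé and Bloch--Kato. Combined with the identity on the reductive quotient $\bG_m$, this yields the desired isomorphism of full Tannaka groups.

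The main obstacle I expect lies in making the Galois-side input precise: identifying $\Ext^i$ in $\Rep_{\bQ_p}^{\MT,S}(G_K)$ with the appropriate continuous cohomology groups cut out by crystallinity at $p$-adic places, and verifying the $\Ext^2$ vanishing, in particular in weight $1$. All the necessary ingredients (Soulé's comparison of $K$-theory with étale cohomology, the Bloch--Kato conjecture in the form proved by Voevodsky--Rost, and Poitou--Tate duality) are standard, but assembling them cleanly and managing the local conditions at $2$ is delicate, and is presumably where the restriction to odd $p$ becomes essential.
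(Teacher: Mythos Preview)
Your overall strategy --- reduce to the pro-unipotent radicals, then to the $\Ext^1$ groups in each weight --- is the same as the paper's. But there is a genuine gap at the crucial step, and it is precisely the point the paper isolates as the whole difficulty.

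You write that ``the map induced by $\rho_\et$ is the \'etale Chern class map, which is an isomorphism by the theorems of Soul\'e and Bloch--Kato.'' The paper explicitly flags this identification as \emph{not obvious}: Soul\'e's theorem says the domain and codomain of
\[
\Ext^1_{\MT(\cO_{K,S},\bQ)}(\bQ,\bQ(n))\otimes_{\bQ}\bQ_p \longrightarrow \Ext^1_{\Rep_{\bQ_p}^{\MT,S}(G_K)}(\bQ_p,\bQ_p(n))
\]
have the same dimension, via his Chern class map, but it is not clear (to the authors, and you have not argued it either) that the map induced by \'etale realisation \emph{agrees} with Soul\'e's Chern class. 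The authors say: ``It is presumably the case that the Chern class map agrees with \'etale realisation under the above identifications, but this is not the strategy we take.'' They instead prove the map is an isomorphism without ever making that identification: for $n=1$ they show surjectivity directly using Kummer classes; for $n\geq 2$ they prove \emph{injectivity} of \'etale realisation via the Beilinson--Lichtenbaum conjecture (in the form of Voevodsky's motivic cohomology comparison), and then invoke Soul\'e only for the dimension count. Your proposal skips over exactly this, which is the content of the appendix.

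A secondary point: you require $\Ext^2$ vanishing on \emph{both} sides in order to know both Lie algebras are free. The paper avoids this via a clean lemma (their Lemma~\ref{lem:iso_on_tannaka_groups}): if $\rho^*$ induces an isomorphism on abelianisations and the \emph{target} Lie algebra $\Lie(U_{\dT_1})_{K_2}$ is free, then $\rho^*$ is an isomorphism --- no freeness hypothesis on the source is needed. The argument is that $\rho^*$ is surjective (iso on abelianisations plus pro-nilpotence), hence by freeness of the target admits a section $s$; then $s$ is also surjective by the same argument, so $\rho^*$ is bijective. This means they only need the motivic $\Ext^2$ vanishing (built into Deligne--Goncharov), and never have to verify $\Ext^2 = 0$ on the Galois side, sidestepping the weight-$1$ headache you anticipated.
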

\end{rem}

\subsection{Comparing the conjectures}

Let us say a little about our proof of Theorem~\ref{thm:main_conjecture_implication}, giving the relationship between the $S$-Selmer Section Conjecture and the refined Kim's Conjecture. We will relate the two conjectures by relating them both to a third conjecture in obstruction theory. If~$\cY/\cO_{K,S}$ is a regular $S$-integral model of a hyperbolic curve~$Y/K$, then the finite descent obstruction cuts out a \emph{finite descent locus}
\[
\cY(\bA_{K,S})_\bullet^{\fcov} \subseteq \cY(\bA_{K,S})_\bullet
\]
inside the modified $S$-adelic points $\cY(\bA_{K,S})_\bullet$, containing the $S$-integral points $\cY(\cO_{K,S})$ (see \S\ref{ss:finite_descent} for precise definitions). According to a conjecture of Stoll, this inclusion should be an equality.

\begin{conj}[Sufficiency of finite descent obstruction to strong approximation]\label{conj:sufficiency_finite_descent}
Let~$Y/K$ be a hyperbolic curve over a number field~$K$ and let~$\cY/\cO_{K,S}$ be a regular $S$-integral model of~$Y$. Then
\[
\cY(\cO_{K,S}) = \cY(\bA_{K,S})_\bullet^{\fcov} \,.
\]
\end{conj}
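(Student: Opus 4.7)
The plan is to reduce the conjecture in two stages to the refined Kim's Conjecture, and then to attack the latter case-by-case. The first stage is to establish a bijection
\[
\cY(\bA_{K,S})_\bullet^{\fcov} \;\xrightarrow{\sim}\; \Sec(\cY/\cO_{K,S})^{\lgeom}\,,
\]
refining the Harari--Szamuely/Stoll dictionary between the finite descent locus and sections of the fundamental exact sequence. In one direction, an adelic point~$(y_v)_v$ in the finite descent locus yields a compatible family of local sections~$s_v$ of~\eqref{eq:local_fes}, and the finite descent condition is precisely what is needed to glue this family to a global section~$s$; the modified adelic topology defining~$\cY(\bA_{K,S})_\bullet$ together with the $S$-integrality of~$(y_v)_{v\notin S}$ then forces~$s$ to be $S$-Selmer. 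In the reverse direction, an $S$-Selmer section restricts to the required local points at each place. Modulo a careful verification of bijectivity (this should be a variant of \Cref{prop:conjecture_implication_1}), the conjecture becomes equivalent to $\cY(\cO_{K,S}) = \Sec(\cY/\cO_{K,S})^{\lgeom}$, that is, to the $S$-Selmer Section Conjecture~\ref{conj:s-selmer_sc}.

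The second stage is then immediate from \Cref{thm:main_conjecture_implication}: it suffices to verify the refined Kim's Conjecture for a set of primes of Dirichlet density~$1$. For~$\cY$ equal to the thrice-punctured line over~$\bZ[1/2]$ (via \Cref{thm:main_kim}) or of the type considered in \Cref{thm:main_chabauty}, this is available in the present paper and already yields unconditional cases of the conjecture. For a general hyperbolic curve, one would have to perform a uniform-in-$p$ Chabauty--Kim analysis, producing enough Coleman functions to cut out~$\cY(\cO_{K,S})$ inside $\cY(\bZ_p)$ for all sufficiently large~$p$; the motivic foundations developed in the Appendix, together with the Selmer scheme formalism, are what make such an analysis plausible in principle, since they allow one to study the relevant Coleman functions as a single family parametrised by $p$-adic realisations of one motivic object.

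The main obstacle is unambiguously this last step. Outside of the polylogarithm case treated here, essentially nothing is known about how the $p$-adic vanishing loci of non-constant Coleman functions behave as $p\to\infty$, and the uniform-in-$p$ non-vanishing statements (such as ``$\Li_{p-3}$ has no zero on $\cY(\bZ_p)\smallsetminus\{-1\}$'') that make our approach work in the polylogarithmic setting rest on delicate and largely ad hoc estimates that have no known analogue for generic curves. A secondary, but still non-trivial, difficulty lies in the first-stage bijection, which must correctly accommodate real places (where $\pi_1^{\et}(Y_{\bR})$ contributes nonabelian cohomology), tangential basepoints when $Y$ is affine, and the asymmetry between the constraints imposed at $v\in S$ (arbitrary $K_v$-points) and at $v\notin S$ ($\cO_v$-integral points).
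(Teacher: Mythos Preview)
This statement is a \emph{conjecture}, and the paper does not prove it in general; it only proves the equivalence with Conjecture~\ref{conj:s-selmer_sc} (Proposition~\ref{prop:conjecture_implication_1}) and the implication from Kim's Conjecture (Corollary~\ref{cor:conjecture_implication_2}), together with the specific instances in Theorems~\ref{thm:main_kim}--\ref{thm:main_chabauty}. Your proposal is best read as a strategy outline, and at that level your second stage and your identification of the main obstacle match the paper's viewpoint.

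The genuine gap is in your first stage. You assert a bijection $\cY(\bA_{K,S})_\bullet^{\fcov} \xrightarrow{\sim} \Sec(\cY/\cO_{K,S})^{\lgeom}$, but no such bijection is known, and the paper explicitly avoids claiming one. What Harari--Stix gives (Theorem~\ref{thm:image_of_localisation}) is that the localisation map $\loc\colon \Sec(\cY/\cO_{K,S})^{\lgeom} \to \cY(\bA_{K,S})_\bullet$ has image exactly $\cY(\bA_{K,S})_\bullet^{\fcov}$; note also that the natural map runs from sections to adelic points, not the reverse as you have it. Surjectivity of~$\loc$ onto the descent locus yields one direction of the equivalence immediately, but the other direction --- deducing the $S$-Selmer Section Conjecture from strong sufficiency of finite descent --- cannot be obtained from your claimed bijection, because injectivity of~$\loc$ is open. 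The paper instead proves the key Lemma~\ref{lem:rational_localisation}: if a Selmer section~$s$ has $\loc(s)\in Y(K)$, then~$s$ itself arises from that $K$-point. The argument passes to all finite \'etale covers~$Y'\to Y$ through which~$s$ lifts, uses Lemma~\ref{lem:lifting_selmer_sections} to show the lifted section is again Selmer, and then applies the theorem of the diagonal (Theorem~\ref{thm:diagonal_theorem}) to the finite fibre $Y'_y$ to produce a $K$-point on~$Y'$; the criterion of \cite[Lemma~53]{stix:evidence_SC} then forces~$s$ to come from a rational point. Your sketch does not supply any mechanism of this kind, and the ``careful verification of bijectivity'' you defer is not presently available.
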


Conjecture~\ref{conj:sufficiency_finite_descent} is equivalent to the $S$-Selmer Section Conjecture~\ref{conj:s-selmer_sc}. More generally we have:

\begin{thmABC}[see \Cref{thm:image_of_localisation}]
	\label{thm:selmer-eq-fin-desc}
	There is a canonical bijection
	\[ \Sec(Y/K)^{\lgeom} \cong Y(\bA_K)_\bullet^{\fcov}. \]
\end{thmABC}

\Cref{thm:selmer-eq-fin-desc} is not new. As we will explain in \S\ref{ss:finite_descent}, work of Harari--Stix shows that the image of a certain localisation map 
\[
\loc\colon \Sec(\cY/\cO_{K,S})^{\lgeom} \to \cY(\bA_{K,S})_\bullet
\]
equals the finite descent locus, while a recent theorem of Porowski \cite{porowski:local_conjugacy} shows the injectivity. \Cref{thm:selmer-eq-fin-desc} makes very concrete the connection between Galois sections and finite descent obstructions, cf.~Remark~8.9 of \cite{stoll:finite_descent}.

Using the equivalence of Conjectures~\ref{conj:s-selmer_sc} and \ref{conj:sufficiency_finite_descent} afforded by \Cref{thm:selmer-eq-fin-desc}, our proof of Theorem~\ref{thm:main_conjecture_implication} then consists of two steps. First, we show that the projection of $\cY(\bA_{\bQ,S})_\bullet^\fcov$ on~$\cY(\bZ_p)$ is contained in the refined Chabauty--Kim locus $\cY(\bZ_p)_{S,\infty}^{\min}$ for all~$p\notin S$. So if Conjecture~\ref{conj:kim} held for all primes~$p$ in a set~$\fP$ of Dirichlet density~$1$, then we would deduce that the projection of the finite descent locus on~$\prod_{p\in\fP}\cY(\bZ_p)$ is contained in
\[
\prod_{p\in\fP}\cY(\bZ_S) \,.
\]
By Porowski again, the projection $\cY(\bA_{\bQ,S})_\bullet^\fcov \to \prod_{p\in\fP}\cY(\bZ_S)$ is injective, so the remaining issue is to show that the image is contained in $\cY(\bZ_S)$, embedded diagonally inside~$\prod_{p\in\fP}\cY(\bZ_S)$. This is accomplished by a theorem due to Stoll (in the projective case; we extend Stoll's argument to the general case in Theorem~\ref{thm:diagonal_theorem}) which we dub the \emph{theorem of the diagonal}.

\subsection{Overview of sections}

We begin in Section~\ref{sec:finite_descent} by proving Theorem~\ref{thm:main_conjecture_implication}: that the refined Kim's Conjecture implies the $S$-Selmer Section Conjecture. We then take a detour in Section~\ref{sec: comparison theorem} to explain how the Chabauty--Kim method as formulated in \cite{kim:motivic,kim:albanese,BDCKW,BD:refined} is related to the motivic Chabauty--Kim method used in \cite{DCW:explicitCK,DCW:mixedtate1,DC:mixedtate2,CDC:polylog1,CDC:polylog2}. This is essentially a very careful checking that the two methods agree. The most technical motivic arguments in this section are hived off into Appendix~\ref{sec:appendix}.

Section~\ref{sec: refined} recalls the setup of the refined Chabauty--Kim method, and lays the groundwork for the computation of the refined Chabauty--Kim locus for the thrice-punctured line over~$\bZ[1/2]$, which we carry out in Section~\ref{sec:calculations}. \Cref{sec:calculations} contains the proofs of Theorems~\ref{thm:main_kim}, \ref{thm:main_unrefined} and~\ref{thm:main_chabauty}.

\subsection*{Acknowledgements}

We would like to express our gratitude to Dustin Clausen, David Corwin, Ishai Dan-Cohen, Brendan Creutz, Elden Elmanto, Annette Huber-Klawitter, Bruno Kahn, Marc Levine, Chris\-tophe Soul\'e and Jakob Stix for taking the time to answer our questions during the writing of this paper.

L.A.B.\ is supported by the Simons Collaboration on Arithmetic Geometry, Number Theory, and Computation under grant number 550031. M.L.\ and T.K.\ were supported in part by the LOEWE research unit USAG and acknowledge funding by the Deutsche Forschungsgemeinschaft  (DFG, German Research Foundation) through the Collaborative Research Centre TRR 326 GAUS, project number 444845124. M.L.\ was also supported by an NWO Vidi grant and an MPIM postdoc scholarship.

\section{Selmer sections, finite descent and Chabauty--Kim}\label{sec:finite_descent}

We begin by discussing the relation between the $S$-Selmer Section Conjecture~\ref{conj:s-selmer_sc}, Stoll's Conjecture on sufficiency of the finite descent obstruction to strong approximation (Conjecture~\ref{conj:sufficiency_finite_descent}) and the refined Kim's Conjecture~\ref{conj:kim}.

We will adopt the following notation: for a geometrically connected variety~$V$ over a field~$F$, we write~$\Sec(V/F)$ for the set of splittings of the fundamental exact sequence
\[
1 \to \pi^{\et}_1(V_{\Fbar}) \to \pi^{\et}_1(V) \to G_F \to 1
\]
up to $\pi^{\et}_1(V_{\Fbar})$-conjugacy. We also write $\Sec(V/F)_{\ab}$ for the set of sections of the geometrically abelian fundamental exact sequence
\[
1 \to \pi_1^{\ab}(V_{\Fbar}) \to \pi_1^{\gab}(V) \to G_F \to 1
\]
given by pushing out the fundamental exact sequence along the abelianisation map~$\pi^{\et}_1(V_{\Fbar})\to\pi_1^{\ab}(V_{\Fbar})\coloneqq\pi_1^{\et}(V_{\Fbar})^{\ab}$. If~$x\in V(F)$ is an $F$-rational point, we write~$\kappa(x)\in\Sec(V/F)$ for the induced section, and~$\kappa_{\ab}(x)\in\Sec(V/F)_{\ab}$ for the abelian section obtained via pushout.
\smallskip

Throughout this section, $K$ will be a number field and~$Y/K$ a hyperbolic curve (or sometimes a curve of non-positive Euler characteristic).\footnote{Our convention is that curves are required to be smooth and geometrically connected, but not necessarily projective.} Our principal object of study will be the set of \emph{Selmer sections} of~$Y/K$, defined as follows.

\begin{defn}
	Let~$Y$ be a hyperbolic curve over a number field~$K$. A section~$s\in\Sec(Y/K)$ (resp.\ abelian section~$s_{\ab}\in\Sec(Y/K)_{\ab}$) is called \emph{Selmer} just when there exists an adelic point~$(y_v)_v\in Y(\bA_K)$ such that
	\[
	s|_{G_v} = \kappa(y_v) \quad \text{resp.} \quad s_{\ab}|_{G_v} = \kappa_{\ab}(y_v)
	\]
	is the (abelian) section coming from~$y_v$ for all places~$v$. We write $\Sec(Y/K)^{\lgeom}\subseteq\Sec(Y/K)$ and $\Sec(Y/K)_{\ab}^{\lgeom}\subseteq\Sec(Y/K)_{\ab}$ for the set of Selmer sections and abelian Selmer sections, respectively.
\end{defn}

Note that the $S$-Selmer section set $\Sec(\cY/\cO_K)^{\Sel}$ of Definition~\ref{def:s-selmer} is exactly the subset of~$\Sec(Y/K)^{\Sel}$ consisting of those Selmer sections for which the adelic point~$(y_v)_v$ can be chosen to be an $S$-adelic point on the model~$\cY$.

It turns out that the adelic point~$(y_v)_v$ associated to a Selmer section is unique, up to a certain ambiguity at infinite places. To explain this, we need the following injectivity result for the local section maps.

\begin{lem}\label{lem:local_injectivity}
	Let~$K_v$ be a local field of characteristic~$0$ and let~$Y/K_v$ be a hyperbolic curve.
	\begin{enumerate}
		\item\label{lempart:local_injectivity_nonarch} If~$K_v$ is non-archimedean, then the section map (resp.\ abelian section map)
		\[
		Y(K_v) \hookrightarrow \Sec(Y/K_v) \quad \text{resp.} \quad Y(K_v) \hookrightarrow \Sec(Y/K_v)_{\ab}
		\]
		is injective.
		\item\label{lempart:local_injectivity_real} If~$K_v=\bR$, then the (abelian) section map factors through an injective map
		\[\pi_0(Y(\bR)) \hookrightarrow \Sec(Y/\bR) \quad \text{resp.} \quad \pi_0(Y(\bR)) \hookrightarrow \Sec(Y/\bR)_{\ab} \,.\]
		\item\label{lempart:local_injectivity_complex} If~$K_v=\bC$, then the (abelian) section map 
		\[ Y(\bC) \to \Sec(Y/\bC)  \quad \text{resp.} \quad Y(\bC) \to \Sec(Y/\bC)_{\ab} \]
		is constant.
	\end{enumerate}
\end{lem}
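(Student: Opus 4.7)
Part (\ref{lempart:local_injectivity_complex}) will be the quickest: $G_\bC$ is trivial, so the local fundamental exact sequence splits uniquely up to conjugation, and $\Sec(Y/\bC)_{(\ab)}$ is a singleton. The real content is in (\ref{lempart:local_injectivity_nonarch}) and (\ref{lempart:local_injectivity_real}), which I will handle separately.

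For (\ref{lempart:local_injectivity_nonarch}) my first step will be to reduce to the abelian case: the pushout $\Sec(Y/K_v)\to\Sec(Y/K_v)_\ab$ intertwines $\kappa$ with $\kappa_\ab$, and since conjugation by $\pi^{\et}_1(Y_{\Kbar_v})$ acts trivially on its abelian quotient, any $\pi^{\et}_1(Y_{\Kbar_v})$-conjugacy $\kappa(y_1)\sim\kappa(y_2)$ forces the actual equality $\kappa_\ab(y_1)=\kappa_\ab(y_2)$. So it will suffice to prove injectivity of $\kappa_\ab$, and for this I will factor it through the (generalized) Jacobian $J$ of $Y$ as
\[
Y(K_v) \xrightarrow{\mathrm{AJ}} J(K_v) \xrightarrow{\delta} H^1(G_v, TJ),
\]
where the target identifies with the $H^1$-torsor $\Sec(Y/K_v)_\ab$ once a base section is chosen. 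The Abel--Jacobi map is injective for hyperbolic $Y$: an equality $\mathrm{AJ}(y_1)=\mathrm{AJ}(y_2)$ would yield a degree-one rational function on the smooth compactification of $Y$ with zeroes and poles confined to the cusp divisor, contradicting the Euler-characteristic condition. The Kummer map $\delta$ is injective by Mattuck's theorem, since $J(K_v)$ is a topologically finitely generated abelian group with no nonzero divisible subgroup.

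In part (\ref{lempart:local_injectivity_real}) I will have to prove both that the section map factors through $\pi_0(Y(\bR))$ and that it is injective on $\pi_0$. The factoring will come from a path argument: given $y_0,y_1$ in the same connected component, a topological path $\gamma\colon[0,1]\to Y(\bR)$ is $G_\bR$-equivariant (each $\gamma(t)$ is real), so it lifts to an étale path between the underlying geometric points which conjugates $\kappa(y_0)$ to $\kappa(y_1)$ by an element of $\pi^{\et}_1(Y_\bC)$; the abelian version is then automatic. For injectivity on $\pi_0$ I will appeal to the real section conjecture for geometrically connected smooth $\bR$-varieties (due in various forms to Mochizuki, P\'al and Wickelgren), which asserts that $\pi_0(Y(\bR))\to\Sec(Y/\bR)$ is a bijection whenever $Y(\bR)\neq\emptyset$. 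The abelian variant will follow from the same Abel--Jacobi plus Kummer factorisation as in (\ref{lempart:local_injectivity_nonarch}), combined with the fact that $\pi_0(J(\bR))\hookrightarrow H^1(G_\bR, TJ)$ via Kummer for real (semi-)abelian varieties, plus a direct check that distinct real components of a hyperbolic curve have distinct images in $\pi_0(J(\bR))$.

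The main obstacle will be the injectivity assertion in the real case: it is genuinely deeper than (\ref{lempart:local_injectivity_nonarch}) and rests on non-trivial real-anabelian input, either cited or re-derived. The abelian real version in particular requires the analysis of connected components under the (semi-)abelian Albanese for hyperbolic curves, which I expect to be the most delicate step of the proof.
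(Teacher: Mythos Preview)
Your approach coincides with the paper's at the structural level: part~(\ref{lempart:local_injectivity_complex}) is trivial; part~(\ref{lempart:local_injectivity_nonarch}) is reduced to the abelian statement and then proved via the generalized Jacobian plus the Kummer map (this is exactly the content of \cite[Proposition~73]{stix:evidence_SC}, which the paper simply cites); and part~(\ref{lempart:local_injectivity_real}) combines a path argument for the factoring through $\pi_0$ with Wickelgren's real-anabelian theorem for injectivity.

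Two points deserve correction or sharpening. First, in your Abel--Jacobi step the phrasing is backwards: if $\mathrm{AJ}(y_1)=\mathrm{AJ}(y_2)$ in the generalized Jacobian, the resulting degree-one rational function has its zero and pole at $y_1,y_2\in Y$, \emph{not} at the cusps; the cusp condition is that the function is congruent to~$1$ along the cusp divisor. The contradiction then runs as you intend (degree one forces $\bar Y\cong\bP^1$, and a M\"obius transformation cannot equal~$1$ at three or more points). Second, and more substantively, your proposed ``direct check'' that distinct real components of $Y$ map to distinct components of $J(\bR)$ is not routine in the hyperbolic generality you need---this is precisely the content of \cite[Theorem~1.1]{wickelgren:real-SC}, which the paper cites outright. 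You correctly flag this as the most delicate step, but your sketch does not contain an argument for it; in practice you should cite Wickelgren rather than claim a direct verification.
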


\begin{proof}
	Part~\eqref{lempart:local_injectivity_complex} is trivial; for part~\eqref{lempart:local_injectivity_nonarch} it suffices to prove that the abelian section map is injective, which is \cite[Proposition~73]{stix:evidence_SC}. For part~\eqref{lempart:local_injectivity_real}, the section map factors through~$\pi_0(Y(\bR))$ e.g.\ by the discussion in \cite[\S16.1]{stix:evidence_SC}, so it suffices to prove injectivity of the abelian section map
	\[
	\pi_0(Y(\bR)) \to \Sec(Y/\bR)_{\ab} \,.
	\]
	This follows from a theorem of Wickelgren \cite[Theorem~1.1]{wickelgren:real-SC}. (The injectivity of the whole section map $\pi_0(Y(\bR)) \to \Sec(Y/\bR)$ was previously proved by Mochizuki \cite{mochizuki:topics}.)
\end{proof}

Now let us write
\[
Y(\bA_K)_{\bullet} = \prod_{v\nmid\infty}^{\prime}Y(K_v)\times \prod_{\text{$v$ real}} \pi_0(Y(K_v))
\]
for the set of \emph{modified adelic points} of~$Y$~\cite{stoll:finite_descent,stoll:finite_descent_errata}. It follows from Lemma~\ref{lem:local_injectivity} that the adelic point~$(y_v)_v\in Y(\bA_K)$ associated to a Selmer section or abelian Selmer section is unique when viewed as an element of~$Y(\bA_K)_\bullet$. Thus the assignment $s\mapsto (y_v)_v$ defines functions
\[
\loc\colon \Sec(Y/K)^{\lgeom} \to Y(\bA_K)_\bullet \quad\text{and}\quad \loc\colon \Sec(Y/K)_{\ab}^{\lgeom} \to Y(\bA_K)_\bullet \,,
\]
which we call the \emph{localisation map} for (abelian) Selmer sections. We will frequently use the following result regarding these localisation maps, a consequence of a remarkable new theorem due to Porowski.
\begin{thm}\label{thm:injectivity}
	Let~$Y/K$ be a curve of non-positive Euler characteristic, and let~$\fP$ be a set of finite places of~$K$ of Dirichlet density~$1$. Then the localisation maps
	\[
	\loc\colon \Sec(Y/K)^{\lgeom} \to \prod_{v\in\fP}Y(K_v) \quad\text{and}\quad \loc\colon \Sec(Y/K)_{\ab}^{\lgeom} \to \prod_{v\in\fP}Y(K_v)
	\]
	are injective.
\end{thm}
\begin{proof}
	We have, by definition, a commuting square
	\begin{center}
	\begin{tikzcd}
		\Sec(Y/K)^{\lgeom} \arrow[r,"\loc"]\arrow[d,hook] & \prod_{v\in\fP}Y(K_v) \arrow[d] \\
		\Sec(Y/K) \arrow[r] & \prod_{v\in\fP}\Sec(Y_{K_v}/K_v)
	\end{tikzcd}
	\end{center}
	in which the bottom horizontal map is given by restriction to the decomposition groups in~$\fP$. This map is injective by \cite[Theorem~1.1]{porowski:local_conjugacy}, and hence so is the localisation map on the top.
	
	For abelian sections, the same argument works, using \cite[Proposition~3.2]{saidi:injectivity_localisation} for the injectivity of the bottom horizontal map. (The statement in \cite{saidi:injectivity_localisation} requires that $\Sec(Y/K)\neq\emptyset$, but injectivity is vacuously true otherwise.) One can alternatively pass to a finite extension of~$K$, embed~$Y$ inside its semiabelian Jacobian, and use \cite[Theorem~1.1]{porowski:local_conjugacy} again.
\end{proof}

\subsection{Finite descent}
\label{ss:finite_descent}

The $S$-Selmer section set is closely related to the finite descent obstruction. If~$(P,\dG)$ is a pair consisting of a finite \'etale $K$-group scheme~$\dG$ and a (right) $\dG$-torsor~$P$ over~$Y$ in the \'etale topology, then we say that a modified adelic point $y\in Y(\bA_K)_{\bullet}$ \emph{survives $(P,\dG)$} just when there is a continuous Galois cocycle $\xi\colon G_K\to \dG(\Kbar)$ such that~$y$ lies in the image of $P_\xi(\bA_K)_{\bullet} \to Y(\bA_K)_{\bullet}$ where~$P_\xi$ denotes the $\xi$-twisted torsor \cite[Definition~5.2]{stoll:finite_descent}.

\begin{defn}[{\cite[Definition~5.4]{stoll:finite_descent}\footnote{Stoll only defines this set in the case that~$Y$ is projective, but the same definition carries over in general.}}]\label{def: descent obstruction}
	We say that $y \in Y(\bA_K)_{\bullet}$ survives the \emph{finite descent obstruction} if it survives every pair $(P,\dG)$. We say it survives the \emph{finite abelian descent obstruction} if it survives every pair $(P,\dG)$ where $\dG$ is abelian.
\end{defn}

The sets of modified adelic points surviving the various descent obstructions are denoted by
\[
Y(\bA_K)_{\bullet}^{\fcov} \subseteq Y(\bA_K)_{\bullet}^{\fab} \subseteq Y(\bA_K)_{\bullet} \,.
\]
The set of rational points $Y(K)$ is contained in each of these sets.

We will also need to consider a third obstruction locus\footnote{Our thanks to Brendan Creutz for pointing out this subtlety.}. We say that a pair~$(P,\dG)$ has \emph{abelian geometric monodromy} just when the image of the induced map~$\pi^{\et}_1(Y_{\Kbar})\to\dG(\Kbar)$ is abelian. We write~$Y(\bA_K)_\bullet^{\fgab}$ for the set of points~$y\in Y(\bA_K)_\bullet$ which survive every pair~$(P,\dG)$ with abelian geometric monodromy. ($Y(\bA_K)_\bullet^{\fgab}$ is called the \emph{finite geometrically abelian descent locus}). This is situated among the other descent loci as
\[
Y(K) \subseteq Y(\bA_K)_{\bullet}^{\fcov} \subseteq Y(\bA_K)_{\bullet}^{\fgab} \subseteq Y(\bA_K)_{\bullet}^{\fab} \subseteq Y(\bA_K)_{\bullet} \,.
\]
\begin{lem}
	\label{lem:fgab-vs-ab}
	If~$Y(K)\neq\emptyset$, then the inclusion $Y(\bA_K)_\bullet^{\fgab}\subseteq Y(\bA_K)_\bullet^{\fab}$ is an equality.
	\begin{proof}
		Suppose that~$y\in Y(\bA_K)_\bullet^{\fab}$ and that~$P=(P,\dG)$ has abelian geometric monodromy. Since~$Y$ has a rational point, \cite[Lemma~$5.5'$]{stoll:finite_descent_errata} implies that there is some twist~$P_\chi=(P_\chi,\dG_\chi)$ of~$P$ which has a connected component~$P_0$ which is geometrically connected. This means that~$P_0$ is a torsor under a subgroup-scheme $\dG_0\subseteq\dG_\chi$. Since~$(P,\dG)$ had abelian geometric monodromy, so too does~$(P_0,\dG_0)$; since~$P_0$ is geometrically connected, this implies that~$\dG_0$ is actually abelian. By assumption, $y$ survives~$(P_0,\dG_0)$, so it survives $(P_\chi,\dG_\chi)$ \cite[Lemma~5.3(1)]{stoll:finite_descent} and hence $(P,\dG)$ \cite[Lemma~5.3(5)]{stoll:finite_descent}. So we have shown that any~$y\in Y(\bA_K)_\bullet^{\fab}$ survives all pairs with abelian geometric monodromy, i.e.\ lies in~$Y(\bA_K)_\bullet^{\fgab}$.
	\end{proof}
\end{lem}

\begin{rem}
	We do not know of any examples where the inclusion $Y(\bA_K)_\bullet^{\fgab}\subseteq Y(\bA_K)_\bullet^{\fab}$ is strict.
\end{rem}

Harari and Stix showed that the finite descent obstruction is closely related to the Selmer section set. Combining their work with Porowski's, we obtain

\begin{thm}\label{thm:image_of_localisation}
	Let~$Y$ be a hyperbolic curve over a number field~$K$. Then:
	\begin{enumerate}
		\item \label{item:image_of_localisation_fcov}
		the localisation map~$\loc\colon\Sec(Y/K)^{\lgeom}\to Y(\bA_K)_\bullet$ is a bijection onto the finite descent locus~$Y(\bA_K)_\bullet^{\fcov}$; and
		\item \label{item:image_of_localisation_fab}
		the abelian localisation map~$\loc\colon\Sec(Y/K)_{\ab}^{\lgeom}\to Y(\bA_K)_\bullet$ is a bijection onto the finite geometrically abelian descent locus~$Y(\bA_K)_\bullet^{\fgab}$.
	\end{enumerate}
	\begin{proof}
		Given Theorem~\ref{thm:injectivity}, we only need to verify that the images of the localisation maps are the claimed descent loci. This is a particular case of the equivalence (i)$\Leftrightarrow$(iii) in \cite[Theorem~11]{harari-stix:descent} (cf.\ the cohomological formulation of the finite descent locus in \cite[\S5]{stoll:finite_descent}). For $Y(\bA_K)^{\fcov}_{\bullet}$ we apply the theorem with $\overline{U} = 1$, while for $Y(\bA_K)^{\fgab}_{\bullet}$ we take $\overline{U}$ to be the first derived subgroup of $\pi_1^{\et}(Y_{\overline{K}})$ and observe that $(P,\dG)$ has abelian geometric monodromy iff the restriction of the class $[P] \in \rH^1(Y, \dG)$ to an element of $\rH^1(\overline{U}, \dG(\overline{K})) = \Hom(\overline{U},\dG(\overline{K}))/\dG(\overline{K})$ is trivial.
	\end{proof}
\end{thm}

Theorem~\ref{thm:image_of_localisation} implies in particular that the $S$-Selmer Section Conjecture~\ref{conj:s-selmer_sc} is equivalent to the sufficiency of the finite descent obstruction to strong approximation~\ref{conj:sufficiency_finite_descent}.

\begin{rem}
	Let us explicitly describe the procedure for lifting an adelic point in the finite descent locus to a Selmer section used in \cite{harari-stix:descent}. Let~$Y/K$ be a hyperbolic curve. There is a bijection between sections of the fundamental exact sequence for~$Y$ up to $\pi_1(Y_\Kbar)$-conjugacy and $K$-forms of the universal covering\footnote{A $K$-form of the universal covering is a pro-finite \'etale covering $\tilde Y\to Y$ whose base change to~$\Kbar$ is a universal covering $\tilde Y_\Kbar\to Y_\Kbar$.} of~$Y_\Kbar$ up to isomorphism, see \cite[\S2.5]{stix:evidence_SC}. This bijection takes a section~$s$ to the pro-covering $\tilde Y\to Y$ corresponding to the closed subgroup $\im(s)\subseteq\pi_1(Y)$. The section comes from a $K$-rational point~$y$ if and only if~$y$ lifts to a $K$-rational point on~$\tilde Y$.
	
	Reformulated in this manner, lifting an adelic point~$(y_v)_v\in\cY(\bA_K)^{\fcov}_\bullet$ to a section amounts to finding a $K$-form of the universal covering~$\tilde Y\to Y$ for which each~$y_v$ lifts to a $K_v$-rational point on~$\tilde Y$. We explain how to do this under the simplifying assumption that $Y(K)\neq\emptyset$. This assumption ensures that there exists some $K$-form of the universal covering~$\tilde Y_0\to Y$. This can be written as an inverse limit of coverings $P_i\to Y$ which become Galois when base-changed to~$\Kbar$, and which are thus torsors under finite \'etale group schemes $\dG_i/K$. If~$(y_v)_v\in\cY(\bA_K)^{\fcov}_\bullet$ lies in the finite descent locus, then for each~$i$, the torsor $P_i\to Y$ has only finitely many twists such that~$(y_v)_v$ lifts to an adelic point of~$P_i$. Indeed, the set of all twists is parametrised by the set $\rH^1(G_K,\dG_i)$, while the fibre of $P_i\to Y$ over~$y_v$ is a $\dG_i$-torsor over~$\Spec(K_v)$, so is classified by an element $[P_{i,y_v}]\in\rH^1(G_v,\dG_i)$. The twists of~$P_i$ to which~$y_v$ lifts are exactly those elements of $\rH^1(G_K,\dG_i)$ whose image in~$\rH^1(G_v,\dG_i)$ is equal to~$[P_{i,y_v}]$. Because the map $\rH^1(G_K,\dG_i)\to\prod_v\rH^1(G_v,\dG_i)$ has finite fibres by Hermite--Minkowski, it follows as claimed that $P_i$ has only finitely many twists to which $(y_v)_v$ lifts.
	
	Let~$T_i$ denote the set of twists of~$P_i$ to which the point $(y_v)_v$ lifts. We have just seen that each~$T_i$ is finite, and it is non-empty by assumption that~$(y_v)_v$ lies in the finite descent locus. If~$P_i$ dominates~$P_j$, then each twist of~$P_i$ determines a twist of~$P_j$, and hence there is an induced function $T_i\to T_j$. Taken together, the~$T_i$ form a cofiltered system of non-empty finite sets, which implies that the inverse limit~$\varprojlim_iT_i$ is non-empty. In other words, there is a compatible system of twists~$P'_i$ of the~$P_i$ to which $(y_v)_v$ lifts.
	
	Now on the one hand, the $P'_i$ fit together into a cofiltered inverse system, and the pro-covering $\tilde Y=\varprojlim_iP'_i\to Y$ is a $K$-form of the universal covering of~$Y_\Kbar$. On the other hand, each point $y_v\in Y(K_v)$ lifts to a compatible sequence of $K_v$-points in the $P'_i$ (again because a cofiltered inverse limit of finite non-empty sets is non-empty). This means that the localisation of the section corresponding to $\tilde Y \to Y$ is the local section coming from~$y_v$.
\end{rem}

\subsubsection{Theorem of the diagonal}

We will need to use one key property of the finite abelian descent set in what follows, which in the projective case is due to Stoll.

\begin{thm}[cf.\ {\cite[Theorem~8.2]{stoll:finite_descent}}]\label{thm:diagonal_theorem}
	Let~$Y$ be a curve of non-positive Euler characteristic over a number field~$K$, and let~$Z\subset Y$ be a finite closed subscheme. Then the image of~$Z(\bA_K)$ in~$Y(\bA_K)_\bullet$ meets the finite abelian descent locus $Y(\bA_K)_\bullet^{\fab}$ in~$Z(K)$. More generally, if $\fP$ is a set of places of $K$ of Dirichlet density~$1$ and $y\in Y(\bA_K)_\bullet^{\fab}$ is such that $y_v\in Z(K_v)$ for all $v \in \fP$, then~$y\in Z(K)$.
\end{thm}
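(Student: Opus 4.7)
The plan is to reduce to Stoll's theorem \cite[Theorem~8.2]{stoll:finite_descent} for projective curves of genus~$\geq 1$, and to adapt its proof to the density-1 setting. The key is to organize the argument according to the smooth compactification~$X$ of~$Y$: when~$X$ has genus~$\geq 1$, a reduction to the projective case is available; when~$X\cong\bP^1_K$ (so~$Y$ is genus-zero affine), one runs the analogous argument using the toric generalized Albanese.

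\textbf{Reduction from affine to projective.} Let~$X$ denote the smooth compactification of~$Y$, and let~$\bar Z\subset X$ be the scheme-theoretic closure of~$Z$, which remains a finite closed subscheme. Any finite abelian étale cover~$P\to X$ restricts to a finite abelian étale cover~$P|_Y\to Y$ with the same Galois group, and the formation $P\mapsto P|_Y$ is compatible with twisting by Galois cocycles. Hence the adelic projection $\pi\colon Y(\bA_K)_\bullet\to X(\bA_K)_\bullet$ maps $Y(\bA_K)_\bullet^{\fab}$ into $X(\bA_K)_\bullet^{\fab}$. Writing $\bar y=\pi(y)$, the hypothesis gives $\bar y\in X(\bA_K)_\bullet^{\fab}$ with $\bar y_v\in\bar Z(K_v)$ for $v\in\fP$. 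Assuming the projective case of the theorem for $(X,\bar Z)$ when~$X$ has genus~$\geq 1$, one deduces $\bar y\in\bar Z(K)$, and the constraint $y_v\in Y(K_v)$ (true for every~$v$ by construction) forces $\bar y\in Z(K)$, whence $y\in Z(K)$.

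\textbf{Projective case with density 1, and genus-zero exception.} The projective case itself requires a density-1 refinement of Stoll's proof. Stoll embeds~$Y$ into its Jacobian~$J$ via the Albanese map and extracts the abelian descent information as Kummer-theoretic constraints on the image of~$y$ in $J(\bA_K)_\bullet$; Chebotarev's density theorem is already an inherent part of the machinery, so I would dig through the proof to show that local data at a density-1 set~$\fP$ of places suffices to run the argument. The remaining case is $X\cong\bP^1_K$ (necessarily $Y=\bP^1_K\smallsetminus D$ with $\deg D\geq 2$); here the Jacobian of~$X$ is trivial, but the generalized Albanese of~$Y$ is a torus~$T$ (a form of $\bG_m^{\deg D-1}$), and Stoll's analysis goes through with Kummer theory for~$T$ (i.e.\ for $\bG_m^r$) in place of the Kummer theory for~$J$.

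\textbf{Main obstacle.} The principal difficulty is verifying the density-1 refinement: Stoll states and proves his Theorem~8.2 with local data at every place, and while the density-1 strengthening is morally the ``correct'' version for a Chebotarev-based argument, one has to check carefully that no step covertly requires data at every place. The torus-Albanese argument for the genus-zero affine case is less delicate, amounting to a parallel translation of Stoll's setup, but still requires care in matching up the Kummer sequences and the corresponding Selmer finiteness statements.
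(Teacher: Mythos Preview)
Your plan matches the paper's strategy: reduce to the smooth compactification, cite Stoll in genus~$\geq 1$, and handle genus~$0$ via Kummer theory for a torus. A few points of comparison are worth making. First, your ``main obstacle'' is not one: the paper simply cites \cite[Theorem~8.2]{stoll:finite_descent} for the projective genus~$\geq 1$ case, as Stoll's argument already accommodates the density-$1$ hypothesis. Second, for the genus-zero case the paper does not work with the generalized Albanese torus of~$Y$ directly; instead it adds a base-change reduction (if the theorem holds for~$Y_L$ then it holds for~$Y$) so that one may assume the cusps are rational and enlarge~$Y$ to~$\bG_m$ itself, where the Kummer-theoretic argument is carried out by hand. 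Third, your reduction step ``$\bar y\in Z(K)\Rightarrow y\in Z(K)$'' has a small gap at real places, since $\pi_0(Y(\bR))\to\pi_0(X(\bR))$ need not be injective; the paper closes this by proving that $Y(\bA_K)_\bullet^{\fab}\hookrightarrow X(\bA_K)_\bullet^{\fab}$ is injective, which in turn rests on a ``multiplicity one'' lemma using Sa\"idi's injectivity of the restriction map on $\rH^1(G_K,\pi_1^{\ab}(Y_{\Kbar}))$. None of these are serious obstructions to your outline, but they are the places where the paper invests its effort.
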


\begin{rem}
	Since the finite descent set~$Y(\bA_K)_\bullet^{\fcov}$ is contained in~$Y(\bA_K)_\bullet^{\fab}$, the analogous statement to Theorem~\ref{thm:diagonal_theorem} holds \emph{a posteriori} for~$Y(\bA_K)_\bullet^{\fcov}$. This latter statement is all we will actually need here, but we prove the stronger statement since it may be of use in obstruction theory.
\end{rem}

We call Theorem~\ref{thm:diagonal_theorem} the \emph{theorem of the diagonal}, since it says that a point in $Z(\bA_K)=\prod_vZ(K_v)$ which is unobstructed by the finite descent obstruction in~$Y$ lies diagonally in the product (with the usual caveats at infinite places).

In the proof, we will frequently use the fact that we can safely ignore the components of elements of $Y(\bA_K)_\bullet^\fab$ at a sparse set of places~$v$.

\begin{lem}\label{lem:multiplicity_one}
	Let~$Y/K$ be a curve of non-positive Euler characteristic, and let~$\fP$ be a set of finite places of~$K$ of Dirichlet density~$1$. Then the projection map
	\[
	Y(\bA_K)_\bullet^{\fab} \to \prod_{v\in\fP}Y(K_v)
	\]
	is injective.
\end{lem}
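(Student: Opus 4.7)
The plan is to lift the two competing modified adelic points to abelian Selmer sections and then deduce equality via a Chebotarev density argument. Fix $y, y' \in Y(\bA_K)_\bullet^{\fab}$ agreeing at every $v \in \fP$. By Theorem~\ref{thm:image_of_localisation}(2) one may lift them to abelian Selmer sections $s, s' \in \Sec(Y/K)_{\ab}^{\lgeom}$ with $\loc(s) = y$ and $\loc(s') = y'$. Because the local components of a Selmer section are by definition the abelian sections coming from the corresponding local point, for each $v \in \fP$ both $s|_{G_v}$ and $s'|_{G_v}$ equal $\kappa_{\ab}(y_v) = \kappa_{\ab}(y'_v)$, so $s|_{G_v} = s'|_{G_v}$ in $\Sec(Y_{K_v}/K_v)_{\ab}$ for every $v \in \fP$.

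The set $\Sec(Y/K)_{\ab}$, assumed non-empty, is a torsor under the continuous Galois cohomology group $H^1(G_K, \pi_1^{\ab}(Y_{\Kbar}))$, so $s$ and $s'$ differ by a class $c \in H^1(G_K, \pi_1^{\ab}(Y_{\Kbar}))$ which, by the preceding paragraph, restricts to zero in $H^1(G_v, \pi_1^{\ab}(Y_{\Kbar}))$ for every $v \in \fP$. The lemma therefore reduces to the assertion that any continuous cohomology class with coefficients in the profinite Galois module $M := \pi_1^{\ab}(Y_{\Kbar})$---which is unramified outside a fixed finite set of places---that vanishes locally at every $v$ in a density-$1$ set of finite places must itself be zero. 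Granted this, $c = 0$ forces $s = s'$, and hence $y = \loc(s) = \loc(s') = y'$.

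To establish the vanishing claim I would reduce to finite coefficients: writing $M = \varprojlim_n M/n$, the continuous cohomology commutes with this inverse limit (the $\varprojlim{}^1$-term vanishes because the groups $H^0(G_K, M/n)$ are finite, hence trivially Mittag--Leffler), so it suffices to kill classes valued in each finite quotient $M/n$. For such a finite Galois module, passing to a finite Galois extension $L/K$ trivialising the action reduces via inflation--restriction to two subcases, both handled by the Chebotarev density theorem: (i) a $\Gal(L/K)$-equivariant homomorphism $G_L \to M/n$ whose kernel contains Frobenius at a density-$1$ set of primes of $L$, which must be trivial because Frobenius conjugacy classes are dense; and (ii) a residual class in the finite group $H^1(\Gal(L/K), M/n)$, detected by its restrictions to decomposition subgroups at primes unramified in $L$, and hence killed by Chebotarev.

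The main obstacle I anticipate is executing this profinite Chebotarev step cleanly, in particular ensuring that the hypothesis of local triviality on $\fP$ passes uniformly through every finite layer $M/n$ without losing density. A subsidiary point is that Lemma~\ref{lem:local_injectivity} is formulated for hyperbolic curves whereas the present lemma allows the borderline cases of Euler characteristic zero (elliptic curves and $\bG_m$); the requisite injectivity of the abelian section map at a finite place in those cases is standard Kummer theory on the (generalised) Jacobian, so the reduction to the cohomological statement remains intact.
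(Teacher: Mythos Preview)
Your overall architecture matches the paper's: lift $y,y'$ to abelian Selmer sections via Theorem~\ref{thm:image_of_localisation}, show the sections agree by proving injectivity of
\[
\rH^1(G_K,\pi_1^{\ab}(Y_{\Kbar})) \to \prod_{v\in\fP}\rH^1(G_v,\pi_1^{\ab}(Y_{\Kbar_v})),
\]
then conclude $y=y'$. The paper simply cites Sa\"idi \cite[Proposition~3.2]{saidi:injectivity_localisation} for this injectivity; you attempt to prove it directly via a Chebotarev argument. Your step~(i) (killing the image in $\rH^1(G_L,A)$ via density of Frobenii) is fine.

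The gap is in step~(ii). You assert that a class in $\rH^1(\Gal(L/K),A)$ is ``detected by its restrictions to decomposition subgroups at primes unramified in $L$, and hence killed by Chebotarev.'' Since unramified decomposition groups are cyclic, this amounts to claiming that restriction to all cyclic subgroups is injective on $\rH^1$ of a finite group with finite coefficients. This is \emph{false} in general. Take $G=(\bZ/8)^\times\cong(\bZ/2)^2$ acting naturally on $A=\bZ/8$ (i.e.\ $A=\mu_8$): one computes $\rH^1(G,A)\cong\bZ/2$, and the nontrivial class --- represented for instance by the cocycle $\xi(3)=0$, $\xi(5)=4$, $\xi(7)=4$ --- restricts to a coboundary on each of the three order-$2$ subgroups. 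This is exactly the cohomological shadow of the Grunwald--Wang obstruction, and it shows that your level-by-level argument cannot succeed as written.

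So the obstacle is not the one you anticipated (density passing through the tower $M/n$); rather, injectivity already fails at individual finite levels. What one needs is that these Grunwald--Wang-type classes do not assemble into a compatible system in $\varprojlim_n\rH^1(G_K,M/n)$, and establishing this uses the specific structure of $M=\pi_1^{\ab}(Y_{\Kbar})$ as the Tate module of a semiabelian variety. That is the content of Sa\"idi's result, and it is not a pure Chebotarev statement. Your remark about Lemma~\ref{lem:local_injectivity} being stated only for hyperbolic curves is well taken; the extension to the Euler-characteristic-zero cases is indeed routine Kummer theory, but strictly speaking the paper's own invocation of that lemma here has the same looseness.
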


\begin{proof}
	If~$Y(K)\neq\emptyset$, then $Y(\bA_K)_\bullet^{\fab}=Y(\bA_K)_\bullet^{\fgab}$, so the result is just the second part of Theorem~\ref{thm:injectivity}, using the second part of Theorem~\ref{thm:image_of_localisation}.
	
	In the general case, consider any two elements~$y,y'\in Y(\bA_K)_\bullet^{\fab}$ such that~$y_v=y'_v$ for all~$v\in\fP$. If~$L/K$ is a finite extension for which $Y(L)\neq\emptyset$, let us write~$y_L$ and~$y'_L$ for the images of~$y$ and~$y'$ in~$Y(\bA_L)_\bullet$. According to \cite[Proposition~5.16]{stoll:finite_descent}\footnote{Stoll only states~\cite[Proposition~5.16]{stoll:finite_descent} for projective~$Y$, but the same proof works in the affine case.}, the points~$y_L$ and~$y'_L$ lie in the finite abelian descent locus $Y(\bA_L)_\bullet^{\fab}$, and $y_{L,w}=y'_{L,w}$ for all places~$w$ of~$L$ in the set~$\fP_L$ of places of~$L$ lying over places in~$\fP$. The set~$\fP_L$ also has density~$1$.
	
	Hence, if~$Y(L)\neq\emptyset$, then we deduce that~$y_L=y'_L$. In other words, $y_{L,w}=y'_{L,w}$ for all finite places~$w$ of~$L$, and~$y_{L,w}$ and~$y'_{L,w}$ lie in the same connected component of~$Y(L_w)$ for all real places~$w$ of~$L$. This implies immediately that~$y_v=y'_v$ for all finite places~$v$ of~$K$, but the argument for real places~$v$ of~$K$ is a little more delicate, since it could happen that every place above~$v$ in~$L$ is complex.
	
	We deal with this by a judicious choice of field~$L$. Fix a real place~$v$ of~$K$. If~$Y(K_v)=\emptyset$ then $Y(\bA_K)_\bullet=\emptyset$ and there is nothing to prove. Otherwise, we may choose a morphism $f\colon Y\to\bP^1$ which is \'etale in a neighbourhood of some $K_v$-point of~$Y$. The image $f(Y(K_v))$ then contains an open interval~$I$ inside $\bP^1(K_v)=\bP^1(\bR)$. Choosing a point~$z$ inside~$I\cap\bP^1(K)$, the scheme-theoretic fibre of~$f$ over~$z$ is a finite $K$-scheme with a $K_v$-point, so has a point defined over a field extension~$L/K$ possessing a real place~$w$ over~$v$. So we deduce that~$y_v$ and $y'_v$ lie in the same component of~$Y(K_v)=Y(L_w)$. Since we know this for all real~$v$, we have shown that~$y=y'$ as elements of~$Y(\bA_K)_\bullet$.
\end{proof}

\begin{cor}\label{cor:injectivity_descent_loci}
	Let~$Y\hookrightarrow X$ be a locally closed immersion of a curve~$Y/K$ of non-positive Euler characteristic in a $K$-variety~$X$. Then the induced map
	\[
	Y(\bA_K)_\bullet^{\fab} \hookrightarrow X(\bA_K)_\bullet^{\fab}
	\]
	is injective (even though the map $Y(\bA_K)_\bullet\to X(\bA_K)_\bullet$ may not be).
\end{cor}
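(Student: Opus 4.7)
The plan is to reduce this directly to Lemma~\ref{lem:multiplicity_one}. The subtlety in the statement is that the ordinary map $Y(\bA_K)_\bullet \to X(\bA_K)_\bullet$ can fail to be injective only through the components at the real places, since a locally closed immersion of schemes induces an injection on $K_v$-points for every completion $K_v$ but need not induce an injection $\pi_0(Y(\bR)) \to \pi_0(X(\bR))$; two different components of $Y(\bR)$ might well lie in the same component of $X(\bR)$. Lemma~\ref{lem:multiplicity_one} is tailor-made to absorb this loss of information at the archimedean places.

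More concretely, suppose $y,y' \in Y(\bA_K)_\bullet^{\fab}$ have the same image in $X(\bA_K)_\bullet$. Let $\fP$ denote the set of \emph{all} finite places of $K$, which has Dirichlet density $1$. For each $v \in \fP$, the locally closed immersion $Y \hookrightarrow X$ induces an injection $Y(K_v) \hookrightarrow X(K_v)$, so the equality $y_v = y'_v$ in $X(K_v)$ forces $y_v = y'_v$ in $Y(K_v)$. Applying Lemma~\ref{lem:multiplicity_one} to this set $\fP$ then yields $y = y'$ as elements of $Y(\bA_K)_\bullet^{\fab}$.

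Since all the genuine work is already carried out in Lemma~\ref{lem:multiplicity_one}, there is essentially no obstacle here; the only thing to verify is that the finite places form a density-$1$ set, which is immediate. The corollary really serves as a convenient packaging of Lemma~\ref{lem:multiplicity_one} for use later in the paper, where one wants to compare abelian descent loci on a hyperbolic curve with those on an ambient variety (e.g.\ a projective compactification or a related space).
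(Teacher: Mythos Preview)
Your proof is correct and essentially identical to the paper's: both argue that equality of the images at all finite places forces equality in $Y(\bA_K)_\bullet^{\fab}$ via Lemma~\ref{lem:multiplicity_one}. The paper packages this as a commuting square with injective left and bottom arrows, but the content is the same.
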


\begin{proof}
	Follows from the commuting square
	\[
	\begin{gathered}[b] 
		\begin{tikzcd}
			Y(\bA_K)_\bullet^{\fab} \arrow[r]\arrow[d,hook] & X(\bA_K)_\bullet^{\fab} \arrow[d] \\
			\prod_{v\nmid\infty}Y(K_v) \arrow[r,hook] & \prod_{v\nmid\infty}X(K_v) \,. & 
		\end{tikzcd} \\[-\dp\strutbox]
	\end{gathered}\qedhere
	\]
\end{proof}

We now begin the proof of \Cref{thm:diagonal_theorem} with some preliminary reductions. Firstly, we may enlarge the curve $Y$ by filling in punctures:

\begin{lem}\label{lem: theorem of the diagonal enlarge curve}
	Let $X$ be a curve over $K$ containing $Y$ as an open subset. If \Cref{thm:diagonal_theorem} holds for $X$, then it also holds for $Y$.
\end{lem}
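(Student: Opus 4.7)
Let $Z \subset Y_{\cl}$ be a finite closed subscheme, $\fP$ a set of places of $K$ of Dirichlet density $1$, and $y \in Y(\bA_K)_\bullet^{\fab}$ a point with $y_v \in Z(K_v)$ for all $v \in \fP$. The goal is to deduce $y \in Z(K)$ by transferring the problem to $X$ and applying Theorem~\ref{thm:diagonal_theorem} for $X$.

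\emph{Step 1: Transfer the subscheme.} Since $Z$ is finite over $\Spec K$, the composite $Z \hookrightarrow Y \hookrightarrow X$ is proper and a monomorphism, hence a closed immersion. Thus $Z$ is a finite closed subscheme of $X_{\cl}$ too, so it makes sense to apply the hypothesis (Theorem~\ref{thm:diagonal_theorem} for $X$) to the pair $(X,Z)$.

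\emph{Step 2: Transfer the adelic point.} The open immersion $Y \hookrightarrow X$ induces a natural map $Y(\bA_K)_\bullet \to X(\bA_K)_\bullet$ which restricts to a map $Y(\bA_K)_\bullet^{\fab} \to X(\bA_K)_\bullet^{\fab}$, and this restriction is \emph{injective} by Corollary~\ref{cor:injectivity_descent_loci} (which applies because $Y$ has non-positive Euler characteristic). Denoting by $y' \in X(\bA_K)_\bullet^{\fab}$ the image of $y$, we have $y'_v = y_v \in Z(K_v)$ for all $v \in \fP$. Applying Theorem~\ref{thm:diagonal_theorem} for $X$ to $y'$ yields a point $z \in Z(K)$ whose diagonal image in $X(\bA_K)_\bullet$ equals $y'$.

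\emph{Step 3: Pull the conclusion back to $Y$.} Since $Z \subset Y$, we have $z \in Y(K)$, and its diagonal image $\iota_Y(z) \in Y(\bA_K)_\bullet$ lies in $Y(\bA_K)_\bullet^{\fcov} \subseteq Y(\bA_K)_\bullet^{\fab}$. Both $y$ and $\iota_Y(z)$ are elements of $Y(\bA_K)_\bullet^{\fab}$ mapping to the same element $y' = \iota_X(z)$ of $X(\bA_K)_\bullet^{\fab}$. By the injectivity from Step 2, $y = \iota_Y(z)$, i.e.\ $y \in Z(K)$, as required.

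\textbf{Expected obstacle.} The argument is essentially formal once Corollary~\ref{cor:injectivity_descent_loci} is in hand. The only subtlety worth flagging is that the naive push-forward $Y(\bA_K)_\bullet \to X(\bA_K)_\bullet$ need not itself be injective (at real places, components of $Y(\bR)$ may merge inside $X(\bR)$); this is precisely why the proof cannot conclude $y = \iota_Y(z)$ by a place-by-place inspection and instead must invoke the finite-abelian-descent injectivity of Corollary~\ref{cor:injectivity_descent_loci}, which in turn rests on Sa\"\i di's injectivity theorem via Lemma~\ref{lem:multiplicity_one}.
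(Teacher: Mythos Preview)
Your proof is correct and takes essentially the same approach as the paper's: both use Corollary~\ref{cor:injectivity_descent_loci} to regard $Y(\bA_K)_\bullet^{\fab}$ as a subset of $X(\bA_K)_\bullet^{\fab}$ and then apply the hypothesis for $X$. Your Step~3 spells out in more detail what the paper compresses into the phrase ``we can regard $y$ as an element of $X(\bA_K)_\bullet^{\fab}$ without ambiguity,'' but the substance is identical.
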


\begin{proof}
	Let~$y\in Y(\bA_K)_\bullet^{\fab}$ be such that $y_v\in Z(K_v)$ for all~$v\in\fP$. By Corollary~\ref{cor:injectivity_descent_loci} we have
	\[
	Y(\bA_K)_\bullet^{\fab} \subseteq X(\bA_K)_\bullet^{\fab} \,,
	\]
	so we can regard~$y$ as an element of~$X(\bA_K)_\bullet^{\fab}$ without ambiguity. If Theorem~\ref{thm:diagonal_theorem} holds for~$X$ then we deduce that $y$ lies in $Z(K)$ and are done.
\end{proof}

Secondly, we are free to enlarge the base field:

\begin{lem}
	\label{lem: theorem of the diagonal scalar extension}
	Let $L/K$ be a finite extension. If \Cref{thm:diagonal_theorem} holds for $Y_L$, then it also holds for $Y$.
\end{lem}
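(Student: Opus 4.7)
The plan is to reduce to the theorem of the diagonal over $L$ by scalar extension, apply the assumed result for $Y_L$, and then Galois-descend the resulting $L$-point to a $K$-point. By enlarging $L$, I may assume $L/K$ is finite Galois. Let $y \in Y(\bA_K)_\bullet^{\fab}$ with $y_v \in Z(K_v)$ for all $v$ in a set $\fP$ of places of $K$ of Dirichlet density~$1$.

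The key technical step, and the one I expect to be the main obstacle, is to verify that the scalar extension $y_L \in Y_L(\bA_L)_\bullet$ still lies in the abelian finite descent locus $Y_L(\bA_L)_\bullet^{\fab}$. Given any abelian pair $(P', \dG')$ on $Y_L$, I would apply the Weil-restriction trick already used in the proof of Lemma~\ref{lem:localisation_image_descent}: set $\dG_1 := \Res^L_K \dG'$, which is again abelian, and $P_1 := Y \times_{\Res^L_K Y_L} \Res^L_K P'$, a $\dG_1$-torsor over $Y$. The counit of adjunction provides a morphism $(P_{1,L}, \dG_{1,L}) \to (P', \dG')$ over $L$, so that survival of $y$ over $(P_1, \dG_1)$ forces survival of $y_L$ over $(P', \dG')$ by \cite[Lemma~5.3(1)~\&~(5)]{stoll:finite_descent}.

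The remaining hypotheses transport routinely to $L$: the set $\fP_L$ of places of $L$ above $\fP$ has Dirichlet density~$1$ in $L$, and for $w \in \fP_L$ lying above $v \in \fP$ we have $y_w = y_v \in Z(K_v) \subseteq Z_L(L_w)$. Applying the theorem of the diagonal for $Y_L$ then yields $y_L \in Z_L(L)$; equivalently, $y_L$ is the diagonal embedding of some $L$-point $z \in Z(L) \subseteq Y(L)$.

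Finally, I descend $z$ to a $K$-point via Chebotarev. Pick a finite place $v$ of $K$ that splits completely in $L$; then $\Gal(L/K)$ acts simply transitively on the set of places $w \mid v$, and for every such $w$ the natural identification $L_w \cong K_v$ sends the $L_w$-component $z$ of $y_L$ to $y_v \in Y(K_v)$. Comparing the identifications at two places $w$ and $w' = \sigma(w)$ forces $\sigma(z) = z$ in $Y(L)$ for every $\sigma \in \Gal(L/K)$, so $z \in Z(L)^{\Gal(L/K)} = Z(K)$. A last local-injectivity argument (automatic at finite places, and given at real places by Lemma~\ref{lem:local_injectivity}\eqref{lempart:local_injectivity_real}) then confirms that $y$ is itself the diagonal embedding of $z$ in $Y(\bA_K)_\bullet$, whence $y \in Z(K)$ as required.
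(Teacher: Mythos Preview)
Your approach matches the paper's, but two small gaps need attention.

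First, the opening reduction ``by enlarging $L$, I may assume $L/K$ is finite Galois'' is unjustified: the hypothesis is that Theorem~\ref{thm:diagonal_theorem} holds for $Y_L$, and replacing $L$ by its Galois closure $L'$ would require knowing it holds for $Y_{L'}$, which is the wrong direction of the implication you are proving. Fortunately the Galois hypothesis is unnecessary. Your descent at a completely split place $v$ works for arbitrary finite $L/K$: the places $w\mid v$ are in bijection with the $K$-embeddings $L\hookrightarrow K_v$, and each such embedding sends $z$ to $y_v$. Two distinct $K$-embeddings of $K(z)$ must differ on some coordinate of $z$, so this forces $K(z)=K$, i.e.\ $z\in Z(K)$.

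Second, your final appeal to Lemma~\ref{lem:local_injectivity}\eqref{lempart:local_injectivity_real} does not cover a real place $v$ of $K$ all of whose extensions to $L$ are complex: there the map $\pi_0(Y(K_v))\to *$ carries no information, so local injectivity alone cannot identify $y_v$ with the component of $z$. The right tool is Lemma~\ref{lem:multiplicity_one}: both $y$ and the diagonal image of $z$ lie in $Y(\bA_K)_\bullet^{\fab}$ and agree at every finite place, hence coincide. The paper's proof packages your Weil-restriction survival argument together with this injectivity into a single inclusion $Y(\bA_K)_\bullet^{\fab}\subseteq Y(\bA_L)_\bullet^{\fab}$, obtained from \cite[Proposition~5.15]{stoll:finite_descent} and Corollary~\ref{cor:injectivity_descent_loci} (applied to $Y\hookrightarrow\Res^L_K Y_L$), and then concludes directly via $Z(L)\cap Y(\bA_K)_\bullet^{\fab}=Z(K)$.
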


\begin{proof}
	Let~$y\in Y(\bA_K)_\bullet^{\fab}$ be such that $y_v\in Z(K_v)$ for all~$v\in\fP$. Applying Corollary~\ref{cor:injectivity_descent_loci} to the closed embedding $Y\hookrightarrow\Res^L_KY_L$ and using \cite[Proposition~5.15]{stoll:finite_descent} (cf.\ the correction in \cite[\S2(3)]{stoll:finite_descent_errata}) we have
	\[
	Y(\bA_K)_\bullet^{\fab} \subseteq Y(\bA_L)_\bullet^{\fab} \,.
	\]
	The set~$\fP_L$ of places of~$L$ lying over places in~$\fP$ is again of Dirichlet density~$1$. If Theorem~\ref{thm:diagonal_theorem} holds for~$Y_L$, we deduce that~$y$ lies in
	\[
	Z(L)\cap Y(\bA_K)_\bullet^{\fab} = Z(K)
	\]
	and are done.
\end{proof}

\begin{proof}[Proof of Theorem~\ref{thm:diagonal_theorem}]
	Using the two preceding reduction steps, we see that it suffices to deal with the cases that $Y=\bG_m$ or~$Y$ is smooth projective of genus at least~$1$. The latter case was dealt with in \cite[Theorem~8.2]{stoll:finite_descent}; we deal with the former by following the strategy of \cite[Proposition~3.6]{stoll:finite_descent}. For this, we need a preliminary cohomological calculation.
	
	\begin{lem}\label{lem:bounded_torsion}
		Let~$m$ denote the number of roots of unity in~$K$. Then for any positive integer~$N$, the cohomology group $\rH^1(G_{K(\mu_N)|K},\mu_N)$ is $2m$-torsion.
		\begin{proof}
			By the Chinese Remainder Theorem, it suffices to show that the cohomology group $\rH^1(G_{K(\mu_N)|K},\mu_{p^r})$ is $2m$-torsion for all prime powers~$p^r$ dividing~$N$. We have the inflation-restriction exact sequence
			\[
			0 \to \rH^1(G_{K(\mu_{p^r})|K},\mu_{p^r}) \to \rH^1(G_{K(\mu_N)|K},\mu_{p^r}) \to \rH^1(G_{K(\mu_N)|K(\mu_{p^r})},\mu_{p^r})^{G_{K(\mu_N)|K}},
			\]
			in which the left-hand group is $2$-torsion by \cite[Proposition~9.1.6]{nsw}. On the other hand, the right-hand group is $\Hom(G_{K(\mu_N)|K(\mu_{p^r})},\mu_{p^r}(K))$, which is $m$-torsion since~$\mu_{p^r}(K)$ is. Thus the middle group is $2m$-torsion.
		\end{proof}
	\end{lem}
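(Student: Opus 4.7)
My plan is to reduce to prime-power coefficients via the Chinese Remainder Theorem and then analyse an inflation-restriction sequence for the tower $K \subseteq K(\mu_{p^r}) \subseteq K(\mu_N)$. The decomposition $\mu_N \cong \bigoplus_{p^r \| N}\mu_{p^r}$ induces a corresponding direct sum decomposition of $\rH^1(G_{K(\mu_N)|K},\mu_N)$, so it suffices to bound the exponent of $\rH^1(G_{K(\mu_N)|K}, \mu_{p^r})$ by $2m$ for each prime power $p^r \mid N$ separately.

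For a fixed $p^r$, I would apply inflation-restriction with respect to the normal subgroup $G_{K(\mu_N)|K(\mu_{p^r})}$, which fixes $\mu_{p^r}$ by definition, to obtain an exact sequence
\[
0 \to \rH^1(G_{K(\mu_{p^r})|K}, \mu_{p^r}) \to \rH^1(G_{K(\mu_N)|K}, \mu_{p^r}) \to \rH^1(G_{K(\mu_N)|K(\mu_{p^r})}, \mu_{p^r})^{G_{K(\mu_{p^r})|K}}.
\]
The rightmost group simplifies: since the inner subgroup acts trivially on $\mu_{p^r}$, this $\rH^1$ is just $\Hom(G_{K(\mu_N)|K(\mu_{p^r})}, \mu_{p^r})$. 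Moreover, $G_{K(\mu_N)|K}$ is abelian (being a subgroup of $(\bZ/N\bZ)^{\times}$), so the conjugation action of $G_{K(\mu_{p^r})|K}$ on the inner subgroup is trivial, and Galois-invariance of a homomorphism forces its image to lie in $\mu_{p^r}(K)$. Since $|\mu_{p^r}(K)|$ divides $m$, this piece is killed by $m$.

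For the kernel $\rH^1(G_{K(\mu_{p^r})|K}, \mu_{p^r})$, I would quote the standard fact that the first Galois cohomology of a cyclotomic extension with its natural cyclotomic module is annihilated by $2$ (the relevant reference is Proposition~9.1.6 of Neukirch--Schmidt--Wingberg; the case $p=2$ is the reason one cannot do better than $2$-torsion uniformly). Combining the $2$-torsion kernel with the $m$-torsion cokernel-of-inflation across the short exact sequence then forces the middle term to be annihilated by $2m$, as required. The only non-formal input is the $2$-torsion bound for the cyclotomic $\rH^1$, which is the main obstacle; once this black-box result is accepted, everything else is routine group cohomology.
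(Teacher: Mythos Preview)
Your proof is correct and follows essentially the same approach as the paper: the same Chinese Remainder reduction, the same inflation-restriction sequence for the tower $K\subseteq K(\mu_{p^r})\subseteq K(\mu_N)$, the same citation of \cite[Proposition~9.1.6]{nsw} for the $2$-torsion bound on the kernel, and the same identification of the image term with homomorphisms into $\mu_{p^r}(K)$. Your explanation of why the invariants force the image into $\mu_{p^r}(K)$ (using abelianness of $G_{K(\mu_N)|K}$) is slightly more explicit than the paper's, but the argument is the same.
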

	
	Now we complete the proof of Theorem~\ref{thm:diagonal_theorem} in the case~$Y=\bG_m$. We may suppose, enlarging~$K$ if necessary using \Cref{lem: theorem of the diagonal scalar extension}, that~$Z^{\redu}$ consists of a finite set of~$K$-rational points. Let~$y\in\bG_m(\bA_K)_\bullet^{\fab}$ be such that $y_v\in Z(K_v)=Z(K)$ for all~$v\in\fP$. Since~$y$ lies in the finite abelian descent set, by \Cref{lem:fgab-vs-ab} and \Cref{thm:image_of_localisation}\,(2) we have that the image of~$y$ in $\prod_v \rH^1(G_v, \hat\bZ(1))$ is the image of an element $\hat y=(y_N)_N\in\rH^1(G_K,\hat\bZ(1))=\varprojlim_N(K^\times/K^{\times N})$ under the localisation map
	\[
	\varprojlim_N(K^\times/K^{\times N}) \to \prod_v\varprojlim_N(K_v^\times/K_v^{\times N}) \,.
	\]
	Here we are using that $\Sec(\bG_m/K)_{\ab} = \rH^1(G_K, \hat\bZ(1))$.
	
	Now fix a positive integer~$N$, and consider the fields $K_z:=K(\mu_N,\sqrt[N]{y_N/z})$ for~$z\in Z(K)$, where~$y_N\in K^\times/K^{\times N}$ is the image of~$\hat y$. If~$v\in\fP$ is a place which splits completely in~$K(\mu_N)$, then it splits completely in some~$K_z$, since $y_N$ is congruent to~$y_v\in Z(K_v)=Z(K)$ modulo~$K_v^{\times N}$. By the Chebotarev density theorem, the density of places which split completely in a fixed~$K_z$ is $\frac1{[K_z:K]}$, while the density of primes which split completely in~$K(\mu_N)$ is~$\frac1{[K(\mu_N):K]}$, from which we deduce the inequality
	\[
	\sum_{z\in Z(K)}\frac1{[K_z:K]} \geq \frac1{[K(\mu_N):K]} \,.
	\] 
	Multiplying the inequality by $[K(\mu_N) : K]$, we obtain
	\[ \sum_{z\in Z(K)}\frac1{[K_z:K(\mu_N)]} \geq 1. \]
	So there is some~$z\in Z(K)$ such that~$[K_z:K(\mu_N)]\leq\# Z(K)$. In other words, there is some $z\in Z(K)$ such that~$y_N/z$ has order at most~$\# Z(K)$ in~$K(\mu_N)^\times/K(\mu_N)^{\times N}$. So $(y_N/z)^{\# Z(K)!}$ is an $N$th power in~$K(\mu_N)$. By \Cref{lem:bounded_torsion} and inflation-restriction, the kernel of \[K^\times/K^{\times N}\to K(\mu_N)^\times/K(\mu_N)^{\times N}\] is $2m$-torsion, where $m$ is the number of roots of unity in~$K$. We deduce that $(y_N/z)^M$ is an $N$th power in~$K$, where~$M \coloneqq 2m\cdot\# Z(K)!$.
	
	So we have proved that for every positive integer~$N$ there exists some~$z\in Z(K)$ such that~$(y_N/z)^M$ is an $N$th power in~$K$, where~$M$ does not depend on~$N$. If $(y_N/z)^M \in K^{\times N}$ then the same is true for any factor of~$N$, thus for each~$z \in Z(K)$ either there is a largest integer~$A$ such that $(y_{A!}/z)^M \in K^{\times A!}$ or we have $(y_{A!}/z)^M \in K^{\times A!}$ for all~$A$. 
	Since the set~$Z(K)$ is finite, the latter must hold for at least one~$z \in Z(K)$, so there is a single~$z$ such that $(y_N/z)^M \in K^{\times N}$ for all~$N$. We deduce that $(\hat y/z)^M=1$. But the torsion subgroup of $\varprojlim_N(K^\times/K^{\times N})$ is the group~$\mu_\infty(K)$ of roots of unity in~$K$, so we deduce that~$\hat y\in\mu_\infty(K)\cdot Z(K)$ and in particular $\hat y \in K^\times$. Since we have $\hat y \equiv y_v \bmod K_v^{\times N}$ for all $N$ and all $v$, the points~$y_v$ must all be equal to~$\hat y$ and so in fact $\hat y\in Z(K)$ and we are done.
\end{proof}

\subsection{Refined Chabauty--Kim}

For the remainder of this section, we restrict to the case~$K=\bQ$, and assume moreover that the smooth compactification of our hyperbolic curve~$Y$ has a rational point. We recall in outline the refined Chabauty--Kim method of \cite{BD:refined}.

Let~$\pi_1^{\et,\bQ_p}(Y_{\Qbar},b)$ be the $\bQ_p$-pro-unipotent \'etale fundamental group of~$Y$ based at either a rational basepoint or rational tangential basepoint~$b$, and let~$\pi_1^{\et,\bQ_p}(Y_{\Qbar},b)\twoheadrightarrow\Pi$ be a $G_\bQ$-equivariant quotient. For every rational point~$y\in Y(\bQ)$, we write~${}_y\Pi_b$ for the pushout of the $\bQ_p$-pro-unipotent \'etale path torsor $\pi_1^{\et,\bQ_p}(Y_{\Qbar};b,y)$ along the map $\pi_1^{\et,\bQ_p}(Y_{\Qbar},b)\to\Pi$. The map
\[
j\colon Y(\bQ) \to \rH^1(G_\bQ,\Pi(\bQ_p)) \quad , \quad y\mapsto [{}_y\Pi_b]
\]
is called the \emph{pro-unipotent Kummer map}. Exactly the same construction also provides a local pro-unipotent Kummer map
\[
j_\ell\colon Y(\bQ_\ell) \to \rH^1(G_\ell,\Pi(\bQ_p)) \quad , \quad y\mapsto [{}_y\Pi_b]
\]
for each prime~$\ell$.

The local non-abelian continuous cohomology sets~$\rH^1(G_\ell,\Pi(\bQ_p))$ are the~$\bQ_p$-points of certain local non-abelian continuous cohomology schemes~$\rH^1(G_\ell,\Pi)$, which are affine $\bQ_p$-schemes that are of finite type if~$\Pi$ is finite-dimensional \cite[Proposition~2]{kim:motivic}. The corresponding global object is called a \emph{Selmer scheme}. Several different versions of this object appear in the literature; we will primarily be interested in the \emph{refined Selmer scheme} of \cite{BD:refined}.

\begin{defn}
	\label{def: refined Selmer scheme}
	Suppose that~$S$ is a finite set of primes not containing~$p$, and that~$\cY/\bZ_S$ is a regular $S$-integral model of~$Y$. We define the \emph{refined Selmer scheme}
	\[
	\Sel_{S,\Pi}^{\min}(\cY)
	\]
	to be the $\bQ_p$-scheme parametrising continuous $G_\bQ$-cohomology classes~$\xi \in \rH^1(G_{\bQ}, \Pi)$ such that
	\[
	\xi|_{G_\ell} \in
	\begin{cases}
		j_\ell(\cY(\bZ_\ell))^{\Zar} & \text{if~$\ell\notin S$} \\
		j_\ell(Y(\bQ_\ell))^{\Zar} & \text{if~$\ell\in S$}
	\end{cases}
	\]
	for all primes~$\ell$ (including~$p$), where~$(-)^{\Zar}$ denotes Zariski-closure inside $\rH^1(G_\ell,\Pi)$.
	
	The refined Selmer scheme sits in a commuting square
	\begin{equation}\label{diag:kim_square_refined}
		\begin{tikzcd}
			\cY(\bZ_S) \arrow[r,hook]\arrow[d,"j_S"] & \cY(\bZ_p) \arrow[d,"j_p"] \\
			\Sel_{S,\Pi}^{\min}(\cY)(\bQ_p) \arrow[r,"\loc_p"] & \rH^1(G_p,\Pi(\bQ_p))
		\end{tikzcd}
	\end{equation}
	where the localisation map~$\loc_p$ is just restriction to the subgroup $G_p\subseteq G_\bQ$. The \emph{refined Chabauty--Kim locus}
	\[
	\cY(\bZ_p)_{S,\Pi}^{\min}\subseteq\cY(\bZ_p)
	\]
	is defined to be the set of all points~$y\in\cY(\bZ_p)$ such that~$j_p(y)$ lies in the scheme-theoretic image of~$\loc_p\colon\Sel_{S,\Pi}^{\min}(\cY)\to\rH^1(G_p,\Pi)$. In the special case that $\Pi$ is the whole fundamental group~$\pi_1^{\et,\bQ_p}(Y_{\Qbar},b)$, we write~$\cY(\bZ_p)_{S,\infty}^{\min}$ for the refined Chabauty--Kim locus. Recall that the Refined Kim's Conjecture~\ref{conj:kim} postulates that $\cY(\bZ_S) = \cY(\bZ_p)_{S,\infty}^{\min}$.
\end{defn}

\begin{rem}
	We recall for the benefit of the reader a few facts that are known about the images of the local Kummer maps above.
	\begin{itemize}
		\item If~$\ell\notin S\cup\{p\}$, then $j_\ell(\cY(\bZ_\ell))$ is finite \cite[Corollary~0.2]{kim-tamagawa}. If moreover~$\ell$ is a prime of good reduction for~$(\cY,b)$ -- meaning that~$\cY_{\bZ_\ell}$ is the complement of an \'etale divisor in a smooth proper $\bZ_\ell$-scheme and~$b$ is~$\bZ_\ell$-integral -- then
		\[
		j_\ell(\cY(\bZ_\ell)) \subseteq \{*\}
		\]
		where~$*$ is the basepoint in~$\rH^1(G_\ell,\Pi)$. (It can happen that~$\cY(\bZ_\ell)$ is empty when~$b$ is tangential, in which case the containment is strict.)
		\item If~$\ell\in S$, then the Zariski-closure of~$j_\ell(Y(\bQ_\ell))$ has dimension~$\leq1$ \cite[Proposition~1.2.1(2)]{BD:refined}.
		\item If~$\ell=p$ and~$p$ is a prime of good reduction for~$(\cY,b)$, then
		\[
		j_p(\cY(\bZ_p))^{\Zar} \subseteq \rH^1_f(G_p,\Pi)
		\]
		is contained in the subscheme parametrising crystalline~$\Pi$-torsors. We have equality provided~$\cY(\bZ_p)\neq\emptyset$ \cite[Theorem~1]{kim:albanese}\cite[Proposition~1.4]{kim:tangential}.
	\end{itemize}
\end{rem}

\begin{rem}
	\label{rem: unrefined CK}
	We will later see one other kind of Selmer scheme. Namely, suppose that the model~$\cY/\bZ_S$ of the hyperbolic curve~$Y$ is the complement of an \'etale divisor in a smooth proper $\bZ_S$-scheme and that the basepoint~$b$ is $\bZ_S$-integral. Then we also have the \emph{Bloch--Kato Selmer scheme}
	\[
	\rH^1_{f,S}(G_\bQ,\Pi)
	\]
	parametrising $\Pi$-torsors with a compatible continuous action of~$G_\bQ$ which is unramified outside~$S\cup\{p\}$ and crystalline at~$p$ (assumed $\notin S$). One can use the Bloch--Kato Selmer scheme to cut out a (non-refined) \emph{Chabauty--Kim locus} $\cY(\bZ_p)_{S,\Pi}\subseteq\cY(\bZ_p)$ via the commuting square
	\begin{equation}\label{diag:kim_square_unrefined}
		\begin{tikzcd}
			\cY(\bZ_S) \arrow[r,hook]\arrow[d,"j_S"] & \cY(\bZ_p) \arrow[d,"j_p"] \\
			\rH^1_{f,S}(G_\bQ,\Pi(\bQ_p)) \arrow[r,"\loc_p"] & \rH^1_f(G_p,\Pi(\bQ_p)) \,,
		\end{tikzcd}
	\end{equation}
	namely the set of points~$y\in\cY(\bZ_p)$ such that $j_p(y)$ lies in the scheme-theoretic image of~$\loc_p\colon\rH^1_{f,S}(G_\bQ,\Pi)\to\rH^1_f(G_p,\Pi)$. This is the Chabauty--Kim locus as defined in \cite{kim:motivic,kim:albanese}.
	
	Under the above assumptions, we have the containment
	\[
	\Sel_{S,\Pi}^{\min}(\cY)\subseteq\rH^1_{f,S}(G_\bQ,\Pi) \,,
	\]
	and hence the Chabauty--Kim locus contains the refined Chabauty--Kim locus: $\cY(\bZ_p)_{S,\Pi}\supseteq\cY(\bZ_p)_{S,\Pi}^{\min}$.
\end{rem}

\subsubsection{Finite descent and refined Chabauty--Kim}

The $\bQ_p$-pro-unipotent \'etale fundamental group~$\pi_1^{\et,\bQ_p}(Y_{\Qbar},b)$ is the Mal\u cev completion of the profinite \'etale fundamental group~$\pi_1^{\et}(Y_{\Qbar},b)$, meaning that there is a continuous group homomorphism
\[
\pi_1^\et(Y_{\Qbar},b) \to \pi_1^{\et,\bQ_p}(Y_{\Qbar},b)(\bQ_p)
\]
which is initial among continuous group homomorphisms from $\pi_1^\et(Y_{\Qbar},b)$ into the $\bQ_p$-points of pro-unipotent groups over~$\bQ_p$. Moreover, for any rational point~$y\in Y(\bQ)$ the $\bQ_p$-pro-unipotent \'etale path torsor $\pi_1^{\et,\bQ_p}(Y_{\Qbar};b,y)$ is the pushout of the profinite \'etale path torsor $\pi_1^{\et}(Y_{\Qbar};b,y)$ from~$\pi_1^{\et}(Y_{\Qbar},b)$ to~$\pi_1^{\et,\bQ_p}(Y_{\Qbar},b)$, meaning that we have
\[
\pi_1^{\et,\bQ_p}(Y_{\Qbar};b,y)(R) = \pi_1^{\et}(Y_{\Qbar};b,y)\times^{\pi_1^{\et}(Y_{\Qbar},b)}\pi_1^{\et,\bQ_p}(Y_{\Qbar},b)(R)
\]
for all~$\bQ_p$-algebras~$R$. The discussion in \cite[\S2.4]{stix:evidence_SC} shows that the class of the $\pi_1^{\et}(Y_{\Qbar},b)$-torsor $\pi_1^{\et}(Y_{\Qbar};b,y)$, endowed with its natural $G_\bQ$-action induced from the action on~$Y_{\Qbar}$, is equal to the class of the section~$\kappa(y)$ under the canonical identifications
\[
\Sec(Y/\bQ) = \rH^1(G_\bQ,\pi_1^{\et}(Y_{\Qbar},b)) = \{\text{$G_\bQ$-equivariant $\pi_1^{\et}(Y_{\Qbar},b)$-torsors}\}/\sim \,.
\]
Hence the composite
\[
Y(\bQ) \xrightarrow\kappa \rH^1(G_\bQ,\pi_1^{\et}(Y_{\Qbar},b)) \to \rH^1(G_\bQ,\Pi(\bQ_p))
\]
is equal to the pro-unipotent Kummer map~$j$. Similarly, the local pro-unipotent Kummer map~$j_\ell$ is the composite
\[
Y(\bQ_\ell) \xrightarrow{\kappa_\ell} \rH^1(G_\ell,\pi_1^{\et}(Y_{\Qbar},b)) \to \rH^1(G_\ell,\Pi(\bQ_p)) \,.
\]

As an easy consequence, we obtain the following compatibility between the finite descent locus and the refined Chabauty--Kim locus.

\begin{lem}\label{lem:project_into_kim}
	Let~$Y/\bQ$ be a hyperbolic curve with a rational basepoint $b$ (possibly tangential), let~$\cY/\bZ_S$ be a regular $S$-integral model of~$Y$ and let~$p\notin S$ be prime. Then the image of the projection
	\[
	\cY(\bA_{\bQ,S})_\bullet^{\fcov} \to \cY(\bZ_p)
	\]
	is contained in the refined Chabauty--Kim locus~$\cY(\bZ_p)_{S,\Pi}^{\min}$ for every quotient~$\Pi$ of $\pi_1^{\et}(Y_{\Qbar}, b)$.
\end{lem}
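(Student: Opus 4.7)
The plan is to combine Theorem \ref{thm:image_of_localisation}, which identifies the finite descent locus with the image of the localisation map on Selmer sections, with the observation (made in the paragraphs just preceding the lemma) that the pro-unipotent Kummer map factors through the section map.

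Concretely, suppose $y=(y_\ell)_\ell$ lies in $\cY(\bA_{\bQ,S})_\bullet^\fcov$. By Theorem \ref{thm:image_of_localisation}, there exists a Selmer section $s\in\Sec(Y/\bQ)^\lgeom$ with $\loc(s)=y$; that is, for every finite place $\ell$ we have $s|_{G_\ell}=\kappa(y_\ell)$, where $y_\ell\in\cY(\bZ_\ell)$ for $\ell\notin S$ and $y_\ell\in Y(\bQ_\ell)$ for $\ell\in S$.

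Pushing $s$ forward along the $G_\bQ$-equivariant quotient $\pi_1^\et(Y_\Qbar,b)\to\Pi(\bQ_p)$ yields a class $\xi\in\rH^1(G_\bQ,\Pi(\bQ_p))$. By the factorisation of the pro-unipotent Kummer map through the section map recorded before the lemma, we have $\xi|_{G_\ell}=j_\ell(y_\ell)$ for every finite prime $\ell$. In particular $\xi|_{G_\ell}$ lies in $j_\ell(\cY(\bZ_\ell))^\Zar$ for $\ell\notin S$ and in $j_\ell(Y(\bQ_\ell))^\Zar$ for $\ell\in S$, so $\xi$ defines a $\bQ_p$-point of the refined Selmer scheme $\Sel_{S,\Pi}^{\min}(\cY)$.

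Evaluating at $\ell=p$ then gives $\loc_p(\xi)=j_p(y_p)$, so $j_p(y_p)$ lies in $\loc_p(\Sel_{S,\Pi}^{\min}(\cY)(\bQ_p))$ and hence a fortiori in the $\bQ_p$-points of the scheme-theoretic image of $\loc_p$. This shows that $y_p\in\cY(\bZ_p)_{S,\Pi}^{\min}$, which is the desired conclusion. There is no real obstacle here: the argument is pure bookkeeping once Theorem \ref{thm:image_of_localisation} and the identification of $j$ as the pushout of $\kappa$ are in hand, and the archimedean components of $y$ play no role since the refined Selmer scheme imposes conditions only at finite places.
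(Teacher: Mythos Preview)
Your proof is correct and follows essentially the same approach as the paper's: both invoke Theorem~\ref{thm:image_of_localisation} to lift $y$ to a Selmer section $s$, push $s$ forward to a global class $\xi\in\rH^1(G_\bQ,\Pi(\bQ_p))$, use the factorisation of $j_\ell$ through $\kappa_\ell$ to verify the local conditions defining the refined Selmer scheme, and then read off $y_p\in\cY(\bZ_p)_{S,\Pi}^{\min}$ from $\loc_p(\xi)=j_p(y_p)$.
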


\begin{proof}
	Let $y = (y_v)_v \in \cY(\bA_{\bQ,S})_{\bullet}^{\fcov}$ be an $S$-integral adelic point which survives the finite descent obstruction, and let~$s\in\Sec(\cY/\bZ_S)^{\lgeom}$ be a Selmer section such that~$\loc(s)=y$ by Theorem~\ref{thm:image_of_localisation}. It follows from the above discussion that we have a commuting diagram
	\begin{center}
		\begin{tikzcd}
			{} & \rH^1(G_\bQ,\pi_1^{\et}(Y_{\Qbar},b)) \arrow[d]\arrow[r] & \rH^1(G_\bQ,\Pi(\bQ_p)) \arrow[d] \\
			Y(\bQ_\ell) \arrow[r,"\kappa_\ell"] & \rH^1(G_\ell,\pi_1^{\et}(Y_{\Qbar},b)) \arrow[r] & \rH^1(G_\ell,\Pi(\bQ_p))
		\end{tikzcd}
	\end{center}
	in which the composite along the bottom row is the local pro-unipotent Kummer map~$j_\ell$. So if~$\xi\in\rH^1(G_\bQ,\Pi(\bQ_p))$ is the image of~$s\in\Sec(Y/\bQ)=\rH^1(G_\bQ,\pi_1^{\et}(Y_{\Qbar},b))$, then $\xi|_{G_\ell}=j_\ell(y_\ell)$ for all primes~$\ell$. It follows that~$\xi\in\Sel_{S,\Pi}(\cY)^{\min}(\bQ_p)$, and hence that~$y_p\in\cY(\bZ_p)_{S,\Pi}^{\min}$ as desired.
\end{proof}

\begin{cor}\label{cor:conjecture_implication_2}
	Let~$Y/\bQ$ be a hyperbolic curve and let~$\cY/\bZ_S$ be a regular $S$-integral model of~$Y$ which has an $S$-integral point or $S$-integral tangential point. If Conjecture~\ref{conj:kim} holds for~$(\cY,S,p)$ for all primes~$p$ in a set of Dirichlet density~$1$, then Conjecture~\ref{conj:sufficiency_finite_descent} holds for~$(\cY,S)$.
\end{cor}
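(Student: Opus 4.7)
The plan is to combine \Cref{lem:project_into_kim} with the theorem of the diagonal (\Cref{thm:diagonal_theorem}), taking the full $\bQ_p$-pro-unipotent \'etale fundamental group as the quotient~$\Pi$ throughout.

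Concretely, I would fix a point $y = (y_v)_v \in \cY(\bA_{\bQ,S})_\bullet^{\fcov}$ and apply \Cref{lem:project_into_kim} to the full quotient, concluding that the $p$-component $y_p$ lies in the refined Chabauty--Kim locus $\cY(\bZ_p)_{S,\infty}^{\min}$ for every prime $p \notin S$. Let $\fP$ be the set of primes of Dirichlet density~$1$ on which the refined Kim's Conjecture holds; then $\fP \setminus S$ still has Dirichlet density~$1$, and for each $p \in \fP \setminus S$ the hypothesised \Cref{conj:kim} identifies $\cY(\bZ_p)_{S,\infty}^{\min}$ with $\cY(\bZ_S)$.

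Next I would invoke Faltings' theorem in the projective case and Siegel's theorem in the affine case to see that $\cY(\bZ_S)$ is a finite set, and hence can be viewed as the $\bQ$-points of a finite closed subscheme $Z \subseteq Y_{\cl}$. Combining this with the preceding paragraph, $y_p \in Z(\bQ) \subseteq Z(\bQ_p)$ for every $p \in \fP \setminus S$. Since $\cY(\bA_{\bQ,S})_\bullet^{\fcov} \subseteq Y(\bA_\bQ)_\bullet^{\fcov} \subseteq Y(\bA_\bQ)_\bullet^{\fab}$, the theorem of the diagonal (\Cref{thm:diagonal_theorem}) applied to $y$ and the density-$1$ set $\fP \setminus S$ yields $y \in Z(\bQ) = \cY(\bZ_S)$, embedded diagonally in $\cY(\bA_{\bQ,S})_\bullet$. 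This is exactly the statement of \Cref{conj:sufficiency_finite_descent} for $(\cY,S)$.

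In terms of difficulty, all of the real content has already been assembled elsewhere in the paper: \Cref{lem:project_into_kim} supplies the compatibility between the finite descent locus and the refined Chabauty--Kim locus, the refined Kim's Conjecture provides the pointwise diagonal condition at a density-$1$ set of primes, and the theorem of the diagonal does precisely the job of upgrading that condition to a global rational/$S$-integral point. There is no genuine obstacle beyond lining these three ingredients up; the only minor point to be careful about is to invoke Faltings/Siegel so that $\cY(\bZ_S)$ really is a finite closed subscheme of~$Y_{\cl}$, as required by the hypotheses of \Cref{thm:diagonal_theorem}.
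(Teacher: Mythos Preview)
Your proposal is correct and follows essentially the same approach as the paper: apply \Cref{lem:project_into_kim} at each prime in~$\fP$, use the assumed \Cref{conj:kim} to land in the finite set $\cY(\bZ_S)$, and invoke the theorem of the diagonal to conclude. The paper's proof is slightly terser (it leaves the finiteness of $\cY(\bZ_S)$ and the inclusion $\fcov\subseteq\fab$ implicit), but the logical structure is identical.
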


\begin{proof}
	Suppose that Conjecture~\ref{conj:kim} holds for all primes~$p$ in a set~$\fP$ of Dirichlet density~$1$. Let~$Z\subset Y$ denote the reduced closed subscheme of~$Y$ consisting of the finite set~$\cY(\bZ_S)$ of $S$-integral points. If $y=(y_v)_v\in\cY(\bA_{\bQ,S})_\bullet^{\fcov}$ lies in the finite descent locus, then by Lemma~\ref{lem:project_into_kim} we must have
	\[
	y_p \in \cY(\bZ_p)_{S,\infty}^{\min} = Z(\bQ)
	\]
	for all~$p\in\fP$. So we deduce by the theorem of the diagonal that
	\[
	y\in Y(\bQ)\cap\cY(\bA_{\bQ,S})_\bullet = \cY(\bZ_S)
	\]
	and we are done.
\end{proof}

This completes the proof of Theorem~\ref{thm:main_conjecture_implication}. %

\subsection{Generalisation to number fields}
\label{sec:remarks-number-fields}

Since Kim's Conjecture currently has only officially been postulated over~$\bQ$, we have restricted the discussion of the Chabauty--Kim method and the statement of \Cref{thm:main_conjecture_implication} to this case. Nevertheless, our results admit a straightforward generalisation to general number fields, which we briefly sketch. For a detailed study of the Chabauty--Kim method in this setting we refer to \cite{Dogra:unlikely-intersections}; see also \cite{DC:mixedtate2} for the specific case of the thrice-punctured line.

Given a hyperbolic curve $Y/K$ with regular model $\cY/\cO_{K,S}$, if $p$ is a rational prime not divisible by any prime in~$S$, we have a commuting square which generalises diagram~\eqref{diag:kim_square_refined}:
\begin{equation}\label{diag:kim_square_refined_number_field}
	\begin{tikzcd}
		\cY(\cO_{K,S}) \arrow[r,hook]\arrow[d,"j_S"] & \cY(\cO_K \otimes \bZ_p) = \prod_{v|p} \cY(\cO_v) \arrow[d,"\prod_{v|p} j_v"] \\
		\Sel_{S,\Pi}^{\min}(\cY)(\bQ_p) \arrow[r,"\loc_p"] & \prod_{v|p} \rH^1(G_v,\Pi(\bQ_p)).
	\end{tikzcd}
\end{equation}
The Chabauty--Kim locus $\cY(\cO_K \otimes \bZ_p)_{S,\Pi}^{\min}$ is defined as the set of points $y = (y_v)_{v|p} \in \cY(\cO_K \otimes \bZ_p)$ such that $(j_v(y_v))_{v|p}$ lies in the scheme-theoretic image of $\loc_p$ in the above diagram. For a single place $v$ dividing~$p$, we define $\cY(\cO_v)_{S,\Pi}^{\min}$ as the image of the projection of $\cY(\cO_K \otimes \bZ_p)_{S,\Pi}^{\min}$ to the $v$-component. 
\Cref{lem:project_into_kim} now generalises to the statement that the image of the projection
\[ \cY(\bA_{K,S})_{\bullet}^{\fcov} \to \cY(\cO_K \otimes \bZ_p) = \prod_{v|p} \cY(\cO_v) \]
is contained in $\cY(\cO_K \otimes \bZ_p)_{S,\Pi}^{\min}$, so projecting further to $\cY(\cO_v)$ one lands in the Chabauty--Kim locus $\cY(\cO_v)_{S,\Pi}^{\min}$. From this one obtains the following general version of \Cref{thm:main_conjecture_implication} in the same way as in the case $K = \bQ$ above.

\begin{thm}
	\label{thm:main-thm-number-fields}
	Assume that $\cY(\cO_{K,S}) = \cY(\cO_v)_{S,\infty}^{\min}$ for a Dirichlet-dense set of primes $v$ of $K$. Then the $S$-Selmer Section Conjecture~\ref{conj:s-selmer_sc} holds for $(\cY,S)$.
\end{thm}

One can use \Cref{thm:main-thm-number-fields} to prove instances of the $S$-Selmer Section Conjecture over imaginary quadratic fields.

\begin{rem}
	\label{rem:qzeta3-results}
	When $K$ is an imaginary quadratic field, $S = \emptyset$, and $\cY = \bP^1 \smallsetminus \{0,1,\infty\}$, using essentially the same reasoning as in \cite[§6.1]{BDCKW}, one sees that $\cY(\cO_v)_{\emptyset,1}^{\min} \subseteq Z(K_v)$ for all split primes~$v$ of~$K$, where $Z \subset Y$ is the reduced finite subscheme consisting of the primitive sixth roots of unity. If $K = \bQ(\zeta_3)$ then $\zeta_6$ and $\zeta_6^{-1}$ are indeed $\cO_K$-points of~$\cY$, so we have the equality
	\[ \cY(\bZ[\zeta_3]) = \{\zeta_6, \zeta_6^{-1}\} = \cY(\cO_v)_{\emptyset,1}^{\min}. \]
	Since the split primes of~$K$ have Dirichlet density~1, \Cref{thm:main-thm-number-fields} implies that the Selmer Section Conjecture~\ref{conj:s-selmer_sc} holds for $K = \bQ(\zeta_3)$, $S = \emptyset$, $\cY = \bP^1 \smallsetminus \{0,1,\infty\}$. In fact the inclusion $\cY(\cO_v)_{\emptyset,1}^{\min} \subseteq Z(K_v)$ is enough to deduce the Selmer Section Conjecture, so Conjecture~\ref{conj:s-selmer_sc} holds more generally for any imaginary quadratic field (and $S = \emptyset$). Further results in this direction are contained in forthcoming work by M.L. and Xiang Li. Like \Cref{thm:main_s-selmer_sc}, these instances of the Selmer Section Conjecture were already proved with different methods by Stix \cite[Corollary~6]{stix:birationalSC}.
\end{rem}

\section{Comparison of \'etale and motivic Selmer schemes}
\label{sec: comparison theorem}

The Chabauty--Kim method for~$\cY=\bP^1\smallsetminus\{0,1,\infty\}$ is understood quite explicitly thanks to work of Corwin, Dan-Cohen and Wewers \cite{DCW:mixedtate1,DC:mixedtate2,DCW:explicitCK,CDC:polylog1,CDC:polylog2}. Their work uses a different foundation for the theory, using the motivic fundamental groupoid of the thrice-punctured line in place of the \'etale fundamental groupoid. In this section, we will recall their method and explain how it relates to the Chabauty--Kim method as originally formulated by Kim. We advise the reader to skip this section on a first reading: we will not use the motivic approach in the following sections, and the point of this section is simply to justify why we can import results from the papers of Corwin, Dan-Cohen and Wewers despite the different foundations of the method.\smallskip

Let $S$ be a finite set of primes and denote by $\bZ_S$ the ring of $S$-integers. Let
\[ \MT(\bZ_S, \bQ) \]
be the category of \emph{mixed Tate motives} over $\bZ_S$ with $\bQ$-coefficients~\cite[§1]{deligne-goncharov}. This is a Tannakian category over $\bQ$. Its simple objects are the \emph{Tate motives} $\bQ(n)$ for $n \in \bZ$, which satisfy
\begin{align}
	\label{eq: mixed tate motives ext groups}
	\Ext^1_{\MT(\bZ_S,\bQ)}(\bQ(0), \bQ(n)) = K_{2n-1}(\bZ_S)_{\bQ} = \begin{cases}
		0 & \text{if $n \leq 0$},\\
		\bZ_S^\times \otimes \bQ & \text{if $n=1$},\\
		K_{2n-1}(\bQ) \otimes \bQ &  \text{if $n\geq 2$},
	\end{cases}
\end{align}
and
\[ \Ext^2_{\MT(\bZ_S,\bQ)}(\bQ(0), \bQ(n)) = 0 \quad\text{for all~$n$}\,. \]
A neutral fibre functor for~$\MT(\bZ_S,\bQ)$ is given by the \emph{de Rham realisation functor}
\[
\rho_{\dR} \colon \MT(\bZ_S,\bQ) \to \bQ\mhyphen\mathrm{Vect_f},
\]
where $\bQ\mhyphen\mathrm{Vect_f}$ is the category of finite-dimensional $\bQ$-vector spaces. We write~$G_{\bQ,S}^{\MT}$ for the Tannakian fundamental group of~$\MT(\bZ_S,\bQ)$ based at~$\rho_{\dR}$, which is a semidirect product of~$\bG_m$ by a pro-unipotent group~$U_{\bQ,S}^{\MT}$.

Let $\cY/\bZ_S$ be the thrice-punctured line with generic fibre $Y/\bQ$, and let $b$ be an $S$-integral base point. By this we mean either an $S$-integral point $b \in \cY(\bZ_S)$ or an $S$-integral tangent vector, i.e.\ a nowhere vanishing section of the tangent bundle $y^*\dT_{\bP^1/\bZ_S}$ over $\bZ_S$ at a cusp $y \in \{0,1,\infty\}$.
Deligne and Goncharov \cite[Th\'eor\`eme~4.4]{deligne-goncharov} construct the ``motivic fundamental group'' $\pi_1(Y,b)$ of $\bP^1 \smallsetminus\{0,1,\infty\}$, which is a pro-algebraic group in $\MT(\bZ_S,\bQ)$ (in the sense of \cite[\S5]{deligne:droite-projective}), whose de Rham realisation is the de Rham fundamental group~$\pi_1^{\dR}(Y,b)$. We let~$\Pi$ be a quotient of~$\pi_1(Y,b)$ in the category~$\MT(\bZ_S,\bQ)$. The \emph{motivic Selmer scheme} is defined to be the affine $\bQ$-scheme parametrising $\Pi$-torsors in~$\MT(\bZ_S,\bQ)$ as follows.

\begin{defn}
	For a $\bQ$-algebra~$R$, we write
	\[
	\rH^1(G_{\bQ,S}^{\MT},\Pi^{\dR})(R)
	\]
	for the set of algebraic cocycles $\xi\colon G_{\bQ,S,R}^{\MT}\to\Pi^{\dR}_R$ defined over~$R$, modulo the natural twisting action of~$\Pi^{\dR}(R)$. The functor
	\[
	R \mapsto \rH^1(G_{\bQ,S}^{\MT},\Pi^{\dR})(R)
	\]
	is representable by an affine $\bQ$-scheme, which is called the \emph{motivic Selmer scheme} $\rH^1(G_{\bQ,S}^{\MT},\Pi^{\dR})$. Equivalently (and without reference to the de Rham fibre functor), the motivic Selmer scheme can be described as the affine $\bQ$-scheme representing the functor
	\[
	R \mapsto \{\text{$\Pi$-torsors over~$R$ in~$\MT(\bZ_S,\bQ)$}\}/\text{iso} \,,
	\]
	where~$R$ is viewed as a ring in~$\ind{\MT(\bZ_S,\bQ)}$ in the obvious way (as a direct sum of copies of the unit object).
\end{defn}

This cohomology scheme can be understood quite explicitly \cite[Theorem~A.4]{corwin:mixed_elliptic}\footnote{\cite[Theorem~A.4]{corwin:mixed_elliptic} is only stated in the case that~$\Pi$ is unipotent, i.e.~finite-dimensional. But the same proof works verbatim for a general pro-unipotent quotient~$\Pi$. In particular, this proves that the functor~$\rH^1(G^\MT_{\bQ,S},\Pi^{\dR})$ is representable, a fact which is not immediately obvious when~$\Pi$ is infinite-dimensional. One can also prove representability by using a Mittag--Leffler argument to show that $\rH^1(G^\MT_{\bQ,S},\Pi^{\dR})=\varprojlim_n\rH^1(G^\MT_{\bQ,S},\Pi^{\dR}_n)$ where~$\Pi^{\dR}_n$ is the maximal $n$-step unipotent quotient of~$\Pi^{\dR}$. See e.g.~\cite[Proposition~2.2.6]{betts:effective} for a similar Mittag--Leffler argument.}. Every cohomology class $[\xi]\in\rH^1(G_{\bQ,S}^{\MT},\Pi^{\dR})(R)$ can be represented by a unique cocycle $\xi\colon G_{\bQ,S,R}^{\MT}\to\Pi^{\dR}_R$ whose restriction to~$\bG_{m,R}$ is trivial. The restriction of~$\xi$ to the pro-unipotent part~$U_{S,R}^{\MT}$ is then $\bG_{m,R}$-equivariant, and the assignment $[\xi] \mapsto \xi|_{U_{\bQ,S,R}^{\MT}}$ gives an identification
\[
\rH^1(G_{\bQ,S}^{\MT},\Pi^{\dR}) = \rZ^1(U_{\bQ,S}^{\MT},\Pi^{\dR})^{\bG_m}
\]
between the motivic Selmer scheme and the affine $\bQ$-scheme parametrising $\bG_m$-equivariant cocycles for the action of~$U_{\bQ,S}^{\MT}$ on~$\Pi^{\dR}$. The affine ring~$\cO(U_{\bQ,S}^{\MT})$ of~$U_{\bQ,S}^{\MT}$ is known as the \emph{ring of unipotent motivic periods} \cite[§3.4.1]{brown:integral_points}, and the $p$-adic period map $\cO(U_{\bQ,S}^{\MT})\to\bQ_p$ defines a point in~$U_{\bQ,S}^{\MT}(\bQ_p)$, which is the inverse of the point $\eta_p^\ur$ from \cite[Lemma~2.2.5]{chatzistamatiou-unver:p-adic_periods}. Evaluation at~$(\eta_p^\ur)^{-1}$ defines a morphism
\begin{equation}
	\label{eq: motivic localisation map}
	\ev_p\colon \rH^1(G_{\bQ,S}^{\MT},\Pi^{\dR})_{\bQ_p} = \rZ^1(U_{\bQ,S}^{\MT},\Pi^{\dR})^{\bG_m}_{\bQ_p} \to \Pi^{\dR}_{\bQ_p}
\end{equation}
of affine $\bQ_p$-schemes, also called the motivic localisation map~$\loc_\Pi$ in~\cite[§2.4.2]{CDC:polylog1}.\footnote{\cite{CDC:polylog1} actually defines $\loc_\Pi$ to be evaluation at $\eta_p^{\ur}$ rather than its inverse. However, it is evaluation at~$(\eta_p^{\ur})^{-1}$ that is needed for the theory \cite{CDC:polylog1}, as it is this choice which makes the square~\eqref{diag:motivic_kim_square} below commute.}
\smallskip

For any $S$-integral point~$y\in\cY(\bZ_S)$, Deligne and Goncharov also define a \emph{motivic path torsor} $\pi_1(Y;b,y)$, which is a $\pi_1(Y,b)$-torsor over $\bQ$ in $\MT(\bZ_S,\bQ)$ \cite{deligne-goncharov}. Its pushout along the quotient map $\pi_1(Y,b) \twoheadrightarrow \Pi$ yields a $\Pi$-torsor
\[
{}_y\Pi_b \coloneqq \pi_1(Y;b,y) \overset{\pi_1(Y,b)}{\times} \Pi
\]
over $\bQ$ in $\MT(\bZ_S,\bQ)$, i.e.\ a $\bQ$-point of $\rH^1(G_{\bQ,S}^{\MT},\Pi^{\dR})$. We thus have a \emph{motivic Kummer map}
\[ j_S^{\dR}\colon \cY(\bZ_S) \to \rH^1(G_{\bQ,S}^{\MT},\Pi^{\dR})(\bQ) \quad , \quad y \mapsto [{}_y\Pi_b] \,. \]

Additionally, when~$\Pi=\pi_1(Y,b)$ is the full fundamental group, the affine ring of~$\Pi^{\dR}$ can be identified with the shuffle algebra on the space of differential forms on~$Y$ with at worst logarithmic poles along the boundary \cite[\S2.2.2]{CDC:polylog1}, so one can define a \emph{de Rham Kummer map}
\[
j_p^{\dR} \colon \cY(\bZ_p) \to \pi_1^{\dR}(Y,b)(\bQ_p)
\]
sending a point~$y\in\cY(\bZ_p)$ to the point corresponding to the map
\[
\cO(\pi_1^{\dR}(Y,b)) \to \bQ_p,\quad \omega \mapsto \int_b^y\omega \,,
\]
where $\int_b^y(-)$ denotes the iterated Coleman integral. For a general quotient~$\Pi$, composing with the projection $\pi_1(Y,b)\twoheadrightarrow\Pi$ yields a de Rham Kummer map
\[
j_p^{\dR} \colon \cY(\bZ_p) \to \Pi^{\dR}(\bQ_p) \,.
\]
These two Kummer maps fit into a commuting square \cite[\S2.4.2]{CDC:polylog1}
\begin{equation}\label{diag:motivic_kim_square}
	\begin{tikzcd}
		\cY(\bZ_S) \arrow[r,hook]\arrow[d,"j_S^{\dR}"] & \cY(\bZ_p) \arrow[d,"j^{\dR}_p"] \\
		\rH^1(G_{\bQ,S}^{\MT},\Pi^{\dR})(\bQ_p) \arrow[r,"\ev_p"] & \Pi^{\dR}(\bQ_p) \,,
	\end{tikzcd}
\end{equation}
strongly reminiscent of the commuting square~\eqref{diag:kim_square_unrefined} in the classical Chabauty--Kim method. What we wish to explain carefully in this section is how to identify these two squares with one another, in order that the results of \cite{CDC:polylog1} can be applied to the classical Chabauty--Kim square~\eqref{diag:kim_square_unrefined} and its refined variant~\eqref{diag:kim_square_refined}.

\subsection{\'Etale realisation}

Now let~$p\notin S$ be a prime. We fix an algebraic closure $\Qbar/\bQ$, let $G_{\bQ} \coloneqq \Gal(\Qbar/\bQ)$ and denote by $\Rep_{\bQ_p}^{f,S}(G_\bQ)$ the category of continuous $\bQ_p$-linear $G_{\bQ}$-representations which are unramified outside $S \cup \{p\}$ and crystalline at $p$. This is a Tannakian category over $\bQ_p$. There is a $p$-adic étale realisation functor
\[
\rho_\et\colon \MT(\bZ_S,\bQ) \to \Rep_{\bQ_p}^{f,S}(G_\bQ), \quad M \mapsto M^{\et} \,,
\]
which is an exact $\bQ$-linear tensor-functor. The \'etale realisation functor takes the motivic fundamental group~$\pi_1(Y,b)$ to the $\bQ_p$-pro-unipotent \'etale fundamental group $\pi_1^{\et,\bQ_p}(Y_{\Qbar},b)$, and takes a motivic path torsor~$\pi_1(Y;b,y)$ to the corresponding $\bQ_p$-pro-unipotent \'etale path torsor $\pi_1^{\et,\bQ_p}(Y_{\Qbar};b,y)$ \cite[Th\'eor\`eme~4.4]{deligne-goncharov}. The \'etale realisation~$\Pi^{\et}$, therefore, is a $G_\bQ$-equivariant quotient of~$\pi_1^{\et,\bQ_p}(Y_{\Qbar},b)$ and the \'etale realisation of~${}_y\Pi_b$ is isomorphic to~${}_y\Pi^{\et}_b$ for all~$y\in\cY(\bZ_S)$, i.e.\ the following diagram commutes
\begin{center}
	\begin{tikzcd}
		\cY(\bZ_S) \arrow[r]\arrow[d,equals] & \rH^1(G_{\bQ,S}^{\MT},\Pi^{\dR})(\bQ) \arrow[d,"\rho_\et"] \\
		\cY(\bZ_S) \arrow[r] & \rH^1_{f,S}(G_\bQ,\Pi^{\et})(\bQ_p) \,.
	\end{tikzcd}
\end{center}

Now if~$R$ is a $\bQ_p$-algebra and~$P$ is a $\Pi$-torsor over~$R$ in~$\MT(\bZ_S,\bQ)$, then the \'etale realisation~$P^{\et}$ is a $\Pi^{\et}$-torsor over $R\otimes_{\bQ}\bQ_p$. Base-changing along the multiplication map~$R\otimes_{\bQ}\bQ_p\to R$ yields a $\Pi^{\et}$-torsor over~$R$. This construction provides a morphism
\begin{equation}\label{eq: Selmer scheme comparison}
	\rH^1(G_{\bQ,S}^{\MT},\Pi^{\dR})_{\bQ_p} \to \rH^1_{f,S}(G_\bQ,\Pi^{\et})
\end{equation}
of affine~$\bQ_p$-schemes. The compatibility between the classical Chabauty--Kim method and the motivic Chabauty--Kim method is made precise in the following theorem.

\begin{thm}\label{thm:comparison_selmer_schemes}
	Let $\pi_1(Y,b) \twoheadrightarrow \Pi$ be a quotient in~$\MT(\bZ_S,\bQ)$ and $\Pi^{\et}$ its $p$-adic étale realisation. Then the map \eqref{eq: Selmer scheme comparison} is an isomorphism, fitting into a commuting diagram
	\begin{equation}\label{diag:etale-motivic_comparison_commuting_diagram}
		\begin{tikzcd}[column sep = small]
			{} & \cY(\bZ_S) \arrow[rr,hook]\arrow[dl,"j_S^{\dR}"] \arrow[dd,"j_S"', pos=0.3] && \cY(\bZ_p) \arrow[dl,"j^{\dR}_p"]\arrow[dd,"j_p"] \\
			\rH^1(G_{\bQ,S}^{\MT},\Pi^{\dR})(\bQ_p) \arrow[rr,"\ev_p",pos=0.7, crossing over]\arrow[dr,equal,"\eqref{eq: Selmer scheme comparison}"',"\sim"] && \Pi^{\dR}(\bQ_p) \arrow[dr,equals,"\sim"] & {} \\
			{} & \rH^1_{f,S}(G_\bQ,\Pi^{\et}(\bQ_p)) \arrow[rr,"\loc_p"]  && \rH^1_f(G_p,\Pi^{\et}(\bQ_p))
		\end{tikzcd}
	\end{equation}
	where the identification $\Pi^{\dR}_{\bQ_p}\cong\rH^1_f(G_p,\Pi^{\et})$ is the one coming from the Bloch--Kato exponential \cite[Proposition~1.4]{kim:tangential}.
\end{thm}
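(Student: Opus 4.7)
The plan is to deduce both assertions from the isomorphism of Tannaka groups in \Cref{thm:tannaka_group_comparison} after giving a purely Tannakian reinterpretation of both sides of \eqref{eq: Selmer scheme comparison}. The motivic Selmer scheme $\rH^1(G_{\bQ,S}^{\MT},\Pi^{\dR})$ already parametrises $\Pi$-torsors in $\ind{\MT(\bZ_S,\bQ)}$ by construction. The first step is to interpret the Bloch--Kato Selmer scheme $\rH^1_{f,S}(G_{\bQ},\Pi^{\et})$ analogously, as the affine $\bQ_p$-scheme of $\Pi^{\et}$-torsors in $\ind{\Rep_{\bQ_p}^{\MT,S}(G_{\bQ})}$. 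Containment of the latter in the former is obvious; for the reverse containment, I would show that any $\Pi^{\et}$-torsor with compatible $G_{\bQ}$-action unramified outside $S\cup\{p\}$ and crystalline at $p$ is automatically mixed Tate. This follows by filtering $\Pi^{\et}$ along its lower central series and repeatedly extending by Tate twists, using the known equality $\rH^1_{f,S}(G_{\bQ},\bQ_p(n)) = \Ext^1_{\Rep_{\bQ_p}^{\MT,S}(G_\bQ)}(\bQ_p,\bQ_p(n))$ for $n\geq 1$, which is a consequence of the $K$-theoretic computations recalled in \eqref{eq: mixed tate motives ext groups}.

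With this reinterpretation in hand, the isomorphism in \eqref{eq: Selmer scheme comparison} becomes a formal consequence of the Tannaka formalism. Indeed, the canonical fibre functor on $\Rep_{\bQ_p}^{\MT,S}(G_\bQ)$ pulls back along $\rho_\et$ to $\rho_{\dR}\otimes\bQ_p$, so in particular the object $\Pi^{\dR}_{\bQ_p}$ carries compatible actions of $G_{\bQ,S}^{\MTR}$ and $G_{\bQ,S,\bQ_p}^{\MT}$. The isomorphism $G_{\bQ,S}^{\MTR}\xrightarrow\sim G_{\bQ,S,\bQ_p}^{\MT}$ of \Cref{thm:tannaka_group_comparison} intertwines these two actions, and hence identifies the corresponding affine schemes of (twisted) cocycles with values in $\Pi^{\dR}_{\bQ_p}$, proving the first claim.

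For the commutativity of diagram~\eqref{diag:etale-motivic_comparison_commuting_diagram}, I would verify each face separately. The left-hand face commutes because the \'etale realisation of the motivic path torsor $\pi_1(Y;b,y)$ is the $\bQ_p$-pro-unipotent \'etale path torsor $\pi_1^{\et,\bQ_p}(Y_{\Qbar};b,y)$ \cite[Th\'eor\`eme~4.4]{deligne-goncharov}. The top and bottom faces are the commuting squares \eqref{diag:motivic_kim_square} and \eqref{diag:kim_square_unrefined}. The remaining right-hand face, comparing $\ev_p$ with the composition of $\loc_p$ and the Bloch--Kato exponential identification $\Pi^{\dR}_{\bQ_p}\cong\rH^1_f(G_p,\Pi^{\et})$, encodes the $p$-adic period comparison: the point $(\eta_p^{\ur})^{-1}\in U_{\bQ,S}^{\MT}(\bQ_p)$ is constructed in \cite{chatzistamatiou-unver:p-adic_periods} precisely so that evaluation at it on the motivic side corresponds to the Bloch--Kato exponential of the localisation on the \'etale side, so this face commutes by construction of the identifications involved.

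The main obstacle is of course \Cref{thm:tannaka_group_comparison} itself, which requires showing the étale realisation functor is both fully faithful and essentially surjective onto mixed Tate Galois representations, and whose proof is deferred to the appendix. The odd-prime hypothesis enters only through this theorem; once granted, the identification of Selmer schemes and the commutativity of the diagram are essentially formal consequences of Tannakian duality, combined with the mild verifications above about automatic mixed Tate-ness of crystalline torsors and the compatibilities of the $p$-adic period map with iterated Coleman integration.
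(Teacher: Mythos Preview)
Your overall strategy matches the paper's: reduce to the Tannaka group isomorphism of \Cref{thm:tannaka_group_comparison}, and then verify the commutativity of the prism face by face. However, there is a genuine gap in your treatment of the right-hand square
\[
\begin{tikzcd}
\rH^1(G_{\bQ,S}^{\MT},\Pi^{\dR})(\bQ_p) \arrow[r,"\ev_p"]\arrow[d,"\sim"] & \Pi^{\dR}(\bQ_p) \arrow[d,"\sim"] \\
\rH^1_{f,S}(G_\bQ,\Pi^{\et}(\bQ_p)) \arrow[r,"\loc_p"] & \rH^1_f(G_p,\Pi^{\et}(\bQ_p)) \,.
\end{tikzcd}
\]
You assert that this commutes ``by construction of the identifications involved'', but this is not the case: the point $\eta_p^{\ur}$ is defined in \cite{chatzistamatiou-unver:p-adic_periods} via the Hodge and Frobenius splittings of the weight filtration on mixed Tate filtered $\varphi$-modules, whereas the Bloch--Kato logarithm $\rH^1_f(G_p,\Pi^{\et})\cong\Pi^{\dR}_{\bQ_p}$ is defined via the comparison element $\gamma_{\dR}^{-1}\gamma_{\cris}$ on $\sD_{\dR}$ of a crystalline torsor. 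These constructions are a priori independent, and the paper spends real effort linking them: one must show that for a $\Pi$-torsor $P$ in $\MT(\bZ_S,\bQ)$, the unique Hodge-filtered path $\gamma_{\dR}\in P^{\dR}(\bQ_p)$ represents the cocycle class $[P]$ (i.e.\ the associated cocycle $\xi_{\gamma_{\dR}}$ is trivial on $\bG_m$), and then that $(\eta_p^{\ur})^{-1}(\gamma_{\dR})=\gamma_{\cris}$. The latter uses the explicit description of $\eta_p^{\ur}\varphi(\eta_p^{\ur})^{-1}$ as lying in the torus $\bG_m$. Only then does one obtain $\logBK([P^{\et}|_{G_p}]) = \gamma_{\dR}^{-1}\gamma_{\cris} = \xi_{\gamma_{\dR}}((\eta_p^{\ur})^{-1}) = \ev_p([P])$. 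This computation is the heart of the theorem and cannot be waved away.

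A smaller point: your argument for automatic mixed-Tate-ness of a $\Pi^{\et}$-torsor (via the lower central series and $\Ext^1$ identifications) is plausible over $\bQ_p$ but is stated too loosely to work over a general $\bQ_p$-algebra $R$, which is what is needed for an isomorphism of schemes. The paper instead transports the weight filtration on $\cO(\Pi^{\et})$ to $\cO(P^{\et})$ via any chosen point $\gamma\in P^{\et}(R)$, and checks that the resulting filtration and the identification of graded pieces are independent of $\gamma$, hence $G_\bQ$-stable. This gives the mixed-Tate structure directly, without any inductive cohomological argument.
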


The vertical face of the commuting prism above is the usual (unrefined) Chabauty--Kim square \eqref{diag:kim_square_unrefined}, and the top face is the motivic Chabauty--Kim square \eqref{diag:motivic_kim_square}.

\begin{rem}
	The commutativity of the above diagram implies that the unrefined Chabauty--Kim locus~$\cY(\bZ_p)_{S,\Pi^\et}$ is exactly the set of points~$y\in\cY(\bZ_p)$ such that~$j_p^{\dR}(y)$ lies in the scheme-theoretic image of the evaluation map~$\ev_p$. We remark that this could \emph{a priori} be a different set to the motivic Chabauty--Kim locus of \cite[Definition~2.27]{CDC:polylog1}, which we here denote~$\cY(\bZ_p)_{S,\Pi}^{\mathrm{mot}}$. Indeed, one can consider the universal cocycle evaluation map \cite[Definition~2.20]{CDC:polylog1}
	\[
	\mathfrak{ev}\colon \rH^1(G_{\bQ,S}^{\MT},\Pi^{\dR})\times U_{\bQ,S}^{\MT} \to \Pi^{\dR}\times U_{\bQ,S}^{\MT} \,,
	\]
	and the definition of the motivic Chabauty--Kim locus is equivalent to saying that $\cY(\bZ_p)_{S,\Pi}^{\mathrm{mot}}$ is the set of points~$y\in\cY(\bZ_p)$ such that~$j_p^{\dR}(y)$ lies in the fibre of the scheme-theoretic image of~$\mathfrak{ev}$ over the point~$(\eta_p^\ur)^{-1}\in U_{\bQ,S}^{\MT}(\bQ_p)$. The fibre of the Zariski-closure of the image of~$\mathfrak{ev}$ could \emph{a priori} be larger than the Zariski-closure of the image of~$\ev_p$ (which is the fibre of~$\mathfrak{ev}$ at~$(\eta_p^\ur)^{-1}$), so we only \emph{a priori} have the containment
	\[
	\cY(\bZ_p)_{S,\Pi^\et} \subseteq \cY(\bZ_p)_{S,\Pi}^{\mathrm{mot}} \,,
	\]
	see \cite[Remark~2.28]{CDC:polylog1}. Note that the justification of~\cite[Remark~2.28]{CDC:polylog1} implicitly uses the compatibility between \'etale and motivic Selmer schemes made precise in Theorem~\ref{thm:comparison_selmer_schemes}, but this is not proved in \cite{CDC:polylog1}.
\end{rem}

\subsection{Mixed Tate categories}

The remainder of this section is devoted to a proof of Theorem~\ref{thm:comparison_selmer_schemes}. We start by recalling some general definitions and properties of mixed Tate categories. For a reference, see for instance~\cite[Appendix~A]{goncharov-zhu}.

\begin{defn}
	Let $K$ be a field. Let $\mathcal{T}$ be a Tannakian $K$-category, and let $K(1)$ be a fixed rank 1 object of $\dT$. 
	A pair $(\mathcal{T}, K(1))$ is called a \textit{mixed Tate category} if the objects $K(n) \coloneqq K(1)^{\otimes n}$ for  $n \in \bZ$, are mutually non-isomorphic, any irreducible object in $\mathcal{T}$ is isomorphic to some $K(n)$, and one has
	\[
	\Ext_{\mathcal{T}}^{1}(K(0), K(n))=0
	\]
	for all $n \leq 0$.
\end{defn}

If $(\mathcal{T}, K(1))$ is a mixed Tate category, then every object $M \in \dT$ admits a unique increasing filtration $W_\bullet M$ indexed by the even integers, called the \emph{weight filtration}, such that $\gr_{2n}^W M$ is a direct sum of copies of $K(-n)$. There exists a canonical fibre functor to the category of finite-dimensional $K$-vector spaces
\begin{equation}
	\label{eq: canonical fibre functor}
	\omega = \omega_{\mathcal{T}} \colon \mathcal{T} \to K\mhyphen\mathrm{Vect_f}, \quad M \mapsto \bigoplus_n \Hom(K(-n), \gr_{2n}^W M) \,.\quad\footnote{In the case of the category $\MT(\bZ_S,\bQ)$ of mixed Tate motives, the canonical fibre functor and the de Rham realisation functor are canonically isomorphic \cite[Proposition~2.10]{deligne-goncharov}.}
\end{equation}
By the Tannakian formalism, $\dT$ is equivalent to the category of representations of the Tannakian fundamental group
\[ G_{\dT} \coloneqq \pi_1(\dT, \omega) \coloneqq \underline{\Aut}^{\otimes}(\omega) \,. \]
The subcategory of semisimple objects (direct sums of the Tate objects $K(n)$) is isomorphic to the category of graded finite-dimensional $K$-vector spaces (with $K(n)$ sitting in degree~$-n$), which defines a surjective homomorphism $G_{\dT} \twoheadrightarrow \bG_m$. In other words, the homomorphism $G_{\dT}\to\bG_m$ is given by the action of~$G_{\dT}$ on the fibre of~$K(-1)$.\footnote{This is the opposite of the convention in \cite{deligne-goncharov}, where the homomorphism is given by the action on~$K(1)$.} Its kernel is a pro-unipotent group $U_{\dT}$. The grading on $\omega(M)$ for all $M \in \dT$ defines a splitting, so that $G_{\dT}$ is canonically a semi-direct product:
\[ G_{\dT} = U_{\dT} \rtimes \bG_m \,. \]
The Lie algebra of $U_{\dT}$ has a grading via the $\bG_m$-action. It is (non-canonically) isomorphic to the graded pro-nilpotent Lie algebra generated by the graded vector space
\[ \Ext^1_{\dT}(K(0), K(n))^\vee \text{ in degree $-n$, where $n=1,2,\ldots$} \]
with relations coming from $\Ext^2_{\dT}(K(0), K(n))$ for $n \geq 1$. In particular, if all of these $\Ext^2$ spaces vanish, then $\Lie(U_\dT)$ is a free pro-nilpotent Lie algebra. The abelianisation of~$\Lie(U_{\dT})$ is \emph{canonically} isomorphic to the product
\[
\prod_{n>0}\Ext^1_{\dT}(K(0),K(n))^\vee
\]

\subsubsection{Tate functors}

\begin{defn}
	\label{def: Tate functor}
	Let $K_2/K_1$ be an extension of fields and let $(\dT_1, K_1(1))$ and $(\dT_2, K_2(1))$ be mixed Tate categories over~$K_1$ and~$K_2$, respectively. A \emph{Tate functor}
	\[
	\rho \colon \dT_1 \to \dT_2
	\]
	is a $K_1$-linear exact tensor functor equipped with an isomorphism $\rho(K_1(1)) = K_2(1)$.
\end{defn}

Any Tate functor $\rho\colon \dT_1 \to \dT_2$ is automatically compatible with the canonical fibre functors, hence induces a homomorphism of Tannakian fundamental groups
\[
\rho^*\colon G_{\dT_2} \to G_{\dT_1,K_2} \,.
\]
This homomorphism is compatible with the semidirect product structure $G_{\dT_i} = U_{\dT_i} \rtimes \bG_m$, so it induces a graded Lie algebra homomorphism
\[
\rho^*\colon \Lie(U_{\dT_2}) \to \Lie(U_{\dT_1})_{K_2} \,.
\]

\begin{lem}\label{lem:iso_on_tannaka_groups}
	Let $K_2/K_1$ be an extension of fields and $\rho\colon\dT_1\to\dT_2$ a Tate functor. Suppose that the induced map
	\[
	\rho_*\colon \Ext^1_{\dT_1}(K_1(0),K_1(n))\otimes_{K_1}K_2 \to \Ext^1_{\dT_2}(K_2(0),K_2(n))
	\]
	is an isomorphism for all~$n>0$, and that $\Ext^2_{\dT_1}(K_1,K_1(n)) = 0$ for all~$n$. Then the induced map
	\[
	\rho^* \colon G_{\dT_2}\to G_{\dT_1,K_2}
	\]
	is an isomorphism.
\end{lem}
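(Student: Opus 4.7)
My plan is to reduce the statement to showing an isomorphism of graded pro-nilpotent Lie algebras and then to exploit the freeness provided by the vanishing of $\Ext^2_{\dT_1}$. First, $\rho$ takes $K_1(n)$ to $K_2(n)$, so the induced map $\rho^*\colon G_{\dT_2}\to G_{\dT_1,K_2}$ is compatible with the projections $G_{\dT_i}\twoheadrightarrow\bG_m$ and with the canonical splittings these admit (coming from the $\bG_m$-action on the canonical fibre functor). Consequently $\rho^*$ restricts to a homomorphism $\rho^*_u\colon U_{\dT_2}\to U_{\dT_1,K_2}$ of pro-unipotent groups, and it suffices to show this is an isomorphism. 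Since we are in characteristic zero, this is equivalent to showing that the induced graded Lie algebra map
\[
\rho^*_u\colon \Lie(U_{\dT_2}) \to \Lie(U_{\dT_1})_{K_2}
\]
is an isomorphism.

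Next, I would observe that the canonical identification of $\Lie(U_\dT)^{\ab}$ with $\prod_{n>0}\Ext^1_\dT(K(0),K(n))^\vee$ recalled in the excerpt is functorial in Tate functors: the $\bG_a$-valued character on $U_{\dT_1,K_2}$ determined by an extension class $E$ pulls back along $\rho^*_u$ to the character determined by $\rho(E)$. Combined with the hypothesis that $\rho_*$ is an isomorphism on $\Ext^1$ after extending scalars to $K_2$, this shows that the induced map on abelianisations
\[
(\rho^*_u)^{\ab}\colon \Lie(U_{\dT_2})^{\ab} \to \Lie(U_{\dT_1})^{\ab}_{K_2}
\]
is an isomorphism of graded vector spaces.

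To conclude, I invoke the hypothesis $\Ext^2_{\dT_1}(K_1(0),K_1(n))=0$, which by the discussion in the excerpt implies that $\Lie(U_{\dT_1})$ is free pro-nilpotent; base change preserves this, so $\Lie(U_{\dT_1})_{K_2}$ is free pro-nilpotent on $V \coloneqq \Lie(U_{\dT_1})^{\ab}_{K_2}$. I then choose a graded linear splitting of the projection $\Lie(U_{\dT_2})\twoheadrightarrow\Lie(U_{\dT_2})^{\ab}$ and compose with $((\rho^*_u)^{\ab})^{-1}$ to obtain a graded linear map $V\to\Lie(U_{\dT_2})$; by the universal property of the free pro-nilpotent Lie algebra this extends uniquely to a graded Lie algebra homomorphism $\psi\colon \Lie(U_{\dT_1})_{K_2} \to \Lie(U_{\dT_2})$. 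By construction, both $\rho^*_u\circ\psi$ and $\psi\circ\rho^*_u$ induce the identity on abelianisations, so a standard graded Nakayama argument (filter by the lower central series and note that on each graded piece the induced endomorphism is the identity) shows both composites are automorphisms; hence $\rho^*_u$ is an isomorphism, as desired. The only place where some care is required is in verifying the naturality of the identification $\Lie(U_\dT)^{\ab}\cong\prod_n\Ext^1_\dT(K(0),K(n))^\vee$ used in the second step, but this is transparent from its construction via pullback of universal extensions of Tate objects.
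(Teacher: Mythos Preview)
Your proof is correct and follows essentially the same strategy as the paper: reduce to the Lie algebra map $\rho^*_u\colon\Lie(U_{\dT_2})\to\Lie(U_{\dT_1})_{K_2}$, use the $\Ext^1$ hypothesis to see that $(\rho^*_u)^{\ab}$ is an isomorphism, and use freeness of $\Lie(U_{\dT_1})_{K_2}$ (from the $\Ext^2$ vanishing) to construct a map in the other direction. The only organisational difference is that the paper first deduces surjectivity of $\rho^*_u$ from the isomorphism on abelianisations, then lifts the identity along this surjection (using freeness of the target) to obtain an exact right inverse $s$ with $\rho^*_u\circ s=\id$, and finally observes that $s$ is surjective by the same lower-central-series argument; you instead build $\psi$ directly from the universal property of the free Lie algebra and argue that both composites are automorphisms via the associated graded. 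These are minor variants of the same idea.
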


\begin{proof}
	It suffices to show that the induced map
	\[
	\rho^* \colon \Lie(U_{\dT_2}) \to \Lie(U_{\dT_1})_{K_2}
	\]
	is an isomorphism. The first assumption ensures that~$(\rho^*)^{\ab}$ is an isomorphism, and this implies inductively that~$\rho^*$ is surjective modulo each step in the descending central series. So~$\rho^*$ is surjective. Since $\Lie(U_{\dT_1})$ is free, $\rho^*$ must have a right inverse~$s\colon \Lie(U_{\dT_1})_{K_2} \to \Lie(U_{\dT_2})$. Since~$s^{\ab}$ is also an isomorphism, exactly the same argument establishes that~$s$ is surjective. So~$\rho^*$, having a surjective right inverse, is an isomorphism.
\end{proof}

\subsection{Comparison of Selmer schemes}
\label{sec: CK comparison}

We say that a representation $V$ of $G_\bQ$ is \emph{mixed Tate} just when it admits a finite ascending filtration $W_\bullet V$ supported in even degrees, called the \emph{weight filtration}, such that $\gr^{W}_{2n}V$ is isomorphic to a direct sum of copies of~$\bQ_p(-n)$ for all~$n$. The weight filtration on any mixed Tate representation is unique, and any $G_\bQ$-equivariant map between mixed Tate representations is automatically strict for the weight filtrations. We write
\[
\Rep_{\bQ_p}^{\MT,S}(G_{\bQ}) \subseteq \Rep_{\bQ_p}^{f,S}(G_{\bQ})
\]
for the category of \emph{mixed Tate representations} which are unramified outside $S\cup\{p\}$ and crystalline at~$p$. The category $\Rep_{\bQ_p}^{\MT,S}(G_{\bQ})$ is a mixed Tate category over $\bQ_p$: the Ext-groups
\[
\Ext^1_{\Rep_{\bQ_p}^{\MT,S}(G_{\bQ})}(\bQ_p(0),\bQ_p(n))
\]
vanish for $n<0$ since any extension of $\bQ_p(0)$ by $\bQ_p(n)$ is split by the weight filtration, and vanish for $n=0$ since any continuous homomorphism $G_{\bQ}\to\bQ_p$ unramified outside~$S\cup\{p\}$ must factor through the finite group $G_{S\cup\{p\}}^{\ab}$. We write~$G_{\bQ,S}^{\MTR}$ for the Tannakian fundamental group of~$\Rep_{\bQ_p}^{\MT,S}(G_\bQ)$ based at its canonical fibre functor. As in the category of mixed Tate motives, the canonical fibre functor is canonically $\otimes$-isomorphic to the de Rham fibre functor~$\sD_{\dR}(-|_{G_p})$, and we permit ourselves to identify the two.

The \'etale realisation functor
\[
\rho_\et \colon \MT(\bZ_S,\bQ) \to \Rep_{\bQ_p}^{f,S}(G_{\bQ})
\]
has image contained inside the mixed Tate representations. So there is an induced homomorphism
\begin{equation}\label{eq:tannaka_group_comparison}
G_{\bQ,S}^{\MTR} \to G_{\bQ,S,\bQ_p}^{\MT}
\end{equation}
of affine group schemes over~$\bQ_p$. In the appendix, we will show that the map
\[
\Ext^1_{\MT(\bZ_S,\bQ)}(\bQ(0),\bQ(n)) \otimes_{\bQ} \bQ_p \to \Ext^1_{\Rep_{\bQ_p}^{\MT,S}(G_\bQ)}(\bQ_p(0),\bQ_p(n))
\]
induced by \'etale realisation is an isomorphism. This is proved by Kummer theory for $n=1$, and by identifying it with Soulé's regulator map
\[ K_{2n-1}(K) \otimes_{\bZ} \bQ_p \to \rH^1_{\et}(\Spec(K), \bQ_p(n)) \]
for $n >1$, which is known to be an isomorphism by \cite[Theorem~1]{soule:higher} (for $p$ odd) and \cite[Lemma~2.19]{hyperbolic-tesselations} (for $p=2$).
Since $\Ext^2$'s in $\MT(\bZ_S,\bQ)$ vanish \cite[Proposition~1.9]{deligne-goncharov}, this implies by Lemma~\ref{lem:iso_on_tannaka_groups} that~\eqref{eq:tannaka_group_comparison} is an isomorphism (as per Theorem~\ref{thm:tannaka_group_comparison} in the introduction).

We use this to finish the proof of Theorem~\ref{thm:comparison_selmer_schemes}. The map
\[
\rH^1(G_{\bQ,S}^{\MT},\Pi^{\dR})_{\bQ_p} \to \rH^1_{f,S}(G_\bQ,\Pi^{\et})
\]
described earlier can be factored as
\[
\rH^1(G_{\bQ,S,\bQ_p}^{\MT},\Pi^{\dR}_{\bQ_p}) \xrightarrow{\sim} \rH^1(G_{\bQ,S}^{\MTR},\Pi^{\dR}_{\bQ_p}) \to \rH^1_{f,S}(G_\bQ,\Pi^\et)
\]
where the first map is pullback along the isomorphism~\eqref{eq:tannaka_group_comparison} and the second corresponds to the inclusion
\[
\{\text{$\Pi^{\et}$-torsors over~$R$ in~$\Rep_{\bQ_p}^{\MT,S}(G_\bQ)$}\}/\text{iso} \subseteq \{\text{$\Pi^{\et}$-torsors over~$R$ in~$\Rep_{\bQ_p}^{f,S}(G_\bQ)$}\}/\text{iso}
\]
for a $\bQ_p$-algebra~$R$.

To show that the second map is also an isomorphism we need to show that any $\Pi^{\et}$-torsor~$P^{\et}$ over a $\bQ_p$-algebra~$R$ in~$\Rep_{\bQ_p}^{f,S}(G_\bQ)$ is automatically mixed Tate, i.e.\ is in $\Rep_{\bQ_p}^{\MT,S}(G_{\bQ})$. For this, let
\[
\bQ_p=W_0\cO(\Pi^{\et}) \subseteq W_2\cO(\Pi^{\et}) \subseteq W_4\cO(\Pi^{\et}) \subseteq \dots
\]
denote the weight filtration on $\cO(\Pi^{\et})\in\ind\Rep_{\bQ_p}^{\MT,S}(G_{\bQ})$. Picking an element~$\gamma\in P^{\et}(R)$ (which exists since $\Pi^{\et}$ is pro-unipotent) induces an isomorphism $\Pi^{\et}_R \simeq P^{\et}$ and hence an isomorphism $\cO(P^{\et}) \simeq \cO(\Pi^{\et})\otimes_{\bQ_p}R$. We define
\[
R = W_0\cO(P^{\et}) \subseteq W_2\cO(P^{\et}) \subseteq W_4\cO(P^{\et}) \subseteq \dots
\]
to be the $R$-linear filtration on $\cO(P^{\et})$ corresponding to the weight filtration on $\cO(\Pi^{\et})$ under this identification. As in \cite[Proposition~2.6]{betts:motivic-anabelian-geometry}, this filtration is independent of the choice of~$\gamma$, as is the induced identification
\[
\gr^W_\bullet\cO(P^{\et}) \cong \gr^W_\bullet\cO(\Pi^{\et})\otimes_{\bQ_p}R
\]
on graded pieces. In particular, the filtration on $\cO(P^{\et})$ is $G_\bQ$-invariant, and each $\gr^W_{2n}\cO(P^{\et})$ is $G_\bQ$-equivariantly isomorphic to a direct sum of copies of $\bQ_p(-n)\otimes_{\bQ_p}R$. So $\cO(P^{\et})\in\ind\Rep_{\bQ_p}^{\MT,S}(G_\bQ)$ and we are done. This completes the proof that~\eqref{eq: Selmer scheme comparison} is an isomorphism.

It remains to prove that~\eqref{diag:etale-motivic_comparison_commuting_diagram} commutes. The commutativity of the triangle
\begin{center}
	\begin{tikzcd}[column sep = small]
		{} & \cY(\bZ_S) \arrow[dl,"j_S^{\dR}"]\arrow[dd,"j_S"'] \\
		\rH^1(G_{\bQ,S}^{\MT},\Pi^{\dR})(\bQ_p) \arrow[dr,equal,"\eqref{eq: Selmer scheme comparison}"',"\sim"] & {} \\
		{} & \rH^1_{f,S}(G_\bQ,\Pi^{\et}(\bQ_p))
	\end{tikzcd}
\end{center}
follows from the fact that the $\bQ_p$-pro-unipotent \'etale path torsor~$\pi_1^{\et,\bQ_p}(Y_{\Qbar};b,y)$ is the \'etale realisation of the motivic path torsor~$\pi_1(Y;b,y)$. The commutativity of the other triangle in~\eqref{diag:etale-motivic_comparison_commuting_diagram} is part of the usual Chabauty--Kim diagram. So it remains to justify why the square
\begin{equation}\label{diag:etale-motivic_comparison_commuting_square}
	\begin{tikzcd}
		\rH^1(G_{\bQ,S}^{\MT},\Pi^{\dR})(\bQ_p) \arrow[r,"\ev_p"]\arrow[d,equal,"\eqref{eq: Selmer scheme comparison}"',"\wr"] & \Pi^{\dR}(\bQ_p) \arrow[d,equals,"\wr"] \\
		\rH^1_{f,S}(G_\bQ,\Pi^{\et}(\bQ_p)) \arrow[r,"\loc_p"] & \rH^1_f(G_p,\Pi^{\et}(\bQ_p))
	\end{tikzcd}
\end{equation}
commutes. For this, we need to recall the definition of both the Bloch--Kato logarithm
\[
\rH^1_f(G_p,\Pi^\et(\bQ_p)) \cong \Pi^{\dR}(\bQ_p)
\]
and the $p$-adic period point
\[
\eta_p^\ur \in U_{\bQ,S}^{\MT}(\bQ_p) \,.
\]
For the former, $\rH^1_f(G_p,\Pi^\et(\bQ_p))$ is the set of isomorphism classes of crystalline $G_p$-equivariant $\Pi^\et$-torsors over~$\bQ_p$. Given such a torsor~$P$, we know that~$\sD_{\dR}(P)\coloneqq\Spec(\sD_{\dR}(\cO(P)))$ is a $\sD_{\dR}(\Pi^\et)\cong\Pi^{\dR}$-torsor in the category of weakly admissible filtered $\varphi$-modules (where the crystalline Frobenius comes from the comparison $\sD_{\dR}\cong\sD_{\cris}$); in particular, $\sD_{\dR}(P)$ carries a Frobenius automorphism and a Hodge filtration on its affine ring. There is a unique Frobenius-invariant path~$\gamma_{\cris}\in\sD_{\dR}(P)(\bQ_p)$ and a unique Hodge-filtered path~$\gamma_{\dR}\in\sD_{\dR}(P)(\bQ_p)$, the latter meaning that the map $\cO(\sD_{\dR}(P))\to\bQ_p$ given by evaluation at~$\gamma_{\dR}$ is compatible with Hodge filtrations. The Bloch--Kato logarithm is then given by
\[
\logBK([P]) = \gamma_{\dR}^{-1}\gamma_{\cris} \,.
\]

For the latter, given a mixed Tate filtered $\varphi$-module in the sense of \cite[Definition~1.1.3]{chatzistamatiou-unver:p-adic_periods}, there are two $\otimes$-functorial splittings of the weight filtration on~$M$. The first splitting~$s_{\dR}$ comes from the fact that the Hodge and weight filtrations on~$M$ are opposed, so
\[
M = \bigoplus_iW_{2i}M\cap F^iM \,.
\]
The second splitting $s_{\cris}$ is the one coming from the decomposition of~$M$ into $\varphi$-eigenspaces. So the automorphism of~$M$ given by
\[
M \xrightarrow[\sim]{s_{\cris}^{-1}} \bigoplus_i \gr^{W}_{2i}M \xrightarrow[\sim]{s_{\dR}} M
\]
is $\otimes$-natural in~$M$, and so defines an element $\eta_p^\ur$ in the Tannaka group of the category of mixed Tate filtered $\varphi$-modules. Since the $\bQ_p$-linear de Rham realisation $M^{\dR}_{\bQ_p}\coloneqq \bQ_p\otimes_{\bQ}M^{\dR}$ of any mixed Tate motive~$M\in\MT(\bZ_S,\bQ)$ is a mixed Tate filtered $\varphi$-module, $\eta_p^\ur$ also defines an element of~$G^{\MT}_{\bQ,S}(\bQ_p)$. This element lies in~$U^{\MT}_{\bQ,S}(\bQ_p)$ since it acts trivially on~$M=\bQ(1)$.

The definition of~$\eta_p^{\ur}$ makes the following clear.

\begin{lem}\label{lem:eta_and_phi}
	Let~$M$ be a mixed Tate filtered $\varphi$-module. Then~$\eta_p^{\ur}\varphi(\eta_p^{\ur})^{-1}$ is the $\bQ_p$-linear automorphism of~$M$ which acts on~$W_{2i}M\cap F^iM$ by multiplication by~$p^{-i}$.
\end{lem}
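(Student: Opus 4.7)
The plan is to reduce the identity to a direct check on Tate generators by the Tannakian formalism.

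First, I would observe that both sides of the claimed equality are $\otimes$-natural $\bQ_p$-linear automorphisms of the forgetful functor from mixed Tate filtered $\varphi$-modules to $\bQ_p$-vector spaces. For $\eta_p^{\ur}\varphi(\eta_p^{\ur})^{-1}$, this is essentially tautological: $\eta_p^{\ur}$ was constructed as an element of the Tannaka group, hence is $\otimes$-natural by definition, while $\varphi$ is $\otimes$-natural because it is part of the data of a filtered $\varphi$-module and morphisms preserve it. For the right-hand side, the decomposition $M = \bigoplus_i (W_{2i}M \cap F^iM)$ coming from the opposedness of the Hodge and weight filtrations is itself $\otimes$-natural, since
\[
W_{2k}(M \otimes N) \cap F^k(M \otimes N) = \bigoplus_{i+j=k} (W_{2i}M \cap F^iM) \otimes (W_{2j}N \cap F^jN),
\]
and the exponent satisfies $-k = -i - j$, so multiplication by $p^{-i}$ on the $i$th summand is compatible with tensor products and duals.

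Next, by the Tannakian formalism, a $\otimes$-natural automorphism of the forgetful functor on a neutral Tannakian category is determined by its values on a $\otimes$-generating family. Since every mixed Tate filtered $\varphi$-module is built from the Tate twists $\bQ_p(i)$ for $i\in\bZ$ via iterated extensions, duals and tensor products, it suffices to verify the equality on the single object $M = \bQ_p(-i)$ for each~$i$.

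Finally, I would carry out this check directly. On $M = \bQ_p(-i)$ the weight filtration is concentrated in weight $2i$ and the Hodge filtration jumps only at $i$, so $W_{2i}M \cap F^i M = M$ is one-dimensional and both splittings $s_{\dR}$ and $s_{\cris}$ are trivial. Consequently $\eta_p^{\ur}$ itself acts as the identity on $\bQ_p(-i)$, and the conjugate $\eta_p^{\ur}\varphi(\eta_p^{\ur})^{-1}$ reduces to $\varphi$, which acts on $\bQ_p(-i)$ as multiplication by $p^{-i}$ by the normalisation of Frobenius on Tate objects used throughout the paper. This matches the claimed right-hand side, and the Tannakian reduction then propagates the equality to all mixed Tate filtered $\varphi$-modules.

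The main (minor) obstacle is the $\otimes$-naturality bookkeeping, in particular checking that the $W\cap F$ splitting really is compatible with tensor products; this is standard for opposed filtrations in mixed Tate categories, and otherwise the argument is a direct unravelling of the construction of $\eta_p^{\ur}$.
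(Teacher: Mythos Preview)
Your Tannakian reduction has a genuine gap. The simple Tate objects $\bQ_p(i)$ do \emph{not} $\otimes$-generate the category of mixed Tate filtered $\varphi$-modules in the sense required: every subquotient of a tensor power of Tate objects is again a direct sum of Tate objects, so you never see a nontrivial extension this way. ``Built from Tate twists via iterated extensions'' is strictly weaker than ``subquotient of tensor powers of Tate twists'', and it is only the latter that lets you conclude that a $\otimes$-natural automorphism is determined by its values on the family. Concretely, the Tannaka group here has the form $G=U\rtimes\bG_m$ with $U$ pro-unipotent; your check on the $\bQ_p(-i)$ only compares the images of the two automorphisms in the quotient $\bG_m$, and says nothing about the $U$-component. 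Any nontrivial $u\in U(\bQ_p)$ acts trivially on every $\bQ_p(i)$ yet nontrivially on some non-split extension, so ``agreement on Tate objects'' does not imply equality.

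The argument the paper has in mind is the direct unwinding of the definition, and this is what actually shows the $U$-part is trivial. Writing $\eta_p^{\ur}=s_{\dR}\circ s_{\cris}^{-1}$, one has
\[
\eta_p^{\ur}\,\varphi\,(\eta_p^{\ur})^{-1}=s_{\dR}\,(s_{\cris}^{-1}\varphi\, s_{\cris})\,s_{\dR}^{-1}.
\]
By construction $s_{\cris}$ is the $\varphi$-eigenspace splitting, so $s_{\cris}^{-1}\varphi\, s_{\cris}$ is the semisimple Frobenius on $\bigoplus_i\gr^W_{2i}M$, acting by the appropriate scalar on the $i$th summand; conjugating by $s_{\dR}$ transports this diagonal action to the decomposition $M=\bigoplus_i(W_{2i}M\cap F^iM)$. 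This one line simultaneously establishes that the automorphism lies in the torus $\bG_m$ and identifies the scalar, which is exactly what your argument was missing.
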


\begin{rem}
	In fact, $\eta_p^{\ur}$ is uniquely characterised by this property and the fact that it acts trivially on~$\bQ_p(1)$.
\end{rem}

Now we show that~\eqref{diag:etale-motivic_comparison_commuting_square} commutes. Let~$P$ be a $\Pi$-torsor over~$\bQ_p$ in the category~$\MT(\bZ_S,\bQ)$, representing an element of~$\rH^1(G^{\MT}_{\bQ,S},\Pi^{\dR})(\bQ_p)$. We let~$P^{\dR}$ and~$P^{\et}$ denote the torsors obtained by applying the $\bQ_p$-linear de Rham and \'etale realisation functors, respectively, and then base-changing along the multiplication map~$\bQ_p\otimes_{\bQ}\bQ_p\to\bQ_p$. So $[P^{\et}]\in\rH^1_{f,S}(G_\bQ,\Pi^{\et}(\bQ_p))$ is the element corresponding to~$[P]\in\rH^1(G^{\MT}_{\bQ,S},\Pi^{\dR})(\bQ_p)$ under~\eqref{eq: Selmer scheme comparison}, and~$P^{\dR}$ can equivalently be described as the $\bQ$-linear de Rham realisation of~$P$ (which is a $G^{\MT}_{\bQ,S}$-equivariant $\Pi^{\dR}$-torsor over~$\bQ_p$). The usual comparison isomorphisms between realisations imply that we have a canonical isomorphism
\[
\sD_{\dR}(P^\et)\cong P^{\dR}
\]
of~$\sD_{\dR}(\Pi^\et)\cong\Pi^{\dR}$-torsors in the category of mixed Tate $\varphi$-modules. By mild abuse of notation, we also write~$\gamma_{\cris}\in P^{\dR}(\bQ_p)$ and~$\gamma_{\dR}\in P^{\dR}(\bQ_p)$ for the unique Frobenius-invariant and Hodge-filtered paths, respectively.

\begin{lem}
	$\gamma_{\dR}\in P^{\dR}(\bQ_p)$ is the unique element such that the associated cocycle
	\[
	\xi_{\gamma_{\dR}}\colon G^{\MT}_{\bQ,S,\bQ_p} \to \Pi^{\dR}_{\bQ_p} \quad\text{given by}\quad \xi_{\gamma_{\dR}}(\sigma) \coloneqq \gamma_{\dR}^{-1}\sigma(\gamma_{\dR})
	\]
	restricts trivially to~$\bG_{m,\bQ_p}$. Additionally, we have
	\[
	(\eta_p^{\ur})^{-1}(\gamma_{\dR}) = \gamma_{\cris} \,.
	\]
	\begin{proof}
		The map $\ev_{\gamma_{\dR}}\colon\cO(P^{\dR})\to\bQ_p$ given by evaluation at~$\gamma_{\dR}$ is compatible with Hodge filtrations, and also (trivially) compatible with weight filtrations. Hence~$\ev_{\gamma_{\dR}}$ is projection onto the $0$th factor in the grading
		\[
		\cO(P^{\dR}) = \bigoplus_iW_{2i}\cO(P^{\dR})\cap F^i\cO(P^{\dR})
		\]
		associated to the torus~$\bG_{m,\bQ_p}$. So~$\gamma_{\dR}$ is $\bG_{m,\bQ_p}$-invariant, and hence its associated cocycle vanishes on~$\bG_{m,\bQ_p}$.
		
		\Cref{lem:eta_and_phi} implies that $\eta_p^{\ur}\varphi(\eta_p^{\ur})^{-1}$ is a $\bQ_p$-point of this torus, so fixes~$\gamma_{\dR}$. This implies that
		\[
		(\eta_p^{\ur})^{-1}(\gamma_{\dR}) = \varphi(\eta_p^{\ur})^{-1}(\gamma_{\dR})
		\]
		is Frobenius-invariant, and hence
		\[
		(\eta_p^{\ur})^{-1}(\gamma_{\dR}) = \gamma_{\cris} \,. \qedhere
		\]
	\end{proof}
\end{lem}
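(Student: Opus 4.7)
My plan is to reduce both assertions to the single observation that $\gamma_{\dR}$ is invariant under the torus $\bG_{m,\bQ_p} \subseteq G^{\MT}_{\bQ,S,\bQ_p}$ acting on~$P^{\dR}$. Once this is established, the first claim follows immediately from the definition of the associated cocycle, while the second follows by combining $\bG_m$-invariance with \Cref{lem:eta_and_phi}, which identifies $\eta_p^{\ur}\varphi(\eta_p^{\ur})^{-1}$ with an element of this torus.

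For the first claim, I would note that $P^{\dR} \cong \sD_{\dR}(P^{\et})$ is a $\Pi^{\dR}$-torsor in mixed Tate filtered $\varphi$-modules, so its coordinate ring carries opposed weight and Hodge filtrations and decomposes as $\cO(P^{\dR}) = \bigoplus_i W_{2i}\cO(P^{\dR}) \cap F^i\cO(P^{\dR})$; under the canonical fibre functor \eqref{eq: canonical fibre functor}, this decomposition realises the grading by which the torus $\bG_{m,\bQ_p} \hookrightarrow G^{\MT}_{\bQ,S,\bQ_p}$ acts on~$P^{\dR}$. Evaluation at $\gamma_{\dR}$ is compatible with Hodge filtrations by definition, and is automatically compatible with weight filtrations (being a morphism of ind-mixed Tate objects), so $\ev_{\gamma_{\dR}}$ is exactly the projection onto the degree-zero summand of this grading. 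Hence $\gamma_{\dR}$ is $\bG_{m,\bQ_p}$-fixed, and its cocycle $\xi_{\gamma_{\dR}}$ vanishes on~$\bG_{m,\bQ_p}$. Uniqueness follows because any trivialisation of $P^{\dR}$ identifies the $\bG_m$-fixed locus with the unique $\bG_m$-fixed point of $\Pi^{\dR}$, namely the identity element.

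For the second claim, \Cref{lem:eta_and_phi} says that $\eta_p^{\ur}\varphi(\eta_p^{\ur})^{-1}$ acts on $W_{2i}M \cap F^iM$ by multiplication by $p^{-i}$, which is precisely the action of the element $p^{-1} \in \bG_{m}(\bQ_p)$ under the weight grading identified above. Thus $\eta_p^{\ur}\varphi(\eta_p^{\ur})^{-1}$ lies in $\bG_{m,\bQ_p}(\bQ_p)$ and therefore fixes $\gamma_{\dR}$. Rearranging, $\varphi\bigl((\eta_p^{\ur})^{-1}\gamma_{\dR}\bigr) = (\eta_p^{\ur})^{-1}\gamma_{\dR}$, so $(\eta_p^{\ur})^{-1}\gamma_{\dR}$ is Frobenius-invariant; by uniqueness of the crystalline path it must equal $\gamma_{\cris}$.

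The main potential obstacle is the bookkeeping identifying the abstract weight grading on mixed Tate filtered $\varphi$-modules with the concrete $\bG_m$-action coming from the Tannakian description of~$G^{\MT}_{\bQ,S}$; this is a naturality statement built into the definition of the canonical fibre functor, but one has to check that the Hodge-opposed splitting and the grading obtained from \eqref{eq: canonical fibre functor} agree. Once that identification is in hand, both parts reduce to the elementary manipulations above.
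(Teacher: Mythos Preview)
Your proposal is correct and follows essentially the same route as the paper's proof: both establish $\bG_{m,\bQ_p}$-invariance of $\gamma_{\dR}$ by observing that $\ev_{\gamma_{\dR}}$ respects both Hodge and weight filtrations and hence is the projection onto the degree-zero summand of the grading $\cO(P^{\dR})=\bigoplus_i W_{2i}\cO(P^{\dR})\cap F^i\cO(P^{\dR})$, and then deduce the second claim from \Cref{lem:eta_and_phi} exactly as you do. Your added remarks on uniqueness and on the identification of the Hodge-opposed splitting with the canonical-fibre-functor grading make explicit points the paper leaves implicit, but the underlying argument is the same.
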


Now using the above lemma, we see that the image of~$[P^{\et}|_{G_p}]\in\rH^1_f(G_p,\Pi^{\et}(\bQ_p))$ under the Bloch--Kato logarithm is
\[
\gamma_{\dR}^{-1}\gamma_{\cris} = \gamma_{\dR}^{-1}(\eta_p^{\ur})^{-1}(\gamma_{\dR}) = \xi_{\gamma_{\dR}}((\eta_p^{\ur})^{-1}) \,,
\]
which is equal to the image of~$[P]\in\rH^1(G^{\MT}_{\bQ,S},\Pi^{\dR}(\bQ_p))=\rZ^1(U^{\MT}_{\bQ,S},\Pi^{\dR}(\bQ_p))^{\bG_m}$ under the map~$\ev_p$ given by evaluation at~$(\eta_p^{\ur})^{-1}$. This concludes the proof of Theorem~\ref{thm:comparison_selmer_schemes}.\qed

\section{Refined Selmer scheme of the thrice-punctured line}
\label{sec: refined}

Let $S$ be a finite set of primes and let $\cY/\bZ_S$ be the thrice-punctured line with generic fibre $Y/\bQ$. Let $p \not\in S$ be prime and let $U = \pi_1^{\et,\bQ_p}(Y_{\overline{\bQ}},b)$ be the $\bQ_p$-pro-unipotent étale fundamental group of $Y$ at an $S$-integral point or tangent vector~$b$. In this section we want to describe the refined Selmer scheme $\Sel_{S,\Pi}^{\min}(\cY)$ of the thrice-punctured line, for any $G_{\bQ}$-equivariant quotient $\Pi$ of~$U$ dominating the abelianisation:
\[U \twoheadrightarrow \Pi \twoheadrightarrow U^{\ab}.\]

Assuming $2 \in S$ (otherwise, $\Sel_{S,\Pi}^{\min}(\cY)$ is empty), we find that the refined Selmer scheme can be written as a union of closed subschemes $\Sel_{S,\Pi}^{\Sigma}(\cY)$ over all \emph{refinement conditions} $\Sigma \in \{0,1,\infty\}^S$:
\begin{equation}
	\label{eq: refined Selmer scheme union}
	\Sel_{S,\Pi}^{\min}(\cY) = \bigcup_{\Sigma} \Sel_{S,\Pi}^{\Sigma}(\cY)
\end{equation}
(\Cref{thm: refined Selmer scheme} below). This induces a similar description of the refined Chabauty--Kim locus $\cY(\bZ_p)_{S,\Pi}^{\min}$:
\begin{equation}
	\label{eq: refined CK locus union}
	\cY(\bZ_p)_{S,\Pi}^{\min} = \bigcup_{\Sigma} \cY(\bZ_p)_{S,\Pi}^{\Sigma}.
\end{equation}

Each refinement condition $\Sigma =(\Sigma_{\ell})_{\ell \in S} \in \{0,1,\infty\}^S$ can be seen as a collection of mod-$\ell$ congruence conditions on $S$-integral points for $\ell \in S$. Namely, if we denote by $\cY(\bZ_S)_{\Sigma}$ the set of those $S$-integral points whose image under the mod-$\ell$ reduction map
\[ \mathrm{red}_{\ell}\colon \cY(\bQ_{\ell}) \subseteq \bP^1(\bQ_{\ell}) = \bP^1(\bZ_{\ell}) \twoheadrightarrow \bP^1(\bF_{\ell}) \]
lies in $\cY(\bF_{\ell}) \cup \{\Sigma_{\ell}\}$ for all $\ell \in S$, then we can write
\begin{equation}
	\label{eq: S-integral points union}
	\cY(\bZ_S) = \bigcup_{\Sigma} \cY(\bZ_S)_{\Sigma},
\end{equation}
and the inclusion $\cY(\bZ_S) \subseteq \cY(\bZ_p)_{S,\Pi}^{\min}$ of $S$-integral points~\eqref{eq: S-integral points union} in the refined Chabauty--Kim locus~\eqref{eq: refined CK locus union} can be refined to inclusions
\[ \cY(\bZ_S)_{\Sigma} \subseteq \cY(\bZ_p)_{S,\Pi}^{\Sigma} \]
for each refinement condition $\Sigma \in \{0,1,\infty\}^S$.

In order to prove these facts, we show a general statement to the effect that the equations defining the refined subscheme $\Sel_{S,\Pi}^{\min}(\cY)$ inside the full (unrefined) Selmer scheme $\rH^1_{f,S}(G_{\bQ}, \Pi)$ all come from the abelian (or depth~1) Selmer scheme, i.e. they are pulled back along the canonical map
\[ \rH^1_{f,S}(G_{\bQ}, \Pi) \to  \rH^1_{f,S}(G_{\bQ}, U^{\ab}) \]
induced by the abelianisation map $\Pi \twoheadrightarrow U^{\ab}$
(\Cref{thm:refinement conditions intermediate quotient} below). We are then reduced to the case $\Pi = U^{\ab}$ where we already know how to describe the refined Selmer scheme thanks to \cite{BBKLMQSX}. We end this section by noting in §\ref{sec: change of fundamental group quotient} and §\ref{sec: S3-action} how refined Chabauty--Kim loci interact with a change of the fundamental group quotient and with the $S_3$-action on the thrice-punctured line, which will be useful for determining these loci in concrete examples in \Cref{sec:calculations}.

Recall (\Cref{def: refined Selmer scheme}) that the \emph{refined Selmer scheme} $\Sel_{S,\Pi}^{\min}(\cY)$ of~$Y$ with respect to $\Pi$ is the scheme parametrising those continuous cohomology classes in $\rH^1(G_{\bQ}, \Pi)$ whose image under the localisation map
\[ \loc_\ell\colon \rH^1(G_{\bQ}, \Pi) \to \rH^1(G_\ell, \Pi) \]
is contained in the Zariski closure of $j_\ell(Y(\bQ_\ell))$ for $\ell \in S$, respectively of $j_{\ell}(\cY(\bZ_\ell))$ for $\ell \not\in S$, where $j_\ell$ is the local Kummer map
\[ j_\ell \colon Y(\bQ_\ell) \to \rH^1(G_\ell, \Pi(\bQ_p)). \]
We will now determine these local Selmer images. 

\subsection{Local conditions at primes outside $S$}

Let us first consider the primes $\ell \not\in S$. The prime $\ell = 2$ has a special role: since $\cY(\bF_2) = \bP^1(\bF_2) \smallsetminus \{0,1,\infty\} = \emptyset$, we have $\cY(\bZ_2) = \emptyset$. So in particular 
\[ j_2(\cY(\bZ_2)) = \emptyset,\]
which leads to the local condition at~2 being unsatisfiable if $2 \not \in S$:

\begin{prop}
	\label{thm: empty refined Selmer scheme}
	If $2 \not \in S$, then the refined Selmer scheme is empty:
	\[ \Sel_{S,\Pi}^{\min}(\cY) = \emptyset. \]
\end{prop}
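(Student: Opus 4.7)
The plan is to exploit the observation already flagged in the text: the thrice-punctured line has no points over $\bF_2$. Since $\bP^1(\bF_2)=\{0,1,\infty\}$ consists entirely of the three punctures, the special fibre $\cY_{\bF_2}=\bP^1_{\bF_2}\smallsetminus\{0,1,\infty\}$ is empty on $\bF_2$-points, so $\cY(\bF_2)=\emptyset$. When $2\notin S$, the local ring $\bZ_S$ admits a canonical map to $\bZ_2$ (since $2$ is not inverted), so $\cY(\bZ_2)$ makes sense as a set of $\bZ_S$-morphisms $\Spec\bZ_2\to\cY$; each such point reduces to an element of $\cY(\bF_2)$, forcing $\cY(\bZ_2)=\emptyset$.

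Given this, I would trace through the definition of the refined Selmer scheme. By Definition~\ref{def: refined Selmer scheme}, a point of $\Sel_{S,\Pi}^{\min}(\cY)$ must satisfy the local condition
\[
\xi|_{G_2}\in j_2(\cY(\bZ_2))^{\Zar}
\]
inside $\rH^1(G_2,\Pi)$ at the prime $\ell=2\notin S$. But the right-hand side is the Zariski closure of the empty subset of $\rH^1(G_2,\Pi)$, which is the empty subscheme. Hence this local condition is unsatisfiable over any $\bQ_p$-algebra $R$, so the functor of points of $\Sel_{S,\Pi}^{\min}(\cY)$ is empty, as claimed.

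There is really no obstacle here: the proposition is a one-line consequence of the combinatorial accident that $\bP^1(\bF_2)$ is exhausted by the cusps. The only thing worth being careful about is that we are indeed allowed to consider the local condition at $\ell=2$ (which requires $2\notin S$ so that $\ell=2$ falls into the ``$\ell\notin S$'' case of Definition~\ref{def: refined Selmer scheme}), and that the Zariski closure of an empty set in an affine scheme is the empty scheme rather than some nonempty degenerate object. Both of these are immediate.
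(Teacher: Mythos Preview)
Your argument is correct and essentially identical to the paper's: the paper observes in the text immediately preceding the proposition that $\cY(\bF_2)=\bP^1(\bF_2)\smallsetminus\{0,1,\infty\}=\emptyset$, hence $\cY(\bZ_2)=\emptyset$ and $j_2(\cY(\bZ_2))=\emptyset$, making the local condition at~$2$ unsatisfiable when $2\notin S$. The proposition is stated without a separate proof environment precisely because this preceding discussion already establishes it.
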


If $\ell \neq 2$, then we always have $-1 \in \cY(\bZ_\ell)$, so that in particular $j_\ell(\cY(\bZ_\ell))$ is non-empty. For any $y \in \cY(\bZ_\ell)$, the étale path torsor $\pi_1^{\et,\bQ_p}(Y_{\Qbar}; b,y)$ is unramified ($\ell \neq p$) respectively crystalline ($\ell = p$). 
For $\ell \neq p$, the only unramified class in $\rH^1(G_\ell, \Pi(\bQ_p))$ is the trivial one \cite[Proof of Cor.~0.3 in §2]{kim-tamagawa}, i.e.\ we have 
\[ j_\ell(\cY(\bZ_\ell)) = \{\ast\}. \]
At $\ell = p$, the Zariski closure of $j(\cY(\bZ_p))$ in $\rH^1(G_p, \Pi)$ is precisely the subspace of crystalline classes:
\[ j_p(\cY(\bZ_p))^{\Zar} = \rH^1_f(G_p, \Pi). \]
This follows from the Bloch--Kato isomorphism $\logBK\colon \rH^1_f(G_p, \Pi) \cong F^0\backslash\Pi^{\dR}$ and the fact that the de Rham Kummer map
\[ j_p^{\dR} \colon \cY(\bZ_p) \to F^0\backslash\Pi^{\dR}(\bQ_p) \]
has Zariski-dense image \cite[Theorem~1]{kim:albanese}.

In conclusion, if $2 \in S$, then the local conditions at $\ell \not\in S$ defining the refined Selmer scheme $\Sel_{S,\Pi}^{\min}(\cY)$ are the same as those defining the unrefined Selmer scheme $\rH^1_{f,S}(G_\bQ,\Pi)$ (cf. \Cref{rem: unrefined CK}). We have therefore an inclusion
\[ \Sel_{S,\Pi}^{\min}(\cY) \subseteq \rH^1_{f,S}(G_\bQ,\Pi), \]
with $\Sel_{S,\Pi}^{\min}(\cY)$ defined as a closed subscheme by the remaining local conditions at $\ell \in S$. We now determine those local conditions at primes contained in $S$.

\subsection{Reduction to abelianised fundamental group}
\label{sec: reduction abelianisation}

\begin{lem}
	\label{thm: reduction to abelian cohomology}
	For $\ell \neq p$, the morphism $\rH^1(G_\ell, \Pi) \to \rH^1(G_\ell, U^{\ab})$ is an isomorphism.
\end{lem}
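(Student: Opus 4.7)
The strategy is to exploit the weight structure on $\Pi$ together with the vanishing of local Galois cohomology of Tate twists at primes $\ell\neq p$. For $Y=\bP^1\smallsetminus\{0,1,\infty\}$ we have $U^{\ab}\cong\bQ_p(1)^2$, and $U$ itself is a free pro-unipotent group on its abelianisation (since the topological fundamental group is free of rank~$2$). Consequently, if $\Pi^{(c)}$ denotes the $c$-th term of the descending central series of $\Pi$, then each graded piece $\gr^c\Pi=\Pi^{(c)}/\Pi^{(c+1)}$ is a $G_\ell$-equivariant quotient of $(U^{\ab})^{\otimes c}$, hence isomorphic as a $G_\ell$-module to a direct sum of copies of $\bQ_p(c)$.

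The key local cohomology computation is that, for $\ell\neq p$, any $\bQ_p$-algebra $R$, and any $c\geq 2$, all three groups $\rH^i(G_\ell,R(c))$ vanish for $i=0,1,2$. Indeed, inertia $I_\ell$ acts trivially on Tate twists since $\ell\neq p$, and Frobenius acts on $R(c)$ by $\ell^{-c}$; the element $\ell^{-c}-1$ is a unit in any $\bQ_p$-algebra for $c\neq0$, so multiplication by it is an isomorphism of $R(c)$ and both Frobenius invariants and coinvariants vanish. The Hochschild--Serre spectral sequence for $1\to I_\ell\to G_\ell\to\hat\bZ\to 1$, together with the identification $\rH^1(I_\ell,R(c))=R(c-1)$ (where Frobenius still acts by the non-unit scalar $\ell^{1-c}-1$ for $c\geq 2$), then yields vanishing in all three cohomological degrees.

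Setting $\Pi_c=\Pi/\Pi^{(c+1)}$, so that $\Pi_1=U^{\ab}$, I would then induct on $c$ using the central extensions $1\to\gr^c\Pi\to\Pi_c\to\Pi_{c-1}\to 1$ for $c\geq 2$. The long exact sequence in continuous cohomology for such a central extension gives
\[
\rH^1(G_\ell,\gr^c\Pi(R))\to\rH^1(G_\ell,\Pi_c(R))\to\rH^1(G_\ell,\Pi_{c-1}(R))\to\rH^2(G_\ell,\gr^c\Pi(R)),
\]
with vanishing outer terms for every $\bQ_p$-algebra $R$, so the middle map is a bijection of pointed sets; by Yoneda this promotes to an isomorphism of affine $\bQ_p$-schemes. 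Iterating down to $\Pi_1=U^{\ab}$ and passing to the pro-limit $\Pi=\varprojlim_c\Pi_c$ yields the desired isomorphism $\rH^1(G_\ell,\Pi)\xrightarrow\sim\rH^1(G_\ell,U^{\ab})$. The main point of care, though ultimately routine, is verifying that central extensions of pro-unipotent groups produce a bona fide six-term sequence of pointed sets terminating in an $\rH^2$ obstruction, and that all arguments are sufficiently functorial in $R$ to promote the bijection on $R$-points into an isomorphism of cohomology schemes.
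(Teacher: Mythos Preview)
Your proposal is correct and follows essentially the same approach as the paper: filter $\Pi$ by its descending central series, identify each graded piece as a sum of copies of $\bQ_p(c)$ (via the surjection from $(U^{\ab})^{\otimes c}$), invoke the vanishing of $\rH^1$ and $\rH^2$ of $G_\ell$ with coefficients in $\bQ_p(c)$ for $c\geq 2$ and $\ell\neq p$, use the long exact sequence for the central extensions, and pass to the limit. One small slip: in your Hochschild--Serre computation you call $\ell^{1-c}-1$ a ``non-unit'', but the point is precisely that it \emph{is} a unit in any $\bQ_p$-algebra for $c\geq 2$, so Frobenius invariants and coinvariants on $R(c-1)$ vanish; the paper simply cites \cite[Prop.~(7.3.10)]{nsw} for this vanishing rather than computing it by hand.
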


\begin{proof}
	We have to show that the map $\rH^1(G_\ell, \Pi(R)) \to \rH^1(G_\ell, U^{\ab}(R))$ is an isomorphism for any $\bQ_p$-algebra~$R$.
	Let $\Pi_n$ denote the $n$th quotient of $\Pi$ along the lower central series. Let $V_n$ be the kernel of the natural projection $\Pi_n \to \Pi_{n-1}$, so that we have a short exact sequence of $\bQ_p$-group schemes
	\begin{equation}
		\label{eq: central extension of pi}
		1 \to V_n \to \Pi_n \to \Pi_{n-1} \to 1.
	\end{equation}
	The abelianisation $\Pi^{\ab} = U^{\ab} = V_1$ is a vector group whose $\bQ_p$-points as a Galois representation are given by $\bQ_p(1) \oplus \bQ_p(1)$. In particular, $V_1$ is pure of weight~$-2$. For larger~$n$, we have a surjection 
	\[ V_1(\bQ_p)^{\otimes n} \twoheadrightarrow V_n(\bQ_p), \quad x_1 \otimes \ldots \otimes x_n \mapsto [x_1,[x_2,[\ldots,x_n ]]],\]
	so $V_n(\bQ_p)$ is pure of weight $-2n$ and therefore a direct sum of copies of $\bQ_p(n)$:
	\[ V_n(\bQ_p) = \bigoplus \bQ_p(n). \]
	Since $\ell \neq p$, we have
	\[ \rH^1(G_\ell, \bQ_p(n)) = 0 \quad \text{and} \quad \rH^2(G_\ell, \bQ_p(n)) = 0 \]
	for all $n \geq 2$ by \cite[Prop.~(7.3.10) and remark]{nsw}, and hence $\rH^1(G_\ell, V_n(\bQ_p)) = \rH^2(G_\ell, V_n(\bQ_p)) = 0$. Since $\rH^i(G_{\ell}, V_n(R)) = \rH^i(G_{\ell}, V_n(\bQ_p)) \otimes_{\bQ_p} R$, the same holds on the level of $R$-valued points:
	\[ \rH^1(G_\ell, V_n(R)) = 0 \quad \text{and} \quad \rH^2(G_\ell, V_n(R)) = 0. \]
	The long exact cohomology sequence in nonabelian Galois cohomology for the central extension~\eqref{eq: central extension of pi} now implies that the map
	\begin{equation}
		\label{eq: iso on H1 for quotients}
		(p_n)_*\colon \rH^1(G_\ell, \Pi_n(R)) \overset{\cong}{\to} \rH^1(G_\ell, \Pi_{n-1}(R))
	\end{equation}
	is an isomorphism for $n \geq 2$. Composing these isomorphisms we find that the map $\rH^1(G_\ell,\Pi_n(R)) \to \rH^1(G_\ell, U^{\ab}(R))$ induced by the abelianisation map $\Pi_n \twoheadrightarrow \Pi_1 = U^{\ab}$ is an isomorphism for all $n \geq 1$. Finally, to go from all $\Pi_n$ to their limit~$\Pi$ we use the isomorphism
	\[ \rH^1(G_{\ell}, \Pi(R)) \cong \varprojlim_n \rH^1(G_{\ell}, \Pi_n(R)), \]
	which is shown in \cite[Lemma~4.5]{betts:motivic-anabelian-geometry} for $R = \bQ_p$ but the same proof works for any $\bQ_p$-algebra.
\end{proof}

\begin{cor}
	\label{thm:refinement conditions intermediate quotient}
	Let $\Pi$ be a $G_{\bQ}$-equivariant intermediate quotient of $U \twoheadrightarrow U^{\ab}$. Then the refined Selmer scheme $\Sel_{S,\Pi}^{\min}(\cY) \subseteq \rH^1_{f,S}(G_{\bQ}, \Pi)$ is the inverse image of the refined Selmer scheme $\Sel_{S,U^{\ab}}^{\min}(\cY)$ for the abelianisation $U^{\ab}$:
	\[
	\begin{tikzcd}
		\Sel_{S,\Pi}^{\min}(\cY) \dar \rar[symbol=\subseteq] \arrow[dr, phantom, "\scalebox{1.5}{$\lrcorner$}" , very near start, color=black] & \rH^1_{f,S}(G_{\bQ}, \Pi) \dar\\
		\Sel_{S,U^{\ab}}^{\min}(\cY) \rar[symbol=\subseteq] & \rH^1_{f,S}(G_{\bQ}, U^{\ab}).
	\end{tikzcd}
	\]
\end{cor}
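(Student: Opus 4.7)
The plan is to leverage Lemma 3.2 (the isomorphism $\rH^1(G_\ell, \Pi) \cong \rH^1(G_\ell, U^{\ab})$ for $\ell \neq p$) to show that all the refinement conditions cutting $\Sel_{S,\Pi}^{\min}(\cY)$ out of $\rH^1_{f,S}(G_{\bQ},\Pi)$ are pulled back from the abelianisation. First I would dispense with the trivial case $2 \notin S$, where Proposition 3.1 shows that both refined Selmer schemes are empty; so I may assume $2 \in S$.

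Next I would verify that the local conditions at primes $\ell \notin S$ defining $\Sel_{S,\Pi}^{\min}(\cY)$ coincide with the Bloch--Kato conditions defining $\rH^1_{f,S}(G_{\bQ},\Pi)$. Concretely, at $\ell \notin S \cup \{p\}$ the condition $\xi|_{G_\ell} \in j_\ell(\cY(\bZ_\ell))^{\Zar} = \{*\}$ is just unramifiedness (for pro-unipotent $\Pi$), while at $\ell = p$ the Zariski-closure of $j_p(\cY(\bZ_p))$ is the full subscheme $\rH^1_f(G_p,\Pi)$, matching the crystalline condition. (The hypothesis $\cY(\bZ_p) \neq \emptyset$ is guaranteed by the $S$-integral basepoint.) Hence the inclusion $\Sel_{S,\Pi}^{\min}(\cY) \subseteq \rH^1_{f,S}(G_{\bQ},\Pi)$ is cut out purely by the refinement conditions $\loc_\ell(\xi) \in j_\ell(Y(\bQ_\ell))^{\Zar}$ for $\ell \in S$, and similarly on the abelian side.

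Then, for each $\ell \in S$ -- which is automatically different from $p$ since $p \notin S$ -- I would invoke Lemma 3.2 to obtain an isomorphism $\rH^1(G_\ell,\Pi) \xrightarrow{\sim} \rH^1(G_\ell, U^{\ab})$ compatible with localisation from global cohomology. Under this isomorphism, the subscheme $j_\ell(Y(\bQ_\ell))^{\Zar}$ in $\rH^1(G_\ell,\Pi)$ corresponds to $j_\ell^{\ab}(Y(\bQ_\ell))^{\Zar}$ in $\rH^1(G_\ell,U^{\ab})$, because local Kummer maps are functorial in the quotient: the pushout of ${}_y\Pi_b$ along $\Pi \twoheadrightarrow U^{\ab}$ is precisely ${}_yU^{\ab}_b$. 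Consequently the condition $\loc_\ell(\xi) \in j_\ell(Y(\bQ_\ell))^{\Zar}$ is exactly the pullback of $\loc_\ell(\xi^{\ab}) \in j_\ell^{\ab}(Y(\bQ_\ell))^{\Zar}$, where $\xi^{\ab}$ is the abelianised class.

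Imposing these pulled-back conditions for every $\ell \in S$ simultaneously identifies $\Sel_{S,\Pi}^{\min}(\cY)$ with the pullback of $\Sel_{S,U^{\ab}}^{\min}(\cY)$ along the abelianisation morphism $\rH^1_{f,S}(G_{\bQ},\Pi) \to \rH^1_{f,S}(G_{\bQ},U^{\ab})$, which is the desired Cartesian square. There is no real obstacle here: all the substantive input is packaged into Lemma 3.2, and the remaining argument is essentially a bookkeeping exercise comparing local defining conditions prime-by-prime, using the fact that the single distinguished prime $p$ at which the unipotent cohomology is interesting is excluded from $S$ by hypothesis.
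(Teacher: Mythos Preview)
Your proposal is correct and follows essentially the same approach as the paper: the paper's proof is a two-sentence appeal to Lemma~3.2 for the conditions at primes in $S$, with the surrounding text having already established that the conditions at primes outside $S$ agree with the Bloch--Kato conditions (and that both sides are empty if $2\notin S$). You have simply made these preliminary reductions explicit within the proof itself rather than relying on the preceding discussion. One small quibble: your justification that $\cY(\bZ_p)\neq\emptyset$ via the basepoint is shaky since $b$ may be tangential; the paper instead uses the point $-1\in\cY(\bZ_p)$ for odd $p$.
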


\begin{proof}
	By definition, the local conditions at primes $\ell \in S$ defining the refined Selmer scheme are determined by the Zariski closure of the image of the $\ell$-adic Kummer map $j_{\ell}\colon Y(\bQ_{\ell}) \to \rH^1(G_{\ell}, \Pi(\bQ_p))$. By \Cref{thm: reduction to abelian cohomology}, these local conditions can be checked on the level of the abelianised fundamental group.
\end{proof}

\subsection{Refined Selmer scheme for the abelianised fundamental group}
\label{sec: refined abelian Selmer scheme}

The refined Selmer scheme $\Sel_{S,U^{\ab}}^{\min}(\cY)$ for the abelianised fundamental group was previously studied in~\cite{BBKLMQSX}, so we only give a brief summary of the facts we need here. We have $U^{\ab}(\bQ_p) = \bQ_p(1) \oplus \bQ_p(1)$ as a Galois representation, with the two factors corresponding to loops around~$0$ and~$1$, respectively. By Kummer theory, we have
\[ \rH^1(G_{\ell}, \bQ_p(1)) = \bigl(\varprojlim_n \bQ_{\ell}^\times/(\bQ_{\ell}^\times)^{p^n}\bigr) \otimes_{\bZ_p} \bQ_p \cong \bQ_p, \]
for $\ell \neq p$, with the last isomorphism induced by the $\ell$-adic valuation map $v_{\ell}\colon\bQ_{\ell}^\times \to \bZ$. As a consequence, we get 
\[ \rH^1(G_{\ell}, U^{\ab}) \cong \bA_{\bQ_p}^2 \]
for $\ell \neq p$. We choose our coordinates $(x_{\ell}, y_{\ell})$ on $\rH^1(G_{\ell}, U^{\ab})$ to be those from the canonical isomorphism above, except for adding a minus sign in the $y_{\ell}$-coordinate. 
Then we get following explicit description of the local Kummer map $j_{\ell}\colon Y(\bQ_{\ell}) \to \rH^1(G_{\ell}, U^{\ab}(\bQ_p))$ for the quotient $U^{\ab}$.

\begin{lem}
	\label{thm: local Kummer in coordinates}
	For $\ell \neq p$, the local Kummer map $j_{\ell}$ is given as $z \mapsto (v_\ell(z), -v_\ell(1-z))$ in the coordinates $(x_{\ell}, y_{\ell})$. \qed
\end{lem}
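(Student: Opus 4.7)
The plan is to compute $j_\ell$ by reducing to the classical $\ell$-adic Kummer map for $\bG_m$, exploiting the fact that the generalised Albanese of $Y = \bP^1 \smallsetminus \{0,1,\infty\}$ is $\bG_m \times \bG_m$.

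First I would identify the Galois representation $U^{\ab}$ with $V_p(\bG_m) \oplus V_p(\bG_m)$. Concretely, this identification is realised by the map $\alpha\colon Y \to \bG_m \times \bG_m$, $z \mapsto (z, 1-z)$ (suitably normalised by the basepoint $b$), which induces an isomorphism on abelianised $\bQ_p$-pro-unipotent étale fundamental groups. Indeed, both sides are free of rank two as Galois representations, and $\alpha$ sends the loops around $0$ and $1$ (which generate $U^{\ab}$) to the standard generators on the right.

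Next I would invoke the naturality of the abelianised pro-unipotent Kummer map in the source variety to obtain a commuting diagram
\[
\begin{tikzcd}
Y(\bQ_\ell) \arrow[d,"\alpha"'] \arrow[r,"j_\ell^{\ab}"] & \rH^1(G_\ell, U^{\ab}) \arrow[d,"\wr"] \\
(\bG_m \times \bG_m)(\bQ_\ell) \arrow[r,"\kappa_\ell \times \kappa_\ell"] & \rH^1(G_\ell, V_p(\bG_m))^2
\end{tikzcd}
\]
in which the bottom arrow is the product of two copies of the classical $\ell$-adic Kummer map $\kappa_\ell$ for $\bG_m$. The classical map $\kappa_\ell\colon \bQ_\ell^\times \to \rH^1(G_\ell, V_p(\bG_m))$ sends $a$ to its Kummer class, which under the canonical isomorphism $\rH^1(G_\ell, \bQ_p(1)) \cong \bQ_p$ from $v_\ell$ (recalled in the paragraph preceding the statement) becomes $v_\ell(a)$. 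Putting everything together, $j_\ell^{\ab}(z) = (v_\ell(z), v_\ell(1-z))$ in the canonical coordinates; applying the sign convention declared in the paragraph above the statement (that $y_\ell$ is the negation of the canonical coordinate on the second factor) then yields the formula $j_\ell(z) = (v_\ell(z), -v_\ell(1-z))$.

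The main obstacle is the identification of $U^{\ab}$ with $V_p(\bG_m)^2$ via the generalised Albanese together with tracking the basepoint. Strictly, the abelianised path torsor from $b$ to $z$ corresponds to the Kummer classes of $z/b$ and $(1-z)/(1-b)$, but since $b$ is fixed these constants are either zero (for $\ell\notin S$, where $b$ is $\ell$-integral so $v_\ell(b) = v_\ell(1-b) = 0$) or absorbed into the affine parametrisation of $\rH^1(G_\ell, U^{\ab})$. Alternatively, one can sidestep this entirely by taking $b$ to be the tangential basepoint at $0$, for which the normalisation is vacuous.
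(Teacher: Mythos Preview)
The paper gives no proof: the lemma ends with \qed\ directly after the statement, treating it as immediate from the preceding paragraph, which identifies $U^{\ab}$ with $\bQ_p(1)\oplus\bQ_p(1)$ via the loops around $0$ and $1$ and identifies $\rH^1(G_\ell,\bQ_p(1))\cong\bQ_p$ via the valuation $v_\ell$. Your proposal is a correct and more detailed unpacking of exactly this reasoning: the map $z\mapsto(z,1-z)$ into $\bG_m\times\bG_m$ is precisely what realises the abelianisation in terms of loops around the cusps, and naturality of the pro-unipotent Kummer map then reduces the computation to the classical Kummer class for $\bG_m$.

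Your comment about the basepoint is a genuine subtlety that the paper glosses over. Note that when this lemma is actually used (in \S\ref{sec:polylog quotient} onward) the paper has fixed $b=\vec{1}_0$, the tangential basepoint at~$0$, so your second resolution applies and the normalisation is indeed vacuous. For a general $S$-integral basepoint~$b$ one would in principle see $v_\ell(b)$ and $v_\ell(1-b)$ as additive constants, but these vanish for $\ell\notin S$ and for $\ell\in S$ are absorbed into the choice of coordinates (the paper refers to \cite{BBKLMQSX} for the detailed setup). So your handling of this point is correct and more careful than the paper itself.
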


\begin{rem}
	\label{rem: sign difference in local Selmer scheme coordinates}
	Adding a minus sign in the second coordinate means that our choice of coordinates differs from that in \cite{BBKLMQSX}. However it agrees with the choice made in \cite{CDC:polylog1}, which we will adopt for the discussion in \Cref{sec:calculations}, where we make explicit calculations for small $S$ working in the polylogarithmic quotient.
\end{rem}

Taking the choice of coordinates into account, we showed in \cite[Lemma\,2.9]{BBKLMQSX} that the Zariski closure of the image $j_{\ell}(Y(\bQ_{\ell}))$ equals the union $R_{0} \cup R_{1} \cup R_{\infty}$ of the three following linear subspaces: 
\begin{align*}
	R_{0} &= \left\{(x, y) \in \bA^2 \; \middle\vert \; y = 0\right\}\!,\\
	R_{1} &= \left\{(x, y) \in \bA^2 \; \middle\vert \; x = 0 \right\}\!,\\
	R_{\infty} &= \left\{(x, y) \in \bA^2 \; \middle\vert \; x + y = 0 \right\}\!,
\end{align*}
where the naming convention comes from the fact that these linear conditions correspond to the three cusps.

Again by Kummer theory, the unrefined Selmer scheme $\rH^1_{f,S}(G_{\bQ}, U^{\ab})$ is an affine space of the form $\bA_{\bQ_p}^S \times \bA_{\bQ_p}^S$, as discussed in \cite[§2.4]{BBKLMQSX}.\footnote{The Selmer scheme $\rH^1_{f,S}(G_{\bQ}, U^{\ab})$ is denoted $\Sel_{S,1}$ in \cite{BBKLMQSX}.} We choose coordinates $((x_{\ell})_{\ell \in S}$, $(y_{\ell})_{\ell \in S})$ such that the global Kummer map 
\[j_S \colon \mathcal{Y}(\bZ_S) \to \rH^1_{f,S}(G_{\bQ}, U^{\ab}) = \bA_{\bQ_p}^S \times \bA_{\bQ_p}^S \] 
is given by
\begin{equation}
	\label{eq:global Kummer in coordinates}
	z\mapsto ((v_{\ell}(z))_{\ell \in S}, (-v_{\ell}(1-z))_{\ell \in S}).
\end{equation}
The coordinates $y_{\ell}$ thus differ again by a sign from the ones used in \cite{BBKLMQSX}.

The localisation map 
\[ \loc_{\ell}\colon \rH^1_{f,S}(G_{\bQ}, U^{\ab}) \to \rH^1(G_{\ell}, U^{\ab}) \]
for $\ell \neq p$ is now simply the projection $\bA^S \times \bA^S \to \bA^2$ onto the $(x_{\ell},y_{\ell})$-components. A point $((x_{\ell})_{\ell \in S}, (y_{\ell})_{\ell \in S})$ of the Selmer scheme is therefore contained in the \emph{refined} Selmer scheme if and only if for all $\ell \in S$ we have $(x_{\ell}, y_{\ell}) \in R_0 \cup R_1 \cup R_{\infty}$. This yields the following explicit description of the refined Selmer scheme.

\begin{prop}
	\label{thm: refined selmer scheme ab}
	Assume $2 \in S$. Then the refined Selmer scheme $\Sel_{S,U^{\ab}}^{\min}(\cY)$ for the abelianised fundamental group is the subscheme of $\bA_{\bQ_p}^S \times \bA_{\bQ_p}^S$ defined by the equations
	\[ x_{\ell} y_{\ell} (x_{\ell} + y_{\ell}) = 0 \quad \text{for all $\ell \in S$}. \]
	It can be expressed as the union of the following $3^{\#S}$ linear subspaces
	\begin{equation}
		\label{eq: Sel ab decomposition}
		\Sel_{S,U^{\ab}}^{\min}(\cY) = \bigcup_{\Sigma} \Sel_{S,U^{\ab}}^{\Sigma}(\cY),
	\end{equation}
	where 
	\[ \Sel_{S,U^{\ab}}^{\Sigma}(\cY) \coloneqq \left\{((x_{\ell})_{\ell \in S},(y_{\ell})_{\ell \in S})  \;\middle\vert\; (x_{\ell},y_{\ell}) \in R_{\Sigma_{\ell}} \text{ for } \ell \in S \right\} \subseteq \bA_{\bQ_p}^S \times \bA_{\bQ_p}^S \]
	and $\Sigma = (\Sigma_{\ell})_{\ell \in S}\in \{0,1,\infty\}^S$. \qed
\end{prop}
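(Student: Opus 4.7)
The plan is to combine three ingredients from the preceding subsections. First I would use Kummer theory to identify the unrefined Selmer scheme $\rH^1_{f,S}(G_{\bQ},U^{\ab})$ with $\bA^S_{\bQ_p}\times\bA^S_{\bQ_p}$ via the coordinates $((x_\ell)_{\ell\in S},(y_\ell)_{\ell\in S})$ for which the global Kummer map takes the form~\eqref{eq:global Kummer in coordinates}. Under this identification, for every $\ell\in S$ (which is necessarily different from $p$), the localisation map
\[
\loc_\ell\colon\rH^1_{f,S}(G_{\bQ},U^{\ab})\to\rH^1(G_\ell,U^{\ab})\cong\bA^2_{\bQ_p}
\]
is simply the coordinate projection onto the $(x_\ell,y_\ell)$-factor; this is a direct consequence of \Cref{thm: local Kummer in coordinates} together with~\eqref{eq:global Kummer in coordinates}, since the local and global Kummer maps are intertwined by $\loc_\ell$.

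Next I would invoke the reduction already carried out earlier in this section: since $2\in S$, the local conditions at primes outside $S$ are automatically captured by the crystalline and unramified requirements built into $\rH^1_{f,S}$, so $\Sel_{S,U^{\ab}}^{\min}(\cY)$ is obtained from $\rH^1_{f,S}(G_{\bQ},U^{\ab})$ solely by imposing $\loc_\ell(\xi)\in j_\ell(Y(\bQ_\ell))^{\Zar}$ for each $\ell\in S$. By \cite[Lemma~2.9]{BBKLMQSX}, each such Zariski closure equals the union $R_0\cup R_1\cup R_\infty\subseteq\bA^2_{\bQ_p}$, which as a reduced closed subscheme of $\bA^2_{\bQ_p}$ is precisely the vanishing locus of the polynomial $xy(x+y)$. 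Pulling back along the coordinate projections then shows that $\Sel_{S,U^{\ab}}^{\min}(\cY)$ is cut out inside $\bA^S_{\bQ_p}\times\bA^S_{\bQ_p}$ exactly by the equations $x_\ell y_\ell(x_\ell+y_\ell)=0$ for $\ell\in S$.

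Finally, the decomposition into $3^{\#S}$ linear pieces would follow by distributing the union $R_0\cup R_1\cup R_\infty$ over the product indexed by $S$: choosing one component $R_{\Sigma_\ell}$ at each place $\ell\in S$ produces the linear subspace $\Sel_{S,U^{\ab}}^{\Sigma}(\cY)$, and taking the union over all $\Sigma\in\{0,1,\infty\}^S$ exhausts the full vanishing locus. There is no genuine obstacle in this argument, as every input is already recorded in the preceding subsections; the only point requiring mild care is the role of the hypothesis $2\in S$, which is precisely what ensures that the relevant local image at $\ell=2$ is the nonempty set $j_2(Y(\bQ_2))^{\Zar}=R_0\cup R_1\cup R_\infty$ rather than the empty set $j_2(\cY(\bZ_2))^{\Zar}=\emptyset$ that would otherwise force the refined Selmer scheme to be empty (cf.\ \Cref{thm: empty refined Selmer scheme}).
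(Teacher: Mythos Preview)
Your proposal is correct and follows essentially the same approach as the paper. In fact, the paper treats this proposition as an immediate consequence of the preceding discussion (it carries a \qed in place of a proof): the identification of $\rH^1_{f,S}(G_\bQ,U^{\ab})$ with $\bA^S_{\bQ_p}\times\bA^S_{\bQ_p}$, the description of $\loc_\ell$ as the coordinate projection, the citation of \cite[Lemma~2.9]{BBKLMQSX} for $j_\ell(Y(\bQ_\ell))^{\Zar}=R_0\cup R_1\cup R_\infty$, and the decomposition over $\Sigma\in\{0,1,\infty\}^S$ are exactly the ingredients recorded in the paragraphs before the proposition.
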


\subsection{Refined Selmer scheme for general fundamental group quotients}

Via \Cref{thm:refinement conditions intermediate quotient}, we obtain a similar description of the refined Selmer scheme for larger quotients. Assume $2 \in S$ and let $U \twoheadrightarrow \Pi \twoheadrightarrow U^{\ab}$ be any $G_{\bQ}$-equivariant intermediate quotient.

\begin{defn}
	For each $\Sigma = (\Sigma_{\ell})_{\ell \in S} \in \{0,1,\infty\}^S$ define $\Sel_{S,\Pi}^{\Sigma}(\cY)$ as the pullback of $\Sel_{S,U^{\ab}}^{\Sigma}(\cY)$ from the abelian Selmer scheme:
	\[
	\begin{tikzcd}
		\Sel_{S,\Pi}^{\Sigma}(\cY) \dar \rar[symbol=\subseteq] \arrow[dr, phantom, "\scalebox{1.5}{$\lrcorner$}" , very near start, color=black] & \rH^1_{f,S}(G_{\bQ}, \Pi) \dar\\
		\Sel_{S,U^{\ab}}^{\Sigma}(\cY) \rar[symbol=\subseteq] & \rH^1_{f,S}(G_{\bQ}, U^{\ab}).
	\end{tikzcd}
	\]
	We call $\Sel_{S,\Pi}^{\Sigma}(\cY)$ the refined Selmer scheme for the \emph{refinement condition}~$\Sigma$. 
	
	Moreover, define the refined Chabauty--Kim locus $\cY(\bZ_p)_{S,\Pi}^{\Sigma} \subseteq \cY(\bZ_p)$ for the refinement condition~$\Sigma$ as the set of all $z \in \cY(\bZ_p)$ whose image under the $p$-adic Kummer map $j_p$ is contained in the scheme-theoretic image of $\Sel_{S,\Pi}^{\Sigma}(\cY)$ under the localisation map $\loc_p$.
\end{defn}

Then \Cref{thm:refinement conditions intermediate quotient} yields the following:

\begin{cor}
	\label{thm: refined Selmer scheme}
	The refined Selmer scheme $\Sel_{S,\Pi}^{\min}(\cY)$ can be written as a union
	\[ \Sel_{S,\Pi}^{\min}(\cY) = \bigcup_{\Sigma} \Sel_{S,\Pi}^{\Sigma}(\cY) \]
	over all refinement conditions $\Sigma \in \{0,1,\infty\}^S$. Similarly, the refined Chabauty--Kim locus $\cY(\bZ_p)_{S,\Pi}^{\min}$ can be written as a union
	\[ \cY(\ZZ_p)_{S, \Pi}^{\min} = \bigcup_{\Sigma}\cY(\ZZ_p)_{S, \Pi}^{\Sigma}. \]
\end{cor}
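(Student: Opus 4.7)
The plan is to deduce the corollary directly from the two preceding results in the section. Corollary~\ref{thm:refinement conditions intermediate quotient} identifies $\Sel_{S,\Pi}^{\min}(\cY)$ with the pullback of $\Sel_{S,U^{\ab}}^{\min}(\cY)$ along the abelianisation map $\rH^1_{f,S}(G_\bQ,\Pi) \to \rH^1_{f,S}(G_\bQ,U^{\ab})$, while Proposition~\ref{thm: refined selmer scheme ab} writes $\Sel_{S,U^{\ab}}^{\min}(\cY)$ as the union of the $3^{\#S}$ closed subschemes $\Sel_{S,U^{\ab}}^{\Sigma}(\cY)$ indexed by $\Sigma \in \{0,1,\infty\}^S$. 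If $2 \notin S$, both sides of the desired identity are empty by Proposition~\ref{thm: empty refined Selmer scheme}, so I may assume $2 \in S$.

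For the first decomposition, I would argue pointwise: a point $\xi$ of $\rH^1_{f,S}(G_\bQ,\Pi)$ lies in $\Sel_{S,\Pi}^{\min}(\cY)$ if and only if its image $\xi^{\ab}$ in $\rH^1_{f,S}(G_\bQ,U^{\ab})$ lies in $\Sel_{S,U^{\ab}}^{\min}(\cY)$ (Corollary~\ref{thm:refinement conditions intermediate quotient}), if and only if $\xi^{\ab}$ lies in some $\Sel_{S,U^{\ab}}^{\Sigma}(\cY)$ (Proposition~\ref{thm: refined selmer scheme ab}), if and only if $\xi$ lies in the corresponding $\Sel_{S,\Pi}^{\Sigma}(\cY)$ by the defining pullback square. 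Equivalently, pullback of closed subschemes distributes over finite unions, and applying this to the decomposition of $\Sel_{S,U^{\ab}}^{\min}(\cY)$ produces the stated decomposition of $\Sel_{S,\Pi}^{\min}(\cY)$.

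For the second decomposition, I would use that $\cY(\bZ_p)_{S,\Pi}^{\min}$ is by definition the set of $z \in \cY(\bZ_p)$ with $j_p(z)$ landing in the $\bQ_p$-points of the scheme-theoretic image of $\loc_p\colon \Sel_{S,\Pi}^{\min}(\cY) \to \rH^1(G_p,\Pi)$. The scheme-theoretic image of a morphism from a finite union of closed subschemes equals the union of the scheme-theoretic images of the individual pieces, since this latter union is a closed subscheme of the target through which the original morphism factors, and it is manifestly the smallest such. Applying this to the first decomposition and taking preimages under $j_p$ yields $\cY(\bZ_p)_{S,\Pi}^{\min} = \bigcup_\Sigma \cY(\bZ_p)_{S,\Pi}^{\Sigma}$. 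There is no real obstacle: this corollary is pure bookkeeping once Corollary~\ref{thm:refinement conditions intermediate quotient} and Proposition~\ref{thm: refined selmer scheme ab} are in hand.
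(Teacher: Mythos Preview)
Your proposal is correct and follows the same approach as the paper, which presents this corollary as an immediate consequence of Corollary~\ref{thm:refinement conditions intermediate quotient} (together with the decomposition of the abelian refined Selmer scheme in Proposition~\ref{thm: refined selmer scheme ab}) without spelling out a proof. You have simply unpacked the implicit reasoning: pullback commutes with finite unions for the Selmer scheme statement, and scheme-theoretic image commutes with finite unions for the Chabauty--Kim locus statement.
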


\begin{rem}
	\label{rem: refinement conditions S-integral points}
	As discussed at the beginning of this section, the presentation of the refined Selmer scheme and the refined Chabauty--Kim locus as a union over refinement conditions~$\Sigma = (\Sigma_{\ell})_{\ell \in S} \in \{0,1,\infty\}^S$ corresponds to writing the $S$-integral points as a union
	\[ \cY(\bZ_S) = \bigcup_{\Sigma} \cY(\bZ_S)_{\Sigma}, \]
	with $\cY(\bZ_S)_{\Sigma}$ denoting the set of those $S$-integral points whose mod-$\ell$ reduction lies in $\cY(\bF_{\ell}) \cup \{\Sigma_{\ell}\}$ for all $\ell \in S$. We have the inclusion
	\[ \cY(\bZ_S)_{\Sigma} \subseteq \cY(\bZ_p)_{S,\Pi}^{\Sigma} \]
	for all $\Sigma \in \{0,1,\infty\}^S$.
\end{rem}

\begin{rem}
	We have worked in this section with the $p$-adic étale Selmer scheme rather than the motivic Selmer scheme because the refinement conditions are phrased in terms of the localisation maps
	\[ \loc_{\ell} \colon \rH^1_{f,S}(G_{\bQ}, \Pi) \to \rH^1(G_\ell, \Pi), \]
	and these are only defined over $\bQ_p$. However, the equations which we computed for the refined Selmer scheme have coefficients in $\bQ$ and are independent of~$p$, so they could be used a posteriori to define a refined Selmer subscheme of the motivic Selmer scheme $\rH^1(G_{\bQ,S}^{\MT},\Pi^{\dR})$ over $\bQ$. It would be interesting to find a motivic definition of the refined Selmer scheme that is independent of~$p$ by construction.
\end{rem}

For later use we note two functoriality properties of refined Chabauty--Kim loci: their behaviour with respect to a change of the fundamental group quotient, and their interaction with the natural $S_3$-action on the thrice-punctured line.

\subsection{Change of fundamental group quotient}
\label{sec: change of fundamental group quotient}

Smaller quotients of the fundamental group result in larger corresponding Chabauty--Kim loci:

\begin{lem}
	\label{thm: change of fundamental group quotient}
	Let $U \twoheadrightarrow \Pi_1 \twoheadrightarrow \Pi_2 \twoheadrightarrow U^{\ab}$ be two $G_{\bQ}$-equivariant quotients of $U$ dominating the abelianisation. Then we have the inclusion $\cY(\ZZ_p)_{S, \Pi_1}^{\min} \subseteq \cY(\ZZ_p)_{S, \Pi_2}^{\min}$. More precisely, for each refinement condition $\Sigma \in \{0,1,\infty\}^S$, we have the inclusion $\cY(\ZZ_p)_{S, \Pi_1}^{\Sigma} \subseteq \cY(\ZZ_p)_{S, \Pi_2}^{\Sigma}$.
\end{lem}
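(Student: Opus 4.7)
The plan is to deduce the inclusion from the functoriality of the refined Chabauty--Kim construction in the quotient~$\Pi$. The quotient map $\pi\colon \Pi_1 \twoheadrightarrow \Pi_2$ induces compatible morphisms on continuous cohomology schemes globally and at each prime, and I would simply trace these through the definitions.

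First I would note that, by \Cref{thm:refinement conditions intermediate quotient}, both refined Selmer schemes $\Sel_{S,\Pi_i}^{\Sigma}(\cY)$ are cut out as the pullbacks of the common scheme $\Sel_{S,U^{\ab}}^{\Sigma}(\cY)$ along the abelianisation maps $\rH^1_{f,S}(G_\bQ,\Pi_i) \to \rH^1_{f,S}(G_\bQ, U^{\ab})$. Since the map $\pi_*\colon \rH^1_{f,S}(G_\bQ,\Pi_1) \to \rH^1_{f,S}(G_\bQ,\Pi_2)$ commutes with the two abelianisation maps, it restricts to a morphism $\Sel_{S,\Pi_1}^{\Sigma}(\cY)\to\Sel_{S,\Pi_2}^{\Sigma}(\cY)$ fitting into a commutative square
\[
\begin{tikzcd}
\Sel_{S,\Pi_1}^{\Sigma}(\cY) \rar \dar["\loc_p"'] & \Sel_{S,\Pi_2}^{\Sigma}(\cY) \dar["\loc_p"] \\
\rH^1(G_p, \Pi_1) \rar["\pi_*"'] & \rH^1(G_p, \Pi_2).
\end{tikzcd}
\]

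Writing $I_i$ for the scheme-theoretic image of $\loc_p\colon\Sel_{S,\Pi_i}^{\Sigma}(\cY)\to\rH^1(G_p,\Pi_i)$, the commutativity of this square together with the universal property of scheme-theoretic images implies that $\pi_*$ maps $I_1$ scheme-theoretically into~$I_2$: indeed, the composite $\pi_*\circ\loc_p$ factors through the closed subscheme $I_2\hookrightarrow\rH^1(G_p,\Pi_2)$, hence so does its factorization through $I_1$. Combined with the functoriality identity $j_p^{\Pi_2}=\pi_*\circ j_p^{\Pi_1}$ for the pro-unipotent Kummer maps, it follows that any point $y\in\cY(\bZ_p)$ with $j_p^{\Pi_1}(y)\in I_1(\bQ_p)$ automatically satisfies $j_p^{\Pi_2}(y)\in I_2(\bQ_p)$, which is precisely the refined inclusion $\cY(\bZ_p)_{S,\Pi_1}^{\Sigma}\subseteq\cY(\bZ_p)_{S,\Pi_2}^{\Sigma}$. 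Taking the union over $\Sigma\in\{0,1,\infty\}^S$ and invoking \Cref{thm: refined Selmer scheme} yields the unrefined inclusion. There is no essential obstacle here; the entire argument is naturality, with the only mild subtlety being the careful handling of scheme-theoretic, rather than set-theoretic, images.
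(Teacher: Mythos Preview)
Your proof is correct and follows the same approach as the paper: the paper simply writes down the commuting square induced by $\Pi_1\twoheadrightarrow\Pi_2$ and says the inclusion ``follows from the definitions,'' whereas you spell out the details (why the Selmer schemes map to one another via the pullback description, why scheme-theoretic images behave well, and the compatibility of the local Kummer maps). The only cosmetic difference is that the paper writes $\rH^1_f(G_p,\Pi_i)$ on the right of the square while you write $\rH^1(G_p,\Pi_i)$, but this does not affect the argument.
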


\begin{proof}
	The quotient map $\Pi_1 \twoheadrightarrow \Pi_2$ induces the vertical maps in the commutative diagram
	\[
	\begin{tikzcd}
		\Sel_{S,\Pi_1}^{\Sigma}(\cY) \dar \rar["\loc_p"] & \rH^1_f(G_p, \Pi_1) \dar \\
		\Sel_{S,\Pi_2}^{\Sigma}(\cY) \rar["\loc_p"] & \rH^1_f(G_p, \Pi_2).
	\end{tikzcd}
	\]
	The inclusion $\cY(\ZZ_p)_{S, \Pi_1}^{\Sigma} \subseteq \cY(\ZZ_p)_{S, \Pi_2}^{\Sigma}$ now follows from the definitions.
\end{proof}

\subsection{The \texorpdfstring{$S_3$}{S\textthreeinferior}-action}
\label{sec: S3-action}

The thrice-punctured line $\cY = \bP^1 \smallsetminus \{0,1,\infty\}$ carries a natural $S_3$-action given by the Möbius transformations
\[ z, \quad \frac1{z}, \quad 1-z, \quad \frac1{1-z}, \quad \frac{z-1}{z}, \quad \frac{z}{z-1}, \]
which permute the three cusps $\{0,1,\infty\}$. (We identify $S_3$ with the symmetric group $S_{\{0,1,\infty\}}$.) This action can be exploited for calculating refined Chabauty--Kim loci. 

For $1 \leq n \leq \infty$, denote by $U_n$ the depth-$n$ quotient of the $\bQ_p$-pro-unipotent étale fundamental group~$U$ of the thrice-punctured line~$\cY$, and let 
\[ \cY(\bZ_p)_{S,n}^{\min} = \bigcup_{\Sigma} \cY(\bZ_p)_{S,n}^{\Sigma} \]
be the associated refined Chabauty--Kim locus, written as a union over refinement conditions $\Sigma \in \{0,1,\infty\}^S$ as in \Cref{thm: refined Selmer scheme}.

\begin{lem}
	\label{thm: S3-action}
	The refined Chabauty--Kim locus $\cY(\bZ_p)_{S,n}^{\min}$ is stable under the $S_3$-action. More precisely, the $S_3$-action permutes the subsets $\cY(\bZ_p)_{S,n}^{\Sigma}$ according to the action of $S_3 = S_{\{0,1,\infty\}}$ on $\{0,1,\infty\}^S$, in the sense that
	\[ \sigma(\cY(\bZ_p)_{S,n}^{\Sigma}) = \cY(\bZ_p)_{S,n}^{\sigma(\Sigma)} \]
	for all $\Sigma \in \{0,1,\infty\}^S$ and $\sigma \in S_3$.
\end{lem}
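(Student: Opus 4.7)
The plan is to verify $S_3$-equivariance of each ingredient in the construction of $\cY(\bZ_p)_{S,n}^{\Sigma}$. Since the $S_3$-action on $\cY$ is defined over $\bZ$ and permutes the cusps, by functoriality it induces an action on the $\bQ_p$-pro-unipotent étale fundamental group~$U$ and on each depth-$n$ quotient~$U_n$ (well-defined up to a choice of path between $b$ and $\sigma(b)$, which becomes irrelevant at the level of conjugacy classes). This yields an honest $S_3$-action on the cohomology sets $\rH^1(G_\ell, U_n)$ for every prime~$\ell$, with respect to which the local Kummer maps $j_\ell$, the localization maps $\loc_\ell$, and the abelianization map $U_n \twoheadrightarrow U^{\ab}$ are all equivariant.

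By the pullback description of $\Sel_{S, U_n}^{\Sigma}(\cY)$ (see the definition preceding \Cref{thm: refined Selmer scheme}, together with \Cref{thm:refinement conditions intermediate quotient}), the statement reduces to its abelian analogue. Invoking the explicit description of $\Sel_{S, U^{\ab}}^{\Sigma}(\cY)$ from \Cref{thm: refined selmer scheme ab} in terms of the local subspaces $R_0, R_1, R_\infty \subseteq \rH^1(G_\ell, U^{\ab})$, the problem reduces further to the purely local claim that for each $\ell \in S$ and each $\sigma \in S_3$, the induced automorphism of $\rH^1(G_\ell, U^{\ab})$ sends $R_c$ to $R_{\sigma(c)}$ for all $c \in \{0,1,\infty\}$.

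For this local step I would identify $R_c$ as the Zariski closure of $j_\ell(Y(\bQ_\ell)_c)$, where $Y(\bQ_\ell)_c$ denotes the set of $\bQ_\ell$-points of $Y$ reducing modulo $\ell$ to the cusp~$c$. This identification follows by direct inspection of the formulas $j_\ell(z) = (v_\ell(z), -v_\ell(1-z))$ and $R_0 = \{y = 0\}$, $R_1 = \{x = 0\}$, $R_\infty = \{x+y = 0\}$, as is already implicit in \S\ref{sec: refined abelian Selmer scheme}. Since $\sigma \in S_3$ acts on $\bP^1_\bZ$ permuting the cusps, it carries $Y(\bQ_\ell)_c$ bijectively onto $Y(\bQ_\ell)_{\sigma(c)}$, and the $S_3$-equivariance of $j_\ell$ then gives $\sigma \cdot R_c = R_{\sigma(c)}$.

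Putting everything together via the $S_3$-equivariance of $j_p$ and $\loc_p$ yields the desired equality $\sigma(\cY(\bZ_p)_{S,n}^{\Sigma}) = \cY(\bZ_p)_{S,n}^{\sigma(\Sigma)}$, and taking the union over $\Sigma$ gives the stability of $\cY(\bZ_p)_{S,n}^{\min}$. The principal technical point is the basepoint bookkeeping underlying the first paragraph; the local computation in the third paragraph is elementary, and everything else is a formal consequence of functoriality.
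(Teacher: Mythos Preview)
Your proposal is correct. The paper's own proof simply cites \cite[Corollary~2.15]{BBKLMQSX}, noting that the argument given there for $n\leq 2$ carries over to arbitrary~$n$; you have instead unpacked that functoriality argument directly---equivariance of the Kummer and localisation maps, reduction to the abelian Selmer scheme via \Cref{thm:refinement conditions intermediate quotient}, and the local identification $\sigma_*R_c=R_{\sigma(c)}$ via the description of $R_c$ as the Zariski closure of $j_\ell(Y(\bQ_\ell)_c)$. The substance is the same; your version is a faithful self-contained sketch of what the cited reference proves, with the basepoint bookkeeping correctly flagged as the one genuinely technical point (cf.\ \Cref{rem: S3-action}).
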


\begin{proof}
	The first statement is proved in \cite[Corollary~2.15(i)]{BBKLMQSX} as a consequence of more general functoriality properties of Chabauty--Kim loci. The second statement is proved in \cite[Corollary~2.15(ii)]{BBKLMQSX}. It is stated there only for $n \leq 2$ because the sets $\cY(\bZ_p)_{S,n}^{\Sigma}$, had only been defined in depth at most~$2$ in loc.\ cit., but the same proof works for arbitrary~$n \leq \infty$.
\end{proof}

\begin{rem}
	\label{rem: S3-action}
	\Cref{thm: S3-action} does not hold for arbitrary fundamental group quotients, i.e., if $U \twoheadrightarrow \Pi$ is any $G_{\bQ}$-equivariant quotient and $\sigma$ an automorphism of $\cY/\bZ_S$, then the refined Chabauty--Kim locus $\cY(\bZ_p)_{S,\Pi}^{\min}$ is in general not stable under~$\sigma$. The fact that the loci for the descending central series quotients $U_n$ are stable under~$\sigma$ relies on those quotients being $\sigma$-equivariant, in a suitable sense that takes different base points into account. However, if for example $\Pi = U_{\PL}$ is the polylogarithmic quotient studied in \cite{CDC:polylog1}, the corresponding locus $\cY(\bZ_p)_{S,\PL}^{\min}$ will be stable only under the involution $z \mapsto 1/z$ but not under all automorphisms of the thrice-punctured line. This is the motivation for defining an $S_3$-symmetrised (unrefined) Chabauty--Kim locus in \cite[§5.2]{CDC:polylog1}.
\end{rem}

\section{Determination of Chabauty--Kim loci}
\label{sec:calculations}

The goal of this section is to prove that the thrice-punctured line satisfies the refined Kim's Conjecture for $S = \{2\}$ and all odd primes~$p$ (\Cref{thm:main_kim}). The main steps are the following:

\begin{enumerate}
	\item We obtain from \cite{CDC:polylog1} a formula for the localisation map 
	\[ \loc_p\colon \rH^1_{f,S}(G_{\bQ}, U_{\PL}) \to \rH^1_f(G_p, U_{\PL}), \]
	where $U_{\PL}$ is the \emph{polylogarithmic quotient} of the fundamental group (\Cref{def: polylog quotient}). This formula holds for any set of primes~$S$ and any prime $p \not\in S$. Corwin--Dan-Cohen work in the motivic setting, so we use our motivic-étale comparison theorem from \Cref{sec: CK comparison} to transfer their results to the usual étale setting. 
	
	\item From \Cref{sec: refined} we know the equations cutting out the \emph{refined} Selmer scheme inside the full Selmer scheme of $U_{\PL}$.
	
	\item Specialising to $S= \{2\}$, we find functions that vanish on the scheme-theoretic image of the refined Selmer scheme $\Sel_{\{2\}, U_{\PL}}^{(1)}(\cY)$ under the $p$-adic localisation map, for the particular refinement condition $\Sigma = (1)$. As a result, we find infinitely many functions vanishing on the associated refined Chabauty--Kim locus $\cY(\bZ_p)_{\{2\}, U_{\PL}}^{(1)}$:
	\[ \log(z) = 0, \quad \Li_k(z) = 0 \quad \text{for $k \geq 2$ even}. \]
	
	\item We then show that the only common solution in $\cY(\bZ_p)$ of these equations is the $\{2\}$-integral point $z = -1$.
	
	\item By exploiting the $S_3$-action, we show that those calculations for the particular refinement condition $\Sigma = (1)$ are enough to determine the complete refined Chabauty--Kim locus, thus confirming that it consists exactly of the $\{2\}$-integral points $\{2,-1,1/2\}$.
\end{enumerate}

In §\ref{sec: proof of unrefined kim conjecture} we show how the same methods can be used to prove the classical (non-refined) Kim's Conjecture in the case $S= \emptyset$ (\Cref{thm:main_unrefined}). This had previously been proved for half the primes in \cite[§6]{BDCKW} in depth~2; we can prove it for all primes by going into higher depth. Finally, we end by showing in §\ref{sec: higher genus} that certain higher genus curves also satisfy a Kim-like conjecture, strong enough to deduce the Selmer Section Conjecture for those curves (\Cref{thm:main_chabauty}).

\subsection{The polylogarithmic quotient}
\label{sec:polylog quotient}

Let $S$ be any finite set of primes, fix a prime $p \not\in S$ and let $U = \pi_1^{\et,\bQ_p}(Y_{\overline{\bQ}}, b)$ the $\bQ_p$-pro-unipotent étale fundamental group of $Y$ with respect to the tangential base point $b = \vec{1}_0$. Here, $\vec{1}_0$ denotes the tangent vector at~$0$ corresponding to~$1$ under the canonical identification $T_0 \bP^1 \cong \bA^1$. The inclusion $Y \hookrightarrow \bG_m$ induces a $G_{\bQ}$-equivariant homomorphism of fundamental groups $U \to \bQ_p(1)$. 

\begin{defn}
	\label{def: polylog quotient}
	The \emph{polylogarithmic quotient} of~$U$ is defined as
	\[ U_{\PL} = U/[N,N], \]
	where $N$ is the kernel of the homomorphism $U \to \bQ_p(1)$ induced by the inclusion $Y \hookrightarrow \bG_m$.
\end{defn}

The de Rham variant of the polylogarithmic quotient was studied extensively in \cite{CDC:polylog1, CDC:polylog2}. In order to define it, one can start with the full $\bQ$-pro-unipotent de Rham fundamental group~$U^{\dR}$, defined as the Tannaka group of the category of unipotent $\bQ$-vector bundles with connection on~$Y$, and define its polylogarithmic quotient $U^{\dR} \twoheadrightarrow U_{\PL}^{\dR}$ in the same way as $U_{\PL}$ is defined as a quotient of the étale fundamental group~$U$ above. 

\begin{rem}
	\label{polylog quotient motivic}
	The polylogarithmic quotient is of motivic origin: let $\pi_1(Y,b)$ the $\bQ$-prounipotent motivic fundamental group in $\MT(\bZ_S, \bQ)$. Then a definition similar to \Cref{def: polylog quotient} defines the polylogarithmic quotient $\pi_1(Y,b) \twoheadrightarrow \pi_1(Y,b)_{\PL}$ in the category $\MT(\bZ_S, \bQ)$. It specialises to $U_{\PL}$ under the $p$-adic étale realisation functor, and to $U_{\PL}^{\dR}$ under the de Rham realisation functor.
\end{rem}

Let $U_{\PL,n}$ (resp.~$U_{\PL,n}^{\dR}$) denote the depth-$n$ quotient of the polylogarithmic étale (resp.~de Rham) fundamental group, for $n \leq \infty$, i.e.\ the quotient of $U_{\PL}$ (resp.\ $U_{\PL}^{\dR}$) by the $(n+1)$th step of the descending central series. The group $U_{\PL,n}^{\dR}$ has coordinate ring given by
\begin{equation}
	\label{eq: polylog fundamental group coordinates}
	\cO(U_{\PL,n}^{\dR}) = \bQ[\log, \Li_1, \Li_2,\ldots,\Li_n].
\end{equation}
The coordinates $\log,\Li_1,\Li_2,\dots$ on~$U^{\dR}_{\PL,n}$ are the formal logarithm and formal polylogarithms, whose definitions we briefly recall. Like any functionals on a Tannaka group, they can be described formally as matrix coefficients, i.e.~triples~$(V,v,\tau)$ where~$V$ is a unipotent vector bundle with integrable connection, $v\in V_b$ and $\phi\in V_b^*$. A matrix coefficient $(V,v,\phi)$ determines a functional in $U^{\dR}\to\bA^1$ taking an element $\gamma\in U^{\dR}(R)$ to $\phi(\gamma_V(v))\in R$. In our case, the logarithm $\log$ means the functional on~$U^{\dR}$ determined by the matrix coefficient where~$V$ is the trivial vector bundle $\cO_Y^{\oplus 2}$ with connection
\[
\nabla - \begin{pmatrix}0&\frac{\rd z}z\\0&0\end{pmatrix} \,,
\]
with $v=e_2$ the second basis vector in $V_b=\bQ^{\oplus2}$ and $\phi=\proj_1$ the first projection. The polylogarithm $\Li_n$ means the functional where~$V=\cO_Y^{\oplus n+1}$ with connection
\[
\nabla - \begin{pmatrix}0&\frac{\rd z}z&&&&\\&0&\frac{\rd z}z&&&\\&&\ddots&\ddots&&\\&&&0&\frac{\rd z}z&\\&&&&0&\frac{\rd z}{1-z}\\&&&&&0\end{pmatrix} \,,
\]
where~$n-1$ of the superdiagonal entries are~$\frac{\rd z}z$, the last is $\frac{\rd z}{1-z}$, and all other entries are~$0$. We take $v=e_{n+1}$ and~$\phi=\proj_1$.

In terms of these coordinates, the $p$-adic de Rham Kummer map
\[j_p^{\dR}\colon \cY(\bZ_p) \to U_{\PL}^{\dR}(\bQ_p) \]
is given by
\[
\log(j_p^{\dR}(z)) = \log^p(z) \quad\text{and}\quad \Li_k(j_p^{\dR}(z)) = \Li_k^p(z) \quad \text{for $k \geq 1$} \,.
\]
In other words, the algebraic functions $\log$ and $\Li_k$ on the de Rham fundamental group pull back to the $p$-adic logarithm and polylogarithms on $\cY(\bZ_p)$. We sometimes drop the superscript $(-)^p$ and write simply $\log(z)$ and $\Li_k(z)$ when no confusion with the algebraic functions on $U_{\PL,n}^{\dR}$ by the same name is likely to arise.

\subsection{Coordinates on the polylogarithmic Selmer scheme}

In order to carry out the Chabauty--Kim method for the polylogarithmic quotient, we need to understand the Bloch--Kato Selmer scheme $\rH^1_{f,S}(G_\bQ,U_{\PL,n})$. Its motivic analogue was studied in \cite{CDC:polylog1}, so we apply our motivic-étale comparison theorem from \Cref{sec: CK comparison} to transfer their results to the étale setting.

\begin{prop}
	\label{thm: coordinates on polylog Selmer scheme}
	The Selmer scheme~$\rH^1_{f,S}(G_\bQ,U_{\PL,n})$ is an affine space over $\QQ_p$ of dimension 
	\[ \dim \, \rH^1_{f,S}(G_\bQ,U_{\PL,n}) = 2\#S + \lfloor (n-1)/2 \rfloor, \]
	with coordinate ring isomorphic to
	\begin{equation}
		\label{eq: polylog Selmer scheme coordinates}
		\cO(\rH^1_{f,S}(G_\bQ,U_{\PL,n})) \cong \bQ_p[(x_{\ell})_{\ell \in S}, (y_{\ell})_{\ell \in S}, z_3, z_5, \ldots ]
	\end{equation}
	where the coordinates $z_{2i+1}$ are indexed by the odd integers in the interval $[3,n]$.
\end{prop}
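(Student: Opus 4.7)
The plan is to invoke the motivic-to-étale comparison theorem of \S\ref{sec: comparison theorem} to transfer a motivic computation (essentially already done in~\cite{CDC:polylog1}) to the étale setting. By \Cref{polylog quotient motivic}, the polylogarithmic quotient $U_{\PL,n}$ is the $p$-adic étale realisation of a corresponding quotient $\pi_1(Y,b)_{\PL,n}$ of the motivic fundamental group in $\MT(\bZ_S,\bQ)$, whose de Rham realisation is $U^{\dR}_{\PL,n}$. \Cref{thm:comparison_selmer_schemes} therefore supplies a canonical isomorphism
\[ \rH^1_{f,S}(G_\bQ, U_{\PL,n}) \cong \rH^1(G^{\MT}_{\bQ,S}, U^{\dR}_{\PL,n})_{\bQ_p} \]
of affine $\bQ_p$-schemes, reducing the proposition to the analogous assertion about the motivic Selmer scheme over $\bQ$.

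For the motivic computation I would argue inductively along the weight filtration of $U^{\dR}_{\PL,n}$. The graded pieces of its Lie algebra in $\MT(\bZ_S,\bQ)$ are: $\gr^W_{-2}\Lie(U^{\dR}_{\PL,n}) \cong \bQ(1)^{\oplus 2}$, generated by loops around $0$ and $1$; and, for each $2 \leq k \leq n$, $\gr^W_{-2k}\Lie(U^{\dR}_{\PL,n}) \cong \bQ(k)$, generated by the iterated commutator $[e_0,[e_0,\ldots,[e_0,e_1]]]$ with $k-1$ copies of $e_0$. The associated central extensions
\[ 1 \to \gr^W_{-2k}U^{\dR}_{\PL,n} \to U^{\dR}_{\PL,k} \to U^{\dR}_{\PL,k-1} \to 1 \]
give, via non-abelian cohomology, maps
\[ \rH^1(G^{\MT}_{\bQ,S}, U^{\dR}_{\PL,k}) \to \rH^1(G^{\MT}_{\bQ,S}, U^{\dR}_{\PL,k-1}) \]
whose fibres are torsors under $\Ext^1_{\MT(\bZ_S,\bQ)}(\bQ(0),\gr^W_{-2k}U^{\dR}_{\PL,n})$. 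Because $\Ext^2_{\MT(\bZ_S,\bQ)}(\bQ(0),\bQ(k))$ vanishes for all $k$ by~\cite[Proposition~1.9]{deligne-goncharov}, these maps are surjective, and the canonical grading of the fibre functor~\eqref{eq: canonical fibre functor} furnishes compatible splittings, exhibiting the motivic Selmer scheme as an iterated trivial affine bundle and hence as an affine space.

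The dimension count and identification of coordinates then follow from~\eqref{eq: mixed tate motives ext groups}. The weight $-2$ contribution is $2 \cdot \dim_\bQ(\bZ_S^\times \otimes \bQ) = 2\#S$, yielding the coordinates $(x_\ell, y_\ell)_{\ell \in S}$ already identified in the abelian Selmer scheme of~\S\ref{sec: refined abelian Selmer scheme}. For $k \geq 2$, Borel's theorem gives $\dim_\bQ K_{2k-1}(\bQ) \otimes \bQ = 1$ when $k$ is odd and $0$ when $k$ is even, contributing one further coordinate $z_k$ for each odd $k \in [3,n]$, amounting to $\lfloor (n-1)/2 \rfloor$ coordinates in total. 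The sum $2\#S + \lfloor (n-1)/2\rfloor$ then gives the claimed dimension.

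The principal subtlety I expect is to upgrade the pointwise affine-bundle structure to a statement about schemes, not merely about $\bQ$-points: one must check that the splittings arising from the grading of the fibre functor produce splittings of the maps of Selmer schemes as morphisms of $\bQ$-schemes. This is where some careful bookkeeping is needed, working with the functor-of-points description of the Selmer scheme (representing $\Pi$-torsors in $\MT(\bZ_S,\bQ)$ over an arbitrary $\bQ$-algebra~$R$) and verifying that the weight filtration splittings are natural in~$R$.
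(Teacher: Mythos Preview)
Your proposal is correct and follows the same route as the paper: reduce to the motivic Selmer scheme via \Cref{thm:comparison_selmer_schemes}, then identify that scheme as an affine space with the stated coordinates. The only difference is one of presentation: the paper treats the motivic computation as a black box, citing \cite[Corollary~3.11 and \S3.3.1]{CDC:polylog1} directly for the affine-space structure and the explicit coordinates (with the dictionary $x_\ell = \Phi_{e_0}^{\tau_\ell}$, $y_\ell = \Phi_{e_1}^{\tau_\ell}$, $z_{2i+1} = \Phi^{\sigma_{2i+1}}_{e_1 e_0\cdots e_0}$), whereas you sketch the inductive argument along the weight filtration that underlies that result. Your sketch is accurate and the subtlety you flag about scheme-level splittings is real, but for the purposes of this paper a citation suffices.
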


\begin{proof}
	By \Cref{thm:comparison_selmer_schemes}, the étale Selmer scheme is isomorphic to the motivic one, basechanged from~$\bQ$ to $\bQ_p$:
	\[ \rH^1_{f,S}(G_\bQ,U_{\PL,n}) \cong \rH^1(G_{\bQ,S}^{\MT},U_{\PL,n}^{\dR})_{\bQ_p}. \]
	By \cite[Corollary~3.11 and §3.3.1]{CDC:polylog1}, the motivic Selmer scheme is an affine space with specified coordinates. The translation between their and our naming convention is as follows:
	\[x_\ell = \Phi_{e_0}^{\tau_\ell}, \quad y_\ell = \Phi_{e_1}^{\tau_\ell}, \quad z_{2i+1} = \Phi^{\sigma_{2i+1}}_{e_1 e_0\cdots e_0}. \qedhere \]
\end{proof}

\begin{rem}
	\label{rem: coordinates not canonical}
	The coordinates $x_{\ell}$, $y_{\ell}$, $z_{2i+1}$ in \Cref{thm: coordinates on polylog Selmer scheme} can be constructed as follows. Recall from §\ref{sec: comparison theorem} that we have a description of the motivic Selmer scheme as a space of $\bG_m$-equivariant cocycles
	\[ \rH^1(G_{\bQ,S}^{\MT}, U_{\PL,n}^{\dR}) = \rZ^1(U_{\bQ,S}^{\MT}, U_{\PL,n}^{\dR})^{\bG_m}. \]
	Since the action of $U^{\MT}_{\bQ,S}$ on the polylogarithmic fundamental group $U_{\PL}^{\dR}$ is trivial \cite[Proposition~16.13]{deligne:droite-projective}, $\bG_m$-equivariant cocycles agree with $\bG_m$-equivariant homomorphisms. Moreover, homomorphisms of pro-unipotent groups are in bijection with homomorphisms between their Lie algebras, so the motivic Selmer scheme is the space of graded homomorphisms
	\[ \rH^1(G_{\bQ,S}^{\MT}, U_{\PL,n}^{\dR}) = \Hom(\Lie(U^{\MT}_{\bQ,S}), \Lie(U_{\PL,n}^{\dR}))^{\bG_m}. \]
	As discussed in \cite[§4.1]{CDC:polylog1}, the Lie algebra of $U_{\bQ,S}^{\MT}$ is free pro-nilpotent on a set of generators $\{\tau_{\ell} : \ell \in S\} \cup \{\sigma_3, \sigma_5, \ldots\}$ with the $\tau_{\ell}$ living in degree (= half-weight)~$-1$ and $\sigma_{2i+1}$ living in degree~$-(2i+1)$. On the other hand, a basis of $\Lie(U_{\PL,n}^{\dR})$ is given by $\{e_0,e_1\}$ in degree~$-1$ and $\mathrm{ad}(e_0)^{k-1} e_1$ in degree $2 \leq k \leq n$. Thus, for a graded cocycle $c \colon \Lie(U^{\MT}_{\bQ,S}) \to \Lie(U_{\PL,n}^{\dR})$ (defined over some $\bQ$-algebra~$R$) we can write
	\begin{align*}
		c(\tau_{\ell}) &= x_{\ell}(c) e_0 + y_{\ell}(c) e_1 \quad (\ell \in S) \\
		c(\sigma_{2i+1}) &= z_{2i+1}(c) \mathrm{ad}(e_0)^{2i}e_1 \quad (1 \leq i \leq \lfloor (n-1)/2\rfloor)
	\end{align*}
	with $x_{\ell}(c), y_{\ell}(c), z_{2i+1}(c) \in R$. This defines the coordinates on the motivic Selmer scheme.
	Note that their construction depends on the choice of free generators of $\Lie(U_{\bQ,S}^{\MT})$. There is a natural choice for the $\tau_{\ell}$, which determines the coordinates~$x_{\ell}$ and~$y_{\ell}$, but there is no (obvious) natural choice for the $\sigma_{2i+1}$, so the coordinates $z_{i+1}$ on $\rH^1_{f,S}(G_\bQ,U_{\PL,n})$ are not canonical. 
\end{rem}

\begin{rem}
	\label{rem: x and y coordinates}
	The coordinates $x_{\ell}$ and $y_{\ell}$ on $\rH^1_{f,S}(G_\bQ,U_{\PL,n})$ from \Cref{thm: coordinates on polylog Selmer scheme} agree with those defined in \Cref{sec: refined abelian Selmer scheme} on the abelian Selmer scheme $\rH^1_{f,S}(G_\bQ,U^{\ab})$ when they are pulled back along the natural morphism
	\[ \rH^1_{f,S}(G_\bQ,U_{\PL,n}) \to \rH^1_{f,S}(G_\bQ,U^{\ab}) \]
	induced by the abelianisation map $U_{\PL,n} \twoheadrightarrow U_{\PL,1} = U^{\ab}$. 
\end{rem}

\subsection{The Chabauty--Kim diagram for the polylog quotient}

Consider the Chabauty--Kim diagram~\eqref{diag:kim_square_unrefined} for the depth-$n$ polylogarithmic quotient~$U_{\PL,n}$:
\begin{equation}
	\label{diag: CK diagram polylog quotient}
	\begin{tikzcd}
		\cY(\bZ_S) \arrow[r,hook]\arrow[d,"j_S"] & \cY(\bZ_p) \arrow[d,"j_p"] \\
		\rH^1_{f,S}(G_\bQ,U_{\PL,n})(\bQ_p) \arrow[r,"\loc_p"] & \rH^1_f(G_p,U_{\PL,n})(\bQ_p) \,.
	\end{tikzcd}
\end{equation}
Thanks to our motivic-étale comparison theorem and the work of \cite{CDC:polylog1}, we understand the localisation map quite explicitly. We have the coordinates~\eqref{eq: polylog Selmer scheme coordinates} on $\rH^1_{f,S}(G_\bQ,U_{\PL,n})$ and the coordinates~\eqref{eq: polylog fundamental group coordinates} on $\rH^1_f(G_p,U_{\PL,n})$ via the Bloch--Kato logarithm $\rH^1_f(G_p,U_{\PL,n}) \cong (U_{\PL,n}^{\dR})_{\bQ_p}$.

\begin{thm}
	\label{thm: polylog localisation map}
	The localisation map $\loc_p\colon \rH^1_{f,S}(G_\bQ,U_{\PL,n}) \to \rH^1_f(G_p,U_{\PL,n})$ in diagram~\eqref{diag: CK diagram polylog quotient} corresponds to the homomorphism on coordinate rings
	\[ \loc_p^{\sharp} \colon \QQ_p[\log,\Li_1,\ldots,\Li_n] \to \QQ_p[(x_\ell)_{\ell \in S}, (y_\ell)_{\ell \in S}, (z_{2i+1})_{1\leq i\leq \lfloor(n-1)/2\rfloor}], \]
	which is explicitly given as follows:
	\begin{align}
		\label{eq: localisation log}
		\loc_p^{\sharp}(\log) &= \sum_{\ell \in S} a_{\tau_{\ell}} x_\ell,\\
		\label{eq: localisation Li_k}
		\loc_p^{\sharp}(\Li_k) &= \sum_{\ell_1,\ldots,\ell_{k-1},q \in S} a_{\tau_{\ell_1}\cdots \tau_{\ell_{k-1}} \tau_q} x_{\ell_1}\cdots x_{\ell_{k-1}} y_q \\
		& \quad + \sum_{i=1}^{\lfloor (k-1)/2 \rfloor} \sum_{\ell_1,\ldots,\ell_{k-2i-1} \in S} a_{\sigma_{2i+1} \tau_{\ell_1}\cdots \tau_{\ell_{k-2i-1}}} x_{\ell_1}\cdots x_{\ell_{k-2i-1}} z_{2i+1}. \nonumber
	\end{align}
	Here, the coefficients $a_w$ whose subscripts are words in the symbols $\{\tau_{\ell} : \ell \in S\} \cup \{\sigma_3,\sigma_5,\ldots\}$ are constants in~$\bQ_p$.
\end{thm}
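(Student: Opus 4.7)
The plan is to reduce the statement to the corresponding assertion for the motivic evaluation map, which is proven in \cite{CDC:polylog1}, and then invoke the comparison of \'etale and motivic Selmer schemes from Section~\ref{sec: comparison theorem}. By \Cref{thm:comparison_selmer_schemes}, the Chabauty--Kim diagram~\eqref{diag: CK diagram polylog quotient} is canonically identified, as a diagram of $\bQ_p$-schemes, with the base change to~$\bQ_p$ of the motivic Chabauty--Kim diagram~\eqref{diag:motivic_kim_square} for the polylogarithmic motivic fundamental group $\pi_1(Y,b)_{\PL}$ from Remark~\ref{polylog quotient motivic}. In particular, the \'etale localisation map $\loc_p$ is identified with the base change of the motivic evaluation map
\[ \ev_p \colon \rH^1(G^{\MT}_{\bQ,S},U_{\PL,n}^{\dR}) \to U_{\PL,n}^{\dR}, \]
and, as recalled in the proof of \Cref{thm: coordinates on polylog Selmer scheme}, the coordinates $(x_\ell,y_\ell,z_{2i+1})$ on the \'etale Selmer scheme are by construction pulled back from the motivic coordinates $(\Phi^{\tau_\ell}_{e_0},\Phi^{\tau_\ell}_{e_1},\Phi^{\sigma_{2i+1}}_{e_1e_0\cdots e_0})$ of \cite{CDC:polylog1} under this identification.

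Once this reduction is in place, the first step is to recall the general formalism. A class in $\rH^1(G^{\MT}_{\bQ,S},U_{\PL,n}^{\dR})(R)$ is uniquely represented by a $\bG_m$-equivariant cocycle $\xi\colon U^{\MT}_{\bQ,S}\to U_{\PL,n}^{\dR}$, which by freeness of~$\mathrm{Lie}(U^{\MT}_{\bQ,S})$ (after a non-canonical choice of generators $\tau_\ell$ for $\ell\in S$ and $\sigma_{2i+1}$ for $i\geq 1$, as in Remark~\ref{rem: coordinates not canonical}) is determined by its values on these generators; unwinding the cocycle condition, the values of the coordinate functions $\log,\Li_k\in\cO(U_{\PL,n}^{\dR})$ at $\ev_p(\xi) = \xi((\eta_p^{\ur})^{-1})$ are polynomials in the $\xi$-coordinates of the generators, with coefficients obtained by pairing the corresponding word $w = \tau_{\ell_1}\cdots\tau_{\ell_{k-1}}\tau_q$ or $\sigma_{2i+1}\tau_{\ell_1}\cdots\tau_{\ell_{k-2i-1}}$ with the motivic period point $(\eta_p^{\ur})^{-1}$. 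The second step is to input the explicit form of the Ihara-type action of $U^{\MT}_{\bQ,S}$ on $U_{\PL,n}^{\dR}$ computed in \cite{CDC:polylog1}, which implies that on the polylog quotient only the words listed in~\eqref{eq: localisation log}--\eqref{eq: localisation Li_k} contribute (others vanishing for degree/weight reasons). Denoting by $a_w$ the value of pairing $w$ with $(\eta_p^{\ur})^{-1}$ gives the asserted formulas.

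The main thing to check, and what I would treat as the core obstacle, is that the identifications of coordinates on both the domain and the codomain of $\loc_p$ provided by \Cref{thm:comparison_selmer_schemes} are precisely the ones under which the CDC formulas transport verbatim. For the codomain, this is the assertion that the Bloch--Kato logarithm identifies $\rH^1_f(G_p,U_{\PL,n})$ with $(U_{\PL,n}^{\dR})_{\bQ_p}$ so that the $p$-adic polylogarithm $\Li_k^p$ on $\cY(\bZ_p)$ pulls back to the coordinate function $\Li_k$, which is the content of the commuting prism~\eqref{diag:etale-motivic_comparison_commuting_diagram}; for the domain, this is the already-cited identification from the proof of \Cref{thm: coordinates on polylog Selmer scheme}. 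Once these are in place, the theorem is an immediate translation of \cite[§4]{CDC:polylog1}, the definition of the coefficients $a_w$ as $p$-adic periods being a convenient (and deliberately opaque) packaging that is all we need for the explicit computations in the subsequent subsections.
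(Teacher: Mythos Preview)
Your proposal is correct and follows essentially the same approach as the paper: reduce to the motivic side via \Cref{thm:comparison_selmer_schemes}, then import the explicit formula from \cite{CDC:polylog1} by specialising the universal cocycle evaluation map at $(\eta_p^{\ur})^{-1}$, with the coefficients $a_w$ defined as the resulting $p$-adic periods. The paper is slightly more precise in citing \cite[Corollary~3.11]{CDC:polylog1} as the source of the universal formula (rather than \S4), and it packages the definition of $a_w$ as $a_w = \per_p(f_w)$ for motivic periods $f_w \in \cO(U_{\bQ,S}^{\MT})$, but these are presentational differences only; your discussion of the coordinate identifications on both domain and codomain is a reasonable elaboration of what the paper leaves implicit.
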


\begin{proof}
	By \Cref{thm:comparison_selmer_schemes}, the étale localisation map is isomorphic to the motivic localisation map~\eqref{eq: motivic localisation map}
	\[ \ev_p\colon \rH^1(G_{\bQ,S}^{\MT},U_{\PL,n}^{\dR})_{\bQ_p} \to (U_{\PL,n}^{\dR})_{\bQ_p}. \]
	As discussed in \Cref{sec: comparison theorem}, this map is given by identifying $\rH^1(G_{\bQ,S}^{\MT},U_{\PL,n}^{\dR})_{\bQ_p}$ with the space $\rZ^1(U_{\bQ,S}^{\MT},U_{\PL,n}^{\dR})^{\bG_m}_{\bQ_p}$ of $\bG_m$-equivariant cocycles and evaluating at a certain $\bQ_p$-point $(\eta_p^{\ur})^{-1} \in U_{\bQ,S}^{\MT}(\bQ_p)$. This is equivalent to specialising the ``universal cocycle evaluation map'' \cite[Definition~2.20]{CDC:polylog1} to the point $(\eta_p^{\ur})^{-1}$. A formula for the universal cocycle evaluation map is proved in \cite[Corollary~3.11]{CDC:polylog1}. That formula involves various motivic periods $f_w \in \cO(U_{\bQ,S}^{\MT})$; specialising to $(\eta_p^{\ur})^{-1}$ amounts to evaluating them at this point, or equivalently applying the $p$-adic period map $\per_p\colon \cO(U_{\bQ,S}^{\MT}) \to \bQ_p$:
	\[ a_w \coloneqq f_w((\eta_p^{\ur})^{-1}) = \per_p(f_w) \in \bQ_p. \]
	In this way the formula by Corwin--Dan-Cohen specialises to the one stated here, up to using different names for the coordinates.
\end{proof}

\begin{rem}
	\label{rem: p-adic periods not canonical}
	As discussed in \Cref{rem: coordinates not canonical} above, the choice of coordinates on the polylogarithmic Selmer scheme is not canonical but depends on a choice of free generators $\{\tau_{\ell} : \ell \in S\} \cup \{\sigma_3, \sigma_5, \ldots \}$ of the Lie algebra of $U_{\bQ,S}^{\MT}$. As a consequence, the $p$-adic constants $a_w$ appearing as coefficients in the localisation map~\eqref{eq: localisation log}--\eqref{eq: localisation Li_k} also depend on this choice. The subscripts of the $a_w$ are in fact words in those chosen generators.	
\end{rem}

Partly due to the choices involved in their definition, the $p$-adic periods $a_w$ are difficult to determine in practice. As explained in \cite[§4.1]{CDC:polylog1}, there are however choices for which at least those $a_w$ subscripted by a single letter $\tau_{\ell}$ or $\sigma_{2i+1}$ have particular known values. These values are $p$-adic logarithms of the primes in~$S$ in the case of $a_{\tau_{\ell}}$ and $p$-adic zeta values in the case of $a_{\sigma_{2i+1}}$:
\begin{align}
	\label{eq: p-adic periods log}
	a_{\tau_{\ell}} &= \log^p(\ell) \quad \text{for $\ell \in S$},\\
	\label{eq: p-adic periods zeta}
	a_{\sigma_{2i+1}} &= \zeta^p(2i+1) \quad \text{for $i \geq 1$}.
\end{align}
We shall adopt this choice in the following.

For concreteness, let us spell out the localisation map in depth~4:
\begin{align}
	\label{eq: concrete localisation map log}
	\loc_p^{\sharp}(\log) &= \sum_{\ell \in S} \log^p(\ell) x_\ell,\\
	\label{eq: concrete localisation map Li1}
	\loc_p^{\sharp}(\Li_1) &= \sum_{\ell \in S} \log^p(\ell) y_\ell,\\
	\label{eq: concrete localisation map Li2}
	\loc_p^{\sharp}(\Li_2) &= \sum_{\ell,q \in S} a_{\tau_\ell \tau_q} x_{\ell} y_q,\\
	\loc_p^{\sharp}(\Li_3) &= \sum_{\ell_1,\ell_2,q \in S} a_{\tau_{\ell_1} \tau_{\ell_2} \tau_q} x_{\ell_1} x_{\ell_2} y_q + \zeta^p(3) z_3,\\
	\loc_p^{\sharp}(\Li_4) &= \sum_{\ell_1,\ell_2,\ell_3,q \in S} a_{\tau_{\ell_1} \tau_{\ell_2} \tau_{\ell_3} \tau_q} x_{\ell_1} x_{\ell_2} x_{\ell_3} y_q + \sum_{\ell \in S} a_{\sigma_3 \tau_\ell} x_{\ell} z_3.
\end{align}

\begin{rem}
	\label{rem: DCW coefficients}
	The first three formulas \eqref{eq: concrete localisation map log}--\eqref{eq: concrete localisation map Li2} describe the localisation map in depth~2 that was studied in \cite{DCW:explicitCK} and \cite[§2.3]{BBKLMQSX}. The coefficients $a_{\tau_{\ell} \tau_{q}}$ of the bilinear polynomial \eqref{eq: concrete localisation map Li2} agree with those denoted $a_{\ell,q}$ in \cite{BBKLMQSX}.\footnote{Compared to \cite{BBKLMQSX}, our $y_{\ell}$-coordinates on the Selmer scheme as well as the third coordinate on the de Rham fundamental group differ by a sign. These two signs cancel out, so that the resulting coefficients for the bilinear map are the same.} They do not depend on choices. While there is no closed formula for them, there exists an algorithm based on Tate's calculation of the Milnor $K$-group $K_2(\bQ)$ \cite[Theorem~11.6]{milnor} to express the $a_{\tau_{\ell} \tau_{q}}$ as $\bQ$-linear combinations of dilogarithms of rational numbers (see \cite{KLS:dcwcoefficients} for our implementation of this algorithm in SageMath).
\end{rem}

The following proposition summarises what we know about the polylogarithmic Chabauty--Kim diagram:

\begin{prop}
	The Chabauty--Kim diagram for the polylogarithmic quotient~\eqref{diag: CK diagram polylog quotient} is isomorphic to the following diagram
	\begin{equation}
		\label{eq: motivic CK diagram polylog quotient}
		\begin{tikzcd}
				\cY(\bZ_S) \arrow[r,hook]\arrow[d,"j_S"] & \cY(\bZ_p) \arrow[d,"j_p"] \\
				\Spec(\bQ_p[(x_{\ell})_{\ell \in S}, (y_{\ell})_{\ell \in S}, z_3, z_5, \ldots ])(\bQ_p) \arrow[r,"\loc_p"] & \Spec(\bQ_p[\log,\Li_1,\ldots,\Li_n])(\bQ_p) \,.
			\end{tikzcd}
	\end{equation}
	The localisation map $\loc_p$ is given by Eqs.~\eqref{eq: localisation log}--\eqref{eq: localisation Li_k}, the local Kummer map $j_p$ is given by \begin{align*}
		\log(j_p(z)) &= \log^p(z),\\
		\Li_k(j_p(z)) &= \Li_k^p(z) \quad \text{for $k \geq 1$},
	\end{align*} 
	and the $x_{\ell}$- and $y_{\ell}$-components of the global Kummer map~$j_S$ are given by
	\begin{align*}
		x_{\ell}(j_S(z)) &= v_{\ell}(z), \\
		y_{\ell}(j_S(z)) &= -v_{\ell}(1-z).
	\end{align*}
\end{prop}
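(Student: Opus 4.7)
The proposition assembles the ingredients already established in this section (and the motivic--étale comparison of §\ref{sec: comparison theorem}), so my plan is to verify each vertex and arrow of~\eqref{eq: motivic CK diagram polylog quotient} by invoking the appropriate earlier result.

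For the lower-left vertex, I would simply invoke Proposition~\ref{thm: coordinates on polylog Selmer scheme} to identify $\rH^1_{f,S}(G_\bQ, U_{\PL,n})$ with the affine $\QQ_p$-scheme whose coordinate ring is $\QQ_p[(x_{\ell})_{\ell\in S},(y_{\ell})_{\ell\in S},z_3,z_5,\ldots]$. For the lower-right vertex, I would use the Bloch--Kato exponential isomorphism $\rH^1_f(G_p, U_{\PL,n}) \cong (U_{\PL,n}^{\dR})_{\QQ_p}$ together with~\eqref{eq: polylog fundamental group coordinates} to realise its coordinate ring as $\QQ_p[\log,\Li_1,\ldots,\Li_n]$. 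The bottom horizontal arrow, i.e.\ the formula for $\loc_p^{\sharp}$, is then given directly by Theorem~\ref{thm: polylog localisation map}.

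For the right-hand vertical arrow, the identities $\log(j_p(z)) = \log^p(z)$ and $\Li_k(j_p(z)) = \Li_k^p(z)$ were already recorded in §\ref{sec:polylog quotient}: they express that the algebraic coordinate functions $\log$ and $\Li_k$ on $U_{\PL,n}^{\dR}$ pull back along the $p$-adic de Rham Kummer map to the Coleman $p$-adic (poly)logarithms, which is exactly the content of Coleman integration on the polylog quotient. For the left-hand vertical arrow, I would combine Remark~\ref{rem: x and y coordinates} with the description~\eqref{eq:global Kummer in coordinates} of the abelianised global Kummer map established in §\ref{sec: refined abelian Selmer scheme}. The remark asserts that the coordinates $x_{\ell},y_{\ell}$ on $\rH^1_{f,S}(G_\bQ,U_{\PL,n})$ are pulled back from the corresponding coordinates on the abelian Selmer scheme along the morphism induced by $U_{\PL,n} \twoheadrightarrow U^{\ab}$, and~\eqref{eq:global Kummer in coordinates} identifies the abelian composite $\cY(\bZ_S) \to \rH^1_{f,S}(G_\bQ,U^{\ab})$ with $z \mapsto ((v_{\ell}(z)),(-v_{\ell}(1-z)))$, giving the claimed formulas.

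Finally, commutativity of the square is intrinsic to the Chabauty--Kim formalism recalled in §\ref{sec:finite_descent} (cf.\ Remark~\ref{rem: unrefined CK}), and is preserved under the functorial change of fundamental-group quotient $\pi_1^{\et,\bQ_p}(Y_{\Qbar},b) \twoheadrightarrow U_{\PL,n}$. Accordingly, there is no substantive obstacle to overcome: the proposition is a bookkeeping statement collecting, in one place and in a convenient coordinate system, the content of Proposition~\ref{thm: coordinates on polylog Selmer scheme}, Theorem~\ref{thm: polylog localisation map}, Remark~\ref{rem: x and y coordinates}, and the discussions of §\ref{sec:polylog quotient} and §\ref{sec: refined abelian Selmer scheme}.
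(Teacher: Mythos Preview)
Your proposal is correct and matches the paper's treatment: the proposition is explicitly introduced as a summary statement (``The following proposition summarises what we know\ldots'') and is stated without proof, precisely because each ingredient has already been established in the places you cite. Your identification of the relevant inputs---Proposition~\ref{thm: coordinates on polylog Selmer scheme}, Theorem~\ref{thm: polylog localisation map}, the Bloch--Kato isomorphism with~\eqref{eq: polylog fundamental group coordinates}, the de Rham Kummer description in \S\ref{sec:polylog quotient}, and Remark~\ref{rem: x and y coordinates} with~\eqref{eq:global Kummer in coordinates}---is exactly right.
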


\begin{rem}
	As mentioned in \Cref{rem: coordinates not canonical}, the $z_{2i+1}$-coordinates on the Selmer scheme depend on choices. As a consequence, the $z_{2i+1}$-components of the global Kummer map~$j_S$ are difficult to understand. It follows however from its motivic origin that the numbers $z_{2i+1}(j_S(z))$ (along with $x_{\ell}(j_S(z))$ and $y_{\ell}(j_S(z))$) are contained in~$\bQ \subset \bQ_p$ and independent of~$p$.
\end{rem}

From \Cref{sec: refined} we also know the equations defining the \emph{refined} Selmer scheme inside the full Selmer scheme:

\begin{prop}
	\label{thm: refined polylog Selmer scheme}
	Assume $2 \in S$. Then, in terms of the coordinates from \Cref{thm: coordinates on polylog Selmer scheme}, the refined Selmer scheme $\Sel_{S,\PL,n}^{\min}(\cY)$ for the polylogarithmic quotient is defined inside $\rH^1_{f,S}(G_\bQ,U_{\PL,n})$ by the equations
	\[ x_{\ell} y_{\ell} (x_{\ell} + y_{\ell}) = 0 \quad \text{for $\ell \in S$}. \]
	It can be written as a union
	\[ \Sel_{S,\PL,n}^{\min}(\cY) = \bigcup_{\Sigma} \Sel_{S,\PL,n}^{\Sigma}(\cY) \]
	over the $3^{\#S}$ refinement conditions $\Sigma \in \{0,1,\infty\}^S$, where $\Sel_{S,\PL,n}^{\Sigma}(\cY)$ is defined by the equations
	\begin{align*} 
		y_{\ell} &= 0 \quad \text{if $\Sigma_{\ell} = 0$},\\
		x_{\ell} &= 0 \quad \text{if $\Sigma_{\ell} = 1$},\\
		x_{\ell} + y_{\ell} &= 0 \quad \text{if $\Sigma_{\ell} = \infty$},
	\end{align*}
	for $\ell \in S$.
\end{prop}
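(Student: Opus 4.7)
The plan is to reduce the statement to the corresponding description of the refined abelian Selmer scheme (Proposition~\ref{thm: refined selmer scheme ab}) via the functoriality principle encoded in Corollary~\ref{thm:refinement conditions intermediate quotient}.

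First I would verify that $U_{\PL,n}$ is indeed an intermediate $G_\bQ$-equivariant quotient $U \twoheadrightarrow U_{\PL,n} \twoheadrightarrow U^{\ab}$. This is immediate from the construction in Definition~\ref{def: polylog quotient}: since $[N,N] \subseteq [U,U]$, the quotient $U_{\PL} = U/[N,N]$ dominates $U^{\ab} = U/[U,U]$, and passing to the depth-$n$ quotient preserves this. Thus Corollary~\ref{thm:refinement conditions intermediate quotient} applies and gives a Cartesian square
\[
\begin{tikzcd}
\Sel_{S,\PL,n}^{\min}(\cY) \dar \rar[symbol=\subseteq] \arrow[dr, phantom, "\scalebox{1.5}{$\lrcorner$}" , very near start, color=black] & \rH^1_{f,S}(G_{\bQ}, U_{\PL,n}) \dar\\
\Sel_{S,U^{\ab}}^{\min}(\cY) \rar[symbol=\subseteq] & \rH^1_{f,S}(G_{\bQ}, U^{\ab}),
\end{tikzcd}
\]
so that $\Sel_{S,\PL,n}^{\min}(\cY)$ is cut out inside $\rH^1_{f,S}(G_\bQ, U_{\PL,n})$ by the pullbacks of whatever equations define $\Sel_{S,U^{\ab}}^{\min}(\cY)$.

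Next I would invoke Proposition~\ref{thm: refined selmer scheme ab}, which states that (under the assumption $2 \in S$) the refined abelian Selmer scheme is cut out in the coordinates $((x_\ell)_{\ell\in S},(y_\ell)_{\ell\in S})$ on $\rH^1_{f,S}(G_\bQ, U^{\ab})$ by the equations $x_\ell y_\ell(x_\ell+y_\ell) = 0$ for $\ell \in S$, and decomposes as the union $\bigcup_\Sigma \Sel_{S,U^{\ab}}^\Sigma(\cY)$ over refinement conditions $\Sigma \in \{0,1,\infty\}^S$, with linear defining equations as listed. By Remark~\ref{rem: x and y coordinates}, the coordinates $x_\ell$ and $y_\ell$ on $\rH^1_{f,S}(G_\bQ, U_{\PL,n})$ from Proposition~\ref{thm: coordinates on polylog Selmer scheme} are precisely the pullbacks of the abelian coordinates along the abelianisation map. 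Pulling back the equations from the abelian case therefore yields exactly the same equations $x_\ell y_\ell(x_\ell+y_\ell) = 0$ for $\ell \in S$ on the polylogarithmic Selmer scheme, as required for the first assertion.

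Finally, for the second assertion, the decomposition into refinement conditions is functorial under pullback: since $\Sel_{S,\PL,n}^\Sigma(\cY)$ is by Definition~\ref{def: refined Selmer scheme} (applied in Section~\ref{sec: refined}) the fibre product of $\Sel_{S,U^{\ab}}^\Sigma(\cY)$ with $\rH^1_{f,S}(G_\bQ, U_{\PL,n})$ over $\rH^1_{f,S}(G_\bQ, U^{\ab})$, the decomposition in the abelian case transfers to
\[
\Sel_{S,\PL,n}^{\min}(\cY) = \bigcup_{\Sigma} \Sel_{S,\PL,n}^{\Sigma}(\cY),
\]
with each piece cut out by the pullbacks of the linear equations, which are the listed linear equations in the $(x_\ell, y_\ell)$ coordinates. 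There is no real obstacle here once the dictionary between the abelian and polylogarithmic coordinates is in place; the content of the proof is entirely contained in Corollary~\ref{thm:refinement conditions intermediate quotient}, Proposition~\ref{thm: refined selmer scheme ab}, and Remark~\ref{rem: x and y coordinates}.
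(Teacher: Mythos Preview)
Your proof is correct and follows essentially the same approach as the paper's own proof, which simply cites Corollary~\ref{thm: refined Selmer scheme} together with Remark~\ref{rem: x and y coordinates}. You have unpacked this one level deeper by appealing directly to Corollary~\ref{thm:refinement conditions intermediate quotient} and Proposition~\ref{thm: refined selmer scheme ab}, which is exactly what underlies the paper's cited corollary.
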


\begin{proof}
	This is a special case of \Cref{thm: refined Selmer scheme}, combined with the fact that the coordinates $x_{\ell}$ and $y_{\ell}$ on the polylogarithmic Selmer scheme are those coming from the abelian Selmer scheme (see \Cref{rem: x and y coordinates}).
\end{proof}

\subsection{Equations for a refined polylogarithmic Chabauty--Kim locus}

Now let $S = \{2\}$ and let $p$ be any odd prime. We carry out the refined Chabauty--Kim method in this case, starting by determining the functions cutting out the refined Chabauty--Kim locus for the particular refinement condition $\Sigma = (1)$. Abbreviate $x\coloneqq x_2$, $y \coloneqq y_2$, and $\tau \coloneqq \tau_2$. Then by \Cref{thm: polylog localisation map}, the localisation map for the polylogarithmic quotient $U_{\PL,n}$ corresponds on coordinate rings to the homomorphism
\[ \loc_p^{\sharp}\colon \bQ_p[\log,\Li_1,\ldots,\Li_n] \to \bQ_p[x, y, (z_{2i+1})_{1\leq i\leq \lfloor(n-1)/2\rfloor}] \]
given by
\begin{align}
	\label{eq: localisation map 2 log}
	\loc_p^{\sharp}(\log) &= a_{\tau} x,\\
	\label{eq: localisation map 2 Li_k}
	\loc_p^{\sharp}(\Li_k) &= a_{\tau^k} x^{k-1}y + \sum_{i=1}^{\lfloor(k-1)/2\rfloor} a_{\sigma_{2i+1} \tau^{k-2i-1}} x^{k-2i-1} z_{2i+1} \quad \text{($1 \leq k \leq n$)}.
\end{align}

By \Cref{thm: refined polylog Selmer scheme}, the \emph{refined} Selmer scheme is the union of the three hyperplanes
\[ \Sel_{\{2\},\PL,n}^{\min}(\cY) = \Sel_{\{2\},\PL,n}^{(0)}(\cY) \cup \Sel_{\{2\},\PL,n}^{(1)}(\cY) \cup \Sel_{\{2\},\PL,n}^{(\infty)}(\cY) \]
defined by $y = 0$, $x = 0$, and $x+y= 0$, respectively. We calculate the restriction of the localisation map to the refined subscheme for the refinement condition~$\Sigma = (1)$. Denote by $i^{(1)}$ the inclusion of the refined subscheme $\Sel_{\{2\},\PL,n}^{(1)}(\cY)$ in the full Selmer scheme. 

\begin{prop}
	\label{thm: refined localisation map 2}
	On the subspace $\Sel_{\{2\},\PL,n}^{(1)}(\cY)$ defined by $x=0$ the localisation map \eqref{eq: localisation map 2 log}--\eqref{eq: localisation map 2 Li_k} restricts as follows:
	\begin{align*}
		(i^{(1)} \circ \loc_p)^{\sharp}(\log) &= 0, \\
		(i^{(1)} \circ \loc_p)^{\sharp}(\Li_1) &= a_{\tau} y,\\
		(i^{(1)} \circ \loc_p)^{\sharp}(\Li_{k}) &= 0 \quad \text{for $k \geq 2$ even},\\
		(i^{(1)} \circ \loc_p)^{\sharp}(\Li_{k}) &= a_{\sigma_{k}} z_{k} \quad \text{for $k \geq 3$ odd}.
	\end{align*}
\end{prop}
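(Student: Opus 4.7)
The plan is to observe that this is a one-line substitution argument, not a genuine computation. By \Cref{thm: refined polylog Selmer scheme}, the closed immersion $i^{(1)}\colon \Sel_{\{2\},\PL,n}^{(1)}(\cY) \hookrightarrow \rH^1_{f,S}(G_\bQ,U_{\PL,n})$ is cut out by the single equation $x=0$, so on coordinate rings $(i^{(1)})^{\sharp}$ is just the quotient map
\[
\bQ_p[x,y,(z_{2i+1})_i] \twoheadrightarrow \bQ_p[y,(z_{2i+1})_i], \qquad x \mapsto 0.
\]
Hence I would simply specialise the formulas \eqref{eq: localisation map 2 log}--\eqref{eq: localisation map 2 Li_k} at $x=0$ and read off the surviving monomials.

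For $\log$, specialisation of $a_\tau x$ at $x=0$ is immediate. For $\Li_k$ I would split the expression into the ``leading'' term $a_{\tau^k}x^{k-1}y$ and the polylogarithmic remainder $\sum_{i=1}^{\lfloor(k-1)/2\rfloor} a_{\sigma_{2i+1}\tau^{k-2i-1}}x^{k-2i-1}z_{2i+1}$. The leading term survives $x=0$ precisely when $k=1$, contributing $a_\tau y$, and vanishes for $k\geq 2$. In the remainder, the monomial $x^{k-2i-1}z_{2i+1}$ survives $x=0$ precisely when $k=2i+1$; this is achievable in the index range $1\leq i\leq\lfloor(k-1)/2\rfloor$ exactly when $k$ is odd and $\geq 3$, in which case the unique surviving term corresponds to $i=(k-1)/2$ and equals $a_{\sigma_k\tau^0}z_k=a_{\sigma_k}z_k$ (reading $\tau^0$ as the empty word). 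For $k\geq 2$ even no such index exists and the entire remainder vanishes. Assembling these four cases yields the claimed formulas. There is no substantive obstacle: the real content sits in \Cref{thm: polylog localisation map} (which already packaged the motivic input from \cite{CDC:polylog1} via the comparison theorem) and \Cref{thm: refined polylog Selmer scheme} (which identified the defining equation of the refined subscheme).
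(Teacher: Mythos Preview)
Your proposal is correct and is essentially the same argument as the paper's, which also proceeds by setting $x=0$ in \eqref{eq: localisation map 2 log}--\eqref{eq: localisation map 2 Li_k} and observing that for $k\geq 2$ every term contains a factor of $x$ except the last summand when $k$ is odd. You have simply spelled out the monomial-by-monomial bookkeeping in more detail than the paper does.
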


\begin{proof}
	For $k \geq 2$, all terms in \eqref{eq: localisation map 2 Li_k} contain factors of $x$, except the very last summand if $k$ is odd.
\end{proof}

\Cref{thm: refined localisation map 2} tells us immediately that the functions $\log$ and $\Li_{k}$ for $k \geq 2$ even vanish on the scheme-theoretic image of $\Sel_{\{2\},\PL,n}^{(1)}(\cY)$ under the localisation map. Pulling these back along the local Kummer map gives us many $p$-adic analytic functions that vanish on the corresponding refined Chabauty--Kim locus:

\begin{prop}
	\label{thm: refined equations}
	On the polylogarithmic refined Chabauty--Kim locus $\cY(\ZZ_p)_{\{2\},\PL,n}^{(1)}$ of depth $n$, the following equations hold:
	\begin{align}
		\log(z) &= 0, \\
		\Li_{k}(z) &= 0 \quad \text{for $2 \leq k \leq n$ even}.
	\end{align}
\end{prop}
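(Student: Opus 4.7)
The proposition follows almost immediately by chaining together the calculation of the restricted localisation map with the defining property of the refined Chabauty--Kim locus. My plan is as follows.

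The starting point is Proposition \ref{thm: refined localisation map 2}, which shows that the composite
\[
(i^{(1)} \circ \loc_p)^{\sharp} \colon \bQ_p[\log, \Li_1, \ldots, \Li_n] \to \cO(\Sel_{\{2\},\PL,n}^{(1)}(\cY))
\]
sends $\log$ to $0$ and $\Li_k$ to $0$ for every even $k$ with $2 \leq k \leq n$. Translated geometrically, this says that these coordinate functions on $\rH^1_f(G_p, U_{\PL,n})$ vanish identically on the set-theoretic image of $\Sel_{\{2\},\PL,n}^{(1)}(\cY)$ under $\loc_p$, and hence also on the scheme-theoretic image (as the vanishing locus of a collection of regular functions is Zariski-closed).

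Next, I would invoke the definition of the refined Chabauty--Kim locus: by construction, $z \in \cY(\bZ_p)_{\{2\},\PL,n}^{(1)}$ means precisely that $j_p(z)$ lies in the scheme-theoretic image of $\loc_p \colon \Sel_{\{2\},\PL,n}^{(1)}(\cY) \to \rH^1_f(G_p, U_{\PL,n})$. Combining this with the previous paragraph, the functions $\log$ and $\Li_k$ (for $k$ even, $2 \leq k \leq n$) must vanish at $j_p(z)$.

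Finally, I would unwind this using the explicit description of the local de Rham Kummer map: we have $\log(j_p(z)) = \log^p(z)$ and $\Li_k(j_p(z)) = \Li_k^p(z)$ for all $k \geq 1$. Substituting gives $\log^p(z) = 0$ and $\Li_k^p(z) = 0$ for every even $k$ with $2 \leq k \leq n$, which is exactly the conclusion. There is no real obstacle here: the entire content of the proposition is packaged in Proposition \ref{thm: refined localisation map 2}, and this final statement is just the translation from the algebraic side (coordinates on the Selmer scheme) to the analytic side (polylogarithm functions on $\cY(\bZ_p)$) via the commuting Chabauty--Kim square.
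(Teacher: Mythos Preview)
Your proposal is correct and follows essentially the same approach as the paper: the paper simply notes (in the sentence immediately preceding the proposition) that Proposition~\ref{thm: refined localisation map 2} shows $\log$ and $\Li_k$ for even~$k$ vanish on the scheme-theoretic image of the refined Selmer scheme under $\loc_p$, and that pulling back along the local Kummer map yields the desired vanishing on the Chabauty--Kim locus. Your expansion of this into three explicit steps is accurate and complete.
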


\begin{rem}
	\label{rem: generalising AWS Theorem A}
	\Cref{thm: refined equations} generalises \cite[Theorem~A]{BBKLMQSX} from depth~$n=2$ to arbitrary depth.
\end{rem}

\subsection{Proof of refined Kim's Conjecture for $S=\{2\}$}
\label{sec: proof of Kim conjecture}

Recall that the refined Kim's Conjecture for $S= \{2\}$ and any odd prime~$p$ states that the inclusion 
\[ \{2,-1, 1/2\} = \cY(\bZ[1/2]) \subseteq \cY(\ZZ_p)_{\{2\},\infty}^{\min}\]
is an equality. The refined Chabauty--Kim locus $\cY(\ZZ_p)_{\{2\},\infty}^{(1)}$ for the refinement condition $\Sigma = (1)$ should therefore consist only of $-1$, the unique $\{2\}$-integral point reducing to the cusp~$1$ modulo~$2$ (see \Cref{rem: refinement conditions S-integral points}). We first show that this is indeed the case already for the polylogarithmic quotient (\Cref{thm: refined locus 1} below), and then we exploit the $S_3$-action to deduce the refined Kim's Conjecture for the full refined locus. 

Note that the element $z=-1$ satisfies the equations from \Cref{thm: refined equations}:
\[ \log(-1) = 0 \quad \text{and} \quad \Li_k(-1) = 0 \text{ for $k\geq 2$ even} \]
since $-1$ is contained in $\cY(\bZ_p)_{\{2\},\PL,\infty}^{(1)}$. We show that $z = -1$ is the only solution. The key to this is the following result:

\begin{prop}
	\label{thm: Li vanishes only on -1}
	Let~$p \geq 5$ be a prime, and let~$\cY=\bP^1_{\bZ_p}\smallsetminus\{0,1,\infty\}$. Then the only element $z\in \cY(\bZ_p)$ satisfying $\log(z)=0$ and $\Li_{p-3}(z)=0$ is~$z=-1$.
\end{prop}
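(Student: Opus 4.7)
The strategy has two parts: first use $\log(z)=0$ to restrict $z$ to the finite set of nontrivial Teichmüller units, and then extract from $\Li_{p-3}(z)=0$ a congruence modulo $p$ which admits only $z=-1$ as a solution.

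For the first reduction, since $z\in\cY(\bZ_p)$, both $z$ and $1-z$ lie in $\bZ_p^\times$, and the $p$-adic Iwasawa logarithm is trivial on $\bZ_p^\times$ precisely on $\mu_{p-1}(\bZ_p)$. Thus $\log(z)=0$ forces $z=\omega(a)$ for a unique $a\in\bF_p^\times\smallsetminus\{1\}$, where $\omega$ denotes the Teichmüller lift. For the second step, the motivating heuristic is Fermat's little theorem: for $n$ coprime to $p$ we have $n^{-(p-3)}\equiv n^2\pmod p$, which suggests that the divergent series $\sum_{n\ge 1}z^n/n^{p-3}$ should ``reduce modulo $p$'' to a finite character sum in~$a$. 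The plan is to make this rigorous by reinterpreting $\Li_{p-3}$ at a Teichmüller element as a special value of the Kubota--Leopoldt $p$-adic $L$-function attached to an appropriate power of the Teichmüller character, and then invoking the classical von~Staudt--Clausen-type congruence which relates the associated generalised Bernoulli number to a finite character sum modulo $p$. This should produce a congruence of the shape
\[
\Li_{p-3}(\omega(a)) \;\equiv\; c_p\cdot\phi(a) \pmod{p},
\]
with $c_p\in\bZ_p^\times$ and $\phi$ an explicit elementary rational function of $a\in\bF_p$. The $p$-adic inversion identity for $\Li_k$, combined with $\log(z)=\log(-z)=0$, forces the antisymmetry $\Li_{p-3}(\omega(a))=-\Li_{p-3}(\omega(a^{-1}))$, constraining $\phi$ to be antisymmetric under $a\leftrightarrow a^{-1}$; a short computation using $\sum_{b=0}^{p-1}t^b=(t^p-1)/(t-1)$ and differentiation shows that such a $\phi$ naturally takes the shape $(a+1)/(a-1)$, whose only zero in $\bF_p^\times\smallsetminus\{1\}$ is $a=-1$.

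The main obstacle is the analytic input in the second step: extracting a clean mod-$p$ identity for the Coleman-analytic $\Li_{p-3}$ at a Teichmüller unit, with a $p$-adic unit proportionality constant (the relevant Euler factor being a unit for $p\ge 5$). Once that is in hand, everything else is essentially elementary: the reduction in the first step is immediate, the symmetry argument pins down the shape of $\phi$, and one concludes $a=-1$ and hence $z=\omega(-1)=-1$.
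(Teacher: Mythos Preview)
Your first reduction is correct and matches the paper: $\log(z)=0$ on $\bZ_p^\times$ forces $z$ to be a $(p-1)$st root of unity, so $z=\omega(a)$ with $a\in\bF_p^\times\smallsetminus\{1\}$ and $z^p=z$.

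The second step, however, is where the paper is far more direct, and where your proposal has a genuine gap. You propose to route through Kubota--Leopoldt $L$-functions and von~Staudt--Clausen to obtain a congruence $\Li_{p-3}(\omega(a))\equiv c_p\,\phi(a)\pmod p$, and you explicitly flag this as the ``main obstacle'' without carrying it out. Your subsequent argument for the shape of $\phi$ is only heuristic: antisymmetry under $a\leftrightarrow a^{-1}$ does not pin down $\phi$, and ``naturally takes the shape $(a+1)/(a-1)$'' is not a proof. So as written, the proposal does not establish the key congruence; it only outlines a plausible strategy.

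The paper avoids $L$-functions entirely. It uses Besser's result that the \emph{modified} polylogarithm $\Li^{(p)}_n(z)=\Li_n(z)-p^{-n}\Li_n(z^p)$ is $\bZ_p$-valued and satisfies $\Li^{(p)}_n(z)\equiv \tfrac{1}{1-z^p}\,\li_n(z)\pmod p$, where $\li_n(z)=\sum_{k=1}^{p-1}z^k/k^n\in\bF_p[z]$ is the finite polylogarithm. Since $z^p=z$, the hypothesis $\Li_{p-3}(z)=0$ gives $\Li^{(p)}_{p-3}(z)=0$ and hence $\li_{p-3}(\bar z)=0$. Now Fermat gives $k^{-(p-3)}\equiv k^2\pmod p$, so $\li_{p-3}(z)=\sum_{k=1}^{p-1}k^2z^k$, and applying $(z\tfrac{d}{dz})^2$ to $\sum_{k=0}^{p-1}z^k=(z^p-1)/(z-1)$ yields the closed form $z(z+1)(z-1)^{p-3}$ in $\bF_p[z]$. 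This is exactly the ``differentiation'' computation you allude to, but here it is the whole proof rather than a heuristic for guessing $\phi$. The only root in $\bF_p\smallsetminus\{0,1\}$ is $\bar z=-1$, whence $z=-1$.

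In short: your hint about differentiating the geometric series is the right computation, but the bridge from Coleman's $\Li_{p-3}$ to that finite sum modulo $p$ is the entire content of the argument, and Besser's modified polylogarithm supplies it in one line, with no $L$-functions needed.
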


\begin{proof}
	The modified $p$-adic polylogarithm
	\[
	\Li^{(p)}_n(z) \coloneqq \Li_n(z)-\frac1{p^n}\Li_n(z^p) \,,
	\]
	is $\bZ_p$-valued and is congruent modulo~$p$ to~$\frac1{1-z^p}\li_n(z)$ where
	\[
	\li_n(z) \coloneqq \sum_{k=1}^{p-1}\frac{z^k}{k^n} \in \bF_p[z]
	\]
	is the finite polylogarithm \cite[Proposition~2.1]{besser:finite_polylogs}. In the particular case that~$n=p-3$, we have
	\[
	\li_{p-3}(z) = \sum_{k=1}^{p-1}k^2z^k = \frac{z(z+1)(z^p-1)}{(z-1)^3} = z(z+1)(z-1)^{p-3}
	\]
	and so the only solution to~$\li_{p-3}(z)=0$ in $\bF_p\smallsetminus\{0,1\}$ is~$z=-1 \bmod p$.
	
	If $z\in \cY(\bZ_p)$ satisfies $\log(z)=0$ and $\Li_{p-3}(z) = 0$, then~$z$ is a root of unity in~$\bZ_p$, so satisfies~$z^p=z$. Hence~$z$ also satisfies~$\Li^{(p)}_{p-3}(z)=0$, and so~$\li_{p-3}(\bar z)=0$, where~$\bar z\in\bF_p$ is the reduction of~$z$. But we've seen that this implies that~$\bar z=-1$, which implies that~$z=-1$ as desired.
\end{proof}

\begin{cor}
	\label{thm: refined locus 1}
	For any odd prime~$p$ we have 
	\[ \cY(\bZ_p)_{\{2\},\PL,\infty}^{(1)} = \{-1\}.\]
\end{cor}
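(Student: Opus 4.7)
The inclusion $\{-1\} \subseteq \cY(\bZ_p)_{\{2\},\PL,\infty}^{(1)}$ is immediate, since $-1 \in \cY(\bZ[1/2])_\Sigma$ for $\Sigma = (1)$ (it is the unique $\{2\}$-integral point reducing to the cusp~$1$ modulo~$2$), and we have the containment $\cY(\bZ[1/2])_\Sigma \subseteq \cY(\bZ_p)_{\{2\},\PL,\infty}^{\Sigma}$ from Remark~\ref{rem: refinement conditions S-integral points}. So the task is to prove the reverse inclusion, and I would split into two cases according to whether $p \geq 5$ or $p = 3$.

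For $p \geq 5$, I would apply Proposition~\ref{thm: refined equations} in depth $n = p-3$ (which is even and $\geq 2$): any $z \in \cY(\bZ_p)_{\{2\},\PL,\infty}^{(1)} \subseteq \cY(\bZ_p)_{\{2\},\PL,p-3}^{(1)}$ satisfies simultaneously $\log(z) = 0$ and $\Li_{p-3}(z) = 0$. Proposition~\ref{thm: Li vanishes only on -1} then forces $z = -1$ and we are done.

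For $p = 3$, Proposition~\ref{thm: Li vanishes only on -1} is not available (the exponent $p-3 = 0$ is not $\geq 2$), so I would argue directly from $\log(z) = 0$ alone. Namely, any $z \in \cY(\bZ_3)_{\{2\},\PL,\infty}^{(1)}$ satisfies $\log(z) = 0$ by Proposition~\ref{thm: refined equations}, and so lies in the kernel of the $p$-adic logarithm $\log \colon \bZ_3^\times \to \bZ_3$, which is the torsion subgroup $\mu(\bZ_3) = \{\pm 1\}$. Since $1 \notin \cY(\bZ_3)$, this leaves $z = -1$ as the only option.

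No step here is a serious obstacle: the heavy lifting has already been done in Propositions~\ref{thm: refined equations} and~\ref{thm: Li vanishes only on -1}, and the only delicate point is the separate handling of $p = 3$, which is resolved by the simple observation that $\bZ_3$ contains no nontrivial $p$-power roots of unity.
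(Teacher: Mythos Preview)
Your proof is correct and follows essentially the same approach as the paper's: both arguments split into the cases $p \geq 5$ (where Propositions~\ref{thm: refined equations} and~\ref{thm: Li vanishes only on -1} together force $z=-1$) and $p=3$ (where $\log(z)=0$ alone suffices since $-1$ is the only root of unity in $\cY(\bZ_3)$). Your write-up is slightly more explicit about the forward inclusion $\{-1\}\subseteq\cY(\bZ_p)_{\{2\},\PL,\infty}^{(1)}$ and about passing to finite depth $n=p-3$, but these are matters of exposition rather than substance.
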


\begin{proof}
	Let $z \in \cY(\bZ_p)_{\{2\},\PL,\infty}^{(1)}$, so $z$ satisfies the equations from \Cref{thm: refined equations}: $\log(z) = 0$ and $\Li_k(z) = 0$ for $k \geq 2$ even. For $p \geq 5$, we get $z = -1$ from \Cref{thm: Li vanishes only on -1}. For $p = 3$, already the first equation $\log(z) = 0$ implies $z = -1$ since that is the only root of unity in $\cY(\bZ_3)$.
\end{proof}

We can now determine the full refined Chabauty--Kim locus by exploiting the $S_3$-action on the thrice-punctured line. At this point we have to use the full fundamental group rather than its polylogarithmic quotient in order to have the $S_3$-symmetries present on the refined Chabauty--Kim locus (see \Cref{rem: S3-action}).

\begin{cor}
	\label{thm: refined CK for prime 2}
	The thrice-punctured line satisfies the refined Chabauty--Kim Conjecture for $S = \{2\}$ and all odd primes~$p$:
	\[ \cY(\bZ_p)_{\{2\},\infty}^{\min} = \{2, -1, 1/2\}. \]
\end{cor}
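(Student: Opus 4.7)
The plan is to combine three ingredients already established: (i) the decomposition of the refined Chabauty--Kim locus over refinement conditions (Corollary~\ref{thm: refined Selmer scheme}); (ii) the functoriality under change of fundamental group quotient (Lemma~\ref{thm: change of fundamental group quotient}); and (iii) the $S_3$-symmetry of the full refined Chabauty--Kim locus (Lemma~\ref{thm: S3-action}).

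First, I would handle the refinement condition $\Sigma = (1)$. By Lemma~\ref{thm: change of fundamental group quotient} applied to the chain $U \twoheadrightarrow U_{\PL} \twoheadrightarrow U^{\ab}$, we have the inclusion
\[
\cY(\bZ_p)_{\{2\},\infty}^{(1)} \subseteq \cY(\bZ_p)_{\{2\},\PL,\infty}^{(1)} = \{-1\},
\]
where the last equality is Corollary~\ref{thm: refined locus 1}. Since $-1 \in \cY(\bZ[1/2])_{(1)}$ (it is the unique $\{2\}$-integral point reducing to the cusp $1$ modulo $2$) and $\cY(\bZ[1/2])_{(1)} \subseteq \cY(\bZ_p)_{\{2\},\infty}^{(1)}$ by Remark~\ref{rem: refinement conditions S-integral points}, the reverse inclusion also holds, giving
\[
\cY(\bZ_p)_{\{2\},\infty}^{(1)} = \{-1\}.
\]

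Next, I would transport this to the other two refinement conditions via the $S_3$-action. By Lemma~\ref{thm: S3-action}, any $\sigma \in S_3$ satisfies $\sigma(\cY(\bZ_p)_{\{2\},\infty}^{(1)}) = \cY(\bZ_p)_{\{2\},\infty}^{(\sigma(1))}$. Taking $\sigma(z) = 1-z$ (which sends $1 \mapsto 0$) yields
\[
\cY(\bZ_p)_{\{2\},\infty}^{(0)} = \{1-(-1)\} = \{2\},
\]
and taking $\sigma(z) = z/(z-1)$ (which sends $1 \mapsto \infty$) yields
\[
\cY(\bZ_p)_{\{2\},\infty}^{(\infty)} = \{-1/(-1-1)\} = \{1/2\}.
\]

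Finally, the decomposition of Corollary~\ref{thm: refined Selmer scheme} gives
\[
\cY(\bZ_p)_{\{2\},\infty}^{\min} = \cY(\bZ_p)_{\{2\},\infty}^{(0)} \cup \cY(\bZ_p)_{\{2\},\infty}^{(1)} \cup \cY(\bZ_p)_{\{2\},\infty}^{(\infty)} = \{2,-1,1/2\},
\]
as desired. There is no serious obstacle here; all the substantive work was done in Proposition~\ref{thm: Li vanishes only on -1} and Corollary~\ref{thm: refined locus 1}, and the present statement is obtained by a purely formal combination of quotient-functoriality and $S_3$-symmetry.
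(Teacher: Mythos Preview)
Your proof is correct and takes essentially the same approach as the paper: both use the decomposition over refinement conditions, pass to the polylogarithmic quotient via Lemma~\ref{thm: change of fundamental group quotient}, apply Corollary~\ref{thm: refined locus 1} for the $(1)$-locus, and propagate via the $S_3$-action from Lemma~\ref{thm: S3-action}. The only cosmetic difference is that you compute each refinement locus explicitly by applying specific $S_3$-elements, whereas the paper starts with an arbitrary point and moves it into the $(1)$-locus.
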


\begin{proof}
	Let $z$ be an element of the refined Chabauty--Kim locus $\cY(\bZ_p)_{\{2\},\infty}^{\min}$. By \Cref{thm: refined Selmer scheme}, the refined Selmer scheme is a union over three refinement conditions corresponding to the three cusps:
	\[ \cY(\bZ_p)_{\{2\},\infty}^{\min} = \cY(\bZ_p)_{\{2\},\infty}^{(0)} \cup \cY(\bZ_p)_{\{2\},\infty}^{(1)} \cup \cY(\bZ_p)_{\{2\},\infty}^{(\infty)}. \]
	By \Cref{thm: S3-action}, there is an $S_3$-conjugate~$\sigma(z)$ of $z$ contained in $\cY(\bZ_p)_{\{2\},\infty}^{(1)}$. By \Cref{thm: change of fundamental group quotient} applied to the quotient map $U \twoheadrightarrow U_{\PL}$, that refined locus is contained in the corresponding one for the polylogarithmic quotient:
	\[ 	\cY(\bZ_p)_{\{2\},\infty}^{(1)} \subseteq \cY(\bZ_p)_{\{2\},\PL,\infty}^{(1)}.\]
	That latter locus consists only of $-1$ by \Cref{thm: refined locus 1}. In particular we have $\sigma(z) = -1$. The original point~$z$ is thus an element of the $S_3$-orbit of~$-1$, which is precisely $\{2,-1,1/2\}$, the set of $\{2\}$-integral points of~$\cY$.
\end{proof}

\subsection{Proof of Kim's Conjecture for \texorpdfstring{$S = \emptyset$}{S=\unichar{"2205}}}
\label{sec: proof of unrefined kim conjecture}

We have focused in this paper on the refined Chabauty--Kim method due to its role in providing instances of the $S$-Selmer Section Conjecture. It is interesting to observe, however, that the same method for proving the refined conjecture for $S=\{2\}$ can be used to prove the original (non-refined) Kim's Conjecture in the case $S= \emptyset$. This is \Cref{thm:main_unrefined} from the introduction, which we shall prove now.

As explained in \Cref{rem: unrefined CK}, if $\pi_1^{\et,\bQ_p}(Y_{\overline{\bQ}}, b) \twoheadrightarrow \Pi$ is any $G_{\bQ}$-equivariant quotient of the $\bQ_p$-pro-unipotent étale fundamental group, we can use the Bloch--Kato Selmer scheme $\rH^1_{f,S}(G_{\bQ}, \Pi)$ to define the (non-refined) Chabauty--Kim locus $\cY(\bZ_p)_{S,\Pi}$ as the set of points $y \in \cY(\bZ_p)$ such that $j_p(z)$ lies in the scheme-theoretic image of $\rH^1_{f,S}(G_{\bQ}, \Pi)$ under the localisation map~$\loc_p$ \cite{kim:motivic, kim:albanese}. The classical formulation of Kim's Conjecture states that the Chabauty--Kim locus $\cY(\bZ_p)_{S,n}$ for the depth-$n$ quotient of the fundamental group should consist exactly of the $S$-integral points for sufficiently large~$n$:

\begin{conj}[{Kim's Conjecture \cite[Conjecture~3.1]{BDCKW}}]
	$\cY(\bZ_S) = \cY(\bZ_p)_{S,n}$ for $n \gg 0$.
\end{conj}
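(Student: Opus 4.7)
The plan is to transport essentially verbatim the computation already carried out in the $S=\{2\}$ case; since $\cY(\bZ)=\emptyset$, the task will be to exhibit, for each odd prime~$p$, an integer $n$ such that $\cY(\bZ_p)_{\emptyset,n}=\emptyset$. The key observation is that when $S=\emptyset$, every sum indexed by $\ell\in S$ in the formulas of Theorem~\ref{thm: polylog localisation map} is empty, so the localisation map for the depth-$n$ polylogarithmic quotient acts on coordinates simply by
\[
\loc_p^\sharp(\log)=0, \quad \loc_p^\sharp(\Li_k)=0 \ \text{for $k\geq 2$ even}, \quad \loc_p^\sharp(\Li_k)=\zeta^p(k)\,z_k \ \text{for $k\geq 3$ odd}.
\]
Pulling these vanishing functions back along $j_p$, exactly as in Proposition~\ref{thm: refined equations}, I would conclude that every $z\in\cY(\bZ_p)_{\emptyset,\PL,n}$ satisfies $\log(z)=0$ and $\Li_k(z)=0$ for $2\leq k\leq n$ even.

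Taking $n\geq p-3$ for $p\geq 5$ (and noting for $p=3$ that $\log(z)=0$ alone already forces $z=-1$, since $-1$ is the unique non-trivial root of unity in~$\bZ_3^\times$), Proposition~\ref{thm: Li vanishes only on -1} will reduce the polylogarithmic Chabauty--Kim locus to
\[
\cY(\bZ_p)_{\emptyset,\PL,n}\subseteq\{-1\},
\]
and Lemma~\ref{thm: change of fundamental group quotient}, applied to the quotient $U_n\twoheadrightarrow U_{\PL,n}$, will then propagate this containment to the full depth-$n$ locus $\cY(\bZ_p)_{\emptyset,n}\subseteq\{-1\}$.

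The final step will be to invoke the $S_3$-symmetry to eliminate the ``ghost solution'' $z=-1$. Because $S=\emptyset$, the set of refinement conditions $\{0,1,\infty\}^S$ is a singleton, so the unrefined and refined Chabauty--Kim loci coincide: $\cY(\bZ_p)_{\emptyset,n}=\cY(\bZ_p)_{\emptyset,n}^{\min}$. By Lemma~\ref{thm: S3-action} this common set is stable under the natural $S_3$-action on $\cY$, whereas the $S_3$-orbit of $-1$ consists of the three points $\{-1,2,1/2\}$. The containment inside $\{-1\}$ is therefore impossible unless the locus is empty, which yields $\cY(\bZ_p)_{\emptyset,n}=\emptyset=\cY(\bZ)$ for every odd prime $p$ and every $n\geq\max(p-3,2)$. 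I do not anticipate a genuine obstacle here: all of the heavy lifting has already been done in the $S=\{2\}$ case, and the main conceptual point is simply that the $S_3$-symmetry -- automatic because $S$ is empty and no refinement is involved -- rules out the single point $-1$ that would otherwise survive the polylogarithmic equations.
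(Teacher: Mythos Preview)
Your overall strategy is reasonable, but there is a genuine flaw in the final step. Your claim that ``the unrefined and refined Chabauty--Kim loci coincide'' for $S=\emptyset$ is false. The description of the refined Selmer scheme as a union over $\Sigma\in\{0,1,\infty\}^S$ in Corollary~\ref{thm: refined Selmer scheme} is valid only under the standing hypothesis $2\in S$; when $2\notin S$, Proposition~\ref{thm: empty refined Selmer scheme} shows that the refined Selmer scheme is \emph{empty} (because $\cY(\bZ_2)=\emptyset$), so $\cY(\bZ_p)_{\emptyset,n}^{\min}=\emptyset$ trivially. This tells you nothing about the unrefined locus $\cY(\bZ_p)_{\emptyset,n}$, which is defined via the larger Bloch--Kato Selmer scheme $\rH^1_{f,\emptyset}(G_\bQ,U_n)$ and need not be empty a priori. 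Consequently, Lemma~\ref{thm: S3-action}, stated for the refined locus, does not immediately apply. Your argument could likely be repaired by establishing $S_3$-stability of the \emph{unrefined} locus $\cY(\bZ_p)_{\emptyset,n}$ directly from functoriality (the same functoriality principles underlying \cite[Corollary~2.15]{BBKLMQSX} should do this), but this requires separate justification and is not what you wrote.

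The paper avoids this issue entirely by a simpler route that you overlooked: when $S=\emptyset$, the formula for $\loc_p^\sharp(\Li_1)$ in Theorem~\ref{thm: polylog localisation map} is \emph{also} an empty sum, so $\Li_1(z)=0$ is another equation holding on $\cY(\bZ_p)_{\emptyset,\PL,n}$. Since $\Li_1(-1)=-\log(2)\neq 0$, this eliminates $z=-1$ directly, already at the level of the polylogarithmic quotient, with no appeal to $S_3$-symmetry. This is both shorter and stays entirely within the unrefined setting.
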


Now let $S = \emptyset$. In this case there are no $S$-integral points: $\cY(\bZ) = \emptyset$. It is shown in \cite[§6]{BDCKW} that Kim's Conjecture holds in depth~1 for $p=3$ and all primes $p \equiv 2 \bmod 3$; moreover, it is conjectured (and verified numerically for $p < 10^5$) that Kim's Conjecture holds in depth~$2$ for all primes $p \equiv 1 \bmod 3$. Using the same calculation with finite polylogarithms as in \Cref{thm: Li vanishes only on -1} we can show the conjecture for all primes by going into higher depth:

\begin{thm}
	\label{thm: Kim conjecture emptyset}
	Let $p \geq 5$ be any prime. Then Kim's Conjecture for $S = \emptyset$ holds in depth~$p-3$:
	\[ \cY(\bZ_p)_{\emptyset,p-3} = \emptyset. \]
\end{thm}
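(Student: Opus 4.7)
The plan is to reduce the unrefined Chabauty--Kim locus to the polylogarithmic one, apply \Cref{thm: Li vanishes only on -1}, and then use the $S_3$-symmetry of $\cY$ to rule out the single remaining candidate. By functoriality of the Bloch--Kato Selmer scheme in the fundamental group quotient, the surjection $U_{p-3}\twoheadrightarrow U_{\PL,p-3}$ gives an inclusion
\[
\cY(\bZ_p)_{\emptyset,p-3}\subseteq\cY(\bZ_p)_{\emptyset,\PL,p-3},
\]
so it suffices to analyse the polylogarithmic locus.

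Specialising \Cref{thm: polylog localisation map} to $S=\emptyset$ causes every monomial in the localisation formulas that involves some $x_\ell$ or $y_\ell$ to disappear. What remains is $\loc_p^\sharp(\log)=0$, $\loc_p^\sharp(\Li_k)=0$ for every even $k$ (and for $k=1$), and $\loc_p^\sharp(\Li_k)=\zeta^p(k)\,z_k$ for odd $k\geq 3$. Consequently, the scheme-theoretic image of $\loc_p$ is cut out in the coordinates $(\log,\Li_1,\ldots,\Li_{p-3})$ by the equations $\log=0$ and $\Li_k=0$ for all even $2\leq k\leq p-3$, so any $z\in\cY(\bZ_p)_{\emptyset,\PL,p-3}$ satisfies $\log(z)=0$ and $\Li_{p-3}(z)=0$ (using that $p-3$ is even since $p$ is odd). \Cref{thm: Li vanishes only on -1} then forces $z=-1$, giving $\cY(\bZ_p)_{\emptyset,p-3}\subseteq\{-1\}$.

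To exclude $z=-1$, one invokes the $S_3$-invariance of the unrefined locus $\cY(\bZ_p)_{\emptyset,n}$ under the Möbius transformations permuting the cusps. This is the unrefined analogue of \Cref{thm: S3-action}: the loci associated to the descending central series quotients $U_n$ are $S_3$-stable because the $U_n$ are $S_3$-equivariant in the appropriate sense (cf.\ \Cref{rem: S3-action}). Hence, if $-1$ lay in $\cY(\bZ_p)_{\emptyset,p-3}$, so would its image $2=1-(-1)$ under $z\mapsto 1-z$; then the reduction to the polylogarithmic quotient above would force $\log(2)=0$. But $2\in\bZ_p$ is not a root of unity (an integer $\neq\pm 1$ is not a root of unity in $\bar{\bQ}$, and any root of unity in $\bQ_p$ is algebraic over $\bQ$), so $\log(2)\neq 0$, a contradiction. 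The only genuinely delicate point is the last one: \Cref{thm: S3-action} is stated for refined loci, so one must carefully track that the same $S_3$-symmetry mechanism applies to the unrefined $U_n$-loci, where (unlike for the polylogarithmic quotient, which enjoys only the involution $z\mapsto 1/z$) the full $S_3$-action is available.
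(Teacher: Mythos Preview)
Your proof is correct, but the final step takes an unnecessary detour. The paper exploits an equation you already wrote down but did not use: for $S=\emptyset$ the localisation map also sends $\Li_1$ to $0$, so $\Li_1(z)=0$ is among the defining equations of the polylogarithmic locus $\cY(\bZ_p)_{\emptyset,\PL,p-3}$. Since
\[
\Li_1(-1)=-\log(1-(-1))=-\log(2)\neq 0,
\]
the point $z=-1$ is excluded directly, and in fact the polylogarithmic locus itself is empty. This avoids the $S_3$-symmetry argument entirely and stays within the polylogarithmic quotient throughout, sidestepping the ``delicate point'' you flag about transferring \Cref{thm: S3-action} to the unrefined setting.

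Amusingly, the two arguments are not so different under the hood: both ultimately hinge on $\log(2)\neq 0$. Your route reaches it via the M\"obius transformation $z\mapsto 1-z$ applied to the full $U_{p-3}$-locus and then the equation $\log=0$; the paper reaches it via the identity $\Li_1(z)=-\log(1-z)$ applied at $z=-1$. The paper's version is cleaner because it needs no symmetry input at all, whereas your version requires checking that the unrefined $U_n$-loci are $S_3$-stable (true, but not literally the statement of \Cref{thm: S3-action}).
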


\begin{proof}
	It is enough to show the conjecture for the polylogarithmic quotient of the fundamental group: $\cY(\bZ_p)_{\emptyset,\PL,p-3} = \emptyset$. Specialising \Cref{thm: polylog localisation map} to $S=\emptyset$, the localisation map $\loc_p\colon \rH^1_{f,\emptyset}(G_{\bQ}, U_{\PL,n}) \to \rH^1_f(G_p, U_{\PL,n})$ corresponds to the homomorphism on coordinate rings
	\[ \loc_p^\sharp\colon \bQ_p[\log,\Li_1,\ldots,\Li_n] \to \bQ_p[(z_{2i+1})_{1\leq i \leq \lfloor (n-1)/2\rfloor}] \]
	given by
	\begin{align*}
		\loc_p^{\sharp}(\log) &= 0,\\
		\loc_p^{\sharp}(\Li_1) &= 0,\\
		\loc_p^{\sharp}(\Li_k) &= 0 \quad \text{for $k \geq 2$ even},\\
		\loc_p^{\sharp}(\Li_k) &= a_{\sigma_k} z_k \quad \text{for $k \geq 3$ odd}.
	\end{align*}
	This implies that all elements of the Chabauty--Kim locus $\cY(\bZ_p)_{\emptyset,\PL,n}$ satisfy the equations
	\begin{equation}
		\label{eq: unrefined equations}
		\log(z) = 0, \quad \Li_1(z) = 0, \quad \Li_k(z) = 0 \; \text{for $2 \leq k \leq n$ even}.
	\end{equation}
	By \Cref{thm: Li vanishes only on -1}, the only element of $\cY(\bZ_p)$ satisfying $\log(z) = 0$ and $\Li_{p-3}(z) = 0$ is $z = -1$. However, that element is not a root of $\Li_1$:
	\[ \Li_1(-1) = -\log(1 - (-1)) = -\log(2) \neq 0. \]
	Thus, there are no solutions to the equations~\eqref{eq: unrefined equations} if $n \geq p-3$.
\end{proof}

In conclusion, Kim's Conjecture holds in depth~1 for $p=2,3$ by \cite[§6]{BDCKW} and in depth~$p-3$ for $p \geq 5$ by \Cref{thm: Kim conjecture emptyset}. This proves \Cref{thm:main_unrefined} from the introduction.

\begin{rem}
	\label{rem: unrefined equations in DCW}
	The equations \eqref{eq: unrefined equations} have previously been derived in \cite[Theorem~1.13]{DCW:mixedtate1}.\footnote{The statement of \cite[Theorem~1.13]{DCW:mixedtate1} contains a typo: it is the $\Li_k(z)$ for $k$ \emph{even} not \emph{odd} which vanish on the Chabauty--Kim locus.} The statement of that theorem contains the assumption that $\zeta^p(n) \neq 0$ for $n \geq 3$ odd (as is conjectured); this assumption is only necessary for saying that the Chabauty--Kim locus $\cY(\bZ_p)_{\emptyset,\PL,\infty}$ is \emph{precisely} the one cut out by the given equations, whereas we only need that the Chabauty--Kim locus is contained in their vanishing set.
\end{rem}

\begin{rem}
	It was shown in \cite[§6]{BDCKW} that the depth 1 locus $\cY(\bZ_p)_{\emptyset,1}$, defined by the equations $\log(z) = 0$ and $\Li_1(z) = 0$, consists precisely of the two primitive sixth roots of unity in $\bZ_p$ if $p \equiv 1 \bmod 3$, and is empty otherwise. Adding in the higher depth equation $\Li_{p-3}(z) = 0$ cuts the set down to the empty set for all~$p$.
\end{rem}

\subsection{Curves of higher genus}
\label{sec: higher genus}

We conclude this paper with the proof of \Cref{thm:main_chabauty}, i.e., the observation that there are also some special higher genus curves where one can prove a Kim-like conjecture, strong enough to deduce the Selmer Section Conjecture. Let~$X/\bQ$ be a smooth projective curve of genus~$\geq2$ with a rational point, and suppose that~$A$ is a quotient of the Jacobian of~$X$ of dimension~$\geq2$, Mordell--Weil rank~$0$ and finite Tate--Shafarevich group. Since~$A$ is a quotient of the Jacobian, its $\bQ_p$-linear Tate module~$V_pA$ is an abelian quotient of the $\bQ_p$-pro-unipotent \'etale fundamental group of~$X$. The assumptions on the Mordell--Weil rank and Tate--Shafarevich group imply that
\[
\rH^1_f(G_\bQ,V_pA)=0 \,,
\]
and hence that the Chabauty--Kim locus~$X(\bQ_p)_{V_pA}$ associated to this quotient is the kernel of the map
\[
j_p\colon X(\bQ_p) \to \rH^1_f(G_p,V_pA)
\]
(when~$p$ is a prime of good reduction).

This can be reinterpreted geometrically. Let
\[
f\colon X \to A
\]
be the composite of the Abel--Jacobi map with the projection from the Jacobian to~$A$. The induced map on $\bQ_p$-pro-unipotent fundamental groups is exactly the quotient map from the fundamental group of~$X$ to~$V_pA$, so the map above factors as the composite
\[
X(\bQ_p) \xrightarrow{f} A(\bQ_p) \xrightarrow{\kappa_p} \rH^1_f(G_p,V_pA)
\]
where~$\kappa_p$ is the $\bQ_p$-linear Kummer map for the abelian variety~$A$. Now the kernel of~$\kappa_p$ is exactly the torsion subgroup~$A(\bQ_p)_{\mathrm{tors}}$, so the Chabauty--Kim locus~$X(\bQ_p)_{V_pA}$ is the set of points~$x\in X(\bQ_p)$ such that~$f(x)$ is a torsion point on~$A$.

Now the Manin--Mumford Conjecture implies that the set of points on~$X_{\Qbar}$ mapping to a torsion point on~$A_{\Qbar}$ is finite, so there is a finite closed subscheme~$Z\subset X$ such that
\[
X(\bQ_p)_{V_pA} = Z(\bQ_p)
\]
for all primes~$p$ of good reduction. This in particular implies Theorem~\ref{thm:main_chabauty}.

Finally, let us explain why the conclusion of Theorem~\ref{thm:main_chabauty}, although weaker than Kim's Conjecture, is nonetheless strong enough to deduce the Selmer Section Conjecture for curves~$X$ as above. The containment $X(\bQ_p)_{V_pA} \subseteq Z(\bQ_p)$ for good primes~$p$ implies by Lemma~\ref{lem:project_into_kim} that any point $x=(x_v)_v\in X(\bA_{\bQ,S})_\bullet^{\fcov}$ in the finite descent locus satisfies $x_p\in Z(\bQ_p)$ for all good primes~$p$. So we deduce from the theorem of the diagonal that~$x\in X(\bQ)$, i.e.\ strong sufficiency of finite descent (Conjecture~\ref{conj:sufficiency_finite_descent}) holds for~$X$. So the Selmer Section Conjecture holds for~$X$.\qed

We remark that strong sufficiency of finite descent for such curves~$X$ (in slightly greater generality) was proven by Stoll \cite[Theorem~8.6]{stoll:finite_descent}. The above argument essentially shows that Stoll's theorem lifts to a statement about Chabauty--Kim loci.

\appendix
\section{Mixed Tate motives vs.\ Galois representations}\label{sec:appendix}

This appendix is devoted to the proof of the following fact, which is broadly known to the experts but for which we have not been able to find a clear reference. 

\begin{prop}\label{prop:etale_realisation_iso_ext}
	Let~$K$ be a number field, $S$ a finite set of places of~$K$, and~$p$ a rational prime not divisible by any element of~$S$. Let~$\MT(\cO_{K,S},\bQ)$ denote the $\bQ$-linear category of mixed Tate motives over~$\cO_{K,S}$ with coefficients in~$\bQ$, and let~$\Rep_{\bQ_p}^{\MT,S}(G_K)$ denote the $\bQ_p$-linear category of mixed Tate representations of~$G_K$, unramified outside $S\cup\{\mathfrak p\mid p\}$ and crystalline at all~$\mathfrak p\mid p$. Then the map
	\begin{equation}\label{eq:etale_realisation_extensions}
		\Ext^1_{\MT(\cO_{K,S},\bQ)}(\bQ(0),\bQ(n)) \otimes_{\bQ} \bQ_p \to \Ext^1_{\Rep_{\bQ_p}^{\MT,S}(G_K)}(\bQ_p(0),\bQ_p(n))
	\end{equation}
	induced by \'etale realisation is an isomorphism for all~$n>0$.
\end{prop}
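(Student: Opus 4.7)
The plan is to split the argument into the cases $n = 1$ and $n \geq 2$, identifying both sides of~\eqref{eq:etale_realisation_extensions} with well-studied arithmetic invariants and then appealing to Kummer theory or to Soul\'e's theorem.

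For $n = 1$, the left-hand side is $\cO_{K,S}^\times \otimes_\bZ \bQ_p$ by~\eqref{eq: mixed tate motives ext groups}. The right-hand side is the Bloch--Kato Selmer group $\rH^1_{f,S}(G_K, \bQ_p(1))$, which Kummer theory likewise identifies with $\cO_{K,S}^\times \otimes_\bZ \bQ_p$: a class in $\rH^1_{\mathrm{cont}}(G_K, \bQ_p(1))$ represented by $u \in K^\times$ is unramified outside $S \cup \{\mathfrak p \mid p\}$ exactly when $u$ is a unit away from that set, and is crystalline at each $\mathfrak p \mid p$ exactly when $u$ is also a unit at $\mathfrak p$, so together $u \in \cO_{K,S}^\times$. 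The \'etale realisation of the motivic Kummer extension associated to $u \in \cO_{K,S}^\times$ is by construction the usual \'etale Kummer extension associated to $u$, so the map~\eqref{eq:etale_realisation_extensions} becomes the identity.

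For $n \geq 2$, the left-hand side is identified with $K_{2n-1}(\cO_{K,S})_\bQ = K_{2n-1}(K)_\bQ$ via~\cite[(1.7)]{deligne-goncharov} and~\eqref{eq: mixed tate motives ext groups}, while the right-hand side is again the Bloch--Kato Selmer group $\rH^1_{f,S}(G_K, \bQ_p(n))$, since an extension of $\bQ_p(0)$ by $\bQ_p(n)$ in our subcategory of mixed Tate representations is the same data as a class in this Selmer group. Under these identifications, the map induced by \'etale realisation should become Soul\'e's $p$-adic \'etale regulator
\[
c_n \colon K_{2n-1}(\cO_{K,S}) \otimes_\bZ \bQ_p \to \rH^1_{f,S}(G_K, \bQ_p(n)),
\]
which Soul\'e proved to be an isomorphism for $p$ odd and $n \geq 2$ in~\cite{soule:higher}.

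The hard part, as the paper's footnote already flags, is verifying that the map induced by the Tannakian \'etale realisation functor genuinely agrees with Soul\'e's Chern class and is not merely some other isomorphism. The approach I would take is to realise $\MT(\cO_{K,S}, \bQ)$ as a full subcategory of Voevodsky's (or Levine's) triangulated category of mixed motives, following Deligne--Goncharov's construction. In this enlarged setting, the $\Ext^1$ groups become motivic cohomology $\rH^1_{\mathrm{mot}}(\Spec \cO_{K,S}, \bQ(n))$, and the \'etale realisation functor refines to the standard $p$-adic \'etale realisation on motivic cohomology, namely the \'etale cycle class map to continuous \'etale cohomology. Under the Beilinson--Lichtenbaum identification of motivic cohomology with weight-$n$ Adams eigenspaces in algebraic K-theory, this cycle class map is by its construction Soul\'e's Chern class (at worst up to a nonzero rational scalar, which is harmless for the isomorphism statement). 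Carrying out this identification rigorously requires tracking several equivalent presentations of the \'etale realisation through these comparison theorems, and that is where I expect the bulk of the technical work to go.
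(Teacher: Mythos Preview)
Your case split and your treatment of $n=1$ are essentially the same as the paper's. For $n\geq 2$, however, you pursue exactly the route the paper deliberately avoids. The paper explicitly flags (in the paragraph following~\eqref{eq:extension_dimensions}) that it is ``presumably the case that the Chern class map agrees with \'etale realisation\dots\ but this is not the strategy we take''; your proposal acknowledges this as the hard part but then only restates the problem: pass to Voevodsky's category, identify the Tannakian \'etale realisation with the \'etale cycle class map, then with Soul\'e's regulator via Beilinson--Lichtenbaum and Adams eigenspaces. Each of these identifications is plausible, but the compatibility of the various regulator constructions up to a known nonzero scalar is genuinely delicate and not something you can simply cite. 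As written, this is a gap.

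The paper circumvents the compatibility question entirely. It uses Soul\'e's theorem only to conclude that the two sides of~\eqref{eq:etale_realisation_extensions} have the same $\bQ_p$-dimension, and then proves directly that the \'etale realisation map is \emph{injective} for $n\geq 2$. The injectivity argument goes through the triangulated category: Beilinson--Lichtenbaum implies that the mod-$p^r$ \'etale realisation on $\Hom_{\DM_\Nis^{\eff,-}(K,\bZ)}(\bZ,\bZ(n)[1])$ has kernel exactly the $p^r$-multiples; combined with a separate lemma that this Hom-group is finitely generated (proved via a localisation sequence and results of Kahn, Quillen, and Geisser--Levine), one gets that the $\bZ_p$-tensored map is injective. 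Bijectivity then follows by the dimension count. This avoids ever having to say what the map \emph{is} in terms of Chern classes.
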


Since $\Ext^2$'s in $\MT(\cO_{K,S},\bQ)$ vanish \cite[Proposition~1.9]{deligne-goncharov}, this implies by Lemma~\ref{lem:iso_on_tannaka_groups} that the induced map
\[
G_{K,S}^{\MTR} \to G_{K,S,\bQ_p}^{\MT}
\]
on Tannaka groups is an isomorphism, and thus completes the proof of Theorem~\ref{thm:tannaka_group_comparison} from the introduction. 

For the proof of \Cref{prop:etale_realisation_iso_ext} we follow a strategy suggested to us by Annette Huber-Klawitter. We wish to express our gratitude for taking the time to answer our questions during the write-up.

We start by treating the case $n = 1$. In this case the Ext-group on the left-hand side of~\eqref{eq:etale_realisation_extensions} is given by
\[ \Ext^1_{\MT(\cO_{K,S},\bQ)}(\bQ(0),\bQ(1)) = \cO_{K,S}^\times \otimes_{\bZ} \bQ \]
by definition of $\MT(\cO_{K,S}, \bQ)$ \cite[§1.7]{deligne-goncharov}. 
The map~\eqref{eq:etale_realisation_extensions} for $n=1$ can be identified with the map
\[ \cO_{K,S}^\times \otimes_{\bZ} \bQ_p \to \rH^1_{f,S}(G_K, \bQ_p(1)) \]
induced by sending an element of $\cO_{K,S}^\times$ to its étale Kummer class, see the proof of \cite[Prop.~1.8]{deligne-goncharov}. By Kummer theory and \cite[Example~3.9]{bloch-kato:tamagawa_numbers}, this is an isomorphism. 

We now assume $n \geq 2$. In this case, the integrality conditions (unramifiedness and crystallineness) play no role: every extension of $\bQ(0)$ by $\bQ(n)$ in $\MT(K,\bQ)$ automatically lies in $\MT(\cO_{K,S}, \bQ)$, and every extension of $\bQ_p(0)$ by $\bQ_p(n)$ in $\Rep_{\bQ_p}(G_K)$ is automatically unramified away from~$p$ and crystalline at places dividing~$p$ \cite[Example~3.9]{bloch-kato:tamagawa_numbers}. So we have to show that the map
\begin{equation}
	\label{eq:ext-realisation}
	\Ext^1_{\MT(K,\bQ)}(\bQ(0),\bQ(n)) \otimes_{\bQ} \bQ_p \to \Ext^1_{\Rep_{\bQ_p}(G_K)}(\bQ_p(0),\bQ_p(n))
\end{equation}
induced by the étale realisation functor
\begin{equation}
	\label{eq:etale-realisation}
	\rho_{\et}\colon \MT(K,\bQ) \to \Rep_{\bQ_p}(G_K)
\end{equation}
is an isomorphism. Both Ext groups can be computed as Hom groups in suitable triangulated categories. On the one hand, the category $\MT(K,\bQ)$ is, by definition, a full and extension-closed subcategory of Voevodsky's triangulated category of motives $\DM(K)_{\bQ}$, where the subscript~$\bQ$ means that we tensor the Hom groups with~$\bQ$. So the first Ext group in~\eqref{eq:ext-realisation} is given by
\[ \Ext^1_{\MT(K,\bQ)}(\bQ(0), \bQ(n)) = \Hom_{\DM(K)_{\bQ}}(\bQ(0), \bQ(n)[1]). \]
On the other hand, let $D^b(\Spec(K)_{\et}, \bZ_p)$ be Ekedahl's triangulated category of $\bZ_p$-sheaves \cite{ekedahl:adic_formalism}, and let $D^b(\Spec(K)_{\et}, \bQ_p)$ be its isogeny category. The category $\Rep_{\bQ_p}(G_K)$ embeds as a full extension-closed subcategory into $D^b(\Spec(K)_{\et}, \bQ_p)$, so the second Ext group in~\eqref{eq:ext-realisation} is given by
\[ \Ext^1_{\Rep_{\bQ_p}(G_K)}(\bQ_p(0), \bQ_p(n)) = \Hom_{D^b(\Spec(K)_{\et}, \bQ_p)}(\bQ_p(0), \bQ_p(n)[1]). \]

By \cite[§2.1]{huber:realizations}, the étale realisation functor~\eqref{eq:etale-realisation} is the restriction of an exact functor of triangulated categories
\begin{equation}
	\label{eq:realisation-triangulated}
	\rho_{\et}\colon \DM(K)_{\bQ} \to D^b(\Spec(K)_{\et}, \bQ_p).
\end{equation}
Thus, we have to show that the map
\begin{equation}
	\label{eq:realisation-on-homs}
	\rho_{\et,*}\colon \Hom_{\DM(K)_{\bQ}}(\bQ(0), \bQ(n)[1]) \otimes_{\bQ} \bQ_p \to \Hom_{D^b(\Spec(K)_{\et}, \bQ_p)}(\bQ_p(0), \bQ_p(n)[1])
\end{equation}
induced by~\eqref{eq:realisation-triangulated} is an isomorphism.
The Hom group on the left of~\eqref{eq:realisation-on-homs} is, by definition, equal to motivic cohomology:
\[ \Hom_{\DM(K)_{\bQ}}(\bQ(0), \bQ(n)[1]) = \rH^1_{\cM}(\Spec(K), \bQ(n)). \]
The Hom group on the right of~\eqref{eq:realisation-on-homs} computes continuous étale cohomology:
\[ \Hom_{D^b(\Spec(K)_{\et}, \bQ_p)}(\bQ_p(0), \bQ_p(n)[1]) = \rH_{\et}^1(\Spec(K), \bQ_p(n)). \]
Thus, we have to show that the map
\begin{equation}
	\label{eq:motivic-to-etale-cohomology}
	\rho_{\et,*}\colon \rH^1_{\cM}(\Spec(K), \bQ(n)) \otimes_{\bQ} \bQ_p \to \rH_{\et}^1(\Spec(K), \bQ_p(n))
\end{equation}
is an isomorphism.
We reduce this to a result by Soulé by considering the precomposition with the motivic Chern class map. More precisely, consider the $n$-th motivic Chern class map
\begin{equation*}
	\label{eq:chern-class}
	c_n^{\mot} \colon K_{2n-1}(K) \to \rH^1_{\cM}(\Spec(K), \bQ(n))
\end{equation*}
from \cite[Corollary~4.2.2]{huber:realizations}. The composition with the map~\eqref{eq:motivic-to-etale-cohomology} equals the étale Chern class
\[ c_n^{\et}\colon K_{2n-1}(K) \to \rH^1_{\et}(\Spec(K), \bQ_p(n)), \]
see \cite[Corollary~4.2.3]{huber:realizations}. We thus have a commutative diagram as follows:
\[ 
\begin{tikzcd}[row sep=large]
	K_{2n-1}(K) \otimes \bQ_p \arrow[dr,"c_n^{\mot} \otimes \bQ_p"]
	\arrow[rr,"c_n^{\et}"] & & \rH^1_{\et}(\Spec(K), \bQ_p(n)) \\
	& \rH^1_{\cM}(\Spec(K), \bQ(n)) \otimes \bQ_p \arrow[ur, "\rho_{\et,*}"] & 
\end{tikzcd}
\]
The motivic Chern class map induces an isomorphism
\begin{equation}
	\label{eq:motivic-chern-class}
	c_n^{\mot} \otimes \bQ\colon K_{2n-1}(K) \otimes \bQ \cong \rH^1_{\cM}(\Spec(K), \bQ(n)).
\end{equation}
This follows from the fact that for any field, the motivic-to-$K$-theory spectral sequence with $\bQ$-coefficients \cite[Theorem~VI.4.2]{weibel:K-book}
\[ E_2^{i,j} = \rH^{i-j}_{\cM}(\Spec(K), \bQ(-j)) \Rightarrow K_{-i-j}(K) \otimes \bQ \]
degenerates at page~2, and the decomposition of $K_{2n-1}(K) \otimes \bQ$ into Adams eigenspaces induces a splitting \cite[§VI.4]{weibel:K-book}:
\[ K_{2n-1}(K) \otimes \bQ \cong \bigoplus_j \rH^{2j-2n + 1}_{\cM}(\Spec(K), \bQ(j)). \]
The motivic Chern class maps on $K_{2n-1}(K)$ are defined by projecting onto the different components. In our situation $K$ is a number field and thus $K_{2n-1}(K)_{\bQ}$ has only a single Adams eigenspace in weight~$n$ \cite[§1.6]{deligne-goncharov}. It follows that the Chern class map \eqref{eq:motivic-chern-class} and therefore also the map $c_n^{\mot} \otimes \bQ_p$ in the above diagram are isomorphisms, so that we are reduced to proving that the étale Chern class map
\[ c_n^{\et}\colon K_{2n-1}(K) \otimes \bQ_p \cong \rH^1_{\et}(\Spec(K), \bQ_p(n)) \]
is an isomorphism. For $p \neq 2$, this is a theorem of Soulé \cite[Theorem~1]{soule:higher};
for $p = 2$, this is shown in \cite[Lemma~2.19 and Theorem~2.21]{hyperbolic-tesselations}, see also \cite[Theorem~B.4.8]{huber-wildeshaus} and \cite[Remark~2.24]{hyperbolic-tesselations}.

\printbibliography

\end{document}